\newtheorem{Thm}{Theorem}[section]
\newtheorem{Lem}[Thm]{Lemma}
\newtheorem{Prop}[Thm]{Proposition}
\newtheorem{Cor}[Thm]{Corollary}
\theoremstyle{definition}
\newtheorem{Eg}[Thm]{Example}
\newtheorem{Def}[Thm]{Definition}
\newtheorem{Hyp}[Thm]{Hypothesis}
\newtheorem{Rmk}[Thm]{Remark}
\numberwithin{equation}{Thm}
\newcommand{\CC}{\mathbb{C}}
\newcommand{\Cp}{\mathbb{C}_p}
\newcommand{\FF}{\mathbb{F}}
\newcommand{\NN}{\mathbb{N}}
\newcommand{\OCp}{\mathcal{O}_{\Cp}}
\newcommand{\Qab}{\mathbb{Q}^{\text{ab}}}
\newcommand{\QQ}{\mathbb{Q}}
\newcommand{\ZZ}{\mathbb{Z}}
\newcommand{\Ptyp}{P}
\newcommand{\Warrow}{\underleftarrow{W}}
\newcommand{\WWarrow}{\underleftarrow{\mathbb{W}}}
\DeclareMathOperator{\Ab}{\mathbf{Ab}}
\DeclareMathOperator{\Gal}{Gal}
\DeclareMathOperator{\Hom}{Hom}
\DeclareMathOperator{\image}{image}
\DeclareMathOperator{\Ring}{\mathbf{Ring}}
\DeclareMathOperator{\Set}{\mathbf{Set}}
\DeclareMathOperator{\Trace}{Trace}
\author{Christopher Davis}
\address{University of California, Irvine, Dept of
Mathematics, Irvine, CA 92697}
\curraddr{University of Copenhagen, Dept of Mathematical Sciences,  Universitetsparken~5, 
2100 K{\o}benhavn {\O}, Denmark}
\email{davis@math.ku.dk}
\date{\today}
\author{Kiran S. Kedlaya}
\address{University of California, San Diego, Dept of Mathematics, La Jolla, CA 92093}
\email{kedlaya@ucsd.edu}
\thanks{Davis was supported by the Danish National Research Foundation through the Centre for Symmetry and Deformation (DNRF92).  Kedlaya was supported by NSF (grant DMS-1101343) and
UCSD (Stefan E. Warschawski professorship).}
\begin{document}
\title{Almost purity and overconvergent Witt vectors}

\begin{abstract}
In a previous paper, we stated a general \emph{almost purity theorem} in the style of Faltings: if $R$ is a ring for which the Frobenius maps on finite $p$-typical Witt vectors over $R$ are surjective, then the integral closure of $R$ in a finite \'etale extension of $R[p^{-1}]$ 
is ``almost'' finite \'etale over $R$. Here, we use almost purity to
lift the finite \'etale extension of $R[p^{-1}]$ to a finite \'etale extension of rings of \emph{overconvergent Witt vectors}. The point is that no hypothesis of $p$-adic completeness is needed; this result thus points towards potential global analogues of $p$-adic Hodge theory. As an illustration,
we construct $(\varphi, \Gamma)$-modules associated to Artin Motives over $\QQ$.  The
$(\varphi, \Gamma)$-modules we construct are defined over a base ring which seems
well-suited to generalization to a more global setting; we plan to pursue
such generalizations in later work.
\end{abstract}

\maketitle

\section*{Introduction}

This paper builds on our previous paper \cite{DK12} to take an initial step towards constructing a global analogue of $p$-adic Hodge theory. To explain what this assertion means, let us first recall some aspects of the usual $p$-adic theory of $(\varphi, \Gamma)$-modules (see \cite[Section~13.6]{BC09} for a detailed description).  Let $K$ denote a finite extension of $\QQ_p$ and put $K_{\infty} := K(\mu_{p^{\infty}})$.  Let $G_K$ and $G_{K_{\infty}}$ denote the absolute Galois groups of $K$ and $K_{\infty}$.  Put $\Gamma := \Gal(K_{\infty}/K)$; it is isomorphic to an open subgroup of $\ZZ_p^{\times}$.   Let $V$ denote a finite-dimensional $p$-adic representation of $G_{K}$.  The theory of $(\varphi, \Gamma)$-modules concerns replacing 
\begin{itemize}
\item the $p$-adic representation $V$, which is a finite module over a nice ring (in our case, $\QQ_p$) equipped with an action by a complicated group (in our case, $G_K$)
\end{itemize}
with
\begin{itemize}
\item a finite module over a complicated ring equipped with an action by a single operator $\varphi$ and the group $\Gamma$.
\end{itemize}
More precisely, for a certain choice of $\QQ_p$-algebra $\mathbf{A}$, the functor from $G_{K_{\infty}}$-representations to $(\varphi, \Gamma)$-modules is given by 
\[
V \leadsto (\mathbf{A} \otimes_{\QQ_p} V)^{G_{K_{\infty}}}.
\]
This gives a $(\varphi, \Gamma)$-module over  $\mathbf{A}_K := \mathbf{A}^{G_{K_{\infty}}}$.

Within this framework, there are multiple reasonable choices for the rings $\mathbf{A}$ and $\mathbf{A}_K$. However, the constructions of these rings in general depend rather heavily on the prime $p$.
Our overarching goal, of which this paper represents a small part, is to define a base ring which serves a similar purpose, but where the dependence on $p$ is concentrated in the choice of a norm (corresponding to the $p$-adic norm).  This provides hope for the possibility of globalizing the construction in a way that allows us to treat not only all primes simultaneously, but perhaps also the archimedean place as well.

Evidence that there might be a meaningful global theory of $(\varphi, \Gamma)$-modules is provided in \cite{Ked13b}, which uses a special choice of base ring $\mathbf{A}_K$ to keep track of a rational structure on de\thinspace Rham cohomology.  On the other hand, it is not clear how to adapt that construction to work for multiple primes $p$ simultaneously.  

In this paper, we describe a construction of \emph{overconvergent Witt vectors} which we propose as a candidate ingredient for a global theory of $(\varphi, \Gamma)$-modules. Our overconvergent Witt vectors are inverse limits of finite $p$-typical Witt vectors along Frobenius subject to a growth condition; when evaluated at a ring of characteristic $p$, we recover the definition used by Davis--Langer--Zink to describe rigid cohomology in terms of the de Rham-Witt complex \cite{DLZ11b, DLZ11a}. However, our present definition makes sense also for rings equipped with $p$-adic norms. For a ring $R$ complete for such a norm, there is an isomorphism between the construction over $R$ and the corresponding construction over an associated ring in characteristic $p$ (the \emph{tilt} in the sense of Scholze \cite{Sch11}, which generalizes the Fontaine-Wintenberger \emph{field of norms} construction). For a more general ring equipped with a $p$-adic norm, the overconvergent Witt vectors form a sort of \emph{decompletion} of the corresponding ring over the $p$-adic completion.

The principal result of this paper (Theorem~\ref{almost purity part 2})
asserts a compatibility between the formation of overconvergent Witt vectors and the formation of finite \'etale extensions. We view this as a variant of the \emph{almost purity theorem} stated in our previous paper \cite{DK12} (and based on work of Faltings \cite{Fal88},
Kedlaya--Liu \cite{KL11}, and Scholze \cite{Sch11}). That theorem asserts that under a certain hypothesis on a ring $R$, the integral closure of $R$ in a finite \'etale extension of $R[p^{-1}]$ is \emph{almost finite \'etale} over $R$; roughly speaking, this means that the failure of this extension to be truly finite \'etale is negligible compared to any ideal of definition of the $p$-adic topology. The key condition on $R$ is that it be \emph{Witt-perfect}, meaning that the Frobenius maps on finite $p$-typical Witt vectors over $R$ are surjective; this is closely related to the defining condition of \emph{perfectoid algebras} used in \cite{KL11} and \cite{Sch11}.

To illustrate the intent of our work, in Section~\ref{sec:phi,Gamma} we
use overconvergent Witt vectors to give an explicit construction analogous to the $(\varphi, \Gamma)$-module construction in the special case of \emph{Artin motives}, i.e., under the special assumption that our action of $G_K$ on $V$ factors through a finite subgroup.  This is done in Theorem~\ref{Artin motive theorem}.  The main result required by our $(\varphi, \Gamma)$-module construction is Theorem~\ref{almost purity part 2}.

It is worth comparing this construction with that of \cite{Ked13b}. On one hand, the construction there uses a ring even smaller than the one considered here, and is able to handle motives other than Artin motives. On the other hand, our construction is better set up for globalization by replacing $p$-typical Witt vectors with big Witt vectors; it is also formulated so as to allow replacement of $p$-adic norms with other norms, possibly even archimedean norms. A pressing question for future work is to find a natural way to combine the constructions so as to retain the advantages on both sides.

\subsection*{Acknowledgements} 

The authors thank Laurent Berger, James Borger, Bryden Cais, Lars Hesselholt, Abhinav Kumar, 
Ruochuan Liu, Tommy Occhipinti, Joe Rabinoff, Daqing Wan, and Liang Xiao for helpful discussions and
suggestions.  

\section{Background on Witt vectors}

\begin{Hyp}
By convention, all rings considered in this paper are commutative and unital, and ring homomorphisms send $1$ to $1$.  When we say a ring $R$ is $p$-adically complete, we implicitly include the assumption that $R$ is also $p$-adically separated.
\end{Hyp}

\begin{Def}
Let $\NN$ denote the multiplicative monoid of positive integers. For $p$ a prime number fixed throughout the paper, let $\Ptyp \subset \NN$ denote the multiplicative monoid of powers of $p$, i.e.,  $\Ptyp = \{1,p,p^2,\ldots\}$.  
Let $\Set, \Ab, \Ring$ denote the categories of sets, abelian groups, and rings, respectively. 

Let $\bullet^{\Ptyp}: \Ring \to \Ring$ denote the functor $R \mapsto R^\Ptyp$ for the product ring structure on $R^\Ptyp$; we will use the same notation to denote the underlying functors $\Ring \to \Ab$ and $\Ring \to \Set$.
Let $F: \bullet^{\Ptyp} \to \bullet^{\Ptyp}$ be the natural transformation of functors
$\Ring \to \Ring$ given by left shift: 
\[
(x_1, x_p, \dots) \mapsto (x_p, x_{p^2}, \dots).
\]
Let $V: \bullet^{\Ptyp} \to \bullet^{\Ptyp}$ be the natural transformation of functors
$\Ring \to \Ab$ given by
\[
(x_1, x_p, x_{p^2}\dots) \mapsto (0, px_1, px_p, \dots).
\]
\end{Def}


\begin{Def}
Define the functor $W_0: \Ring \to \Set$ by the formula $R \mapsto R^{\Ptyp}$.  Of course, this is the same as the functor $\bullet^{\Ptyp}$ viewed as a functor $\Ring \to \Set$.  Define the \emph{ghost map} as the natural transformation $w: W_0 \to \bullet^{\Ptyp}$ by the formula
\[
(x_1, x_p, \dots) \mapsto (w_1, w_p, \dots), \qquad w_{p^n} = \sum_{0 \leq i \leq n} p^i x_{p^i}^{p^{n-i}}.
\]
\end{Def}

\begin{Thm} \label{construct Witt vectors}
\begin{enumerate}
\item[(a)]
There is a unique way to promote $W_0$ to a functor $W: \Ring \to \Ring$ in such a way that
$w: W \to \bullet^{\Ptyp}$ is a natural transformation.
\item[(b)]
There is a unique natural transformation $F: W \to W$
such that $w \circ F = F \circ w$.
\item[(c)]
The formula 
\[
(x_1, x_p, x_{p^2}, \dots) \mapsto (0, x_1, x_p, \dots)
\]
defines a natural transformation $V: W \to W$ of functors $\Ring \to \Ab$ satisfying $w \circ V = V \circ w$.
\end{enumerate}
\end{Thm}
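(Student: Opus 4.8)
The plan is to prove Theorem~\ref{construct Witt vectors} by exploiting the classical fact that the ghost map $w$ becomes a bijection after inverting $p$, so that ring/abelian-group structures and natural transformations on $W$ can be specified by their effect on ghost components and then shown to have coordinates that lie in the original ring (not just in $R[p^{-1}]$). Concretely, for a polynomial ring $A = \ZZ[x_1, x_p, \dots, y_1, y_p, \dots]$ over $\ZZ$ (in the countably many ``coordinate'' variables indexed by $\Ptyp$, doubled up for binary operations), the ghost map $w\colon A^\Ptyp \to A^\Ptyp$ is injective, and becomes bijective after tensoring with $\ZZ[p^{-1}]$. So there exist unique polynomials $S_{p^n}, P_{p^n} \in A[p^{-1}]$ with $w(S) = w(x) + w(y)$ and $w(P) = w(x) \cdot w(y)$; the content of part (a) is the integrality lemma that in fact $S_{p^n}, P_{p^n} \in A$. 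The standard route to that lemma is the ``Dwork-type'' criterion: a tuple $(z_1, z_p, \dots)$ of elements of a $p$-torsion-free ring $B$ equipped with a ring endomorphism $\sigma$ lifting the $p$-power Frobenius mod $p$ lies in the image of $w$ iff $w_{p^n} \equiv \sigma(w_{p^{n-1}}) \pmod{p^n}$ for all $n \geq 1$; applying this to $B = A$ with $\sigma$ the substitution $x_i \mapsto x_i^p$ shows the ghost components of $x + y$ and $x \cdot y$ (which satisfy the congruence because $w(x), w(y)$ do) come from genuine integral Witt coordinates. Functoriality in $R$ and uniqueness are then formal: any ring $R$ is a quotient of such a polynomial ring, universal polynomials transport along ring maps, and a second ring structure natural in $R$ and compatible with $w$ would have to agree with this one first on polynomial rings (where $w$ is injective) and then on all rings.

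For part (b), the operator $F$ on ghost components is the left shift $(w_1, w_p, w_{p^2}, \dots) \mapsto (w_p, w_{p^2}, \dots)$, matching the natural transformation $F$ on $\bullet^\Ptyp$ from the earlier definition; again by the polynomial-ring/ghost-injectivity argument there is at most one lift, and one shows the corresponding Witt coordinates are integral. Here the Dwork congruence criterion applies in the same way: the shifted ghost tuple still satisfies $w_{p^n}' \equiv \sigma(w_{p^{n-1}}') \pmod{p^n}$ because the original one did (the condition is just an index shift), so $F$ descends to an integral polynomial self-map of $W$, and $w \circ F = F \circ w$ by construction. That this $F$ is a ring homomorphism is automatic: on $W(A)$ for $A$ $p$-torsion-free it is a ring map because after applying the injective $w$ it becomes the componentwise shift, which is visibly a ring map; the general case follows by naturality and the surjection from a polynomial ring.

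Part (c) is the easiest: the displayed ``shift coordinates right'' map $V$ is manifestly a polynomial (indeed linear, with integer coefficients) natural transformation of the underlying set/abelian-group functors, so there is nothing to check about integrality or definability. One verifies directly on ghost components that $w \circ V = V \circ w$, where the right-hand $V$ is the transformation $(x_1, x_p, \dots) \mapsto (0, px_1, px_p, \dots)$ of $\bullet^\Ptyp$: indeed $w_{p^n}(0, x_1, x_p, \dots) = \sum_{1 \le i \le n} p^i x_{p^{i-1}}^{p^{n-i}} = p \sum_{0 \le j \le n-1} p^j x_{p^j}^{p^{(n-1)-j}} = p\, w_{p^{n-1}}(x_1, x_p, \dots)$, which is exactly the $p^n$-component of $V$ applied to the ghost vector. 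Additivity of $V\colon W \to W$ then follows from additivity of the right-shift on coordinates, or alternatively by transporting additivity of $V$ on ghosts through the (injective, for $p$-torsion-free rings) ghost map and passing to quotients.

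I expect the only genuine obstacle to be the integrality statement underlying (a) and (b) — i.e., that the a priori rational universal polynomials $S_{p^n}, P_{p^n}$ and the coordinates of $F$ actually have integer coefficients. Everything else (uniqueness, functoriality, compatibility of $F$ and $V$ with $w$, the ring-homomorphism property of $F$, additivity of $V$) reduces formally to injectivity of $w$ over $p$-torsion-free rings plus the fact that every ring is a quotient of a polynomial ring over $\ZZ$. The cleanest way to dispatch the integrality point is the Dwork congruence lemma cited above; I would state and prove that lemma first, apply it on the universal polynomial ring with $\sigma$ the $p$-power substitution, and then harvest (a), (b), (c) in sequence.
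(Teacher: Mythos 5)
The paper does not prove Theorem~\ref{construct Witt vectors}: it simply cites Serre for part~(a) and Illusie for parts~(b) and~(c). Your proposal supplies the standard proof that those references themselves use, namely define everything on ghost components over the universal polynomial ring $A = \ZZ[x_1,x_p,\dots,y_1,y_p,\dots]$, use injectivity of the ghost map on $p$-torsion-free rings for uniqueness, invoke the Dwork congruence criterion (with $\sigma$ the substitution $x_i\mapsto x_i^p$) for integrality of the universal polynomials, and transport to arbitrary $R$ by naturality and surjections from polynomial rings. The verification that $w\circ V = V\circ w$ by direct computation is correct, as is the observation that the required congruence for the shifted ghost sequence in part~(b) is weaker than the original Dwork congruence. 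So your argument is correct and is, in substance, the proof the paper points to.

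One small slip worth correcting: in your discussion of part~(c), the sentence ``Additivity of $V\colon W\to W$ then follows from additivity of the right-shift on coordinates'' is not a valid argument, because the addition on $W(R)$ is the Witt addition, not coordinatewise addition, so $\ZZ$-linearity of the coordinate map $(x_1,x_p,\dots)\mapsto(0,x_1,x_p,\dots)$ does not by itself give $V(\underline{x}+\underline{y})=V(\underline{x})+V(\underline{y})$ in $W(R)$. Your ``or alternatively'' clause is the right argument: on a $p$-torsion-free ring one checks on ghost components (where $V$ becomes the additive map $w\mapsto (0,pw_1,pw_p,\dots)$) that $w(V(\underline{x}+\underline{y}))=w(V(\underline{x})+V(\underline{y}))$, concludes by injectivity of $w$, and then passes to general $R$ by naturality. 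You should drop the first clause and keep only the ghost-component argument.
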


\begin{proof}
For part (a), see \cite[Chapter~II, Theorem~6]{Serre79}.  For parts (b) and (c), see \cite[Sections~0.1.1--0.1.3]{Ill79}. 
\end{proof}

We will be defining norms and norm-like functions on $p$-typical Witt vectors over normed rings.  To prove that these functions really behave like norms, we will have to study the effect of addition and multiplication in terms of components.  The most convenient way to study these Witt vector components is via ghost components.

\begin{Lem} \label{homogeneous polys}
Let $R = \ZZ[x_1, x_p, \ldots]$ and fix $a \in \NN$.  Assume $\underline{z} \in W(R)$ is such that for all $n$, the $p^n$-th ghost component of $\underline{z}$ is a homogeneous polynomial of degree $ap^n$ under the weighting in which the variable $x_{p^i}$ has weight $p^i$.  Then the $p^n$-th Witt component of $\underline{z}$, written $z_{p^n}$, is also a homogeneous polynomial of degree $ap^n$.  
\end{Lem}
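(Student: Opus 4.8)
The plan is to run an induction on $n$, using the explicit shape of the ghost map to solve for each Witt component $z_{p^n}$ in terms of the lower Witt components and the ghost components. Recall that $w_{p^n}(\underline{z}) = \sum_{0 \le i \le n} p^i z_{p^i}^{p^{n-i}}$, so that
\[
p^n z_{p^n} = w_{p^n}(\underline{z}) - \sum_{0 \le i \le n-1} p^i z_{p^i}^{p^{n-i}}.
\]
First I would handle the base case $n = 0$: here $z_1 = w_1(\underline{z})$ is by hypothesis homogeneous of degree $a$, as desired. For the inductive step, I would assume $z_{p^i}$ is homogeneous of degree $ap^i$ for all $i < n$. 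Then each summand $z_{p^i}^{p^{n-i}}$ on the right-hand side is homogeneous of degree $ap^i \cdot p^{n-i} = ap^n$, and $w_{p^n}(\underline{z})$ is homogeneous of degree $ap^n$ by hypothesis; hence the right-hand side is homogeneous of degree $ap^n$. Therefore $p^n z_{p^n}$ is homogeneous of degree $ap^n$.

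The one genuine subtlety is passing from "$p^n z_{p^n}$ is homogeneous of degree $ap^n$" to "$z_{p^n}$ is homogeneous of degree $ap^n$." This is where working over $R = \ZZ[x_1, x_p, \ldots]$ — a polynomial ring over $\ZZ$, hence an integral domain in which $p$ is a nonzerodivisor — is essential. I would argue as follows: write $z_{p^n} = \sum_d c_d$ as the (finite) decomposition of the polynomial $z_{p^n}$ into its weighted-homogeneous components $c_d$ of degree $d$. Multiplication by $p^n$ is injective on $R$ and preserves the weighted grading, so $p^n z_{p^n} = \sum_d p^n c_d$ is the weighted-homogeneous decomposition of $p^n z_{p^n}$. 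Since we have shown this is concentrated in degree $ap^n$, we get $p^n c_d = 0$ for $d \ne ap^n$, and hence $c_d = 0$ for $d \ne ap^n$; thus $z_{p^n} = c_{ap^n}$ is homogeneous of degree $ap^n$.

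I expect the main obstacle to be purely bookkeeping: making sure the Witt components $z_{p^i}$ really are well-defined polynomials in $R$ (which follows from Theorem~\ref{construct Witt vectors}(a), since $W(R)$ is functorially a ring and $\underline{z} \in W(R)$ is given), and that the ghost-component formula can legitimately be inverted over $R$ rather than only over $R[p^{-1}]$ — but the displayed identity above makes this transparent, because it expresses $p^n z_{p^n}$ as an honest element of $R$, and then the torsion-free argument of the previous paragraph finishes the job. There is no deeper input needed beyond the polynomial identities defining $W$ and the fact that $\ZZ$ is $p$-torsion-free.
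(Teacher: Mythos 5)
Your proof is correct and follows the same inductive argument as the paper, solving $p^n z_{p^n} = w_{p^n}(\underline{z}) - \sum_{i < n} p^i z_{p^i}^{p^{n-i}}$ and reading off homogeneity of the right-hand side. The only difference is that you spell out the final step — that $R$ being $p$-torsion-free lets one cancel $p^n$ while preserving homogeneity — which the paper leaves implicit.
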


\begin{proof}
We prove this using induction on $n$.  If $n = 0$, the result is trivial, because $w_{p^0} = z_{p^0}$.  To complete the induction, note that
\[
p^{n+1}z_{p^{n+1}} = w_{p^{n+1}} - \sum_{0 \leq i \leq n} p^i z_{p^i}^{p^{n+1-i}}.
\]  
Everything on the right side is homogeneous of degree $ap^{n+1}$, and hence so is $z_{p^{n+1}}$.
\end{proof}

The following corollary is an immediate application of Lemma~\ref{homogeneous polys}.

\begin{Cor} \label{homogeneous corollary} Let $R$ denote a ring, and let $\underline{x}, \underline{y} \in W(R)$.  
\begin{enumerate}
\item[(a)] The $p^i$-th component of $\underline{x} + \underline{y}$ is a homogeneous polynomial of degree $p^i$ in $x_1, x_p, \ldots, y_1, y_p, \ldots$, under the weighting in which the variables $x_{p^i}$, $y_{p^i}$ have weight $p^i$.
\item[(b)] The $p^i$-th component of $\underline{x} \cdot \underline{y}$ is a homogeneous polynomial of degree $2p^i$.
\item[(b$'$)] The $p^i$-th component of $\underline{x} \cdot \underline{y}$ is a homogeneous polynomial in the $x$-variables (respectively, in the $y$-variables) of degree $p^i$ (respectively, of degree $p^i$).
\item[(c)] The $p^i$-th component of $F(\underline{x})$ is a homogeneous polynomial of degree $p^{i+1}$.
\end{enumerate}
\end{Cor}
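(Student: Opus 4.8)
The plan is to deduce every item directly from Lemma~\ref{homogeneous polys} by exhibiting, in each case, the element $\underline{z} \in W(R)$ whose Witt components we care about, where $R$ is an appropriate polynomial ring over $\ZZ$, and checking that its ghost components satisfy the required homogeneity. First I would set up the universal situation: for part (a), take $R = \ZZ[x_1, x_p, \dots, y_1, y_p, \dots]$, assign weight $p^i$ to $x_{p^i}$ and to $y_{p^i}$, and let $\underline{z} = \underline{x} + \underline{y}$. Since $w_{p^n}(\underline{x} + \underline{y}) = w_{p^n}(\underline{x}) + w_{p^n}(\underline{y})$ (the ghost map is a ring homomorphism, in particular additive) and each $w_{p^n}(\underline{x}) = \sum_{0 \le i \le n} p^i x_{p^i}^{p^{n-i}}$ is visibly homogeneous of degree $p^n$ under this weighting (the term for index $i$ has weight $p^i \cdot p^{n-i} = p^n$), Lemma~\ref{homogeneous polys} with $a = 1$ applies and gives that the $p^n$-th Witt component of $\underline{x} + \underline{y}$ is homogeneous of degree $p^n$. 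Specializing the universal polynomial identity to arbitrary $\underline{x}, \underline{y} \in W(R')$ for any ring $R'$ then yields the stated claim.

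For part (b), take the same polynomial ring and weighting but now $\underline{z} = \underline{x} \cdot \underline{y}$; since $w_{p^n}$ is multiplicative, $w_{p^n}(\underline{x} \cdot \underline{y}) = w_{p^n}(\underline{x}) \cdot w_{p^n}(\underline{y})$ is a product of two polynomials each homogeneous of degree $p^n$, hence homogeneous of degree $2p^n$; apply Lemma~\ref{homogeneous polys} with $a = 2$. For part (b$'$), refine the weighting: assign $x_{p^i}$ weight $p^i$ but $y_{p^i}$ weight $0$ (equivalently, work one set of variables at a time, treating the other as scalars in the coefficient ring). Under the first weighting, $w_{p^n}(\underline{x} \cdot \underline{y}) = w_{p^n}(\underline{x}) w_{p^n}(\underline{y})$ has degree $p^n$ in the $x$-variables since $w_{p^n}(\underline{y})$ has degree $0$; Lemma~\ref{homogeneous polys} with $a = 1$ then shows the $p^n$-th Witt component is homogeneous of degree $p^n$ in the $x$-variables, and symmetrically for the $y$-variables. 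One caveat to record here: Lemma~\ref{homogeneous polys} as stated is over $\ZZ[x_1, x_p, \dots]$ with one family of variables, so for (b), (b$'$) I would either note that its proof applies verbatim with the coefficient ring $\ZZ$ enlarged to $\ZZ[y_1, y_p, \dots]$ (for the $x$-homogeneity statement) or restate the lemma in the mild generality needed.

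For part (c), take $R = \ZZ[x_1, x_p, \dots]$ with $x_{p^i}$ of weight $p^i$ and $\underline{z} = F(\underline{x})$; by Theorem~\ref{construct Witt vectors}(b), $w_{p^n}(F(\underline{x})) = w_{p^{n+1}}(\underline{x}) = \sum_{0 \le i \le n+1} p^i x_{p^i}^{p^{n+1-i}}$, which is homogeneous of degree $p^{n+1}$; Lemma~\ref{homogeneous polys} with $a = p$ gives that the $p^n$-th Witt component of $F(\underline{x})$ is homogeneous of degree $p \cdot p^n = p^{n+1}$. I do not expect any serious obstacle: the only point requiring a moment's care is the bookkeeping in (b$'$), namely confirming that the inductive argument of Lemma~\ref{homogeneous polys} survives when the coefficient ring carries variables of weight zero — which it does, since the recursion $p^{n+1} z_{p^{n+1}} = w_{p^{n+1}} - \sum_{0 \le i \le n} p^i z_{p^i}^{p^{n+1-i}}$ and the cancellation of the $p$-power denominator are insensitive to the coefficient ring, provided it remains a torsion-free (indeed polynomial) $\ZZ$-algebra so that division by $p^{n+1}$ is unambiguous.
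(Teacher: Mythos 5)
Your proof is correct and takes exactly the route the paper intends: the paper simply declares the corollary ``an immediate application of Lemma~\ref{homogeneous polys}'' and gives no further argument, and what you have written is the careful fleshing-out of that deduction, in each case exhibiting the universal $\underline{z}$ and verifying the ghost-component homogeneity with the appropriate $a$ (namely $a=1$, $2$, $1$, $p$). Your caveat about (b$'$) requiring a mildly enlarged coefficient ring --- and your observation that the induction of Lemma~\ref{homogeneous polys} is insensitive to this as long as the base remains a torsion-free $\ZZ$-algebra so division by $p^{n+1}$ is well-defined --- is the right point to flag and is handled correctly.
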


In \cite{DK12}, the Witt-vector Frobenius map $F$ was studied extensively.  We recall now some of the most important concepts and results from that paper.
We begin with a more precise description of the components of Frobenius.  For the proof, see \cite[Lemma~1.4(a)]{DK12}.

\begin{Lem} \label{big Frobenius components lemma}
Take $\underline{x}, \underline{y} \in W(R)$ with $F(\underline{x}) = \underline{y}$. 
Then for each $p^i$, we have 
\[
y_{p^i} = x_{p^i}^p + px_{p^{i+1}} + pf_{p^i}(x_1,\ldots,x_{p^i}),
\]
where $f_{p^i}(x_1,\ldots, x_{p^i})$ is a polynomial with integer coefficients not involving the variable $x_{p^{i+1}}$.
\end{Lem}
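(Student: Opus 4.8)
The plan is to extract the claim about the components of Frobenius from the ghost-component description, working one $p^i$ at a time and using the explicit formula for $w \circ F = F \circ w$. Write $\underline{y} = F(\underline{x})$, so that by Theorem~\ref{construct Witt vectors}(b) the ghost components satisfy $w_{p^n}(\underline{y}) = w_{p^{n+1}}(\underline{x})$ for all $n \geq 0$. Spelling this out with the defining formula for $w$, one gets
\[
\sum_{0 \leq j \leq n} p^j y_{p^j}^{p^{n-j}} = \sum_{0 \leq j \leq n+1} p^j x_{p^j}^{p^{n+1-j}}.
\]
The idea is to solve for $y_{p^i}$ by induction on $i$: the term $p^i y_{p^i}$ appears on the left (take $n = i$), and the matching terms on the right are $x_{p^i}^{p}$ at index $j=i$ and $p^{i+1} x_{p^{i+1}}$ at index $j = i+1$, while all remaining contributions involve only $x_1, \ldots, x_{p^i}$ (from indices $j \leq i$ on the right) and $y_1, \ldots, y_{p^{i-1}}$ (from indices $j \leq i-1$ on the left), the latter of which are, by the inductive hypothesis, themselves polynomials in $x_1, \ldots, x_{p^{i-1}}$ with integer coefficients.

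First I would verify the base case $i = 0$, where $y_1 = w_1(\underline{y}) = w_p(\underline{x}) = x_1^p + p x_p$, matching the claimed formula with $f_1 = 0$. For the inductive step, I would isolate $p^i y_{p^i}$ on the left-hand side of the identity at level $n = i$, substitute the known expressions for $y_1, \ldots, y_{p^{i-1}}$, and observe that every term other than $p^i y_{p^i}$, $x_{p^i}^p$, and $p^{i+1} x_{p^{i+1}}$ is divisible by $p^{i+1}$: indeed such terms are of the form $p^j (\cdot)^{p^{i-j}}$ with $j < i$, hence divisible by $p^j \cdot p^{i-j} = p^i$ once one uses that the relevant base is itself a polynomial (so no denominators appear) — more carefully, one checks the divisibility by $p^{i+1}$ directly from the shape $p^j z^{p^{i-j}}$ with $j \leq i$, since $p^j \cdot p = p^{j+1} \mid p^{i+1}$ when...

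Actually the cleanest route here is to divide the level-$i$ relation by $p^i$ throughout, which is legitimate since $W(R)$ is a subring of the ghost ring exactly when $R$ is torsion-free (and $R = \ZZ[x_1, x_p, \ldots]$ is), and then all terms except $y_{p^i}$, which equals $p^{-i}(\text{stuff})$, must combine to give an integer polynomial; comparing with the level-$i$ relation for $\underline{x}$'s own ghost components (i.e. the formula expressing $x_{p^i}$ recursively) lets one cancel and arrive at
\[
y_{p^i} = x_{p^i}^p + p x_{p^{i+1}} + p \cdot f_{p^i}(x_1, \ldots, x_{p^i}),
\]
where $f_{p^i}$ collects the remaining terms, visibly not involving $x_{p^{i+1}}$ since that variable enters the right-hand side only through the single term $p^{i+1} x_{p^{i+1}}$.

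The main obstacle I anticipate is the bookkeeping of the divisibility by $p^{i+1}$: one must argue that after substituting the inductive expressions for the lower $y_{p^j}$, every term that is not one of the three distinguished ones contributes a multiple of $p^{i+1}$ to $p^i y_{p^i}$, equivalently a multiple of $p$ to $y_{p^i}$. This is where working over the torsion-free ring $\ZZ[x_1, x_p, \ldots]$ is essential, so that division by powers of $p$ in the ghost ring is unambiguous and the resulting Witt components are genuine integer polynomials; alternatively, one can invoke the standard fact (used implicitly in Theorem~\ref{construct Witt vectors}) that the universal polynomials defining $F$ have integer coefficients, and then the entire statement becomes an identity of integer polynomials that may be checked after base change to the ghost ring. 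Either way, once the divisibility is in hand the identification of $f_{p^i}$ and the verification that it omits $x_{p^{i+1}}$ are immediate. Since this is precisely \cite[Lemma~1.4(a)]{DK12}, I would in practice simply cite that reference, but the argument above is the one underlying it.
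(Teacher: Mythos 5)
The paper itself gives no proof, citing only \cite[Lemma~1.4(a)]{DK12}; your strategy (equate the ghost components $w_{p^n}(\underline{y}) = w_{p^{n+1}}(\underline{x})$ over the universal ring $\ZZ[x_1, x_p, \dots]$ and solve for $y_{p^i}$ by induction on $i$) is the standard one and surely what underlies that reference, and the base case $i=0$ is handled correctly. However, the inductive step has a genuine gap: the divisibility argument is not established, and the version you first sketch is wrong. You assert that terms of the form $p^j z^{p^{i-j}}$ with $j<i$ are divisible by $p^i$ ``since $p^j \cdot p^{i-j} = p^i$''; but for a general integer polynomial $z$, raising to a $p$-power does not produce extra factors of $p$, so $p^j z^{p^{i-j}}$ is divisible only by $p^j$. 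Your subsequent ``more carefully, one checks\dots'' and ``the cleanest route is to divide by $p^i$\dots'' never actually supply the missing congruence; knowing a priori that $y_{p^i}$ is an integer polynomial does not by itself tell you that $y_{p^i} - x_{p^i}^p - p x_{p^{i+1}}$ is divisible by $p$.

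Here is what closes the gap. From the level-$i$ ghost identity one obtains
\[
y_{p^i} = x_{p^i}^p + p\, x_{p^{i+1}} + \sum_{j=0}^{i-1} p^{j-i}\bigl(x_{p^j}^{p^{i+1-j}} - y_{p^j}^{p^{i-j}}\bigr),
\]
and one must show each summand lies in $p\,\ZZ[x_1,\dots,x_{p^i}]$. The point is to pair the matching $j$-terms from each side and use the inductive hypothesis $y_{p^j} \equiv x_{p^j}^p \pmod{p}$ for $j<i$ together with the elementary congruence: if $a \equiv b \pmod{p}$ in a commutative ring, then $a^{p^m} \equiv b^{p^m} \pmod{p^{m+1}}$, proved by iterating $a \equiv b \pmod{p^k} \Rightarrow a^p \equiv b^p \pmod{p^{k+1}}$. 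Taking $a = y_{p^j}$, $b = x_{p^j}^p$, $m = i-j$ gives $y_{p^j}^{p^{i-j}} \equiv x_{p^j}^{p^{i+1-j}} \pmod{p^{i-j+1}}$, so the $j$-th summand is divisible by $p^{(j-i)+(i-j+1)} = p$, as required. That the summand involves only $x_1,\dots,x_{p^i}$ follows directly from the inductive hypothesis, since $y_{p^j}$ with $j\le i-1$ depends only on $x_1,\dots,x_{p^{j+1}}$ and $j+1\le i$. With this congruence supplied, your outline is complete.
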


\begin{Def}
For $p$ a prime, a ring $A$ is \emph{Witt-perfect at $p$} if for all positive integers $n$,
the map $F: W_{p^{n+1}}(A) \to W_{p^n}(A)$ is surjective. By \cite[Theorem~3.2]{DK12}, it is equivalent to require that the $p$-th power map on $A/pA$ is surjective and that for every $a \in A$, there exists $b \in A$ such that 
\[
b^p \equiv pa \bmod p^2.
\]
One important consequence 
is that the condition that $A$ is Witt-perfect at $p$ depends only on $A/p^2 A$; for instance, this implies
that $A$ is Witt-perfect at $p$ if and only if $A \otimes_{\ZZ} \ZZ_{(p)}$ is Witt-perfect at $p$.  See the aforementioned theorem for various other equivalent formulations
of the Witt-perfect condition. 
\end{Def}

The preceding comments make the following lemma clear.  We will use it repeatedly in what follows.

\begin{Lem} \label{sequence of p-power roots}
Let $A$ be a ring which is Witt-perfect at a prime $p$. Then there exist $x_1,x_2,\dots \in A$
such that $x_1^p \equiv p \pmod{p^2}$ and $x_{n+1}^p \equiv x_n \pmod{p}$ for $n \geq 1$.
\end{Lem}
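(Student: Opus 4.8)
The plan is to extract the desired sequence directly from the Witt-perfect condition, using the characterization recalled in the definition of ``Witt-perfect at $p$'': namely that the $p$-th power map on $A/pA$ is surjective, and that for every $a \in A$ there exists $b \in A$ with $b^p \equiv pa \pmod{p^2}$. The first element $x_1$ is produced by applying the second property with $a = 1$: there exists $x_1 \in A$ with $x_1^p \equiv p \pmod{p^2}$. The remaining elements are produced by iterating the first property.

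More precisely, having constructed $x_1, \dots, x_n$ with the stated congruences, I would apply surjectivity of the $p$-th power map on $A/pA$ to the class of $x_n$: there exists $x_{n+1} \in A$ whose image in $A/pA$ is a $p$-th root of the image of $x_n$, i.e. $x_{n+1}^p \equiv x_n \pmod{p}$. This is literally just unwinding the definition, so the construction is immediate once one invokes \cite[Theorem~3.2]{DK12} as quoted; indeed the text already signals this by saying ``the preceding comments make the following lemma clear.''

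There is essentially no obstacle here; the only point requiring a moment's care is the very first step, where one wants $x_1^p \equiv p \pmod{p^2}$ rather than merely a $p$-th root condition modulo $p$. This is exactly the content of the ``$b^p \equiv pa \bmod p^2$'' clause with $a=1$, so it is handled by the same cited theorem. One could alternatively phrase the induction starting from $n \geq 1$ uniformly once $x_1$ is in hand, noting that the congruences $x_{n+1}^p \equiv x_n \pmod p$ for $n \geq 1$ only ever require surjectivity of Frobenius on $A/pA$. Thus the proof is a two-line appeal to the recalled equivalent formulation of the Witt-perfect condition, followed by an evident induction.

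\begin{proof}
By \cite[Theorem~3.2]{DK12}, since $A$ is Witt-perfect at $p$, the $p$-th power map on $A/pA$ is surjective and there exists $x_1 \in A$ with $x_1^p \equiv p \pmod{p^2}$. Given $x_1, \dots, x_n$ with the asserted congruences, surjectivity of the $p$-th power map on $A/pA$ applied to the class of $x_n$ yields $x_{n+1} \in A$ with $x_{n+1}^p \equiv x_n \pmod{p}$. Continuing in this fashion produces the desired sequence.
\end{proof}
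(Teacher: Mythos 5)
Your proof is correct and matches the intended argument exactly: the paper leaves this lemma as "clear" from the recalled characterization of Witt-perfectness in \cite[Theorem~3.2]{DK12}, and you spell out precisely that unwinding — the clause $b^p\equiv pa\pmod{p^2}$ with $a=1$ gives $x_1$, and surjectivity of the $p$-th power map on $A/pA$ iterated gives $x_2,x_3,\dots$.
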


\section{Almost purity}
\label{sec:almost purity}
We next set up the context for the general almost purity theorem stated in \cite{DK12},
giving a somewhat more detailed exposition than in \cite[Theorem~5.2]{DK12}.

\begin{Hyp}
Throughout \S\ref{sec:almost purity}, fix a prime number $p$
and let $A$ be a $p$-normal ring (see Definition~\ref{p-normal ring}).
\end{Hyp}

\begin{Def} \label{p-normal ring}
A ring $A$ is \emph{$p$-normal} if it is $p$-torsion-free and integrally closed in $A_p := A[p^{-1}]$.
By a \emph{$p$-ideal} in $A$, we will mean any ideal $I$ determining the $p$-adic topology on $A$:  this means $(p^m) \subseteq I$ and $I^n \subseteq (p)$ for some positive integers $m,n$.
\end{Def}

The following lemma should be thought of as providing explicit generators for the ideal of ``elements of $p$-adic valuation at least $\frac{m}{p^n}$'' for $\frac{m}{p^n} \leq 1$.

\begin{Lem} \label{power ideals}
Suppose that $A$ is Witt-perfect at $p$, and choose 
$x_1,x_2,\dots \in A$ as in Lemma~\ref{sequence of p-power roots}. Then
\[
\{x \in A: x^{p^n} \in p^m A \} = (x_n^m, p)A \qquad (n=1,2,\dots; m=1,2,\dots,p^n).
\]
\end{Lem}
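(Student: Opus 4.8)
The plan is to prove the two inclusions separately, after fixing notation. Write $B:=A/pA$, write $\overline a$ for the image in $B$ of $a\in A$, and let $\varphi\colon B\to B$ be the Frobenius $\overline a\mapsto\overline a^{\,p}$, which is surjective since $A$ is Witt-perfect. From Lemma~\ref{sequence of p-power roots} one checks (iterating the relations, and using that $a\equiv b\pmod p$ implies $a^p\equiv b^p\pmod{p^2}$) that $x_n^p\equiv x_{n-1}\pmod p$ and $x_n^{p^n}\equiv p\pmod{p^2}$; write $x_n^{p^n}=p(1+pw_n)$ with $w_n\in A$. In particular $\overline{x_n}^{\,p^n}=0$ in $B$ and, for $n\ge2$, $\overline{x_{n-1}}=\varphi(\overline{x_n})$, so $\overline{x_{n-1}}B\subseteq\overline{x_n}B$. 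Two remarks will be used repeatedly: first, the set $S:=\{x\in A:x^{p^n}\in p^mA\}$ is stable under multiplication by $A$ and contains $p$ (as $m\le p^n$), so $pA\subseteq S$; second, $(x_n^m,p)A$ is precisely the preimage under $A\to B$ of the principal ideal $\overline{x_n}^{\,m}B$. For the inclusion $(x_n^m,p)A\subseteq S$ I would expand $(ax_n^m+bp)^{p^n}$ by the binomial theorem: the top term $a^{p^n}(x_n^{p^n})^m$ lies in $p^mA$ since $x_n^{p^n}\in pA$, the bottom term $(bp)^{p^n}$ lies in $p^mA$ since $p^n\ge m$, and for $0<k<p^n$ the term $\binom{p^n}{k}(ax_n^m)^k(bp)^{p^n-k}$ has $p$-adic order at least $(n-v_p(k))+(p^n-k)+\lfloor mk/p^n\rfloor$, which a short estimate (using $k<p^n$, $v_p(k)\le n-1$, and $\lfloor t\rfloor>t-1$) shows is greater than $m$.

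For the reverse inclusion, by the second remark it suffices to show that any $x$ with $x^{p^n}\in p^mA$ has image $\overline x\in\overline{x_n}^{\,m}B$. The crucial special case is $m=1$, namely $\ker(\varphi^n)=\overline{x_n}B$ (equivalently $\{x:x^{p^n}\in pA\}=(x_n,p)A$). Here the substantive point---and, I expect, the only place where the Witt-perfect and $p$-normality hypotheses are really needed---is the subcase $n=1$, that $\ker\varphi=\overline{x_1}B$; this is in essence one of the equivalent characterizations of Witt-perfect rings, and I would simply invoke \cite[Theorem~3.2]{DK12} for it. The general $n$ then follows by induction on $n$: if $\overline t^{\,p^n}=0$, then $\varphi^{n-1}(\overline t)\in\ker\varphi=\overline{x_1}B=\varphi^{n-1}(\overline{x_n})B$, so surjectivity of $\varphi^{n-1}$ lets us write $\varphi^{n-1}(\overline t)=\varphi^{n-1}(\overline{x_n}\,\overline d)$ for some $d$; hence $\overline t-\overline{x_n}\,\overline d\in\ker(\varphi^{n-1})=\overline{x_{n-1}}B\subseteq\overline{x_n}B$, so $\overline t\in\overline{x_n}B$.

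Granting the $m=1$ identity, I would finish the proof of $\subseteq$ by a ``peeling'' induction on $m$: given $x$ with $x^{p^n}\in p^mA$, construct $b_j,c_j\in A$ with $x=x_n^jb_j+pc_j$ for $j=1,\dots,m$. For $j=1$ this is the $m=1$ case applied to $x$ (since $x^{p^n}\in pA$). Given the expression at a stage $j<m$, expand
\[
x^{p^n}=p^j(1+pw_n)^j\,b_j^{p^n}+\Sigma_j+p^{p^n}c_j^{p^n},
\]
where $\Sigma_j$ gathers the terms $\binom{p^n}{k}(x_n^jb_j)^k(pc_j)^{p^n-k}$ with $0<k<p^n$. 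The estimate from the first paragraph, with $j$ in place of $m$, gives $\Sigma_j\in p^{j+1}A$, and $p^{p^n}c_j^{p^n}\in p^{j+1}A$ since $j+1\le m\le p^n$; as $x^{p^n}\in p^mA\subseteq p^{j+1}A$ and $A$ is $p$-torsion-free, cancelling $p^j$ yields $(1+pw_n)^jb_j^{p^n}\in pA$, hence $\overline{b_j}^{\,p^n}=0$, hence $\overline{b_j}\in\ker(\varphi^n)=\overline{x_n}B$ by the $m=1$ case. Writing $b_j=x_nb_{j+1}+pc'$ and substituting gives the expression at stage $j+1$; at $j=m$ we obtain $x\in(x_n^m,p)A$. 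The main obstacle is thus concentrated in the single case $m=n=1$, which carries the arithmetic content of the Witt-perfect (and $p$-normal) hypotheses; the remainder is binomial bookkeeping, the reduction modulo $p$, and the peeling recursion.
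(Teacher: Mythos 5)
Your argument takes a genuinely different route from the paper's, but it has a gap at precisely the point you flag as the crux. The paper runs a \emph{descending} induction on $m$ from $m=p^n$ down to $m=1$: the base case $\{x\in A:x^{p^n}\in p^{p^n}A\}=pA$ is exactly where $p$-normality enters (one observes that $x/p$ is integral over $A$), and each descending step uses Witt-perfectness to choose $b$ with $b^{p^n}\equiv -a\pmod{pA}$ and to replace $x$ by $x+x_n^mb$. You instead reduce the hard inclusion via a peeling (ascending) induction on $m$ and an inner induction on $n$, both collapsing onto the single statement $\ker\varphi=\overline{x_1}B$. The peeling step and the $n$-induction are correct as written, and the easy inclusion by binomial estimates is sound (one reads ``$p$-adic order $\ge \ell$'' as ``lies in $p^\ell A$'' and uses $x_n^{p^n}\in pA$ to place $x_n^{mk}$ in $p^{\lfloor mk/p^n\rfloor}A$).

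The gap is the claim that $\ker\varphi=\overline{x_1}B$ --- equivalently the case $m=n=1$, i.e.\ $\{x:x^p\in pA\}\subseteq(x_1,p)A$ --- follows from \cite[Theorem~3.2]{DK12}. As paraphrased in this paper, that theorem characterizes Witt-perfectness by (i) surjectivity of $\varphi$ on $A/pA$ and (ii) existence, for each $a\in A$, of $b$ with $b^p\equiv pa\pmod{p^2}$; it says nothing about the \emph{kernel} of $\varphi$ being principal or generated by $\overline{x_1}$. It also carries no $p$-normality hypothesis, whereas the reverse inclusion at $m=n=1$ genuinely needs $p$-normality (this is precisely what enters the paper's base case). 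You have therefore taken the hardest instance of the lemma --- the one concentrating all the arithmetic content, and which in the paper is reached \emph{last}, as the endpoint $m=1$ of the descending chain --- and treated it as the unproved \emph{base} of your inductions. Closing this hole would require essentially reproducing the paper's descending induction (say for $n=1$, from $m=p$ down to $m=1$), after which the peeling step and the inner $n$-induction become redundant detours, since that descending induction already yields every case directly.
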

\begin{proof}
We proceed by descending induction on $m$. For the base case $m=p^n$, we observe that
$\{x \in A: x^{p^n} \in p^{p^n} A \} = pA$ because $A$ is integrally closed in $A_p$ by hypothesis.
Given the claim for $m+1$, if $x \in A$ satisfies $x^{p^n} \in p^m A$, we may write
$x^{p^n} = p^m a$ and then choose $b \in A$ with $b^{p^n} \equiv - a \pmod{pA}$. We then have
$x^{p^n} \equiv - (x_n^m b)^{p^n} \pmod{p^{m+1} A}$. Now write
\[
(x + x_n^m b)^{p^n}  = x^{p^n} + (x_n^m b)^{p^n}  + \sum_{i=1}^{p^n-1} \binom{p^n}{i} x^i (x_n^m b)^{p^n-i}
\]
and observe that $(x^i (x_n^m b)^{p^n-i})^{p^n} \in p^{mp^n} A$, so by repeated application of the base case
we have $x^i (x_n^m b)^{p^n-i} \in p^m A$. It follows that $(x + x_n^m b)^{p^n} \in p^{m+1} A$,
so by the induction hypothesis $x + x_n^m b \in (x_n^{m+1}, p)A$. This completes the induction. 
\end{proof}
\begin{Cor}
With notation as in Lemma~\ref{power ideals},
the ideals $(x_n, p)$ in $A$ form a cofinal sequence among $p$-ideals.
Moreover, if $A$ is $p$-adically complete, the ideals $(x_n)$ and $(x_n,p)$ coincide.
\end{Cor}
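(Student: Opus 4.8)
The plan is to deduce the corollary directly from Lemma~\ref{power ideals} by extracting the relevant instances and then chaining inclusions. First I would establish cofinality of the sequence $(x_n, p)$ among $p$-ideals. On one hand, each $(x_n, p)$ is itself a $p$-ideal: it contains $p$ (so $(p) \subseteq (x_n,p)$), and applying Lemma~\ref{power ideals} with $m = 1$ gives $\{x \in A : x^{p^n} \in pA\} = (x_n, p)A$, so in particular $(x_n, p)^{p^n} \subseteq (p)$ since every element of $(x_n,p)$ raised to the $p^n$ lies in $pA$ (expand the binomial: the pure power $x_n^{p^n}$ is $\equiv p \pmod{p^2}$ up to a unit, hence in $(p)$, the pure power $p^{p^n}$ is in $(p)$, and all cross terms carry a factor of $p$). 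Thus each $(x_n,p)$ is a genuine $p$-ideal.

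For the cofinality itself, let $I$ be an arbitrary $p$-ideal, so $(p^M) \subseteq I$ and $I^N \subseteq (p)$ for some positive integers $M, N$. I want to find $n$ with $(x_n, p) \subseteq I$. Since $I^N \subseteq (p)$, every $y \in I$ satisfies $y^N \in pA$, hence $y^{p^n} \in pA$ for any $p^n \geq N$; but I need the reverse containment, so the argument should instead go: choose $n$ large enough that $p^{p^n} \leq$ ... — more carefully, I would use that $x_n^{p^n} \equiv p \pmod{p^2}$, so $x_n^{p^n} \in (p) \subseteq$ (nothing yet). The clean route is: by Lemma~\ref{power ideals} with $m = p^n$ already implicit, $x_n$ has ``$p$-adic valuation $\tfrac{1}{p^n}$'' in the sense that $x_n^{p^n} \in pA \setminus p^2A$ up to the integral-closure normalization. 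Given $I \supseteq (p^M)$, I claim $(x_n^{Mp^n}, p) \subseteq I$ is not quite what I want either; rather, since $x_n^{Mp^n}$ and $p^M$ differ by a unit times lower-order terms and $p^M \in I$, iterating Lemma~\ref{power ideals} shows $x_n^{Mp^n + \text{something}} \in (p^{M}) \subseteq I$, and then a descending induction (exactly the mechanism of Lemma~\ref{power ideals}) pushes this down to $x_n^k \in I$ for suitable $k$; finally choosing $n' \gg n$ so that $x_{n'}^{p^{n'-n}} = x_n$ (up to adjusting by Lemma~\ref{sequence of p-power roots}) and $p^{n'-n} \geq k$ yields $x_{n'} \in I$ and hence $(x_{n'}, p) \subseteq I$. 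The main obstacle is bookkeeping the exponents in this last step correctly, since the $x_n$ are only $p$-power-compatible modulo $p$, not on the nose; I expect to repeatedly invoke Lemma~\ref{power ideals} to absorb the error terms, just as in its own proof.

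For the second assertion, suppose $A$ is $p$-adically complete. Trivially $(x_n) \subseteq (x_n, p)$, so I only need $p \in (x_n)$. Here I would use that $x_n^{p^n} = pa$ for some unit-adjusted $a$, or more simply: by the cofinality just proved, $(x_{n+k}, p)$ shrinks, and since $x_{n+k}^{p^k} \equiv x_n \pmod{p}$, one gets $x_n \in (x_{n+k}) + (p) = (x_{n+k}) + p A$. Iterating, $x_n \in \bigcap_k \big((x_{n+k}) + p^? A\big)$ — no; the correct approach is to write $p = x_1^p + p^2 c$ for some $c \in A$ (from $x_1^p \equiv p \bmod p^2$), so $p(1 - pc) = x_1^p$, and since $1 - pc$ is a unit in the $p$-adically complete ring $A$ (its inverse is the convergent series $\sum (pc)^i$), we get $p \in (x_1^p) \subseteq (x_1)$. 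For general $n$, similarly $p = x_1^p(1-pc)^{-1}$ and $x_1 \equiv x_2^p \equiv \cdots \equiv x_n^{p^{n-1}} \pmod p$; writing $x_1 = x_n^{p^{n-1}} + p d$ and substituting, $p = (x_n^{p^{n-1}} + pd)^p (1-pc)^{-1}$, whose expansion puts $p$ into $(x_n^{p^{n-1}}) + (p x_n, p^2 d \cdot(\ldots))$; solving for $p$ using completeness again (the terms with a factor of $p$ on the right can be moved over and inverted) gives $p \in (x_n)$. The only delicate point is confirming that the relevant element $1 + (\text{multiple of } p)$ is invertible, which is exactly where $p$-adic completeness (hence separatedness) is used; everything else is the geometric series. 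Hence $(x_n) = (x_n, p)$ in the complete case, completing the proof.
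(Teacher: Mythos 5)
Your proof of the cofinality statement goes in the wrong direction, and this is a genuine gap rather than a matter of presentation. You set out to show that every $p$-ideal $I$ \emph{contains} some $(x_n,p)$, i.e.\ $(x_n,p)\subseteq I$. That statement is false: in any Witt-perfect $p$-adically complete $A$ (e.g.\ $A=\mathcal{O}_{\CC_p}$), the ideal $I=(p^2)$ is a $p$-ideal, yet $p\in(x_n,p)$ for all $n$ while $p\notin(p^2)$, so no $(x_n,p)$ is contained in $I$. The thrashing in your second paragraph --- ``is not quite what I want either,'' ``more carefully,'' repeated restarts --- is a symptom of trying to prove something that is not true. What the paper needs (check how the corollary is invoked in the proof of Theorem~\ref{almost purity part 1}(b) and (c)) is the opposite containment: every $p$-ideal $I$ is \emph{contained in} some $(x_n,p)$, so that a module killed by every $(x_n,p)$ is automatically killed by every $p$-ideal. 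With the direction fixed, the argument is a one-liner and is the statement you actually proved en route and then discarded: if $I^N\subseteq (p)$, choose $n$ with $p^n\geq N$; then for any $y\in I$ we have $y^{p^n}=y^{p^n-N}y^N\in (p)$, so by Lemma~\ref{power ideals} with $m=1$ we get $y\in(x_n,p)$, i.e.\ $I\subseteq(x_n,p)$. Your verification that each $(x_n,p)$ is itself a $p$-ideal is fine (that part of the binomial argument, using $x_n^{p^n}\equiv p\pmod{p^2}$, is correct and needed).

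For the second assertion your core idea is right, but it can be stated without the hedging: you already noted $x_n^{p^n}\equiv p\pmod{p^2}$, so $x_n^{p^n}=p(1+pc)$ for some $c\in A$, and since $A$ is $p$-adically complete $1+pc$ is a unit with inverse $\sum_{i\geq 0}(-pc)^i$; hence $p=x_n^{p^n}(1+pc)^{-1}\in(x_n)$ directly for every $n$, with no need to pass through $x_1$ and propagate along the chain of approximate $p$-th roots. As written, your detour through ``$x_1\equiv x_n^{p^{n-1}}\pmod p$'' and the attempt to ``solve for $p$'' introduces avoidable bookkeeping and never quite closes.
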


\begin{Lem} \label{p-integral modulo nilradical}
Let $I$ be the nilradical of $A$. Then $I = IA_p$ and this is the nilradical of $A_p$.  Moreover,
the map $A_p/I \to (A/I)_p$ is bijective
and $A/I$ is $p$-normal.
\end{Lem}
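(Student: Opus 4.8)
The plan is to reduce everything to the two defining properties of a $p$-normal ring --- that $A$ is $p$-torsion-free (so that $A$ embeds in $A_p$) and integrally closed in $A_p$ --- together with the standard fact that forming the nilradical and forming quotients both commute with localization. The key preliminary observation I would establish is that $I$ is stable under division by $p$: if $x \in I$ with $x^n = 0$, then $x/p \in A_p$ satisfies $(x/p)^n = 0$, so $x/p$ is a root of the monic polynomial $T^n \in A[T]$ and hence lies in $A$ by integral closedness; being nilpotent, it then lies in $I$. Consequently the $A_p$-submodule $IA_p \subseteq A_p$ is just $I$ itself, which gives the first assertion; and since the nilradical of $A_p$ equals $(\text{nilradical of }A)[p^{-1}] = I[p^{-1}] = IA_p$, it equals $I$.

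For the bijectivity of $A_p/I \to (A/I)_p$ I would use exactness of localization to identify $(A/I)_p = A_p/(IA_p) = A_p/I$, checking that this identification is indeed the natural map; alternatively, one computes directly that the kernel of $A_p \to (A/I)_p$ consists of the fractions $a/p^j$ with $p^k a \in I$ for some $k$, and $p^k a \in I$ forces $a \in I$ because $A$ is $p$-torsion-free, while surjectivity is immediate.

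To see that $A/I$ is $p$-normal I would first check it is $p$-torsion-free: if $pa \in I$ then $p^n a^n = 0$ for some $n$, so $a^n = 0$ by $p$-torsion-freeness of $A$, hence $a \in I$. This lets $A/I$ embed into $(A/I)_p$, which I would identify with $A_p/I$ via the previous step; the image of $A/I$ there is the image of $A$, namely $A/(A\cap I) = A/I$. For integral closedness, given $\xi \in A_p/I$ integral over $A/I$, I would lift $\xi$ to $\alpha \in A_p$ and lift a monic relation to a monic $P \in A[T]$, so that $P(\alpha)$ lies in the nilradical of $A_p$; raising to a suitable power, $P(T)^m \in A[T]$ is monic and has $\alpha$ as a root, so $\alpha \in A$ by $p$-normality of $A$, whence $\xi \in A/I$.

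I do not expect a serious obstacle here; the one point needing care is the last step, where integral closedness of $A$ must be transported to $A/I$ by first clearing the nilpotent discrepancy $P(\alpha)$ in the monic relation (raising it to a power that annihilates it) before invoking the hypothesis on $A$, and where one must not forget to verify $p$-torsion-freeness of $A/I$ so that the definition of $p$-normal applies to it at all.
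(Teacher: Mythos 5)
Your proposal is correct and follows essentially the same route as the paper: both arguments rest on observing that nilpotents of $A_p$ are integral over $A$ (hence lie in $A$ and in $I$), and both establish integral closedness of $A/I$ by lifting to $A_p$, lifting the monic relation, and using that the error term is nilpotent so a power of the lifted relation is a genuine monic relation over $A$. You merely spell out a few steps the paper leaves as ``clear'' (such as $p$-torsion-freeness of $A/I$ and raising the nilpotent $Q(y)$ to a power).
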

\begin{proof}
Note first that every element of $IA_p$ is integral over $\ZZ$, and so belongs to $A$ because $A$
is $p$-normal. It is clear that $IA_p$ is the nilradical of $A_p$, $A/I$ is $p$-torsion-free,
and $A_p/I \to (A/I)_p$ is bijective. It remains to check that $A/I$ is integrally closed in $(A/I)_p \cong A_p/I$;
for this, choose $x \in A_p/I$ to be a root of the monic polynomial $P(T) \in (A/I)[T]$.
Lift $x$ to $y \in A_p$ and lift $P(T)$ to the monic polynomial $Q(T) \in A[T]$.
We then have $Q(y) \in I$, so $y$ is integral over $A$; therefore $y \in A$ and $x \in A/I$, as desired.
\end{proof}

\begin{Lem} \label{trace of integral extension}
Let $B$ be the integral closure of $A$ in a finite \'etale extension
of $A_p$. 
\begin{enumerate}
\item[(a)]
The map $\Trace_{B_p/A_p}$ carries $B$ into $A$.
\item[(b)]
For any finite set $b_1,\dots,b_n$ of elements of $B$ which generates $B_p$ as an $A_p$-module,
the quotient $B/(Ab_1 + \cdots + Ab_n)$ is killed by some power of $p$.
\end{enumerate}
\end{Lem}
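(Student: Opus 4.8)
The plan is to exploit the standard fact that a finite \'etale algebra over a field (or more generally over a reduced ring) has an invertible trace pairing, so that integrality controls denominators. First I would reduce part (a) to the statement that for $b \in B$, the element $\Trace_{B_p/A_p}(b)$ is integral over $A$: since $A$ is $p$-normal and $\Trace_{B_p/A_p}(b)$ lies in $A_p$ by construction, integrality over $A$ forces it to lie in $A$. To see the integrality, note that $B$ is integral over $A$ by definition, hence $b$ satisfies a monic polynomial over $A$; the trace of $b$ (computed in the finite \'etale, hence finite projective, $A_p$-algebra $B_p$) is a sum of Galois conjugates of $b$ in a suitable extension, and each conjugate is a root of the same monic polynomial, so is integral over $A$. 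A sum of elements integral over $A$ is integral over $A$, giving (a). (One may instead argue via a normal basis or via passing to a faithfully flat extension of $A_p$ splitting $B_p$, then descending.)

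For part (b), the key input is that the trace pairing on $B_p$ over $A_p$ is a perfect pairing of finite projective $A_p$-modules, because $B_p/A_p$ is finite \'etale. Concretely, given $b_1,\dots,b_n \in B$ generating $B_p$ as an $A_p$-module, I would consider the $A_p$-linear dual basis obstruction: the Gram matrix $M = (\Trace_{B_p/A_p}(b_ib_j))_{i,j}$ has entries in $A$ by part (a), and its determinant (a suitable discriminant) is a unit in $A_p$ — more carefully, since the $b_i$ merely generate rather than form a basis, one passes to a subset forming a basis of $B_p$ after inverting $p$, or argues with the whole generating set using that $B_p$ is projective and the pairing nondegenerate. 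In any case, there is $d \in A$ with $d = p^k u$ for some unit $u$ of... no: rather, one shows there exists a nonzero $d \in A_p$, which one can take of the form (unit)$\cdot p^{-k}$ after clearing denominators, such that $d \cdot B \subseteq Ab_1 + \cdots + Ab_n$. Explicitly: for $b \in B$, the coordinates of $b$ with respect to a basis among the $b_i$ are obtained by applying $M^{-1}$ to the vector $(\Trace_{B_p/A_p}(b\,b_j))_j$, whose entries lie in $A$ by part (a); since $M^{-1}$ has entries in $A[\det(M)^{-1}]$ and $\det(M)$ divides a power of $p$ in $A_p$ (as it is a $p$-adic unit), we get $p^k b \in Ab_1 + \cdots + Ab_n$ for a fixed $k$ independent of $b$. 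This is exactly the assertion that $B/(Ab_1+\cdots+Ab_n)$ is killed by $p^k$.

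The step I expect to be the main obstacle is handling the fact that $b_1,\dots,b_n$ are only assumed to \emph{generate} $B_p$ over $A_p$, not to be a basis, so the Gram matrix need not be square or invertible on the nose; one must either extract a basis (which exists locally on $\Spec A_p$, and one must globalize, using that $A_p$ may be far from local) or work with the full $n\times n$ matrix and use that the pairing restricted to the span of the $b_i$ is still nondegenerate because that span is all of $B_p$. A clean way around this is to first establish the claim when $B_p$ is free over $A_p$ with basis a subset of the $b_i$'s, then reduce the general case by a localization/patching argument on $\Spec A_p$, or simply by replacing $B$ by the $A$-submodule it generates and noting the relevant determinants still lie in $A$ and remain $p$-adic units. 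A second, more minor subtlety is to verify that $\det(M)$ is indeed a unit in $A_p$: this is the standard fact that for finite \'etale $B_p/A_p$ the discriminant is a unit, which one can cite or prove by base-changing to residue fields.
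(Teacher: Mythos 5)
Your plan for part (a) contains a real gap. You say the trace of $b$ is ``a sum of Galois conjugates of $b$ in a suitable extension,'' but that phrasing presupposes that $A_p$ is a domain (or at least that one can make sense of Galois conjugates of $b$ over $A_p$), which is false for a general $p$-normal ring $A$. The paper spends the bulk of its proof of (a) on exactly the reduction you skip: it first reduces to $A$ noetherian (writing $A$ as a union of finitely generated subrings), then replaces $A$ by its reduced quotient (using Lemma~\ref{p-integral modulo nilradical} to check this preserves $p$-normality), then uses that a reduced noetherian ring with $p$ a nonzerodivisor is a subring of a finite product of domains, and only then invokes the Galois-closure/conjugates argument. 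You do note a possible alternative route (a normal basis or a faithfully flat extension splitting $B_p$ followed by descent), and that may well work, but as written the core step of your argument does not apply to the ring you are given.

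Your plan for part (b) is where the gap is more serious, and you correctly diagnose it yourself: the Gram matrix $M=(\Trace_{B_p/A_p}(b_ib_j))$ need not be square or invertible because the $b_i$ are only assumed to generate $B_p$, not to form a basis, and indeed $B_p$ is only finite \emph{projective} over $A_p$, which need not be free or even locally free with a basis extracted from the $b_i$ in any globally patchable way. You propose ``extract a basis locally and globalize'' or ``restrict the pairing to the span,'' but neither is carried through, and the first in particular runs into the issue that $A_p$ need not be noetherian or have any finiteness that would make patching tractable. The paper's proof sidesteps this entirely: it takes $F$ free over $A$ with basis mapping to the $b_i$, picks an $A_p$-linear splitting $\iota$ of the surjection $F_p\to B_p$ (using only projectivity of $B_p$), and then uses perfectness of the trace pairing to write each coordinate functional $e_i^*\circ\iota$ on $B_p$ as $b\mapsto\Trace_{B_p/A_p}(bt_i)$ for some $t_i\in B_p$; clearing the $p$-denominator in the $t_i$ and applying part (a) gives $p^m b\in Ab_1+\cdots+Ab_n$ directly, with no Gram matrix, no choice of basis, and no determinant. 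I would recommend abandoning the Gram-matrix framing in favor of this splitting argument.
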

\begin{proof}
To check (a), we may use the fact that every ring is the union of subrings which are finitely generated
over $\ZZ$ to reduce to the case where $A$ is noetherian.
We may then use Lemma~\ref{p-integral modulo nilradical} to replace $A$ with its reduced quotient.
Since now $A$ is noetherian and reduced, it has finitely many minimal prime ideals, none of which contain
$p$ because the latter is not a zero-divisor. We may thus replace $A$ with the product of the quotients
by the minimal prime ideals, then by an individual factor of the product. At this point, we need
only treat the case where $A$ is an integral domain; we may also assume that $B$ is an integral domain.
Let $K$ and $L$ be the respective fraction fields of $A$ and $B$.
Let $M$ be a Galois closure of $L$ over $K$.
For any $x \in B$, the conjugates of $x$ in $M$ satisfy every integral relation over $A$ satisfied by $x$
itself, and are thus integral over $A$. Consequently, any sum of Galois conjugates of $x$ is also integral over $A$, but this implies that $\Trace_{B_p/A_p}(x)$ is integral over $A$. Hence $\Trace_{B_p/A_p}(x)$ is in $A$, which proves the claim.

To check (b),
let $F$ be the free module over $A$ generated by $e_1,\dots,e_n$.
Let $F^* := \Hom_A(F,A)$ be the dual module and let $e_1^*, \dots, e_n^*$ be the corresponding dual basis of $F^*$. We then have the identity
\[
x = \sum_{i=1}^n e_i^*(x) e_i \qquad (x \in F_p).
\]
Let $\pi: F \to B$ be the $A$-module homomorphism taking $e_i$ to $b_i$,
and choose an $A_p$-linear splitting $\iota: B_p \to F_p$ of $\pi$. We then have
\[
b = \sum_{i=1}^n e_i^*(\iota(b)) b_i \qquad (b \in B).
\]
Since $B_p$ is finite \'etale over $A_p$, the trace pairing on $B_p$ is perfect; consequently,
there exists $t_i \in B_p$ such that
$e_i^*(\iota(b)) = \Trace_{B_p/A_p}(b t_i)$ for all $b \in B_p$. Choose a nonnegative integer $m$ for which
$p^m t_i \in B$ for $i=1,\dots,n$; then
\[
p^m b = \sum_{i=1}^n \Trace_{B_p/A_p}(b p^m t_i) b_i \qquad (b \in B).
\]
By (a), the quantities $\Trace_{B_p/A_p}(b p^m t_i)$ belong to $A$;
it follows that $p^m$ kills $B/(Ab_1 + \cdots + Ab_n)$.
\end{proof}

\begin{Lem} \label{almost splitting lemma}
Let $B$ be an $A$-module and fix an $A$-module endomorphism $t$ of $B$.
Suppose that for some $A$-module homomorphism
$\pi: F \to B$ with $F$ finite free, there exists an $A$-module homomorphism $\iota: B \to F$ such that 
$\pi \circ \iota = t$. Then for any other $A$-module homomorphism $\pi': F' \to B$ with $F'$ finite free
and with $\image(\pi) \subseteq \image(\pi')$, there exists an $A$-module homomorphism $\iota': B \to F'$
such that $\pi' \circ \iota' = t$.
\end{Lem}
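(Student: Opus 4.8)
The plan is to exploit the freeness of $F$ to lift the map $\pi$ through $\pi'$, then compose with the given splitting data. First I would choose a basis $e_1,\dots,e_n$ of $F$; since $F'$ is finite free and $\image(\pi) \subseteq \image(\pi')$, each $\pi(e_j)$ lies in $\image(\pi')$, so I can pick preimages and thereby build an $A$-module homomorphism $\sigma \colon F \to F'$ with $\pi' \circ \sigma = \pi$. (Here the freeness of $F$ is what makes the lift possible: we only need to lift finitely many basis vectors, with no compatibility conditions to check.) Then I would simply set $\iota' := \sigma \circ \iota \colon B \to F'$. This is an $A$-module homomorphism, and
\[
\pi' \circ \iota' = \pi' \circ \sigma \circ \iota = \pi \circ \iota = t,
\]
which is exactly the desired identity.

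There is essentially no serious obstacle here; the only point requiring care is that the lift $\sigma$ exists as stated, and for that one does need $F$ to be \emph{free} (projectivity would also suffice, but free is what is assumed) so that the universal property applies to produce $\sigma$ from an arbitrary choice of images in $F'$. No hypothesis on $F'$ beyond being a module is actually needed for the lifting step, and no surjectivity of $\pi'$ is required either—only the containment of images. I do not expect to invoke any earlier result from the paper; this is a purely formal diagram-chase, and I would present it in precisely that spirit.
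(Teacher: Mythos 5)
Your proof is correct and matches the paper's argument exactly: both construct the lift $\sigma\colon F\to F'$ using freeness of $F$ together with $\image(\pi)\subseteq\image(\pi')$, and then take $\iota' = \sigma\circ\iota$. Nothing to add.
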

\begin{proof}
Since $F$ is free and $\image(\pi) \subseteq \image(\pi')$, we may construct a homomorphism $F \to F'$ such that the composition $F \to F' \to B$
coincides with $\pi$. We then compose $B \to F \to F'$ to get the map $\iota'$.
\end{proof}

\begin{Lem} \label{descent for finite projective}
Let $B$ be the integral closure of $A$ in a finite \'etale extension
of $A_p$. Let $A',B'$ be the $p$-adic completions of $A,B$. Choose $a \in A$.
\begin{enumerate}
\item[(a)]
Suppose that there exist a finite free $A'$-module $F'$ and an $A'$-module homomorphism $F' \to B'$ with the following property: 
we can find
another $A'$-module homomorphism $B' \to F'$ such that the composition $B' \to F' \to B'$ is multiplication
by $a$. Then this can also be achieved with $F' \to B'$ being the base extension of an $A$-module
homomorphism $F \to B$, with $F$ a finite free $A$-module.  Moreover, it can be ensured that the cokernel of $F \to B$ is killed by a power of $p$.
\item[(b)]
Let $F \to B$ be an $A$-module homomorphism satisfying the conclusion of (a).
Then there exists an $A$-module homomorphism $B \to F$ such that the composition $B \to F \to B$ is multiplication by
$a$.
\end{enumerate}
\end{Lem}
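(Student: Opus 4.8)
The plan is to prove both parts by descending data from the $p$-adic completion, using the trace form of the finite \'etale extension together with the two elementary Lemmas~\ref{trace of integral extension} and~\ref{almost splitting lemma}.

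\emph{Part (a).} The first step is to build a convenient model over $A$ itself. By Lemma~\ref{trace of integral extension}(b), choose $b_1,\dots,b_n \in B$ generating $B_p$ as an $A_p$-module and with $B/(Ab_1+\cdots+Ab_n)$ killed by $p^N$ for some $N$, and let $F_0 := A^n \to B$ send the $i$-th basis vector to $b_i$; then the cokernel of $F_0 \to B$ is killed by $p^N$, and since $p^N B$ lies in the image of $F_0$, the image of $F_0 \otimes_A A' \to B'$ contains $p^N B'$. Next, the image of the given map $F' \to B'$ is a finitely generated $A'$-submodule of $B'$; choose finitely many generators $c_1,\dots,c_r$ of it, approximate each $c_j$ to within $p^N B'$ by an element $c_j' \in B$ (possible since $B$ is dense in $B' = \widehat{B}$), and adjoin these approximants as extra free generators to obtain an $A$-module homomorphism $F := F_0 \oplus A^r \to B$. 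Its cokernel is a quotient of that of $F_0 \to B$, hence killed by $p^N$; and since each $c_j = c_j' + (c_j - c_j')$ with $c_j - c_j' \in p^N B' \subseteq \image(F_0 \otimes_A A' \to B')$, the image of $F \otimes_A A' \to B'$ contains every $c_j$, hence contains $\image(F' \to B')$. Finally, apply Lemma~\ref{almost splitting lemma} over the ring $A'$, taking the endomorphism $t$ there to be multiplication by $a$ on $B'$, with $\pi$ the map $F' \to B'$ (which has a section up to $t$ by hypothesis) and $\pi'$ the map $F \otimes_A A' \to B'$: this produces a section of $F \otimes_A A' \to B'$ up to $a$, so $F \to B$ has the required properties.

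\emph{Part (b).} Write $\pi \colon F \to B$, $e_i \mapsto b_i$, and enlarge the exponent killing the cokernel so that $p^m B \subseteq \image(\pi)$. The key tool is the trace section: since $B_p$ is finite \'etale over $A_p$, the trace pairing on $B_p$ is perfect, so there are elements $t_1,\dots,t_n \in B_p$ with $x = \sum_i \Trace_{B_p/A_p}(x t_i)\, b_i$ for all $x \in B_p$; enlarging $m$ so that also $p^m t_i \in B$, and using that $\Trace_{B_p/A_p}$ carries $B$ into $A$ (Lemma~\ref{trace of integral extension}(a)), we obtain an $A$-module homomorphism $\rho \colon B \to F$, $b \mapsto (\Trace_{B_p/A_p}(b\, p^m t_i))_i$, with $\pi \circ \rho = p^m\, \mathrm{id}_B$. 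It now suffices to produce an $A$-module homomorphism $\sigma_1 \colon B \to F$ with $(\pi \circ \sigma_1 - a\,\mathrm{id}_B)(B) \subseteq p^m B$: for then, writing $\pi \circ \sigma_1 - a\,\mathrm{id}_B = p^m g$ with $g \colon B \to B$ (legitimate since $B$ is $p$-torsion-free) and using $\pi \circ (\rho \circ g) = p^m g$, one checks that $\sigma := \sigma_1 - \rho \circ g$ satisfies $\pi \circ \sigma = a\,\mathrm{id}_B$ exactly. The existence of $\sigma_1$ is where the hypothesis enters: the section of $F \otimes_A A' \to B'$ up to $a$ furnished by the conclusion of part~(a), restricted along $B \to B'$ and reduced modulo $p^m$, is an $A$-linear map $B \to F/p^m F$ whose composite with $\pi \bmod p^m$ is multiplication by $a$ on $B/p^m B$; one must lift this, or rather some $A$-linear map $B \to F/p^m F$ with the same property, to an honest $A$-linear map $B \to F$.

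I expect this last lifting step to be the main obstacle. The reduction map $\Hom_A(B,F) \to \Hom_A(B, F/p^m F)$ need not be surjective --- its cokernel injects into $\operatorname{Ext}^1_A(B,A)^{\oplus n}$ --- so one cannot simply lift an arbitrary element, and the point is to show that the specific element we need (or a suitable modification of it within the set of mod-$p^m$ sections up to $a$) does lie in the image. This should follow from the structure of $B$: by construction $B$ differs from a finitely generated $A$-module only by the $p$-power-torsion module $\operatorname{coker}(\pi)$, which bounds the relevant $\operatorname{Ext}$ groups by a fixed power of $p$; combining this with the fact that the obstruction class dies after base change to the completion $A'$ should force it to vanish. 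Once $\sigma_1$ is found, the rest of part~(b) is the short computation above, and part~(a) is a matter of assembling Lemmas~\ref{trace of integral extension}(b) and~\ref{almost splitting lemma} as described.
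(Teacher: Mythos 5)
Your part~(a) is essentially the paper's argument: build a ``large enough'' finite free $A$-module mapping to $B$ (using Lemma~\ref{trace of integral extension}(b) to get $p$-power control and approximation in $B'$ to capture the image of $F'$), then invoke Lemma~\ref{almost splitting lemma}. The cosmetic difference --- you approximate generators of $\image(F' \to B')$ while the paper approximates images of basis elements of $F'$ --- is immaterial.

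Your part~(b) has a genuine gap, which you yourself flag. The correction trick $\sigma = \sigma_1 - \rho\circ g$ is fine once $\sigma_1 \colon B \to F$ exists as an honest $A$-linear map with $\pi\sigma_1 \equiv a \pmod{p^m}$, but producing $\sigma_1$ is precisely the descent problem the lemma is meant to solve, and your proposed route through $\operatorname{Ext}^1_A(B, F)$ does not close. Two concrete obstructions: $B$ need not be a finitely generated $A$-module (one only knows $p^N B \subseteq \image(F) \subseteq B$), so the $\operatorname{Ext}$ groups are not obviously controlled; and $A \to A'$ is not flat in general (completion is only flat for Noetherian rings), so ``the obstruction dies after base change to $A'$'' does not formally imply it vanishes over $A$.

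The paper sidesteps the lifting problem entirely. Let $T$ be the set of $A_p$-linear maps $B_p \to F_p$ whose composite with $\pi_p$ is multiplication by $a$; this is nonempty because $F_p \to B_p$ is surjective and $B_p$ is finite projective over $A_p$, and after choosing a basepoint $t_0$ it is identified with the finite projective $A_p$-module $\Hom_{A_p}(B_p, \ker(F_p \to B_p))$. The corresponding set $T'$ over $A'_p$ is then $T \otimes_{A_p} A'_p$, so $T$ is $p$-adically dense in $T'$. The hypothesis supplies $t' \in T'$ carrying $B'$ into $F'$, and (by Lemma~\ref{trace of integral extension}, which gives $p^m B' \subseteq b_1 A' + \cdots + b_k A'$) every element of $T'$ sufficiently $p$-adically close to $t'$ also carries $B'$ into $F'$. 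By density one picks such a $t \in T$, and then $t$ carries $B \subseteq B'$ into $F' \cap F_p = F$. This torsor-plus-density argument is the key idea missing from your proposal: it replaces the problematic $\operatorname{Ext}$ computation with an approximation in a finite projective module, where density is automatic.
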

\begin{proof}
To prove (a), note that by 
Lemma~\ref{trace of integral extension}, we can find a nonnegative integer $m$ and some elements
$b_1,\dots,b_k$ of $B$ such that $p^m B \subseteq b_1 A + \cdots + b_k A$;
we then also have $p^m B' \subseteq b_1 A' + \cdots + b_k A'$.
For this choice of $m$, write the images in $B'$ of the basis elements of $F'$ as
$t_1 + p^m u_1, \dots, t_h + p^m u_h$ with $t_i \in B$, $u_i \in B'$.
Let $G \to B$ be the $A$-module homomorphism with $G$ a finite free $A$-module whose basis elements map
to $b_1, \dots, b_k, t_1, \dots, t_h$, and put $G' := G \otimes_{A} A'$.  Notice that the image of $G \to B$ contains $p^m B$ and that the image of $G' \to B'$ contains the image of $F' \to B'$.  
We may then apply Lemma~\ref{almost splitting lemma} to conclude.

To prove (b), let $T$ be the set of $A_p$-module homomorphisms $B_p \to F_p$
such that the composition $B_p \to F_p \to B_p$ is multiplication by $a$.
This set is nonempty because $F_p \to B_p$ is surjective and $B_p$ is a finite projective $A_p$-module.
Choose an element $t_0 \in T$; the map $t \mapsto t - t_0$ then identifies $T$ with the
set of homomorphisms $B_p \to \ker(F_p \to B_p)$, which we may view as a finite projective $A_p$-module.
Let $T'$ be the set of $A'_p$-module homomorphisms $B'_p \to F'_p$
such that the composition $B'_p \to F'_p \to B'_p$ is multiplication by $a$;
the map $t \mapsto t - t_0$ then identifies $T'$ with $T \otimes_{A_p} A'_p$.
Since the conclusion of (a) has been assumed, there exists 
an element $t'$ of $T'$ which carries $B'$ into $F'$; 
by Lemma~\ref{trace of integral extension}, every element in a sufficiently small $p$-adic neighborhood of $t'$
also has this property. We can thus find $t \in T$ which carries $B'$ into $F'$ and thus carries $B$ into $F$.
This gives the desired maps $B \to F \to B$, proving (b).
\end{proof}

\begin{Thm}[Almost purity] \label{almost purity part 1}
Assume that $A$ is Witt-perfect at $p$.
Let $B$ be the integral closure of $A$ in a finite \'etale extension of $A_p$. 
\begin{enumerate}
\item[(a)]
The ring $B$ is again Witt-perfect at $p$.
\item[(b)]
For every $p$-ideal $I$, 
there exist a finite free $A$-module $F$ and an $A$-module homomorphism $F \to B$
whose cokernel is killed by $I$.
\item[(c)]
For any data as in (b), for each $t \in I$ there exists an $A$-module homomorphism
$B \to F$ such that the composition $B \to F \to B$ equals multiplication by $t$.
\item[(d)]
The trace pairing map $B \to \Hom_A(B,A)$ (which exists by
Lemma~\ref{trace of integral extension}) is injective and its cokernel is killed by every $p$-ideal of $A$.
\end{enumerate}
\end{Thm}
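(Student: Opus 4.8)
The plan is to prove the four parts in order, leaning on the structural lemmas already established, with the Witt-perfectness of $B$ (part (a)) as the crucial input that unlocks everything else.

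For part (a), the idea is to verify the concrete criterion recalled after the definition of Witt-perfect: that the $p$-th power map on $B/pB$ is surjective and that every $b \in B$ admits $c \in B$ with $c^p \equiv pb \pmod{p^2}$. Since the Witt-perfect condition depends only on the ring mod $p^2$, and since $B$ is the integral closure of $A$ in a finite étale $A_p$-algebra $B_p$, the étaleness should let us lift $p$-power roots: given $b \in B$, one wants to solve $x^p = b$ (resp.\ $x^p = pb$) approximately modulo $p$ (resp.\ $p^2$). Using that $A$ is Witt-perfect we get such roots over $A$ after multiplying by suitable elements; the étale (hence unramified) structure of $B_p$ over $A_p$, together with the fact that $B$ is integrally closed in $B_p$, should guarantee that the approximate root, a priori in $B_p$, actually lies in $B$. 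I expect part (a) to be the main obstacle: controlling denominators when extracting $p$-power roots in the integral closure is exactly where the hypothesis that $A$ is Witt-perfect has to be used in a nontrivial way, presumably via Lemma~\ref{power ideals} applied over $B$ once we know $B$ is Witt-perfect — so there is a mild bootstrapping flavor that needs to be set up carefully.

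Given (a), part (b) follows from the decompletion machinery. Start with the trivial observation that the $p$-adic completions $A', B'$ satisfy: $B'$ is finite over $A'$ (standard for integral closures in finite étale extensions after $p$-adic completion, or invoke that over the completion one is in the perfectoid/almost setting), so there is a surjection from a finite free $A'$-module onto $B'$, and in fact — by the almost purity statement over the completion, as in \cite{DK12} — for any $p$-ideal one gets a finite free $A'$-module mapping to $B'$ with cokernel killed by that ideal. Then Lemma~\ref{descent for finite projective}(a) descends this to a finite free $A$-module $F$ with $F \to B$ whose cokernel is killed by a power of $p$; combining with the cofinality of the ideals $(x_n,p)$ among $p$-ideals (the Corollary to Lemma~\ref{power ideals}) and Lemma~\ref{almost splitting lemma} to enlarge $F$, we arrange the cokernel to be killed by the given $I$.

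Part (c) is then a direct application of Lemma~\ref{descent for finite projective}(b): over the completion, the almost purity input provides, for each $t$ in the $p$-ideal, an $A'$-module splitting $B' \to F'$ with composite equal to multiplication by $t$; part (b) of that lemma descends it to $B \to F$ over $A$ with the same property. Finally, for part (d), injectivity of the trace map $B \to \Hom_A(B,A)$ follows because it becomes the perfect trace pairing after inverting $p$ and $B$ is $p$-torsion-free; for the cokernel, apply (b) and (c) with a given $p$-ideal $I$: choosing $F \to B$ with cokernel killed by $I$ and a splitting $B \to F$ with composite multiplication by $t \in I$, dualize to express any $\phi \in \Hom_A(B,A)$ — after multiplying by $t$ — in terms of the trace pairing against the finitely many images of the basis of $F$, exactly as in the proof of Lemma~\ref{trace of integral extension}(b). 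This shows $I \cdot \Hom_A(B,A)$ lies in the image of the trace map, i.e.\ the cokernel is killed by every $p$-ideal, as claimed.
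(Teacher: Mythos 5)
Your outline for parts (b) and (c) is essentially the paper's route (pass to the $p$-adic completion, invoke the completed almost purity theorem of Kedlaya--Liu/Scholze, then descend via Lemma~\ref{descent for finite projective} and the cofinality of the ideals $(x_n,p)$), though you gloss over a real technical step: $A$ is not in general the ring of norm-$\leq 1$ elements of $R = A_p$, so the paper first replaces $A$ by the subring $A_0 = \{x : px^n \in A \text{ for all } n\}$, works there, and then descends back to $A$ using $x_{n+1}^{p-1}B_0 \subseteq B$. That detail is needed to make the cited KL11/Sch11 theorems apply.

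For part (a), you do not give a proof; you describe an intention (``lift $p$-power roots using \'etaleness'') and then admit there is ``a mild bootstrapping flavor that needs to be set up carefully.'' The paper simply cites \cite[Theorem~5.2(a)]{DK12}, so there is no obligation to reprove this here, but as written your sketch is not a substitute.

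The serious gap is in part (d). You propose: with $\pi \circ \iota = t\cdot$ and $F$ free with basis mapping to $b_1,\dots,b_n$, write $e_j^*(\iota(b)) = \Trace_{B_p/A_p}(b t_j)$ and conclude that $t\phi$ lies in the image of the trace map. But this only shows $t\phi(b) = \Trace_{B_p/A_p}\bigl(b \cdot \sum_j \phi(b_j) t_j\bigr)$ where the element $\sum_j \phi(b_j) t_j$ lies in $B_p$, not in $B$. Perfection of the trace pairing after inverting $p$ gives you $t_j \in B_p$ with no control over $p$-denominators, and neither (b) nor (c) on its own supplies that control; you would only learn that some fixed power of $p$ clears the denominators, which does not shrink along with $I$. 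This is precisely why the paper's proof of (d) introduces a genuinely new ingredient: the ring $C$, the integral closure of $B\otimes_A B$ in $B_p\otimes_{A_p}B_p$. One first checks $B\otimes_A B$ is Witt-perfect, shows $C/(B\otimes_A B)$ is killed by every $p$-ideal (via \cite[Proposition~3.6.11]{KL11}), shows $C$ is also Witt-perfect (Lemma~\ref{L:Witt-perfectness for integral closure}), and only then applies (b) and (c) with $A$ replaced by $C$ --- i.e.\ one almost-splits the multiplication map $B\otimes_A B \to B$, and it is the separability idempotent in $B_p\otimes_{A_p}B_p$, almost-integrally lifted, that controls the denominators of the $t_j$. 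Your dualization step does not recover this, so part (d) is not proved as written.
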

\begin{proof}
Part (a) is \cite[Theorem~5.2(a)]{DK12}. 
(We note two typos in that proof: In the last paragraph, the sentence
``Since $B$ satisfies (ii) and $\psi(B[p^{-1}])$ is dense in $S$...'' should read
``Since $\mathfrak{o}_B$ satisfies (ii) and $\psi(B)$ is dense in $S$....'')
We deduce the other parts 
as follows.

Choose $x_1,x_2,\dots \in A$ as in Lemma~\ref{sequence of p-power roots}.
Let $A_0$ be the subring of $A_p$ consisting of those $x$ for which $p x^n \in A$ for all $n \in \NN$.
As in the proof of \cite[Theorem~5.2(a)]{DK12}, $A_0$ is Witt-perfect at $p$.
Let $B_0$ be the integral closure of $A_0$ in $B_p$.
Let $A', B'$ be the $p$-adic completions of $A_0, B_0$; note that
$x_1,x_2,\dots$ are units in $A'_p$.
By \cite[Theorem~5.5.9]{KL11} or \cite[Theorem~5.25]{Sch11},
for each sufficiently large $n \in \NN$, there exist a finite free $A'$-module $F'$ and $A'$-module homomorphisms
$B'\to F' \to B'$ whose composition is multiplication by $x_{n+1}$.
By Lemma~\ref{descent for finite projective}(a), we may ensure that $F' \to B'$ is the base extension of a
homomorphism $F_0 \to B_0$ of $A$-modules; by Lemma~\ref{descent for finite projective}(b), we may ensure that the cokernel of $F_0 \to B_0$ is killed by $x_{n+1}$.

Since $x_{n+1}^{p-1} B_0 \subseteq B$, after multiplying by $x_{n+1}^{p-1}$ we may descend to a homomorphism
$F \to B$ of $A$-modules whose cokernel is killed by $x_{n+1}^p$.
Using Lemma~\ref{trace of integral extension}, we may add generators to force the cokernel to be killed also by $p^m$
for some nonnegative integer $m$.
Since the ideals $(x_{n+1}^p,p^m) = (x_n,p)$ are cofinal among $p$-ideals, this yields (b).

Continuing with notation as above, by Lemma~\ref{descent for finite projective}, there also exist a finite free $A_0$-module $F_0$ and $A_0$-module homomorphisms $B_0\to F_0 \to B_0$ whose composition is multiplication by $x_{n+1}$. If we multiply the map $B_0 \to F_0$ by $x_{n+1}^{p-1}$, the result carries $B$ into $F$; we thus obtain a finite free $A$-module $F$ and $A$-module homomorphisms $B \to F \to B$ whose composition is multiplication by
$x_{n+1}^p$. On the other hand, by Lemma~\ref{trace of integral extension}, we can make a different choice of
$B \to F$ such that the composition $B \to F \to B$ is multiplication by $p^m$ for some nonnegative integer $m$. We can thus arrange for the
composition to equal multiplication by $t$ for any $t \in (x_{n+1}^p, p^m) = (x_n,p)$.
By Lemma~\ref{descent for finite projective}(b) and the fact that the ideals $(x_n,p)$ are cofinal
among $p$-ideals, we deduce (c).

Note that $B \otimes_A B$ is Witt-perfect at $p$. To see this, note that by \cite[Corollary~3.3]{DK12}, it suffices to show that the $p$-power map on $B \otimes_A B$ is surjective modulo~$p$ and that $V(1)$ is contained in the image of $F: W_{p^2}(B \otimes_A B) \to W_p(B \otimes_A B)$.  These results hold because, by the same corollary, the analogous conditions hold for the ring $B$.  For example, to prove the second claim, consider the element $(b_1 \otimes 1, b_p \otimes 1, b_{p^2} \otimes 1)$, where $\underline{b} := (b_1, b_p, b_2) \in W_{p^2}(B)$ is such that $F(\underline{b}) = (1,0)$. Next note that $B_p$ is finite projective as a module over $B_p \otimes_{A_p} B_p \cong (B \otimes_A B)_p$ via the multiplication map. 

Let $C$ be the integral closure of $B \otimes_A B$ in $B_p \otimes_{A_p} B_p$.
By \cite[Proposition~3.6.11]{KL11}, $C/(B \otimes_A B)$ is killed by every $p$-ideal of $A$. We may show that $C$ is also Witt-perfect at $p$ by imitating an argument
from the proof of \cite[Theorem~5.2(a)]{DK12}; see Lemma~\ref{L:Witt-perfectness for integral closure} below.
We may thus apply (b) and (c) with $A$ replaced by $C$
to deduce (d).
\end{proof}
\begin{Cor}
Let $A$ be a $\ZZ$-torsion-free ring which is integrally closed in $A_{\QQ} := A \otimes_{\ZZ} \QQ$.
Let $B$ be the integral closure of $A$ in a finite \'etale extension of $A_{\QQ}$. If $A$ is Witt-perfect at $p$,
then so is $B$.
\end{Cor}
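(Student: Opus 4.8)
The plan is to reduce to Theorem~\ref{almost purity part 1}(a) by localizing away the primes other than $p$. Write $C$ for the given finite \'etale extension of $A_{\QQ}$, so that $B$ is the integral closure of $A$ in $C$, and set
\[
A' := A \otimes_{\ZZ} \ZZ_{(p)}, \qquad B' := B \otimes_{\ZZ} \ZZ_{(p)}.
\]
Since $\ZZ_{(p)}[p^{-1}] = \QQ$, we have $A'_p = A'[p^{-1}] = A_{\QQ}$; and since $C$ is already a $\QQ$-algebra, $C \otimes_{\ZZ} \ZZ_{(p)} = C$. Thus $C$ is a finite \'etale extension of $A'_p$, and the goal becomes to recognize $B'$ as an integral closure to which Theorem~\ref{almost purity part 1}(a) applies.

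First I would verify that $A'$ satisfies the running hypothesis of \S\ref{sec:almost purity}, namely that it is $p$-normal (Definition~\ref{p-normal ring}). It is $p$-torsion-free because $A$ is $\ZZ$-torsion-free and $\ZZ_{(p)}$ is flat over $\ZZ$. For integral closedness, recall that the formation of integral closure commutes with localization: the integral closure of $A'$ in $A'_p = A_{\QQ}$ is the localization at the image of $\ZZ \setminus p\ZZ$ of the integral closure of $A$ in $A_{\QQ}$, which is $A$ itself by hypothesis; hence it equals $A'$. So $A'$ is $p$-normal. Moreover $A'$ is Witt-perfect at $p$, since this holds for $A$ and, as recorded in the discussion of the Witt-perfect condition (which depends only on the reduction modulo $p^2$), a ring is Witt-perfect at $p$ if and only if its tensor product with $\ZZ_{(p)}$ is.

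Next, applying the same localization principle with $C$ in place of $A_{\QQ}$, the ring $B' = B \otimes_{\ZZ} \ZZ_{(p)}$ is the integral closure of $A'$ in $C \otimes_{\ZZ} \ZZ_{(p)} = C$. Since $C$ is finite \'etale over $A'_p$, Theorem~\ref{almost purity part 1}(a) applies to the pair $(A', C)$ and shows that $B'$ is Witt-perfect at $p$. Finally, because $B'/p^2 B' \cong B/p^2 B$ (using $\ZZ_{(p)}/p^2 \ZZ_{(p)} \cong \ZZ/p^2 \ZZ$), the same ``depends only on mod $p^2$'' criterion, now applied to $B$, upgrades Witt-perfectness of $B'$ to Witt-perfectness of $B$, as desired.

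None of these steps is hard; the only point requiring genuine care is confirming that $A'$ is $p$-normal, i.e., that passing from $A$ to $A'$ does not enlarge the relevant integral closure, so that Theorem~\ref{almost purity part 1} is legitimately applicable. I do not anticipate any substantial obstacle beyond this bookkeeping.
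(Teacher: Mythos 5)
Your proposal is correct and follows essentially the same route as the paper's (very terse) proof: the paper's justification is exactly ``Theorem~\ref{almost purity part 1} plus the fact that Witt-perfectness at $p$ can be checked after inverting all primes other than $p$,'' which is the localization to $\ZZ_{(p)}$ that you carry out explicitly. Your filling in of the verification that $A \otimes_{\ZZ} \ZZ_{(p)}$ is $p$-normal (via compatibility of integral closure with localization), that $B \otimes_{\ZZ} \ZZ_{(p)}$ is the appropriate integral closure, and that Witt-perfectness transfers back via the mod-$p^2$ criterion, are precisely the details the paper leaves to the reader.
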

\begin{proof}
This follows from Theorem~\ref{almost purity part 1} and the fact that Witt-perfectness at $p$ can be checked
after inverting all primes other than $p$.
\end{proof}

\begin{Lem} \label{L:Witt-perfectness for integral closure}
Let $A$ be a $p$-torsion-free ring which is Witt-perfect at $p$. Let $B$ be the integral closure of $A$ in $A_p$. If $B/A$ is killed by every $p$-ideal of $A$, then $B$ is also Witt-perfect at $p$.
\end{Lem}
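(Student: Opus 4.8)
The plan is to verify the two concrete conditions characterizing Witt-perfectness recalled in the definition of ``Witt-perfect at $p$'': namely that the $p$-power map on $B/pB$ is surjective, and that for every $b \in B$ there exists $c \in B$ with $c^p \equiv pb \pmod{p^2 B}$. Throughout we exploit that $B/A$ is killed by every $p$-ideal of $A$; in particular, choosing $x_1, x_2, \dots \in A$ as in Lemma~\ref{sequence of p-power roots}, the ideal $(x_n, p)A$ is a $p$-ideal for each $n$, so $x_n B \subseteq A + pB$, and for $n$ large we can make $x_n$ have arbitrarily large $p$-adic valuation. The key point is that multiplying an element of $B$ by a suitable $x_n$ pushes it into $A$ modulo a controlled power of $p$, where we may then apply the Witt-perfectness of $A$, and finally divide back out.

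First I would handle surjectivity of Frobenius mod $p$. Given $b \in B$, pick $n$ with $x_n^p \in p B$ (possible since $x_1^p \equiv p \pmod{p^2}$ and $x_{m+1}^p \equiv x_m \pmod p$ force the valuation of $x_n$ to tend to $\infty$; concretely $x_1$ already works as $x_1^p \in pA$). Actually the cleaner route: since $x_1 B \subseteq A + pB$, write $x_1 b = a + pb'$ with $a \in A$, $b' \in B$. Using that $A$ is Witt-perfect, solve $\alpha^p \equiv a \pmod{pA}$ with $\alpha \in A$, and also recall $x_1^p \equiv p \pmod{p^2}$, so there is $\beta \in A$ with $\beta^p \equiv p \pmod{p^2 A}$ (take $\beta = x_1$). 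Then modulo $p$ we have $x_1^p b^p \equiv a \equiv \alpha^p$, i.e. $(x_1 b)^p \equiv \alpha^p \pmod{pB}$. Since $x_1^{p} \equiv p$ is a zero-divisor mod $p$, this does not immediately give a $p$-th root of $b$; instead I would argue as in the proof of Lemma~\ref{power ideals}, using the explicit power-ideal description there to divide by $x_1$ in the appropriate quotient. The honest argument: the ideal $\{x \in B : x^p \in pB\}$ equals $(x_1, p)B$ by the analogue of Lemma~\ref{power ideals} applied to $B$ (which is itself Witt-perfect-in-progress, so I would instead apply it to $A$ and bootstrap). Because of this circularity, I would reorganize: prove the mod-$p$ $p$-power surjectivity first by the direct computation above, deducing that $b - $ (a $p$-th power) lies in $x_1 B + pB$, then induct on the valuation using $x_{n+1}^p \equiv x_n$.

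Second, for the congruence $c^p \equiv pb \pmod {p^2 B}$: again write $x_1 b = a + p b'$ with $a \in A$; since $A$ is Witt-perfect, choose $\gamma \in A$ with $\gamma^p \equiv pa \pmod{p^2 A}$. Then $\gamma^p \equiv p x_1 b \pmod{p^2 B}$, and since $x_1^p \equiv p \pmod{p^2}$ one checks $(\gamma / x_1)^p$ makes sense after the usual clearing-denominators manipulation, yielding the desired $c$ up to an error absorbed by re-running the argument with a higher $x_n$. I expect the main obstacle to be exactly this bootstrapping/telescoping step: one needs to combine the chain $x_{n+1}^p \equiv x_n \pmod p$ with the controlled way $x_n B \subseteq A + pB$ to show the residual error, which a priori lies only in some $x_n B + p^k B$, can be iteratively eliminated so that it lands in $p^k B$ for every $k$ and hence (by $p$-adic separatedness of the relevant quotients, or rather by working purely mod $p^2$) is genuinely zero where required. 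Equivalently, one imitates the relevant paragraph of the proof of \cite[Theorem~5.2(a)]{DK12}, which is precisely the reference the main text points to; the content of the lemma is to isolate and repackage that argument. Once both conditions are checked, \cite[Theorem~3.2]{DK12} gives that $B$ is Witt-perfect at $p$, completing the proof.
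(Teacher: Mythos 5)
There is a real gap here, and it is the gap you yourself flag: the one step you actually carry out is wrong, and the step you call ``the main obstacle'' is never carried out. In the first paragraph you write ``modulo $p$ we have $x_1^p b^p \equiv a \equiv \alpha^p$.'' From $x_1 b = a$ one gets $x_1^p b^p = (x_1 b)^p = a^p$, not $a$; and $\alpha$ was chosen to satisfy $\alpha^p \equiv a$, so the chain of congruences does not close up (it would need $a^p \equiv a$). So the ``direct computation'' that your reorganization is supposed to start from is not available. You then, quite candidly, defer the iteration/telescoping argument to the reference \cite{DK12}, so what you have written is a plan that identifies the right ingredients (the sequence $x_n$, the fact that $(x_n,p)B \subseteq A$, Witt-perfectness of $A$, and the observation that the second Witt-perfectness condition is automatic once $x_1 \in A \subseteq B$) but does not execute the core.

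For comparison, the paper sidesteps the iteration entirely by going one level deeper in the tower $x_1, x_2, \dots$. It reduces to showing that the $p$-power map is surjective on $B$ modulo the $p$-ideal $(x_2^p, p)$, uses that $x_2^{p^2-p} x \in A$ (killing $B/A$ by the $p$-ideal $(x_2^{p^2-p},p)$), chooses $y \in A$ with $y^p \equiv x_2^{p^2-p} x \pmod{pA}$ and a correction $u$ with $x_2^{p^2} u \equiv p \pmod{p^p}$, and sets $z = x_2^{p^2-p+1}u$, $w = yz/p \in A_p$. A single explicit identity then shows $w^p - x \in (x_2^p,p)A$; since $w^p \in B$ and $B$ is integrally closed in $A_p$, this forces $w \in B$ and gives the desired congruence in one stroke, with no bootstrapping. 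The reason $x_2$ (rather than $x_1$) is used is precisely to create enough slack in the valuations so that the error in this one step is already small enough; your proposal, which stays at the level of $x_1$, would indeed need to iterate, and that iteration is exactly the part you have left unwritten. To repair your approach you would need to either supply the induction (with corrected congruences, working in characteristic $p$ where $(a+b)^p \equiv a^p + b^p$ so that successive approximate $p$-th roots can be added), or adopt the paper's deeper-root trick to avoid induction altogether.
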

\begin{proof}
By \cite[Corollary~3.3]{DK12} again, there exist $x_1,x_2 \in A$ such that $x_1^p \equiv p \pmod{p^2}$ and $x_2^p \equiv x_1 \pmod{p}$,
and it suffices to show that the $p$-power map on $B$ is surjective modulo $(x_2^p,p)$.  (In general, if $B$ contains a ring which is Witt-perfect at $p$, then $B$ is Witt-perfect at $p$ if and only if the $p$-th power map on $B$ is surjective modulo~$p$.)
Note that $x_2^{p^2} \equiv p \pmod{p^2}$, so on one hand $x_2^{p^2}$ is divisible by $p$, and on the other hand there exists $u \in A$ such that
$x_2^{p^2} u \equiv p \pmod{p^p}$.

Choose any $x \in B$. Since $B/A$ is killed by the $p$-ideal $(x_2^{p^2-p}, p)$, we have $x_2^{p^2-p} x \in A$. Since $A$ is Witt-perfect at $p$, there exists $y \in A$ such that
$y^p \equiv x_2^{p^2-p} x \pmod{pA}$. Put $z = x_2^{p^2-p+1} u$ and $w = yz/p$.
On one hand, $z^p$ is divisible by $(x_2^{p^2})^{p-1} x_2^p$ and hence by $p^{p-1} x_2^p$;
on the other hand $x_2^{p-1} z = x_2^{p^2} u \equiv p \pmod{p^p A}$, so, raising both sides to the $p$-th power, 
$x_2^{p^2-p} z^p \equiv p^p \pmod{p^{p+1} A}$. Hence
\[
w^p -x =  \frac{y^p - x_2^{p^2-p} x}{p} \frac{z^p}{p^{p-1}} + \frac{x_2^{p^2-p} z^p - p^p}{p^{p+1}} (px)
\in (x_2^{p},p)A,
\]
so on one hand $w \in B$, and on the other hand $w^p \equiv x \pmod{(x_2^2 ,p)B}$.
\end{proof}

\begin{Rmk}
We do not know whether Lemma~\ref{L:Witt-perfectness for integral closure} remains true if we drop the condition that $B/A$ is killed by every $p$-ideal of $A$. A typical example for which this condition fails, but the conclusion remains true, is
\begin{align*}
A &= \mathcal{O}_{\CC_p}[p^{1/p^n} T^{1/p^n} : n = 0, 1, \ldots][ T^{2/p^n} : n = 0,1, \ldots] \\
B &= \mathcal{O}_{\CC_p}[T^{1/p^n} : n = 0, 1, \ldots].
\end{align*}
\end{Rmk}

\section{Inverse limits}

We next introduce a formal generalization of finite length Witt vectors by taking an inverse limit under the Witt vector Frobenius map.   Each of these new objects  possesses an infinite sequence of ghost components 
\[
\ldots, w_{p^{-2}},\, w_{p^{-1}},\, w_1.
\]
Here is the formal definition.

\begin{Def}
Let $\Warrow(R)$ denote the inverse limit of the inverse system consisting of one copy of the ring $W_{p^n}(R)$ for each $n \geq 0$, and one transition map
\[
F^{n_2 - n_1}: W_{p^{n_2}}(R) \rightarrow W_{p^{n_1}}(R)
\]
for each $n_2 \geq n_1 \geq 0$.  For any $n \geq 0$, we define the $(p^{-n})$-th ghost map 
\[
w_{p^{-n}}: \Warrow(R) \rightarrow R
\]
by projecting to the first Witt-component of $W_{p^n}(R)$.  By the definition of the Frobenius map in terms of ghost components, this is equivalent to defining $w_{p^{-n}}$ to be the composition of the projection of $\Warrow(R) \rightarrow W_{p^{n+i}}(R)$ followed by the $p^i$-th ghost map $w_{p^i}$ for any $i \geq 0$; the case $i = 0$ is exactly our definition. 
\end{Def}

\begin{Rmk}
There is a standard description of $p$-typical Witt vectors as an inverse limit of finite length Witt vectors.  The ring we construct is typically very different, because our transition maps are the Witt vector Frobenius maps, not restriction.  For example, using the Dwork lemma one can show that $\Warrow(\ZZ) \cong \ZZ$, while $W(\ZZ)$ is uncountable.
\end{Rmk}

\begin{Lem} \label{lifting lemma}
For any ring $R$ and any positive integer $m$,
the map induced by functoriality $\Warrow(R/(p^{m+1})) \to \Warrow(R/(p^m))$
is bijective.
\end{Lem}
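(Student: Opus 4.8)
The plan is to analyze the map $\Warrow(R/(p^{m+1})) \to \Warrow(R/(p^m))$ level by level, i.e.\ to understand the effect of the reduction map $W_{p^n}(R/(p^{m+1})) \to W_{p^n}(R/(p^m))$ on each finite-length Witt vector ring and then pass to the inverse limit. The key structural fact I would exploit is that the kernel of $R/(p^{m+1}) \to R/(p^m)$ is the ideal $J := (p^m)/(p^{m+1})$, which satisfies $J^2 = 0$ (since $p^m \cdot p^m = p^{2m} \in (p^{m+1})$ once $2m \ge m+1$, i.e.\ $m \ge 1$) and moreover $pJ = 0$. For such a square-zero ideal killed by $p$, the functoriality map $W_{p^n}(R/(p^{m+1})) \to W_{p^n}(R/(p^m))$ has a very controlled kernel: a Witt vector $\underline{x} = (x_1, x_p, \dots, x_{p^n})$ maps to zero iff every component lies in (the image of) $J$, and by Corollary~\ref{homogeneous corollary}(a),(b) addition and multiplication on such vectors are governed by homogeneous polynomials that, because $J^2 = 0$ and $pJ = 0$, collapse dramatically.

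The heart of the argument is the interaction with Frobenius. By Lemma~\ref{big Frobenius components lemma}, if $F(\underline{x}) = \underline{y}$ then $y_{p^i} = x_{p^i}^p + p x_{p^{i+1}} + p f_{p^i}(x_1, \dots, x_{p^i})$. When all $x_{p^i}$ lie in $J$, we have $x_{p^i}^p \in J^p = 0$ (for $p \ge 2$), $p x_{p^{i+1}} \in pJ = 0$, and likewise $p f_{p^i}(\dots) = 0$ since $f_{p^i}$ has no constant term and $pJ = 0$. Hence \emph{$F$ annihilates the kernel of the reduction map}: the transition maps in the inverse system defining $\Warrow(R/(p^{m+1}))$ kill the part that dies under reduction. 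I would first use this to prove injectivity of $\Warrow(R/(p^{m+1})) \to \Warrow(R/(p^m))$: an element of the kernel is a compatible sequence $(\underline{x}^{(n)})_n$ with each $\underline{x}^{(n)} \in \ker(W_{p^n}(R/(p^{m+1})) \to W_{p^n}(R/(p^m)))$, and since $\underline{x}^{(n)} = F(\underline{x}^{(n+1)})$ with $\underline{x}^{(n+1)}$ also in the (componentwise-$J$) kernel, the computation above forces $\underline{x}^{(n)} = 0$ for every $n$. For surjectivity, given a compatible sequence $(\underline{y}^{(n)})_n \in \Warrow(R/(p^m))$, I lift each $\underline{y}^{(n)}$ arbitrarily to $\underline{z}^{(n)} \in W_{p^n}(R/(p^{m+1}))$; the sequence $(\underline{z}^{(n)})$ need not be $F$-compatible, but the discrepancy $F(\underline{z}^{(n+1)}) - \underline{z}^{(n)}$ lies in the reduction kernel, hence (by the same Frobenius computation applied one more time) is killed by a further application of $F$. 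So replacing $\underline{z}^{(n)}$ by $F(\tilde z^{(n+1)})$ for suitable lifts built from $\underline{z}^{(n+2)}$ — or, more cleanly, observing that $F^2$ of any lift of $\underline{y}^{(n+2)}$ depends only on $\underline{y}^{(n+2)}$ modulo the kernel and is therefore canonically determined — produces an honest element of $\Warrow(R/(p^{m+1}))$ mapping to the given sequence.

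The step I expect to be the main obstacle is making the surjectivity argument fully rigorous: one must check that the ``re-$F$'d'' lifts are genuinely compatible across \emph{all} levels simultaneously (a diagonal/limit argument), not merely that each individual discrepancy can be corrected. The clean way around this, which I would adopt, is to show that for any ring $S$ with a square-zero ideal $J$ satisfying $pJ=0$ and $S/J = \bar S$, the map $F \colon W_{p^{n+1}}(S) \to W_{p^n}(S)$ factors through $W_{p^n}(\bar S)$ — equivalently, $F$ on $W_{p^{n+1}}(S)$ depends only on the image in $W_{p^{n+1}}(\bar S)$. Granting this, the transition maps of the system $\{W_{p^n}(R/(p^{m+1}))\}$ literally coincide, under the reduction isomorphism of index sets, with those of $\{W_{p^n}(R/(p^m))\}$ shifted by one, so the two inverse limits are canonically identified. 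Verifying this factorization is a direct component computation using Lemma~\ref{big Frobenius components lemma} together with $J^p = 0$ and $pJ = 0$, exactly as sketched above; the only care needed is the edge case $p=2$, where $J^p = J^2 = 0$ still holds, so the argument goes through uniformly.
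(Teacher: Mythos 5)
Your argument is correct, but it takes a genuinely different route from the paper's. The paper's proof lifts each $\underline{x}_{p^{-i}}$ to an auxiliary element of $W_{p^{i+1}}(R/(p^{m+1}))$ that has one extra, completely arbitrary component, then applies $F^{m+1}$ and proves well-definedness by an induction of length $m+1$ that tracks the $p$-adic precision gained after each application of $F$ (Lemma~\ref{big Frobenius components lemma} is invoked $m+1$ times). Your approach instead isolates the structural reason a single application of $F$ already suffices: the kernel $J = (p^m)/(p^{m+1})$ of $R/(p^{m+1}) \to R/(p^m)$ satisfies $J^2 = 0$ and $pJ = 0$ (using $m \ge 1$), so Lemma~\ref{big Frobenius components lemma} together with the fact that $f_{p^i}$ has no constant term (which follows from homogeneity, Corollary~\ref{homogeneous corollary}(c), or from $F(0)=0$) shows that $F$ annihilates the kernel of the componentwise reduction $W_{p^{n+1}}(R/(p^{m+1})) \to W_{p^{n+1}}(R/(p^m))$. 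Hence $F$ factors as $\bar F \circ r$ with $\bar F: W_{p^{n+1}}(R/(p^m)) \to W_{p^n}(R/(p^{m+1}))$ --- note this is a factorization through $W_{p^{n+1}}(R/(p^m))$, not $W_{p^n}$; you wrote the latter, but your ``equivalently'' clause states the correct thing. One then checks $r \circ \bar F = F$ as well (using surjectivity of $r$ and naturality), so the two inverse systems are interleaved and their limits are canonically identified by $\varprojlim r$, which is precisely the functoriality map. Your route is tighter (the number of Frobenius applications does not grow with $m$) and explains structurally why the lemma holds, whereas the paper's proof is a more direct but coarser componentwise precision chase; both ultimately rest on the same Frobenius-component formula.
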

\begin{proof}
Let $\underline{x}$ denote an arbitrary element in $\Warrow(R/(p^m))$, and let $\underline{x}_{p^{-i}}$ denote the corresponding element in $W_{p^i}(R/(p^m))$.  
For each $i$, choose an auxiliary lift 
\[
\underline{z}_{p^{-i}} \in W_{p^{i+1}}(R/(p^{m+1}))
\]
of $\underline{x}_{p^{-i}}$ by first lifting to $W_{p^i}(R/(p^{m+1}))$ and then lifting to $W_{p^{i+1}}(R/(p^{m+1}))$ along the natural restriction map (not along Frobenius). By Lemma~\ref{big Frobenius components lemma} and induction on $n$,
for $n=1,\dots,m+1$, the components of $F^n(\underline{z}_{p^{-i}})$ are
uniquely determined modulo $p^n$ by the element $\underline{x}$.
Thus the element
\[
\underline{y}_{p^{-i}} := F^{m+1}(\underline{z}_{p^{-(i+m+2)}}) \in W_{p^{i+1}}(R/(p^{m+1}))
\]
does not depend on the choices of the lifts $\underline{z}_{p^{-i}}$. To check that the sequence $(\underline{y}_{p^{-i}})$ is a coherent sequence under Frobenius, we must check that
\[
F(\underline{y}_{p^{-(i+1)}}) = \underline{y}_{p^{-i}}.
\]
In other words, we must check that 
\[
F\left(F^{m+1}(\underline{z}_{p^{-(i+m+3)}})\right) = F^{m+1}(\underline{z}_{p^{-(i+m+2)}}).
\]
Because $F(\underline{z}_{p^{-(i+m+3)}})$ is itself a lift of $\underline{x}_{p^{-(i+m+2)}}$, we may assume $F(\underline{z}_{p^{-(i+m+3)}}) = \underline{z}_{p^{-(i+m+2)}}$, and the claim follows.  

For any element $\underline{x} \in \Warrow(R/(p^m))$, we have constructed a lift in $\Warrow(R/(p^{m+1}))$.  We have also seen that the components of such a lift are uniquely determined, which completes the proof that the natural map $\Warrow(R/(p^{m+1})) \to \Warrow(R/(p^m))$ is a bijection.
\end{proof}

\begin{Cor} \label{inverse limit for complete}
Let $R$ be a ring which is $p$-adically complete.
Then the natural map $\Warrow(R) \to \Warrow(R/(p))$ is an isomorphism.
\end{Cor}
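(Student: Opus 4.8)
The plan is to factor the natural map through an inverse limit and reduce everything to Lemma~\ref{lifting lemma}. Since $R$ is $p$-adically complete, the natural map $R \to \varprojlim_m R/(p^m)$ is an isomorphism. For each fixed $n$, the underlying set of $W_{p^n}(R)$ is a finite power of $R$ functorially in $R$, so $W_{p^n}$ commutes with inverse limits; moreover the ring structure on $W_{p^n}$ is given by universal polynomials, so the resulting identification is one of rings. Hence $W_{p^n}(R) \cong \varprojlim_m W_{p^n}(R/(p^m))$, compatibly with the Frobenius transition maps as $n$ varies. Taking $\varprojlim_n$ and interchanging the two inverse limits (a formal step, since a double inverse limit is the limit over the product index category) yields a ring isomorphism
\[
\Warrow(R) = \varprojlim_n W_{p^n}(R) \cong \varprojlim_m \varprojlim_n W_{p^n}(R/(p^m)) = \varprojlim_m \Warrow(R/(p^m)),
\]
and one checks from the construction that this isomorphism is the one induced by functoriality from the projections $R \to R/(p^m)$.

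Next, Lemma~\ref{lifting lemma} asserts that every transition map $\Warrow(R/(p^{m+1})) \to \Warrow(R/(p^m))$ in this last inverse system is bijective. Consequently the projection $\varprojlim_m \Warrow(R/(p^m)) \to \Warrow(R/(p))$ onto the $m=1$ term is an isomorphism, with inverse obtained by composing the inverse bijections. Composing with the isomorphism of the previous paragraph identifies $\Warrow(R)$ with $\Warrow(R/(p))$, and tracing through functoriality shows that this composite is precisely the natural map $\Warrow(R) \to \Warrow(R/(p))$ induced by $R \to R/(p)$; hence that map is an isomorphism.

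I expect no real obstacle here beyond bookkeeping: the only points requiring care are the interchange of the two inverse limits and the verification that the isomorphism produced is the natural map rather than some twist of it. One could instead prove injectivity and surjectivity of the natural map directly, using Lemma~\ref{big Frobenius components lemma} to lift elements and to pin down their components modulo powers of $p$, but this essentially reproves Lemma~\ref{lifting lemma} and is less transparent.
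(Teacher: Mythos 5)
Your proof is correct and is exactly the derivation the paper intends, since the corollary is stated with no proof immediately after Lemma~\ref{lifting lemma}: write $R \cong \varprojlim_m R/(p^m)$, use that $\Warrow$ commutes with this inverse limit, and then apply the lemma to collapse the resulting tower of bijections onto the $m=1$ term. Your attention to the interchange of limits and to checking that the resulting isomorphism is the natural map is appropriate but does not change the substance of the argument.
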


\begin{Lem} \label{inverse limit for perfect}
Suppose that the ring $R$ is of characteristic $p$.
Let $R'$ be the inverse limit of $R$ under the $p$-power
Frobenius endomorphism $\varphi$. 
\begin{enumerate}
\item[(a)]
The natural projection $\Warrow(R') \to \Warrow(R)$ is an isomorphism.
\item[(b)]
The map $F$ is bijective on $\Warrow(R')$, so there are natural isomorphisms
\[
W(R') \cong \varprojlim_{F} W(R') \cong \Warrow(R').
\]
\end{enumerate}
\end{Lem}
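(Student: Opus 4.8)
The plan rests on two soft facts. First, $W$ and each $W_{p^n}$ send inverse limits of rings to inverse limits of rings (their underlying functors are products, with ring operations given by universal polynomials in the components), and hence so does $\Warrow = \varprojlim_n W_{p^n}$. Second, in characteristic $p$ the Witt-vector Frobenius is computed componentwise: by Lemma~\ref{big Frobenius components lemma}, for a ring $S$ of characteristic $p$ the map $F\colon W_{p^{n+1}}(S)\to W_{p^n}(S)$ is $(s_1,\dots,s_{p^{n+1}})\mapsto(s_1^p,\dots,s_{p^n}^p)$ and $F = W(\varphi_S)$ on $W(S)$, where $\varphi_S$ is the $p$-power endomorphism of $S$. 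Write $\mathrm{res}\colon W_{p^{n+1}}(S)\to W_{p^n}(S)$ for truncation and $\Phi_n := W_{p^n}(\varphi_S)$ for componentwise $p$-power on $W_{p^n}(S)$; then $F = \Phi_n\circ\mathrm{res} = \mathrm{res}\circ\Phi_{n+1}$ and $\mathrm{res}\circ\Phi_{n+1}=\Phi_n\circ\mathrm{res}$. In particular, on $\Warrow(S)$ the Frobenius $F$ is the componentwise $p$-power $\Warrow(\varphi_S)$.

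The central claim I would establish first is that $F$ is bijective on $\Warrow(S)$ for every characteristic-$p$ ring $S$. I would prove this by writing down a two-sided inverse, namely the ``shift'' $\Sigma\colon\Warrow(S)\to\Warrow(S)$ determined on coherent sequences by $\Sigma(\underline x)_{p^{-n}} = \mathrm{res}(\underline x_{p^{-(n+1)}})$; using only the identities above together with the coherence relation $\Phi_n(\mathrm{res}(\underline x_{p^{-(n+1)}})) = F(\underline x_{p^{-(n+1)}}) = \underline x_{p^{-n}}$, one checks that $\Sigma$ lands in $\Warrow(S)$ and that $\Sigma\circ F$ and $F\circ\Sigma$ are both the identity. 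I expect this step to be the only real obstacle: $\varphi_S$ itself is in general neither injective nor surjective (take $S$ a non-reduced, non-perfect characteristic-$p$ ring), yet applying $\Warrow$ makes it invertible, because $\Warrow$ remembers the whole Frobenius tower, which in characteristic $p$ is built out of copies of $\varphi_S$, so one extra $\varphi_S$ can always be absorbed by shifting one stage deeper.

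Part (a) is then immediate: commuting $\Warrow$ with $R' = \varprojlim(\cdots\xrightarrow{\varphi}R\xrightarrow{\varphi}R)$ and using $\Warrow(\varphi_R) = F$ identifies $\Warrow(R')$ with $\varprojlim(\cdots\xrightarrow{F}\Warrow(R)\xrightarrow{F}\Warrow(R))$, compatibly with the projections, so the map in the statement is the first projection of an inverse limit along a bijection of $\Warrow(R)$, hence an isomorphism. For part (b): applying the central claim to $S = R'$ gives that $F$ is bijective on $\Warrow(R')$. Moreover $R'$ is perfect, $\varphi_{R'}$ being a bijection with inverse the left shift $(r_0,r_1,\dots)\mapsto(r_1,r_2,\dots)$ (the relations $r_{k+1}^p = r_k$ supplying the $p$-power roots); hence $F = W(\varphi_{R'})$ is a ring automorphism of $W(R')$, so the first projection $\varprojlim_F W(R')\to W(R')$ is an isomorphism — this is the first isomorphism of (b). For the second, I would un-twist the Frobenius tower: since $R'$ is perfect each $\Phi_n$ is an automorphism of $W_{p^n}(R')$, and from $F = \Phi_n\circ\mathrm{res}$ one gets $\mathrm{res}\circ\Phi_{n+1}^{\,n+1} = \Phi_n^{\,n}\circ F$, exhibiting the automorphisms $\Phi_n^{\,n}$ as an isomorphism of inverse systems $(W_{p^n}(R'),F)_n\xrightarrow{\sim}(W_{p^n}(R'),\mathrm{res})_n$; passing to limits and using $W(R') = \varprojlim_n(W_{p^n}(R'),\mathrm{res})$ gives $\Warrow(R') \cong W(R')$, completing the chain $W(R')\cong\varprojlim_F W(R')\cong\Warrow(R')$.

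Apart from the central claim, everything is formal: the interchange of inverse limits, the un-twisting by powers of $\Phi_n$, and the perfectness of $R'$ all follow once the componentwise formula for $F$ in characteristic $p$ is in hand. (One could instead prove the single statement $\Warrow(S)\cong W(\varprojlim_\varphi S)$ for all characteristic-$p$ $S$, via the natural truncation-induced map $\varprojlim_F W(S)\to\varprojlim_n(W_{p^n}(S),F)$ and a check on $p^i$-th components, and read off (a) and (b) from it; but the explicit two-sided inverse seems the cleanest route.)
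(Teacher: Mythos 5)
Your proof is correct, and it takes essentially the same approach the paper intends: the paper's one-line justification, ``Both claims follow easily from the equality $F = W(\varphi)$,'' is precisely what you unpack — identifying $F$ on $\Warrow(S)$ with $\Warrow(\varphi_S)$ in characteristic $p$, exhibiting its explicit two-sided inverse given by truncation (the shift $\Sigma$), and then using perfectness of $R'$ to conclude part (b).
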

\begin{proof}
Both claims follow easily from the equality $F = W(\varphi)$.
\end{proof}

\begin{Thm} \label{theta map}
Let $p$ be a prime.
Let $R$ be a ring which is $p$-adically complete.
Let $R'$ be the inverse limit of $R/(p)$ under the $p$-power Frobenius endomorphism.
Form the composition 
\[
W(R') \cong \Warrow(R') \cong \Warrow(R/(p)) \cong \Warrow(R) \to W_{1}(R) = R
\]
in which the first isomorphism is the identification of Lemma~\ref{inverse limit for perfect}(b), the second isomorphism is the identification of Lemma~\ref{inverse limit for perfect}(a), the third map is the identification
of Corollary~\ref{inverse limit for complete}, and the fourth map is the projection onto the final term of the inverse system. The result is the usual \emph{theta map} $\theta: W(R') \to R$ of $p$-adic Hodge theory,
which intervenes in the construction of Fontaine's $p$-adic period rings; for example, see \cite[\S 4.4]{BC09}.   This ring homomorphism $\theta$ is surjective if 
$R$ is Witt-perfect at $p$.
\end{Thm}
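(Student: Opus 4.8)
The plan is to unwind the definition of $\theta$ step by step and then isolate the single assertion that requires the Witt-perfect hypothesis, namely surjectivity; the claim that $\theta$ is a ring homomorphism is automatic since it is a composition of ring homomorphisms (three isomorphisms and one projection). First I would make explicit what $\theta$ does to a Witt vector $\underline{x} = (x_1, x_p, x_{p^2}, \dots) \in W(R')$. Tracing through the three identifications, an element of $W(R')$ corresponds under Lemma~\ref{inverse limit for perfect}(b) to the compatible system $(\dots, F^2(\underline{x}), F(\underline{x}), \underline{x})$ in $\Warrow(R')$, which maps to the corresponding system in $\Warrow(R/(p))$ under the projection of Lemma~\ref{inverse limit for perfect}(a), then lifts uniquely to $\Warrow(R)$ by Corollary~\ref{inverse limit for complete}, and finally projects to the first Witt component of $W_1(R) = R$, i.e.\ to $w_1$ of the bottom term. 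Working out what this produces is exactly the standard computation: if $\overline{x}^{(i)} \in R/(p)$ denotes the $i$-th entry of the sequence in $R'$ (so that $(\overline{x}^{(i+1)})^p = \overline{x}^{(i)}$) and $\hat{x}^{(i)} \in R$ is any lift, then $\theta(\underline{x}) = \sum_{i \geq 0} p^i \lim_{j \to \infty} (\hat{x}_{p^i}^{(i+j)})^{p^j}$ in the convergent sense, which is precisely Fontaine's $\theta$; I would cite \cite[\S 4.4]{BC09} for the identification rather than reprove it, since the excerpt already does.

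For surjectivity under the Witt-perfect hypothesis, the cleanest route is reduction modulo $p$. By Corollary~\ref{inverse limit for complete} the composite $\Warrow(R) \to W_1(R) = R \to R/(p)$ is the same as $\Warrow(R) \cong \Warrow(R/(p)) \to W_1(R/(p)) = R/(p)$, i.e.\ $w_{p^0}$ on $\Warrow(R/(p))$. Since $R$ is $p$-adically complete (hence $p$-adically separated and complete), it suffices to show that $\theta \bmod p$ is surjective: then every element of $R$ is congruent mod $p$ to something in the image, the image contains $p \cdot R$ trivially once it contains anything (actually one argues by successive approximation), and completeness finishes the lift. So the real content is: the map $w_{p^0} : \Warrow(R/(p)) \to R/(p)$, equivalently $\varprojlim_F W_{p^n}(R/(p)) \to R/(p)$ sending a Frobenius-compatible system to the first component of its bottom term, is surjective when $R$ is Witt-perfect at $p$.

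The key step is therefore to produce, for a given $\overline{a} \in R/(p)$, a compatible system of finite Witt vectors whose bottom ghost (equivalently first) component is $\overline{a}$. This is where the Witt-perfect hypothesis enters through its consequence that the $p$-power map on $R/pR$ is surjective (recorded in the Witt-perfect definition and in Lemma~\ref{sequence of p-power roots}). Concretely, I would choose a sequence $\overline{a} = \overline{a}_0, \overline{a}_1, \overline{a}_2, \dots$ in $R/(p)$ with $\overline{a}_{j+1}^p = \overline{a}_j$; this exists by iterating surjectivity of Frobenius on $R/pR$. Then $(\overline{a}_j)_{j} \in R'$, and its Teichmüller representative $[(\overline{a}_j)_j] \in W(R')$ maps under $\theta \bmod p$ to $\overline{a}_0 = \overline{a}$, because $\theta$ of a Teichmüller element is the evident limit and mod $p$ only the $i=0$ term survives. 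That exhibits the surjectivity of $\theta \bmod p$, and completeness of $R$ upgrades it to surjectivity of $\theta$.

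The main obstacle I anticipate is not any single hard estimate but rather the bookkeeping in the first paragraph: correctly composing the three identifications of Lemma~\ref{inverse limit for perfect}(a,b) and Corollary~\ref{inverse limit for complete} so that the resulting formula genuinely matches Fontaine's $\theta$ as normalized in \cite{BC09}, including getting the indexing of the $p$-power roots and the direction of the Frobenius transition maps right. Once that identification is pinned down, the surjectivity argument via Teichmüller lifts of $p$-power-compatible sequences and $p$-adic completeness is short and formal.
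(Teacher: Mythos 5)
Your surjectivity argument is correct in outline but takes a genuinely different and somewhat longer route than the paper's. The paper uses the full strength of the Witt-perfect hypothesis directly: since each $F\colon W_{p^{n+1}}(R)\to W_{p^n}(R)$ is surjective, any $r\in R=W_1(R)$ can be lifted step by step along $F$ to a coherent system, i.e.\ to an element of $\Warrow(R)$, and the composite $W(R')\to R$ is therefore surjective with no further work. You instead pass to $R/(p)$, exploit only the weaker consequence that the $p$-power map is surjective on $R/pR$, produce a Teichm\"uller preimage mod $p$, and then invoke successive approximation. That does work, but the successive-approximation step needs $W(R')$ to be $p$-adically complete (or, equivalently, one needs to know the image of $\theta$ is closed); this is true because $R'$ is perfect of characteristic $p$, so the $V$-adic and $p$-adic filtrations on $W(R')$ agree, but you neither state nor justify it. The trade-off is that your argument establishes surjectivity under a hypothesis strictly weaker than Witt-perfectness, while the paper's one-line argument is shorter precisely because it exploits the stronger hypothesis available.

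On the first (and in the paper, actually longer) claim --- that the composite agrees with Fontaine's $\theta$ --- your proposal is thinner than the paper's treatment. You sketch how the three identifications should compose and then say you would ``cite \cite[\S 4.4]{BC09} rather than reprove it.'' But \cite{BC09} gives the classical formula $\theta\bigl(\sum p^i[x_{p^i}]\bigr)=\sum p^i\lim_n(\widetilde x_{p^i}^{(p^n)})^{p^n}$; it does not contain the inverse-limit-along-Frobenius construction $\Warrow$ used here, so there is nothing in \cite{BC09} that identifies the two maps for you. The paper supplies the missing computation (attributed to Liu): writing $\underline y\in\Warrow(R)$ for the image of $\underline x$, one checks that $y_{p^{-n-i}p^i}$ is a valid lift $\widetilde x_{p^i}^{(p^n)}$, rearranges the double sum, and recognizes the result as the $p^n$-th ghost component of $\underline y_{p^{-n}}$, which by Frobenius-compatibility is independent of $n$ and equals $y_1$. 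You should carry out some version of this calculation rather than defer it to a reference that cannot supply it.
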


\begin{proof}
The last statement, that $\theta$ is surjective if $R$ is Witt-perfect at $p$, is obvious:  By the definition of Witt-perfect, any element in $R = W_1(R)$ can be prolonged into an element of $\Warrow(R)$, hence the map $\Warrow(R) \rightarrow R$ is surjective, and hence so is the composite $W(R') \rightarrow R$.  

To prove the first statement, we need of course to recall the usual definition of $\theta: W(R') \rightarrow R$:
\[
\theta: \sum_{i = 0}^{\infty} p^i [x_{p^i}] \mapsto \sum_{i=0}^{\infty} p^i \lim_{n \rightarrow \infty} \left(\widetilde{x}_{p^i}^{(p^n)}\right)^{p^n},
\]
where $x_{p^i}^{(p^n)} \in R/p$ is the $p^n$-th component in the inverse system corresponding to $x_{p^i} \in R' = \varprojlim R/p$, and where $\widetilde{x}_{p^i}^{(p^n)} \in R$ is any lift.  The authors thank Ruochuan Liu for the following proof that this agrees with the description of $\theta$ from the statement of the theorem.

Let $\underline{y} \in \Warrow(R)$ denote the element corresponding to $\underline{x} \in W(R')$ under the above composition of isomorphisms.  Tracing through the definitions of these maps, we find that for our lift $\widetilde{x}_{p^i}^{(p^n)}$, we may take $y_{p^{-n-i}p^{i}}$.  Then we find
\begin{align*}
\sum_{i=0}^{\infty} p^i \lim_{n \rightarrow \infty} \left(\widetilde{x}_{p^i}^{(p^n)}\right)^{p^n}  &= \sum_{i=0}^{\infty} p^i \lim_{n \rightarrow \infty} \left(y_{p^{-n-i} p^{i}} \right)^{p^n} \\
&= \lim_{n \rightarrow \infty} \sum_{i = 0}^{n} p^i \left(y_{p^{-n-i} p^{i}} \right)^{p^n} \\
&= \lim_{n \rightarrow \infty} \sum_{i = 0}^{n} p^i \left( y_{p^{-n} p^i} \right)^{p^{n-i}}.
\end{align*}
This latter term is exactly the $p^n$-th ghost component of $\underline{y}_{p^{-n}}$.  Because the Witt vectors $y_{p^{-n}}$ form a compatible system under the Witt vector Frobenius, the $p^n$-th ghost component of $\underline{y}_{p^{-n}}$ is actually independent of $n$.  In particular, we may take $n = 0$, which corresponds to the projection $\underline{y} \mapsto y_1$, as required.
\end{proof}

\begin{Rmk} \label{only inverse Frobenius}
For general $R$, the ring $\Warrow(R)$ admits an inverse Frobenius map $F^{-1}$ taking $\underline{x} = (\underline{x}_{p^{-n}})_{n=0}^\infty$
to the inverse system with $\underline{x}_{p^{-n-1}}$ restricted to $W_{p^n}(R)$ for each $n \geq 1$.  In general, $\Warrow(R)$ does not admit a Frobenius map.  For example, let $R = \ZZ_p[\mu_{p^{\infty}}]$ and let $R'$ denote its $p$-adic completion.  Let $r \in R' \setminus R$.   Define $\underline{x}_1 \in W_1(R')$ to be the length one Witt vector $(r)$.  Now inductively assume we have a Witt vector $\underline{x}_{p^{-(n-1)}} \in W_{p^{n-1}}(R')$ with ghost components $(w_1, w_p, \ldots, w_{p^{n-2}}, r)$ and $w_{p^i} \in R$ for $0 \leq i \leq n-2$, i.e., all ghost components except the last are in $R$.  Because $R'$ is Witt-perfect at $p$ (see \cite[Example~4.9]{DK12}), we can produce a Witt vector $\underline{x}_{p^{-n}} \in W_{p^n}(R')$ such that $F(\underline{x}_{p^{-n}}) = \underline{x}_{p^{-(n-1)}}$.  By \cite[Corollary~2.6]{DK12}, we can further assume that the first ghost component of $\underline{x}_{p^{-n}}$ is in $R$.  Induction then yields an element $\underline{x} \in \Warrow(R')$, and by construction $\underline{y} := F^{-1}(\underline{x}) \in \Warrow(R)$. However  $F(\underline{y}) \not\in \Warrow(R)$: the only possibility for its $w_1$ ghost component is $r \not\in R$; there are no other possibilities because $R'$ is $p$-adically separated. 
\end{Rmk}

\section{Norms on rings}

Before continuing, we need to set some notation and terminology concerning norms on rings.

\begin{Def}
For $A \in \Ab$, a \emph{nonarchimedean seminorm} on $A$ is a function $\left| \bullet \right|: A \to [0, +\infty)$
such that $\left| 0 \right| = 0$ and satisfying the strong triangle inequality $\left|x + y\right| \leq \max\{\left|x\right|,\left|y\right|\}$ for all $x,y \in A$.  
We say two seminorms $\left| \bullet \right|_1, \left| \bullet \right|_2$ on $R$ are \emph{metrically equivalent} (or just \emph{equivalent} when this is unambiguous)
if there exist constants $c_1, c_2 > 0$ such that for all $x \in A$,
$\left| x \right|_1 \leq c_1 \left| x \right|_2$ and
$\left| x \right|_2 \leq c_2 \left| x \right|_1$.
A seminorm $\left| \bullet \right|$ is a \emph{norm} if the inverse image of $0$ is equal to $\{0\}$.
For example, any seminorm which is metrically equivalent to a norm is itself a norm.
\end{Def}

\begin{Def}
For $R \in \Ring$, a seminorm $\left| \bullet \right|$ on $R$ is \emph{submultiplicative} if for
all $x,y \in R$, $\left| xy \right| \leq \left| x \right| \left| y \right|$. Note that if there exists a constant
$c > 0$ such that $\left| xy \right| \leq c \left| x \right| \left| y \right|$, then $\left| \bullet \right|$
is metrically equivalent to the submultiplicative seminorm $c^{-1} \left| \bullet \right|$.

The seminorm $\left| \bullet \right|$ is \emph{power-multiplicative} if it is submultiplicative and
additionally for all $x \in R$, $\left| x^2 \right| \geq \left| x \right|^2$. This implies that
$\left| x^n \right| = \left| x \right|^n$ for all $x \in R$, $n \in \NN$; in particular, if $R$ is nonzero and the norm $\left| \bullet \right|$ is nonzero, then
$\left| 1 \right| = 1$. We say $\left| \bullet \right|$ is \emph{uniform} if it is metrically
equivalent to a power-multiplicative seminorm.


The seminorm $\left| \bullet \right|$ is \emph{multiplicative} if for all $x,y \in R$,
$\left| x y \right| = \left| x \right| \left| y \right|$.
\end{Def}

\begin{Def} \label{p-adic norm on a ring}
Let $A$ be a $p$-normal ring.
We define the \emph{$p$-adic seminorm} on $R := A_p$ as the 
power-multiplicative nonarchimedean seminorm $\left| \bullet \right|$ on $R$ computed as follows:
for $x \in R$, $|x|$ is the infimum of
$p^{-c/d}$ over all $c,d \in \ZZ$ with $d>0$ and $p^{-c} x^d \in A$.
Note that $A$ is contained in the subring of $R$ of elements of $p$-adic norm at most $1$, but these need not coincide.
On the other hand, every element of $R$ of norm strictly less than $1$ does belong to $A$.
\end{Def}

\begin{Def}
By an \emph{analytic field}, we will mean a field $K$
equipped with a multiplicative norm under which it is complete.
We say $K$ is \emph{nonarchimedean} if its norm is nonarchimedean. 
\end{Def}

We will also need Berkovich's generalization of the Gel'fand representation theorem in the proof of Proposition~\ref{power-multiplicative norm on W} below.
\begin{Thm} \label{Gelfand representation}
Let $R$ be a ring equipped with a power-multiplicative nonarchimedean norm. 
Then there exists an isometric embedding
of $R$ into a product of fields such that each field is equipped with a multiplicative nonarchimedean norm and the product
is equipped with the supremum norm.  
\end{Thm}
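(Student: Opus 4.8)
The plan is to deduce Berkovich's generalization of the Gel'fand–Mazur/Gel'fand representation theorem from the statement for Banach rings by an easy completion-and-rescaling argument, rather than re-proving it from scratch. The content we want is essentially Berkovich's result that the spectrum (in his sense) of a Banach ring is nonempty and that a commutative Banach ring with a power-multiplicative norm embeds isometrically into the product of the completed residue fields over its spectrum, with the target carrying the supremum norm. So the first step is to record the reduction to the complete case: given $R$ with a power-multiplicative nonarchimedean norm $|\bullet|$, pass to the separated completion $\widehat{R}$; since $|\bullet|$ is power-multiplicative it is in particular a genuine norm (the kernel is $\{0\}$), so $R \hookrightarrow \widehat{R}$ is an isometric inclusion, and it suffices to produce the desired embedding for $\widehat{R}$ and then restrict.

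**Next I would** invoke Berkovich's machinery for the complete ring. For a commutative Banach ring $\widehat{R}$, the Berkovich spectrum $\mathcal{M}(\widehat{R})$ consists of the bounded multiplicative seminorms on $\widehat{R}$; Berkovich proves this space is nonempty and compact, and for each point $x \in \mathcal{M}(\widehat{R})$ one obtains a complete residue field $\mathcal{H}(x)$ with a multiplicative (here nonarchimedean, since our norm is) norm, together with an evaluation map $\widehat{R} \to \mathcal{H}(x)$. Assembling these gives a ring homomorphism
\[
\widehat{R} \longrightarrow \prod_{x \in \mathcal{M}(\widehat{R})} \mathcal{H}(x)
\]
into the product equipped with the supremum norm $\|(a_x)\| = \sup_x |a_x|_{\mathcal{H}(x)}$. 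By the very definition of the product norm, this map is norm-nonincreasing and its ``norm'' at $f \in \widehat{R}$ is exactly the spectral radius $\rho(f) = \sup_{x} |f(x)|$. The crux is then the identity $\rho(f) = |f|$ for a power-multiplicative norm, i.e., the spectral radius formula $\rho(f) = \lim_n |f^n|^{1/n}$ combined with power-multiplicativity giving $|f^n|^{1/n} = |f|$; this is where Berkovich's nonemptiness-of-spectrum input is genuinely used, and it is precisely the step that forces us to work over the complete ring. With that identity in hand the map is an isometry, and since each $\mathcal{H}(x)$ is an analytic field with nonarchimedean multiplicative norm, the theorem follows for $\widehat{R}$ and hence for $R$.

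**The main obstacle** is not really a gap to be filled here but a citation to be pinned down: the theorem as stated is exactly \cite[Theorem~1.3.1 and its corollaries]{Ber90} (Berkovich, \emph{Spectral Theory and Analytic Geometry over Non-Archimedean Fields}), together with the spectral radius formula \cite[Theorem~1.3.1]{Ber90}, and the only thing we contribute is the trivial observation that one may first complete. So in practice the ``proof'' consists of: (i) note $|\bullet|$ power-multiplicative $\Rightarrow$ it is a norm and $R \hookrightarrow \widehat{R}$ is isometric; (ii) cite Berkovich for the existence of $\mathcal{M}(\widehat{R})$, the fields $\mathcal{H}(x)$, and the evaluation homomorphism; (iii) cite the spectral radius formula and use power-multiplicativity to conclude $\|f\|_{\sup} = |f|$, so that the evaluation homomorphism is an isometric embedding into a supremum-normed product of nonarchimedean analytic fields; (iv) restrict to $R$. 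I would write this up in three or four sentences with the references, since re-deriving Berkovich's spectrum theory in this paper would be wildly disproportionate.
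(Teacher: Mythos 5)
Your proposal matches the paper's proof, which is exactly the two-step reduction you describe: pass to the completion $\widehat{R}$ (an isometric embedding since $|\bullet|$ is a norm) and then invoke \cite[Theorem~1.3.1]{Ber90} for the Berkovich spectrum, the evaluation map into $\prod_x \mathcal{H}(x)$, and the spectral-radius identity $\sup_x |f(x)| = \lim_n |f^n|^{1/n} = |f|$ coming from power-multiplicativity. One small slip: power-multiplicativity of a seminorm does \emph{not} by itself force the kernel to be $\{0\}$ (the zero seminorm is power-multiplicative), so your parenthetical justification for $R \hookrightarrow \widehat{R}$ being injective is wrong as stated; but since the theorem's hypothesis already says ``norm,'' injectivity holds anyway and the argument is unaffected.
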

\begin{proof}
We may assume that $R$ is complete, in which case see \cite[Theorem~1.3.1]{Ber90}.
\end{proof}

\section{Norms on Witt vectors}
\label{sec:norms}

\begin{Hyp}
Throughout \S\ref{sec:norms}, 
suppose that $R \in \Ring$ is equipped with a nonarchimedean submultiplicative seminorm $\left|\bullet \right|$.
\end{Hyp}

\begin{Def}
We define the function $\left|\bullet \right|_{W}$ on $W(R)$ by the formula
\[
\left|(x_{p^n})_{n \geq 0}\right|_{W} = \sup\left\{ \left|x_{p^n}\right|^{1/p^n}: n \geq 0\right\}.
\]
Similarly for $(x_{p^n})_{0 \leq n \leq r} \in W_{p^r}(R)$, we put 
\[
\left|(x_{p^n})_{0 \leq n \leq r} \right|_W = \sup \left\{ \left| x_{p^n} \right|^{1/p^n} : 0 \leq n \leq r\right\}.
\]
\end{Def}

\begin{Rmk} \label{trivial bounds for W}
We will make frequent use of two trivial observations. On one hand, for any $n \geq 0$,
any ring $R$, and any $\underline{x} \in W(R)$ projecting to $\underline{y} \in W_{p^n}(R)$, we have
$\left|\underline{x}\right|_{W} \geq \left|\underline{y}\right|_{W}$. On the other hand, any $\underline{y} \in W_{p^n}(R)$
lifts to some $\underline{x} \in W(R)$ with $\left|\underline{x}\right|_{W} = \left|\underline{y}\right|_{W}$,
e.g., by padding with zeroes.
\end{Rmk}

\begin{Prop} \label{Frobenius Verschiebung bound infinite case} Let $\underline{x}, \underline{y} \in W(R)$.  We have the following:
\begin{enumerate}
\item[(a)] $\left|\underline{x} + \underline{y}\right|_{W} \leq \max\{ \left|\underline{x}\right|_{W}, \left|\underline{y}\right|_{W}\}$;
\item[(b)] $\left|\underline{x}  \underline{y}\right|_{W} \leq \left|\underline{x}\right|_{W} \left|\underline{y}\right|_{W}$;
\item[(c)] $\left|F(\underline{x})\right|_W \leq \left| \underline{x}\right|_{W}^p$;
\item[(d)] $\left| V(\underline{x})\right|_W = \left| \underline{x} \right|^{1/p}$.
\end{enumerate}
\end{Prop}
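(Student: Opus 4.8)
The plan is to reduce each assertion to a statement about finitely many Witt components, then invoke the homogeneity facts recorded in Corollary~\ref{homogeneous corollary} together with the submultiplicativity and strong triangle inequality of the seminorm on $R$. Throughout, the key observation is that $\left|\bullet\right|_W$ is a supremum over the $n$-th components, each normalized by a $p^n$-th root; so it suffices to bound $\left|z_{p^n}\right|^{1/p^n}$ for each individual component $z_{p^n}$ of the relevant Witt vector $\underline{z}$ (be it $\underline{x}+\underline{y}$, $\underline{x}\underline{y}$, $F(\underline{x})$, or $V(\underline{x})$).

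For (a): by Corollary~\ref{homogeneous corollary}(a), the $p^n$-th component of $\underline{x}+\underline{y}$ is a $\ZZ$-polynomial in the variables $x_{p^j}, y_{p^j}$ which is homogeneous of weighted degree $p^n$, where $x_{p^j}$ and $y_{p^j}$ carry weight $p^j$. Each monomial is thus a product of variables whose weights sum to $p^n$; applying submultiplicativity of $\left|\bullet\right|$ to such a monomial and noting that a variable of weight $p^j$ contributes at most $\max\{\left|\underline{x}\right|_W,\left|\underline{y}\right|_W\}^{p^j}$ (since $\left|x_{p^j}\right| \le \left|\underline{x}\right|_W^{p^j}$ by definition, and likewise for $y$), we see the monomial has seminorm at most $M^{p^n}$ with $M := \max\{\left|\underline{x}\right|_W,\left|\underline{y}\right|_W\}$. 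The strong triangle inequality then gives $\left|z_{p^n}\right| \le M^{p^n}$, i.e.\ $\left|z_{p^n}\right|^{1/p^n}\le M$; taking the supremum over $n$ yields (a). Part (b) is identical, using Corollary~\ref{homogeneous corollary}(b$'$): the $p^n$-th component of $\underline{x}\underline{y}$ is homogeneous of weighted degree $p^n$ separately in the $x$-variables and in the $y$-variables, so each monomial splits into an $x$-part of weighted degree $p^n$ and a $y$-part of weighted degree $p^n$, contributing at most $\left|\underline{x}\right|_W^{p^n}\left|\underline{y}\right|_W^{p^n}$; again the strong triangle inequality closes it. Part (c) uses Corollary~\ref{homogeneous corollary}(c) in exactly the same way: the $p^n$-th component of $F(\underline{x})$ is homogeneous of weighted degree $p^{n+1}$ in the $x$-variables, so each monomial has seminorm at most $\left|\underline{x}\right|_W^{p^{n+1}}$, whence $\left|F(\underline{x})_{p^n}\right|^{1/p^n}\le \left|\underline{x}\right|_W^{p}$ and the supremum gives (c).

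For (d), note $V(\underline{x}) = (0, x_1, x_p, x_{p^2},\dots)$, so its $p^0$-component is $0$ and its $p^{n+1}$-component is $x_{p^n}$. Hence
\[
\left|V(\underline{x})\right|_W = \sup_{n\ge 0}\left|x_{p^n}\right|^{1/p^{n+1}} = \left(\sup_{n\ge 0}\left|x_{p^n}\right|^{1/p^n}\right)^{1/p} = \left|\underline{x}\right|_W^{1/p},
\]
which is the asserted equality (this is the one part that is an equality rather than an inequality, and it requires no homogeneity input, just the explicit shift formula for $V$). The same computations go through verbatim for the truncated seminorm on $W_{p^r}(R)$, so no separate argument is needed there. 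I do not expect a genuine obstacle here; the only point requiring a little care is bookkeeping the weighted-homogeneity argument so that a monomial of weighted degree $d$ in variables of the indicated weights really does get seminorm bound $\left|\underline{x}\right|_W^{d}$ — this is where submultiplicativity of $\left|\bullet\right|$ (as opposed to mere subadditivity) is essential, and where one must remember that $\left|\bullet\right|_W$ is already built from $p^n$-th roots so the exponents match up cleanly.
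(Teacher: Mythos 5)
Your proof is correct and follows the same route as the paper: parts (a)--(c) reduce to the weighted-homogeneity statements of Corollary~\ref{homogeneous corollary} combined with submultiplicativity and the strong triangle inequality, and part (d) is read directly off the shift formula for $V$. If anything your treatment of (b) is slightly more precise than the paper's wording --- the $p^i$-th component of $\underline{x}\underline{y}$ is not literally a product of an $x$-polynomial and a $y$-polynomial, but rather a polynomial each of whose monomials factors that way, which is exactly what you say --- so nothing is missing.
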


\begin{proof}
We first prove (a).  Let $M = \max\{ \left|\underline{x}\right|_{W}, \left|\underline{y}\right|_{W}\}$.  We are done if we show that the $p^i$-th component of $\underline{x} + \underline{y}$ has norm at most $M^{p^i}$.  We know the same is true for the components of $\underline{x}$ and $\underline{y}$ individually, and so we are done by Corollary \ref{homogeneous corollary}, together with the fact that the seminorm on $R$ is submultiplicative.   (Because the coefficients are in $\ZZ$, and our seminorm is nonarchimedean, the coefficients can be ignored for this calculation.)

The proof of (b) is similar. By Corollary \ref{homogeneous corollary} again, the $p^i$-th component of $\underline{x} \underline{y}$ is a product of a homogeneous degree $p^i$-polynomial in the $x$-variables with a homogeneous degree $p^i$-polynomial in the $y$-variables (with our usual weighting).  We are now finished because the seminorm on $R$ is nonarchimedean and submultiplicative.  

One proves (c) similarly.  The proof of (d), on the other hand,
is apparent from the direct expression of Verschiebung in terms of Witt components given in Theorem~\ref{construct Witt vectors}(c).
\end{proof}

Using Remark \ref{trivial bounds for W}, this proposition easily adapts to finite length Witt vectors. 

\begin{Cor} \label{Frobenius Verschiebung bound} Let $\underline{x}, \underline{y} \in W_{p^n}(R)$.  We have the following:
\begin{enumerate}
\item[(a)] $\left|\underline{x} + \underline{y}\right|_{W} \leq \max\{ \left|\underline{x}\right|_{W}, \left|\underline{y}\right|_{W}\}$;
\item[(b)] $\left|\underline{x}  \underline{y}\right|_{W} \leq \left|\underline{x}\right|_{W} \left|\underline{y}\right|_{W}$;
\item[(c)] $\left| F(\underline{x}) \right|_W \leq \left| \underline{x} \right|_W^p$;
\item[(d)] $\left| V(\underline{x})\right|_W = \left| \underline{x} \right|_W^{1/p}$.
\end{enumerate}
\end{Cor}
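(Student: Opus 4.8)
The plan is to deduce each of (a)--(d) from the corresponding assertion of Proposition~\ref{Frobenius Verschiebung bound infinite case} about infinite-length Witt vectors, using the two elementary facts recorded in Remark~\ref{trivial bounds for W}: the truncation map $W(R) \to W_{p^m}(R)$ never increases $\left|\bullet\right|_W$, and every element of $W_{p^m}(R)$ lifts to an element of $W(R)$ of equal $\left|\bullet\right|_W$-value (for instance by padding with zeroes). I would also use, without further comment, that truncation is compatible with addition, multiplication, $F$, and $V$, which is immediate from the universality of the Witt vector operations (the $p^i$-th component of each operation is a fixed polynomial in the components indexed by $1, p, \dots, p^i$).

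For (a), (b), (c): choose lifts $\underline{x}', \underline{y}' \in W(R)$ of $\underline{x}, \underline{y}$ with $\left|\underline{x}'\right|_W = \left|\underline{x}\right|_W$ and $\left|\underline{y}'\right|_W = \left|\underline{y}\right|_W$. Then $\underline{x} + \underline{y}$, $\underline{x}\,\underline{y}$, and $F(\underline{x})$ are the images under the appropriate truncation map of $\underline{x}' + \underline{y}'$, $\underline{x}'\,\underline{y}'$, and $F(\underline{x}')$, so combining the truncation bound (first inequality below) with Proposition~\ref{Frobenius Verschiebung bound infinite case} (second inequality) gives
\begin{align*}
\left|\underline{x} + \underline{y}\right|_W &\leq \left|\underline{x}' + \underline{y}'\right|_W \leq \max\{\left|\underline{x}\right|_W, \left|\underline{y}\right|_W\}, \\
\left|\underline{x}\,\underline{y}\right|_W &\leq \left|\underline{x}'\,\underline{y}'\right|_W \leq \left|\underline{x}\right|_W \left|\underline{y}\right|_W, \\
\left|F(\underline{x})\right|_W &\leq \left|F(\underline{x}')\right|_W \leq \left|\underline{x}\right|_W^p.
\end{align*}

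For (d), the same argument yields $\left|V(\underline{x})\right|_W \leq \left|\underline{x}\right|_W^{1/p}$, and the reverse inequality follows by taking $\underline{x}'$ to be the zero-padded lift, since then $V(\underline{x}')$ is the zero-padded lift of $V(\underline{x})$ and hence has the same norm. More cleanly, one simply reads (d) off the formula $V(x_1, \dots, x_{p^n}) = (0, x_1, \dots, x_{p^n})$ of Theorem~\ref{construct Witt vectors}(c): the supremum defining $\left|V(\underline{x})\right|_W$ equals $\sup_{0 \leq i \leq n} \left|x_{p^i}\right|^{1/p^{i+1}} = \bigl(\sup_{0 \leq i \leq n} \left|x_{p^i}\right|^{1/p^i}\bigr)^{1/p} = \left|\underline{x}\right|_W^{1/p}$, exactly as in the proof of Proposition~\ref{Frobenius Verschiebung bound infinite case}(d). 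I do not anticipate any genuine obstacle here; the only point requiring a little care is the bookkeeping of which truncation map $W(R) \to W_{p^m}(R)$ is in play in part (c), where $F$ shortens the length by one.
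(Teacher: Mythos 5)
Your argument is exactly the paper's: lift to infinite-length Witt vectors without increasing the norm via Remark~\ref{trivial bounds for W}, apply Proposition~\ref{Frobenius Verschiebung bound infinite case}, and use compatibility of truncation with the Witt operations; the paper writes out precisely this chain for part (b) and declares the rest analogous. Your extra care with the Frobenius length shift in (c) and the direct component formula for (d) matches the spirit of the paper's remark that (d) is ``apparent from the direct expression of Verschiebung.''
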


\begin{proof}
These trivially follow from Proposition~\ref{Frobenius Verschiebung bound infinite case} by using Remark \ref{trivial bounds for W}.  For example, to prove (b), let $\underline{x'}, \underline{y'}$ denote lifts to infinite length Witt vectors having the same norms as $\underline{x}$, $\underline{y}$.  Then we have
\[
\left|\underline{x} \underline{y}\right|_W \leq \left|\underline{x'} \underline{y'}\right|_W \leq \left|\underline{x'} \right|_W \left|\underline{y'}\right|_W = \left|\underline{x} \right|_W \left|\underline{y}\right|_W.
\]
\end{proof}

\begin{Prop} \label{power-multiplicative norm on W}
Assume the nonarchimedean submultiplicative seminorm on $R$ is a power-multiplicative norm.  Then the submultiplicative seminorm $\left| \bullet \right|_W$ on the subring of $\underline{x} \in W(R)$ for which $\left| \underline{x} \right|_W < +\infty$
is power-multiplicative.
\end{Prop}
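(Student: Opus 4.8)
The plan is to reduce, via Berkovich's theorem, to the case of a multiplicative norm on a field, where a ``leading Witt component'' argument applies. Since Proposition~\ref{Frobenius Verschiebung bound infinite case}(a),(b) already give that $\left|\bullet\right|_W$ is a submultiplicative nonarchimedean seminorm on $A := \{\underline{x} : \left|\underline{x}\right|_W < +\infty\}$, by the definition of \emph{power-multiplicative} it suffices to prove $\left|\underline{x}^2\right|_W \geq \left|\underline{x}\right|_W^2$ for all $\underline{x} \in A$. I would begin by applying Theorem~\ref{Gelfand representation} to embed $R$ isometrically into a supremum-normed product $\prod_\lambda K_\lambda$ of fields with multiplicative nonarchimedean norms. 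Because the Witt-vector functor commutes with products and, under $W(\prod_\lambda K_\lambda) \cong \prod_\lambda W(K_\lambda)$, the norm $\left|\bullet\right|_W$ on the product is the supremum of the norms $\left|\bullet\right|_W$ on the factors, this gives an isometric ring embedding of $A$ into $\prod_\lambda \{\underline{x} \in W(K_\lambda) : \left|\underline{x}\right|_W < +\infty\}$. A supremum of power-multiplicative seminorms is power-multiplicative, and power-multiplicativity passes to subrings, so it is enough to treat $R = K$ a field with a multiplicative norm, and, after completing, $K$ an analytic field.

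Next, I would normalize. For $c \in K^\times$ one has $[c] \in W(K)^\times$ with $[c]^{-1} = [c^{-1}]$, and a short induction on ghost components shows the $p^n$-th Witt component of $[c]\underline{x}$ is $c^{p^n} x_{p^n}$; hence $\left|[c]\underline{x}\right|_W = |c|\,\left|\underline{x}\right|_W$. Enlarging $K$ to an analytic field whose value group contains $\left|\underline{x}\right|_W$ (which does not affect the statement), I may assume $\left|\underline{x}\right|_W = 1$, so $\underline{x} \in W(\calO_K)$ and it remains to show $\left|\underline{x}^2\right|_W = 1$. When $|p| = 1$ this is immediate: the ghost map $w\colon W(K) \to K^{\Ptyp}$ is then a ring isomorphism (solve the ghost equations recursively, $p$ being a unit), the same recursion and the strong triangle inequality give $\left|\underline{x}\right|_W = \sup_n |w_{p^n}(\underline{x})|^{1/p^n}$, and under the isomorphism $\left|\bullet\right|_W$ becomes $\underline{w} \mapsto \sup_n |w_{p^n}|^{1/p^n}$, which satisfies $\|\underline{w}^2\| = \|\underline{w}\|^2$ since $|\bullet|$ is multiplicative and $t \mapsto t^2$ is monotone. (Note $\left|\bullet\right|_W$ itself need not be multiplicative here.)

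For $|p| < 1$ the residue field $k$ of $\calO_K$ has characteristic $p$, so $W(k)$ is an integral domain (it embeds into the Witt vectors of a perfect field). If the supremum defining $\left|\underline{x}\right|_W = 1$ is \emph{attained}, then $\underline{x}$ has nonzero image $\bar{\underline{x}}$ in $W(k) = W(\calO_K)/W(\mathfrak{m}_K)$; since $W(k)$ is a domain, $\bar{\underline{x}}^{\,2} \neq 0$, so some Witt component of $\underline{x}^2$ is a unit in $\calO_K$, whence $\left|\underline{x}^2\right|_W = 1$. More generally, this is an instance of the principle that the first-Witt-component map $W(\calO_K) \to \calO_K$ — and its composites with powers of $F$, which compute the ghost components — is a norm-nonincreasing ring homomorphism, which settles any $\underline{x}$ whose norm is attained by one of its Witt components.

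The hard part will be the remaining case: $|p| < 1$ with the supremum defining $\left|\underline{x}\right|_W$ \emph{not} attained. There the residue-field and leading-component arguments see nothing, and one must rule out persistent cancellation among the components of $\underline{x}^2$. The approach I would pursue is: observe that it suffices to prove $\left|\underline{x}^{p^k}\right|_W = 1$ for all large $k$ (this forces $\left|\underline{x}^2\right|_W = 1$, since $1 = \left|\underline{x}^{p^k}\right|_W \leq \left|\underline{x}^2\right|_W\,\left|\underline{x}\right|_W^{p^k-2} = \left|\underline{x}^2\right|_W$); use the congruence $\underline{x}^{p^k} \equiv F^k(\underline{x}) \pmod{p}$; and make the resulting leading-term estimate uniform in $k$ after passing to the perfect closure in equal characteristic — where Teichmüller coordinates are available and $\left|\bullet\right|_W$ is unchanged under multiplication by a power of $p$ — and to a sufficiently large (e.g.\ spherically complete) extension in mixed characteristic, so that a near-maximal component can be brought into first position in the limit. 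Carrying out this approximation argument carefully is, in my view, the technical heart of the proof; the Berkovich reduction and the attained/$|p|=1$ cases above are comparatively routine.
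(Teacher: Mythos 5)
Your overall route --- Berkovich reduction to an analytic field $K$, Teichm\"uller rescaling to $\left|\underline{x}\right|_W = 1$, then a ``leading component'' argument --- is the paper's route, and your residue-field step (that $W(k)$ is a domain, so the image of $\underline{x}^2$ in $W(k)$ is nonzero when that of $\underline{x}$ is) is exactly the paper's. Your separate treatment of $|p| = 1$ via the ghost isomorphism is a pleasant variant; the paper instead takes $K$, and hence $k$, algebraically closed and folds $|p|=1$ into the residue-field argument using that $W(k)$ is then a product of copies of $k$.

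There is, however, a genuine gap, and you flag it yourself: the case $|p| < 1$ with the supremum defining $\left|\underline{x}\right|_W$ not attained is only sketched, not completed. The idea you are missing, which the paper uses to make this case \emph{never arise}, is to reduce first to the case that $\underline{x}$ has only finitely many nonzero Witt components (by successive approximation) \emph{before} rescaling. With finitely many nonzero components, $\left|\underline{x}\right|_W = \max_n \left|x_{p^n}\right|^{1/p^n}$ is automatically attained; and it lies in the divisible value group $\left|K^\times\right|$ once $K$ is taken algebraically closed, so the rescaling $[r]$ with $\left|r\right| = \left|\underline{x}\right|_W^{-1}$ exists and puts $[r]\underline{x}$ in $W(\mathcal{O}_K)$ with a unit Witt component. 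At that point the residue-field argument you already have closes the proof, and your ``technical heart'' is bypassed rather than analyzed head-on. Note that your ``enlarge $K$ so the value group contains $\left|\underline{x}\right|_W$'' only addresses the existence of $[r]$, not the attainment of the supremum, which is the real obstruction.
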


\begin{proof}
By Theorem \ref{Gelfand representation}, there exists an isometric embedding of $R$ into a product of fields equipped with \emph{multiplicative} norms, and such that the norm on the product is the supremum of the norms on the factors.  It is clear that the Witt vector functor commutes with (possibly infinite) products of rings, so it suffices to prove the result in the case $R = K$, a nonarchimedean analytic field.  We further assume $K$ is algebraically closed (and hence so too is its residue field $k$).  

Let $\underline{x} \in W(K)$; we want to show that $\left| \underline{x} \right|_W^2 \leq \left| \underline{x}^2 \right|_W$.  We may reduce to the case where $\underline{x}$ has only finitely many nonzero components
(by successive approximations).  From the formula 
\[
[r] \cdot (x_1, x_p, \ldots) = (rx_1, r^p x_p, \ldots),
\]
and from the fact that the norm on $K$ is multiplicative, we see that 
\[
\left|[r] \cdot \underline{x} \right|_W = \left| r\right| \left| \underline{x} \right|_W. 
\]
In this way (using that $\underline{x}$ has only finitely many nonzero components), we can rescale to assume that $|\underline{x}|_W = 1$.  Thus we are reduced to proving that if $\underline{x} \in W(K)$ has $\left|\underline{x}\right|_W = 1$, then $\left| \underline{x}^2 \right|_W = 1$.  We know $\left| \underline{x}^2 \right|_W \leq 1$ by submultiplicativity.  If the algebraically closed residue field $k$ has characteristic different from $p$, then the ring $W(k)$ is isomorphic to a product of copies  of $k$, and in particular the projection of $\underline{x}^2$ is nonzero.  If $k$ has characteristic $p$, then from the classical theory of Witt vectors over perfect characteristic $p$ fields, we know $W(k)$ is reduced, and hence the projection of $\underline{x}^2$ is again nonzero.  This implies $\left| \underline{x}^2 \right|_W = 1$, as required.
\end{proof}

The Witt-perfect condition translates into a condition on norms of Witt vectors as follows.  Note that this result concerns $p$-typical Witt vectors evaluated on rings in which $p$ is a unit.  Typically such Witt vectors are not useful, but we are able to make meaningful statements by placing restrictions on their norms.

\begin{Lem} \label{Witt-perfect to norm-perfect}
Let $A$ be a ring which is $p$-normal
and Witt-perfect at $p$. Equip $R := A_p$ with the $p$-adic seminorm. 
Then for every $\underline{x} \in W_{p^n}(R)$, there exists $\underline{y} \in W_{p^{n+1}}(R)$ with $F(\underline{y}) = \underline{x}$ and $\left| \underline{y} \right|_{W}^p \leq \left| \underline{x} \right|_{W}.$
\end{Lem}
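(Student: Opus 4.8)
\emph{Plan.} The idea is to build $\underline{y} = (y_1, y_p, \dots, y_{p^{n+1}}) \in W_{p^{n+1}}(R)$ one component at a time. By Lemma~\ref{big Frobenius components lemma}, the relation $F(\underline{y}) = \underline{x}$ is equivalent to the system
\[
x_{p^i} = y_{p^i}^p + p\,y_{p^{i+1}} + p\,f_{p^i}(y_1,\dots,y_{p^i}) \qquad (0 \le i \le n),
\]
with $f_{p^i}$ an integral polynomial; since $p$ is a unit in $R$ this determines $y_{p^{i+1}}$ from $y_1,\dots,y_{p^i}$ via $y_{p^{i+1}} = p^{-1}\bigl(x_{p^i} - y_{p^i}^p - p\,f_{p^i}(y_1,\dots,y_{p^i})\bigr)$, so that $y_1 \in R$ is the only free choice. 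Writing $c = \left|\underline{x}\right|_W$, the desired bound $\left|\underline{y}\right|_W^p \le c$ is equivalent to $\left|y_{p^i}\right| \le c^{p^{i-1}}$ for all $i$. (Note that the reverse inequality $\left|\underline{y}\right|_W^p \ge c$ holds automatically for \emph{any} preimage, by Corollary~\ref{Frobenius Verschiebung bound}(c); so the content of the lemma is that $F$ admits a norm-optimal section.)

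Feeding the target bounds $\left|y_{p^j}\right| \le c^{p^{j-1}}$ for $j \le i$ into the formula for $y_{p^{i+1}}$, and using the weighted homogeneity of $f_{p^i}$ (degree $p^{i+1}$, Corollary~\ref{homogeneous corollary}(c)) together with submultiplicativity of $\left|\bullet\right|$, one finds $\left|y_{p^i}^p\right| \le c^{p^i}$, $\left|p\,f_{p^i}(y_1,\dots,y_{p^i})\right| \le p^{-1}c^{p^i}$, and $\left|x_{p^i}\right| \le c^{p^i}$, whence $\left|y_{p^{i+1}}\right| \le p\,c^{p^i}$. This crude estimate is off by the factor $p$, and removing it is the whole point. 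What must be arranged is the sharpened congruence $\left|x_{p^i} - y_{p^i}^p\right| \le p^{-1}c^{p^i}$ at each stage; since the remaining summand $p\,f_{p^i}$ already has norm $\le p^{-1}c^{p^i}$, this then yields $\left|y_{p^{i+1}}\right| \le c^{p^i}$.

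To achieve the sharpened congruences one uses the fine structure of Witt-perfect rings (after harmlessly enlarging $A$ to $A_0 := \{z \in R : \left|z\right| \le 1\}$, which is again $p$-normal and Witt-perfect at $p$, as in the proof of Theorem~\ref{almost purity part 1}, and carries the same $R$ and the same $p$-adic seminorm). Lemma~\ref{power ideals} identifies the ideals $\{z : z^{p^k} \in p^mA_0\} = (x_k^m, p)$ of elements of $p$-adic norm $\le p^{-m/p^k}$ and shows these are cofinal among $p$-ideals; moreover surjectivity of the $p$-power map modulo $p$ descends to surjectivity modulo each of these ideals. This lets one extract $p$-th roots of prescribed size modulo such power ideals, and, via a descending induction on the exponent $m$ in the style of the proof of Lemma~\ref{power ideals} itself, select $y_1$ so that the forced higher components $y_{p^{i+1}}$ inherit the bound $\left|y_{p^{i+1}}\right| \le c^{p^i}$. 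The hard part is the coordination across the $n+1$ stages: the single parameter $y_1$ must make all of the sharpened congruences $\left|x_{p^i} - y_{p^i}^p\right| \le p^{-1}c^{p^i}$ hold simultaneously. This succeeds because these congruences are not independent --- the Frobenius relations among the ghost components of $\underline{x}$ make them compatible, propagating one suitably chosen $p$-th root of $x_1$ through the whole tuple.
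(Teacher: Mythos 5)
There is a genuine gap at the crucial step, and I do not think the plan as outlined converges to a proof.

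You correctly identify the problem structure: since $p$ is invertible in $R$, a Frobenius preimage of $\underline{x}$ is entirely determined by its first component $y_1 \in R$, and the bound $\left|\underline{y}\right|_W^p \leq c$ unwinds to the sequence of sharpened congruences $\left|x_{p^i} - y_{p^i}^p\right| \leq p^{-1}c^{p^i}$. But you then assert, without argument, that a single choice of $y_1$ can be made so that all of these hold simultaneously, on the grounds that ``the Frobenius relations among the ghost components make them compatible.'' That phrase does not constitute a proof: the fact that $y_p, y_{p^2}, \dots$ are \emph{determined} by $y_1$ is exactly the reason the congruences are \emph{not} independently satisfiable by fiat --- one must exhibit a $y_1$ for which all the downstream forced values stay in bounds. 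In the case $c=1$ with $\underline{x} \in W_{p^n}(A_0)$, the existence of such a $y_1$ is precisely the surjectivity of $F\colon W_{p^{n+1}}(A_0) \to W_{p^n}(A_0)$, i.e.\ the Witt-perfectness hypothesis itself; so an argument along the lines you sketch would essentially be re-proving Witt-perfectness from scratch rather than invoking it. And for general $c$ the situation is worse: multiplying $\underline{x}$ by a Teichm\"uller element $[p^k]$ rescales $c$ only in discrete jumps of size $p^p$, which is why bringing $c$ below $1$ and applying Witt-perfectness na\"ively only yields the weak bound $\left|\underline{y}\right|_W \leq 1$ rather than $\left|\underline{y}\right|_W^p \leq c$, losing a factor that your method does not account for. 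The appeals to Lemma~\ref{power ideals} and to a ``descending induction on the exponent $m$'' gesture at the right tools but do not supply the needed coordination.

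The paper handles exactly this difficulty by a different device. It first reduces, by multiplying by $[p^k]$, to $c > 1$; then it chooses the auxiliary elements $x_n$ from Lemma~\ref{sequence of p-power roots} so that multiplication by the Teichm\"uller lift $[x_n^{pm}]$ can shift the norm of $\underline{x}$ continuously (since $\left|x_n\right| = p^{-1/p^n}$) into the open interval $(p^{-1/p^{s+1}}, 1)$. With $\underline{x}' := [x_n^{pm}]\underline{x}$ now in $W_{p^s}(A)$, Witt-perfectness gives a lift $\underline{y}' \in W_{p^{s+1}}(A)$, and the fine control $\left|x'_{p^j} - (y'_{p^j})^p\right| \leq p^{-1}$ from Lemma~\ref{big Frobenius components lemma} yields the sharp norm bound $\left|\underline{y}'\right|_W^p \leq p^{-m/p^{n-1}}c$. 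Dividing back out by $[x_n^m]$ is only approximate, so one picks $\underline{y}_0$ close to $[x_n^{-m}]\underline{y}'$, estimates $\left|F(\underline{y}_0)-\underline{x}\right|_W < 1$, and uses Witt-perfectness a second time to lift the error $F(\underline{y}_0)-\underline{x}$ with controlled norm; the difference $\underline{y}_0 - \underline{y}_1$ is then the required preimage. The two invocations of Witt-perfectness, the rescaling by Teichm\"uller elements of small fractional valuation, and the error-correction step are what replace the ``coordination across stages'' that your sketch leaves as an assertion.
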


\begin{proof}
For notational convenience, we will replace $n$ by $s$ in the proof. So let us choose an $s \geq 0$, an element $\underline{x} \in W_{p^s}(R)$,
and put $c := \left| \underline{x} \right|_{W}$.
We wish to exhibit $\underline{y} \in W_{p^{s+1}}(R)$ with $F(\underline{y}) = \underline{x}$
and $\left| \underline{y} \right|_{W}^{p} \leq c$.
Note that for any $n \in \ZZ$, $F([p^n] \underline{y}) = [p^{p n}] \underline{x}$; consequently,
there is no loss of generality in assuming that $c > 1$.

Choose $x_1, x_2,\dots \in A$ as in Lemma~\ref{sequence of p-power roots}.
Then for any $n\geq 1$ and any $y \in R$, $\left| x_n y \right| = p^{-1/p^n} \left| y \right|$.
Choose integers $m,n \geq 0$ such that $ p^{-1/p^{s+1}} < p^{- m/p^{n-1}} c < 1$,
and put $\underline{x}' := [x_{n}^{p m}] \underline{x}$. Then
$\left| \underline{x}' \right|_{W} = p^{- m/p^{n-1}} c < 1$, so because $A$ is Witt-perfect,
we may lift
$\underline{x}'$ along $F$ to some $\underline{y}' \in W_{p^{s+1}}(A)$.  By  Lemma~\ref{big Frobenius components lemma}, for all $0 \leq j \leq s$, 
we have $|x'_{p^j} - (y'_{p^j})^p| \leq p^{-1}$,
and so $\left| \underline{y}' \right|_{W}^p \leq p^{-m/p^{n-1}} c$.

Because $x_n$ is a unit in $A_p$ whenever $A$ is $p$-adically complete,
we can choose $\underline{y}_0 \in W_{p^{s+1}}(R)$
so that $\left| [x_{n}^m]\underline{y}_0 - \underline{y}' \right|_{W} < 
p^{-m/p^{n}}$.
Considering the two cases $\left| [x_{n}^m] \underline{y}_0 \right|_{W} > \left| \underline{y}' \right|_{W}$ and $\left| [x_{n}^m] \underline{y}_0 \right|_{W} \leq \left| \underline{y}' \right|_{W}$, one checks that  
\begin{align*}
\left| [x_{n}^m] \underline{y}_0 \right|_{W} &\leq p^{-m/p^{n}} c^{1/p}, \\
\intertext{which means}
\left| \underline{y}_0\right|_W^p &\leq c. 
\end{align*}
Applying $F$ to  $[x_{n}^m]\underline{y}_0 - \underline{y}'$, we have by 
Corollary~\ref{Frobenius Verschiebung bound} that
\[
\left| [x_{n}^{p m}] (F(\underline{y}_0) - \underline{x}) \right|_{W} < p^{-m/p^{n-1}},
\]
and so $\left| F(\underline{y}_0) - \underline{x} \right|_{W} < 1$.
By applying the fact that $A$ is Witt-perfect again,
we can find $\underline{y}_1 \in W_{p^{s+1}}(R)$ with
$F(\underline{y}_1) = F(\underline{y}_0) - \underline{x}$ and 
$\left| \underline{y}_1 \right|^p_{W} \leq 1$.  We may then take $\underline{y} := \underline{y}_0 - \underline{y}_1$ to complete the proof.
\end{proof}

\section{Overconvergent Witt vectors}
\label{sec:overconvergent}

For rings of $p$-typical Witt vectors over rings of characteristic $p$, the importance of
subrings defined by growth conditions (often called \emph{overconvergent subrings}) has already
been observed in $p$-adic cohomology (e.g., see \cite{DLZ11a, DLZ11b}) and in $p$-adic Hodge theory
(e.g., see \cite{KL11}). One can introduce similar rings more generally,
but the most natural context for this is the inverse limit construction we introduced earlier.

\begin{Def} \label{norm on p-typical inverse limit}
Let $R$ be a ring equipped with a submultiplicative nonarchimedean seminorm $\left| \bullet \right|$.
For $b>0$, let $\Warrow^b(R)$ be the set of elements $(\underline{x}_{p^{-n}}) \in \Warrow(R)$ for which $p^{-bn} \left|\underline{x}_{p^{-n}} \right|^{p^n}_{W} \to 0$ as $n \to \infty$.
This forms an abelian group on which
the function $\left| \bullet \right|_{W,b}$ defined by the formula
\begin{equation} \label{equation for inverse limit norm}
\left| \underline{x} \right|_{W,b} = \sup_n \left\{ p^{-bn} \left|\underline{x}_{p^{-n}} \right|^{p^n}_{W} \right\}
\end{equation}
is a seminorm thanks to Corollary~\ref{Frobenius Verschiebung bound}.
Let $\Warrow^\dagger(R)$ denote the union of the $\Warrow^b(R)$ over all $b>0$; this forms a subring of $\Warrow(R)$ again by
Corollary~\ref{Frobenius Verschiebung bound}.
\end{Def}

Our first result shows that our definition is a generalization of the overconvergence condition studied in \cite{DLZ11a, DLZ11b}.

\begin{Thm} \label{overconvergence in characteristic p}
Keep notation as in Definition~\ref{norm on p-typical inverse limit}, 
and suppose further that $R$ is perfect of characteristic $p$
and that the seminorm on $R$ is power-multiplicative.  In this case, the seminorm $\left| \bullet \right|_{W,b}$ is power-multiplicative, and
via the natural isomorphism
$W(R) \cong \Warrow(R)$ of Lemma~\ref{inverse limit for perfect}(b),
$\Warrow^b(R)$ corresponds to the subring of $W(R)$ consisting of those $\underline{x}$
for which $p^{-bn} |x_{p^n}|^{p^{-n}}\to 0$ as $n \to \infty$.
We thus recover the usual \emph{overconvergent Witt ring} in characteristic $p$, as used for instance in
\cite{DLZ11a, DLZ11b}.   
\end{Thm}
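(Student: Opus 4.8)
The statement has three parts: (1) the seminorm $\left| \bullet \right|_{W,b}$ is power-multiplicative; (2) under the isomorphism $W(R) \cong \Warrow(R)$ of Lemma~\ref{inverse limit for perfect}(b), the subring $\Warrow^b(R)$ matches the subring of $\underline{x} \in W(R)$ with $p^{-bn} |x_{p^n}|^{p^{-n}} \to 0$; and (3) we thereby recover the usual overconvergent Witt ring. Since (3) is just the remark that the described subring is the standard one, the work lies in (1) and (2), and the natural order is to do (2) first and then deduce (1) from power-multiplicativity of $\left| \bullet \right|_W$ on $W(R)$.

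For part (2), I would trace the isomorphism $W(R) \cong \varprojlim_F W(R) \cong \Warrow(R)$ explicitly. Since $R$ is perfect of characteristic $p$, we have $F = W(\varphi)$ with $\varphi$ bijective, so $F$ is bijective on $W(R)$; the inverse system all of whose terms are $W(R)$ with transition map $F$ has limit identified with $W(R)$ itself by the compatible system $(\ldots, F^{-2}\underline{x}, F^{-1}\underline{x}, \underline{x})$. Composing with the projections $W(R) \to W_{p^n}(R)$ defining $\Warrow(R)$, the element of $\Warrow(R)$ corresponding to $\underline{x} \in W(R)$ has $\underline{x}_{p^{-n}}$ equal to the image of $F^{-n}\underline{x}$ in $W_{p^n}(R)$. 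Now I would compute $\left| \underline{x}_{p^{-n}} \right|_W$: writing $\underline{x} = (x_1, x_p, x_{p^2}, \dots)$, since $\varphi$ is the $p$-power map and is an isometry on $R$ (power-multiplicativity forces $|\varphi(r)| = |r^p| = |r|^p$, hence $|\varphi^{-1}(r)| = |r|^{1/p}$), we get that the $i$-th component of $F^{-n}\underline{x}$ is $\varphi^{-n}(x_{p^i})$, with norm $|x_{p^i}|^{p^{-n}}$. Truncating to $W_{p^n}(R)$ and applying the definition of $\left| \bullet \right|_W$,
\[
\left| \underline{x}_{p^{-n}} \right|_W = \sup_{0 \le i \le n} |x_{p^i}|^{p^{-n}/p^i} = \sup_{0 \le i \le n} |x_{p^i}|^{p^{-n-i}},
\]
so $p^{-bn}\left|\underline{x}_{p^{-n}}\right|_W^{p^n} = p^{-bn} \sup_{0 \le i \le n} |x_{p^i}|^{p^{-i}}$. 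The condition that this tends to $0$ is, after a short argument comparing the $\sup$ with its terms (the $i=n$ term and earlier terms, using that $p^{-bn} \to 0$ handles any fixed bounded contribution), equivalent to $p^{-bn}|x_{p^n}|^{p^{-n}} \to 0$; I would spell out this comparison carefully since it is the one place the equivalence could be subtle. This also shows $\left| \underline{x} \right|_{W,b} = \sup_n p^{-bn}\left|\underline{x}_{p^{-n}}\right|_W^{p^n} = \sup_n p^{-bn} \sup_{0 \le i \le n} |x_{p^i}|^{p^{-i}}$, which (since the inner sup is nondecreasing in $n$ and $p^{-bn}$ decreasing) equals a weighted sup comparable to $\sup_i p^{-bi}|x_{p^i}|^{p^{-i}}$, matching the standard overconvergent norm.

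For part (1), once $\Warrow^b(R)$ is identified with a subring of $W(R)$, power-multiplicativity of $\left| \bullet \right|_{W,b}$ follows from power-multiplicativity of $\left| \bullet \right|_W$ on $W(R)$ — which is Proposition~\ref{power-multiplicative norm on W}, applicable since the seminorm on $R$ is a power-multiplicative norm (perfect of characteristic $p$ with a power-multiplicative seminorm; if it is only a seminorm rather than a norm one quotients by the kernel, or invokes the Gel'fand embedding as in that proof). Concretely, the formula $\left| \underline{x} \right|_{W,b} = \sup_i p^{-bi} |x_{p^i}|^{p^{-i}}$ exhibits $\left| \bullet \right|_{W,b}$ as a sup of rescaled versions of the Witt norm on the perfection, and each of these is power-multiplicative, so their sup is too; alternatively one checks $|\underline{x}^2|_{W,b} \ge |\underline{x}|_{W,b}^2$ directly from the multiplicativity on each analytic field factor exactly as in Proposition~\ref{power-multiplicative norm on W}. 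I expect the main obstacle to be part (2): specifically, pinning down the explicit form of the isomorphism $W(R) \cong \Warrow(R)$ on the level of components and then showing that the two growth conditions (one a $\sup$-over-truncations condition, the other a single-component condition) genuinely coincide — the inequality in one direction is immediate, and the other requires using the decay factor $p^{-bn}$ to absorb the lower-index terms of the truncated sup.
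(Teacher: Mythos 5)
Your computation for part (2) is correct and essentially matches the paper's: under the isomorphism of Lemma~\ref{inverse limit for perfect}(b) the truncation $\underline{x}_{p^{-n}}$ has $p^i$-th component $x_{p^i}^{p^{-n}}$, so $p^{-bn}|\underline{x}_{p^{-n}}|_W^{p^n} = p^{-bn}\sup_{0\le i\le n}|x_{p^i}|^{p^{-i}}$, and the sup over $n$ collapses to $\sup_i p^{-bi}|x_{p^i}|^{p^{-i}}$; your sketch of why the two decay conditions are equivalent is also fine. The characterization of the DLZ overconvergent ring then follows just as in the paper.

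The gap is in part (1), power-multiplicativity of $\left|\bullet\right|_{W,b}$, and all three routes you offer fail. (i) It does not ``follow from'' power-multiplicativity of $\left|\bullet\right|_W$ on $W(R)$: that proposition controls $\sup_i |x_{p^i}|^{p^{-i}}$, not $\sup_i p^{-bi}|x_{p^i}|^{p^{-i}}$, and neither norm bounds the other in a way that would transfer the inequality $|\underline{x}^2|\ge |\underline{x}|^2$. (ii) The ``sup of rescaled versions of the Witt norm'' picture is not literal: for fixed $i$, the function $\underline{x}\mapsto p^{-bi}|x_{p^i}|^{p^{-i}}$ is \emph{not} a seminorm (it is not subadditive, since Witt addition mixes components), so there is no decomposition of $\left|\bullet\right|_{W,b}$ as a sup of power-multiplicative seminorms. (iii) Arguing ``exactly as in'' Proposition~\ref{power-multiplicative norm on W} breaks at the reduction step: after the Gel'fand reduction to a field $K$ and rescaling by a Teichm\"uller element (and by powers of $p$, which scales $\left|\bullet\right|_{W,b}$ by $p^{-b}$ since $R$ is perfect) so that $\left|\underline{x}\right|_{W,b}=1$, you only get $|x_{p^i}|\le p^{bip^i}$, which is unbounded; the element need not lie in $W(\calO_K)$ and there is no reduction modulo $\mathfrak{m}_K$ to invoke. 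Indeed the analogous claim at finite level is false: $V(1)\in W_p(\FF_p((t)))$ has $|V(1)|_W=1$ but $V(1)^2=0$, so one really needs the coherence across all truncations. The paper does not attempt to reprove this here; it cites an external result, $\cite[\text{Lemma~4.1}]{Ked13a}$, whose proof goes via the Teichm\"uller expansion $\underline{x}=\sum_i p^i[a_i]$ (for which $\left|\underline{x}\right|_{W,b}=\sup_i p^{-bi}|a_i|$) and a Newton-polygon-type multiplicativity argument. You would need to either reproduce that argument or cite it; the shortcuts proposed do not close the gap.
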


\begin{proof}
Let $\underline{y} \in \Warrow(R)$ denote the image of some element $\underline{x} \in W(R)$ under the isomorphism $W(R) \cong \Warrow(R)$.  Then 
\begin{align*}
\underline{y}_{p^{-n}} &= (x_1^{p^{-n}}, x_p^{p^{-n}}, \ldots, x_{p^n}^{p^{-n}}). \\
\intertext{Then using the definitions of the various seminorms, we compute}
\left| \underline{y} \right|_{W,b} &=  \sup_n \left\{ p^{-bn} \left|\underline{y}_{p^{-n}} \right|^{p^n}_{W} \right\} \\
&= \sup_{j \leq n} \left\{ p^{-bn} \left|x_{p^{j}}^{p^{-n}} \right|^{p^{n-j}} \right\} \\
&= \sup_{j \leq n} \left\{ p^{-bn} \left|x_{p^{j}} \right|^{p^{-j}} \right\} \\
&= \sup_{j} \left\{ p^{-bj} \left|x_{p^{j}} \right|^{p^{-j}} \right\}. 
\end{align*}
This proves the stated description of $\Warrow^b(R)$.  We deduce that $\left| \bullet \right|_{W,b}$ is power-multiplicative as in the proof of \cite[Lemma~4.1]{Ked13a}.  

We now prove that this notion of overconvergence agrees with the notion used in \cite{DLZ11a, DLZ11b}.  Let $R = k\left[x_1^{\frac{1}{p^{\infty}}}, \ldots, x_r^{\frac{1}{p^{\infty}}} \right]$ denote the perfection of $k[x_1, \ldots, x_r]$, where $k$ is some perfect field of characteristic $p$.  A Witt vector $(f_1, f_p, f_{p^2},\ldots)\in W(R)$ is considered overconvergent if there exist positive constants $C,D$ such that 
\[
\deg f_{p^j} \leq Cjp^j + Dp^j
\]
for all $j \geq 0$.  If we define a norm on $R$ by setting 
\[
| f | := p^{\deg f},
\]
then it is clear that the two notions of overconvergence do in fact coincide.
\end{proof}

In the remainder of this section, we use the characteristic $p$ case to derive analogous properties in case $R$ carries a $p$-adic seminorm.

\begin{Hyp}
For the remainder of \S\ref{sec:overconvergent}, take $R$ to be $A_p$  equipped with the $p$-adic seminorm for some $p$-normal ring $A$.
\end{Hyp}

\begin{Lem} \label{effect of multiplication by p}
Let $\underline{x}  = (\underline{x}_{p^{-n}}) \in \Warrow(R)$ be an element such that 
$\underline{x}_{p^{-n}} = [y_{p^{-n}}]$ for some $y_{p^{-n}} \in R$. 
Then for each nonnegative integer $m$,
\begin{equation} \label{effect of multiplication by p}
\left| p^m \underline{x} \right|_{W,b} = p^{-\min\{b,1\}m} \left| \underline{x}_1 \right|_W.
\end{equation}
\end{Lem}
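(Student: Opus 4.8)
First I would reduce the statement to a valuation estimate for the Witt components of $p^m$ times a Teichm\"uller element, and then reassemble the seminorm. Since $\underline{x} \in \Warrow(R)$ we have $F(\underline{x}_{p^{-(n+1)}}) = \underline{x}_{p^{-n}}$, and because $F$ carries $[y]$ to $[y^p]$ this forces $y_{p^{-(n+1)}}^p = y_{p^{-n}}$; hence $(y_{p^{-n}})^{p^n} = y_1$, so by power-multiplicativity of the $p$-adic seminorm $\left|y_{p^{-n}}\right| = \left|y_1\right|^{1/p^n}$, and $\left|\underline{x}_1\right|_W = \left|y_1\right|$. The case $m = 0$ is then immediate, so I assume $m \geq 1$. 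Viewing $p^m[y]$ as an element of $W(\ZZ[y])$, all of its ghost components equal $p^m y^{p^j}$, which are homogeneous of degree $p^j$ in $y$; so by Lemma~\ref{homogeneous polys} its $p^j$-th Witt component is $a_j y^{p^j}$ for some $a_j \in \ZZ$. Write $v_j := v_p(a_j)$.

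The essential step is to show that $v_0 = m$, that $v_m = 0$, and that $v_j \geq m - j$ for $0 \leq j \leq m$ (for $j > m$ one has $v_j \geq 0$ automatically). The first holds since $a_0 = p^m$. For the rest I would induct on $j$ using the recursion $p^j a_j = p^m - \sum_{0 \leq i < j} p^i a_i^{p^{j-i}}$: for $i < j \leq m$ the subtracted term has $p$-adic valuation $i + p^{j-i} v_i \geq i + p^{j-i}(m-i) \geq m$ (as $m-i \geq 1$ and $p^{j-i} \geq 1$), so $v_p(p^j a_j) \geq m$ and hence $v_j \geq m-j$; moreover when $j = m$ the inequality $i + p^{m-i}(m-i) > m$ is strict (since $p^{m-i} \geq 2$), so the subtracted sum has valuation $> m = v_p(p^m)$, forcing $v_p(p^m a_m) = m$, i.e.\ $v_m = 0$.

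Then, fixing $n$, submultiplicativity and the strong triangle inequality give $\left|a_j y_{p^{-n}}^{p^j}\right| \leq p^{-v_j}\left|y_{p^{-n}}\right|^{p^j}$, so
\[
p^{-bn}\left|p^m\underline{x}_{p^{-n}}\right|_W^{p^n} \leq \left|y_1\right|\, p^{-g(n)}, \qquad g(n) := bn + \min_{0 \leq j \leq n} v_j\, p^{n-j}.
\]
For $n \leq m$, $v_j \geq m - j \geq 0$ gives $\min_{0 \leq j \leq n} v_j p^{n-j} \geq m - n$, hence $g(n) \geq m - n(1-b)$, which is $\geq \min\{b,1\}m$ whether $b \leq 1$ or $b \geq 1$; for $n > m$, $g(n) \geq bn \geq \min\{b,1\}m$. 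Thus $\left|p^m\underline{x}\right|_{W,b} \leq p^{-\min\{b,1\}m}\left|y_1\right|$. For the reverse inequality I would use, when $b \geq 1$, the $n = 0$ term of the supremum, which is $\left|p^m y_1\right| \geq p^{-m}\left|y_1\right|$ (since $\left|y_1\right| \leq \left|p^{-m}\right|\left|p^m y_1\right|$); and when $b \leq 1$, the $n = m$ term $p^{-bm}\left|p^m[y_{p^{-m}}]\right|_W^{p^m}$, where, as $a_m$ is a $p$-adic unit, picking $t \in \ZZ$ with $a_m t \equiv 1 \pmod{p}$ and applying the strong triangle inequality yields $\left|a_m y_{p^{-m}}^{p^m}\right| = \left|y_1\right|$, so that (together with $v_m = 0$) this term equals $p^{-bm}\left|y_1\right|$. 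Combining gives $\left|p^m\underline{x}\right|_{W,b} = p^{-\min\{b,1\}m}\left|\underline{x}_1\right|_W$.

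I expect the main obstacle to be the valuation estimate of the second paragraph — in particular the sharp statement $v_m = 0$, since it is this, rather than merely $v_j \geq \max\{m-j,0\}$, that pins the exponent at $\min\{b,1\}m$ instead of something smaller. Once the $v_j$ are controlled, the rest is routine bookkeeping with the strong triangle inequality.
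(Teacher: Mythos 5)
Your proof is correct and follows essentially the same route as the paper: you reduce (via Lemma~\ref{homogeneous polys}) to studying the universal Witt components $a_j$ of $p^m \in W(\ZZ)$, establish the valuation bounds $v_j \geq m-j$ with the sharp values $v_0 = m$, $v_m = 0$ via the ghost-component recursion, and then estimate the defining supremum, getting the upper bound from those estimates and the lower bound by evaluating the $n=0$ term (for $b\geq 1$) and the $n=m$ term (for $b\leq 1$). The paper's proof does exactly this, phrased as $z_0 = p^m$, $z_i \in p^{m-i}\ZZ$, $z_m \equiv 1 \pmod p$, with the two cases of $b$ handled in one pass; your separation into upper and lower bounds with named valuations $v_j$ is merely a slightly more explicit bookkeeping of the same argument.
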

\begin{proof}
We may use homogeneity 
(Lemma~\ref{homogeneous polys})
to reduce to the case $\underline{x} = 1$. In $W(\ZZ)$, we have
\[
p^m = (z_0, z_1, \dots)
\]
for some $z_i$ with $z_0 = p^m$, $z_i \in p^{m-i} \ZZ$ for $i=1,\dots,m-1$, and $z_m \equiv 1 \pmod{p}$. Write
\[
\left| p^m \right|_{W,b} = \sup_n \{p^{-bn} \left| p^m \right|_W^{p^n} \} = \sup_{0 \leq i \leq n} \{ p^{-bn} \left| z_i \right|^{p^{n-i}}\}.
\]
In the final supremum, for $b \geq 1$, the term for a given $i$ and $n$
is at most 
\[
p^{-bn-(m-i)p^{n-i}} \leq p^{-n-(m-i)p^{n-i}} \leq p^{-i-(m-i)} = p^{-m}
\]
with equality for $i=n=0$; for $b < 1$, the term is at most
\[
p^{-bn-(m-i)p^{n-i}} \leq p^{-bn-b(m-i)p^{n-i}} \leq p^{-bi-b(m-i)} = p^{-bm}
\]
with equality for $i=m=n$. 
This proves the claim.
\end{proof}

\begin{Lem} \label{complete Witt vectors}
If $A$ is complete with respect to the seminorm $\left| \bullet \right|$, then $W_{p^n}(A)$ is complete with respect to the seminorm $\left| \bullet \right|_W$.  Similarly, $\Warrow^b(R)$ is complete with respect to the seminorm $\left| \bullet \right|_{W,b}$.  
\end{Lem}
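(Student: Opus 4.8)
The plan is to treat the two assertions in turn, with the second reduced to the first.

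\emph{First assertion ($W_{p^n}(A)$ complete).} I would show that every $|\bullet|_W$-Cauchy sequence $(\underline{x}^{(k)})_k$ in $W_{p^n}(A)$ has each of its finitely many Witt components converging in $A$. The one structural input is the \emph{triangularity} of Witt subtraction, extracted from the ghost identities exactly as in the proof of Lemma~\ref{homogeneous polys}: for each $j$ the $p^j$-th Witt component of $\underline{x}-\underline{y}$ equals $x_{p^j}-y_{p^j}$ plus a polynomial, with coefficients in $\ZZ$, in the components $x_{p^i},y_{p^i}$ with $i<j$, and this polynomial vanishes identically when $\underline{x}=\underline{y}$, hence lies in the ideal generated by the differences $x_{p^i}-y_{p^i}$ for $i<j$. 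Granting this, I induct on $j$. For $j=0$ one has $|x_1^{(k)}-x_1^{(l)}|=|(\underline{x}^{(k)}-\underline{x}^{(l)})_1|\le|\underline{x}^{(k)}-\underline{x}^{(l)}|_W$, so $(x_1^{(k)})_k$ is Cauchy and converges in $A$. For the inductive step I use that a $|\bullet|_W$-Cauchy sequence is bounded; since the polynomial correction is a fixed polynomial evaluated on bounded arguments, submultiplicativity bounds its norm, and writing it through the $x_{p^i}^{(k)}-x_{p^i}^{(l)}$ (which tend to $0$ by the inductive hypothesis) shows it tends to $0$; together with $|(\underline{x}^{(k)}-\underline{x}^{(l)})_{p^j}|\le|\underline{x}^{(k)}-\underline{x}^{(l)}|_W^{p^j}$ and the strong triangle inequality this forces $(x_{p^j}^{(k)})_k$ to be Cauchy, hence convergent. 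Assembling the componentwise limits gives $\underline{x}\in W_{p^n}(A)$, and re-running the same estimate with $\underline{x}^{(l)}$ replaced by $\underline{x}$ gives $|(\underline{x}^{(k)}-\underline{x})_{p^j}|\to0$ for each of the finitely many $j$, hence $|\underline{x}^{(k)}-\underline{x}|_W\to0$. (Alternatively one may induct on $n$ using the exact sequence $0\to VW_{p^{n-1}}(A)\to W_{p^n}(A)\to A\to 0$, the identity $|V(\underline{u})|_W=|\underline{u}|_W^{1/p}$ of Corollary~\ref{Frobenius Verschiebung bound}(d), and the fact---another instance of triangularity---that $r\mapsto[r]$ is uniformly continuous on bounded subsets of $A$.)

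\emph{Second assertion ($\Warrow^b(R)$ complete).} Here we are in the setting where $R=A_p$ is complete for the $p$-adic seminorm (the natural reading of ``similarly''), so the first assertion applied with $A$ replaced by $R$ shows each $W_{p^n}(R)$ is $|\bullet|_W$-complete. Let $(\underline{x}^{(k)})_k$ be $|\bullet|_{W,b}$-Cauchy in $\Warrow^b(R)$. For each fixed $n$ the projection $\pi_n\colon\Warrow(R)\to W_{p^n}(R)$ is a ring homomorphism, and the definition of $|\bullet|_{W,b}$ (Definition~\ref{norm on p-typical inverse limit}) gives $p^{-bn}|\pi_n(\underline{z})|_W^{p^n}\le|\underline{z}|_{W,b}$; hence $(\pi_n(\underline{x}^{(k)}))_k$ is $|\bullet|_W$-Cauchy in $W_{p^n}(R)$ and converges to some $\underline{\xi}_n\in W_{p^n}(R)$. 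Since $F$ is a ring homomorphism with $|F(\underline{z})|_W\le|\underline{z}|_W^p$ (Corollary~\ref{Frobenius Verschiebung bound}(c)), it is continuous, so $F(\underline{\xi}_{n+1})=\underline{\xi}_n$ and the $\underline{\xi}_n$ assemble to $\underline{\xi}\in\Warrow(R)$. It remains to check $\underline{\xi}\in\Warrow^b(R)$ and $|\underline{x}^{(k)}-\underline{\xi}|_{W,b}\to0$, both of which follow from the standard ``uniform limit of null sequences is null'' argument: given $\epsilon>0$ pick $K$ with $|\underline{x}^{(k)}-\underline{x}^{(l)}|_{W,b}<\epsilon$ for $k,l\ge K$; fixing $k\ge K$ and letting $l\to\infty$ in $p^{-bn}|\pi_n(\underline{x}^{(k)})-\pi_n(\underline{x}^{(l)})|_W^{p^n}<\epsilon$ yields $p^{-bn}|\pi_n(\underline{x}^{(k)}-\underline{\xi})|_W^{p^n}\le\epsilon$ for every $n$; since $p^{-bn}|\pi_n(\underline{x}^{(k)})|_W^{p^n}\to0$ (as $\underline{x}^{(k)}\in\Warrow^b(R)$), the strong triangle inequality gives $\limsup_n p^{-bn}|\pi_n(\underline{\xi})|_W^{p^n}\le\epsilon$, so $\underline{\xi}\in\Warrow^b(R)$; and then $|\underline{x}^{(k)}-\underline{\xi}|_{W,b}\le\epsilon$ for all $k\ge K$.

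The only step requiring genuine Witt-vector computation, and hence the main obstacle, is the triangularity of Witt (and Teichm\"uller) subtraction used in the first assertion. It is routine via the ghost map and the argument of Lemma~\ref{homogeneous polys}, but it cannot be bypassed: the $|\bullet|_W$-topology on $W_{p^n}(A)$ is \emph{not} the product uniformity on $A^{n+1}$ (the correction terms are not uniformly small off bounded sets), so the proof genuinely exploits the boundedness of Cauchy sequences. Once the first assertion is established, the passage to $\Warrow^b(R)$ is a soft functional-analytic argument with no further Witt-vector input.
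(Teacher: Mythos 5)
Your proof is correct and follows essentially the same route as the paper's: induct on the Witt component index $j$, show each component sequence is Cauchy using the bound $|z_{p^j}|\le|\underline{z}|_W^{p^j}$ together with the fact that the cross-terms in Witt subtraction lie in the ideal generated by the lower-index differences, and then handle $\Warrow^b(R)$ by projecting to each $W_{p^n}(R)$ and using continuity of $F$. The paper's version is a terse two-sentence sketch of exactly this argument; you have supplied the details it leaves implicit, in particular the triangularity of Witt subtraction and the role of boundedness of Cauchy sequences, which are the genuinely load-bearing points.
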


\begin{proof}
Let $\underline{x}^{(1)}, \underline{x}^{(2)}, \ldots$ denote a Cauchy sequence of Witt vectors.  Using induction on $j$, one can check that the sequence of Witt vector components $x^{(1)}_{p^j}, x^{(2)}_{p^j}, \ldots$ is also a Cauchy sequence, hence it has a limit in $A$, and this is the limit of our original sequence of Witt vectors.  The proof for $\Warrow^b(R)$ is similar; the only additional detail to check is that the limits remain compatible under Frobenius; this follows from the fact that the Frobenius map can be defined in terms of polynomials of the coefficients, and hence $F: W_{p^{n+1}}(R) \rightarrow W_{p^n}(R)$ is continuous.
\end{proof}

\begin{Def} \label{transfer to positive characteristic}
Assume $R$ is $p$-adically complete and let $R'$ be the inverse limit of $R$ under the $p$-power map.
This set admits a natural ring structure as follows.
For $\underline{x}  = (x_{p^{-m}})_{m=0}^\infty$, 
$\underline{y} = (y_{p^{-m}})_{m=0}^\infty$ in $R'$, the sum $\underline{x} + \underline{y}$ is defined as the coherent sequence
$(z_{p^{-m}})_{m=0}^\infty$ with
\[
z_{p^{-m}} = \lim_{l \to \infty} (x_{p^{-m-l}} + y_{p^{-m-l}})^{p^l};
\]
the limit exists because if $\left| x_{1} \right|, \left| y_{1} \right| \leq c$, then
\[
\left| (x_{p^{-m-l-1}} + y_{p^{-m-l-1}})^{p^{l+1}} - (x_{p^{-m-l}} + y_{p^{-m-l}})^{p^l} \right| \leq p^{-l} c.
\]
For this addition structure, the ring $R'$ has characteristic~$p$.
By a similar calculation, we have $z_{p^{-m-1}}^p = z_{p^{-m}}$, so we get a well-defined element of $R'$. We thus obtain the ring structure on $R'$; by similar calculations, the formula
\[
\left| \underline{x} \right| := \left| x_{1} \right|
\]
defines a power-multiplicative seminorm on $R'$.
\end{Def}

\begin{Thm} \label{compare to positive characteristic1}
Suppose that $A$ is Witt-perfect at $p$ and $p$-adically complete.
Let $A'$ be the inverse limit of $A/(p)$ under the $p$-power Frobenius endomorphism. For any $b$, there is a natural way to embed $\Warrow(A')$ into $\Warrow^b(R')$, and for $0 < b \leq 1$,
the isomorphism $\Warrow(A') \cong \Warrow(A)$ from
Theorem~\ref{theta map} extends to an isometric isomorphism $\Warrow^b(R') \cong \Warrow^b(R)$.
\end{Thm}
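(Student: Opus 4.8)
The plan is to build everything out of the characteristic-$p$ picture from Theorem~\ref{overconvergence in characteristic p}, transported to the $p$-adic setting via the tilting isomorphisms already assembled in Theorem~\ref{theta map}. First I would treat the embedding $\Warrow(A') \hookrightarrow \Warrow^b(R')$. Since $A'$ is perfect of characteristic $p$ with its power-multiplicative seminorm (Definition~\ref{transfer to positive characteristic} applied to $A/(p)$, noting $A$ is Witt-perfect so that $A/(p)$ has surjective Frobenius and the inverse limit is perfect), Lemma~\ref{inverse limit for perfect}(b) gives $W(A') \cong \Warrow(A')$, and the natural inclusion $A' \hookrightarrow R'$ (where $R'$ is the tilt of the $p$-adic field/ring $R$, i.e. the inverse limit of $R = A_p$ under the $p$-power map) induces a map $\Warrow(A') \to \Warrow(R')$; one checks it lands in $\Warrow^b(R')$ by the explicit norm computation in the proof of Theorem~\ref{overconvergence in characteristic p}, which shows that for $\underline{x}$ coming from $W(A')$ the quantity $p^{-bn}|\underline{x}_{p^{-n}}|_W^{p^n}$ equals $\sup_j p^{-bj}|x_{p^j}|^{p^{-j}}$, and this tends to $0$ because the coordinates of an honest Witt vector over $A'$ are bounded (all of norm $\le 1$, so the finiteness and decay are automatic). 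This already gives a bounded — indeed norm-nonincreasing — injection, and in fact for $0 < b \le 1$ one should get that it is isometric onto its image by the same computation together with Lemma~\ref{effect of multiplication by p}.

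Next, for $0 < b \le 1$, I would construct the isomorphism $\Warrow^b(R') \cong \Warrow^b(R)$ extending the isomorphism of Theorem~\ref{theta map}. The starting point is the chain of isomorphisms in that theorem: $\Warrow(R')$ (which equals $\Warrow(A')$ since $R'/(p)$-type considerations collapse — more precisely, $R'$ has characteristic $p$ and Lemma~\ref{lifting lemma} / Corollary~\ref{inverse limit for complete} identify $\Warrow$ of a $p$-adically complete ring with $\Warrow$ of its reduction mod $p$, and $R' = \varprojlim R/(p)$) $\cong \Warrow(R)$. Concretely the map sends $\underline{x} \in \Warrow(R')$ to the element $\underline{y} \in \Warrow(R)$ whose ghost components are the limits $w_{p^{-n}}(\underline{y}) = \lim_{\ell} (\widetilde{x}_{p^{-n-\ell}})^{p^\ell}$ of Teichmüller-style lifts, exactly as in the proof of Theorem~\ref{theta map}. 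The content to add is: this bijection carries $\Warrow^b(R')$ onto $\Warrow^b(R)$ isometrically for the norms $|\bullet|_{W,b}$. The strategy for the norm comparison is a successive-approximation argument reducing to the Teichmüller case. Writing a general element of $\Warrow^b(R)$ as a $p$-adically convergent sum $\sum_{m \ge 0} p^m [\,\cdot\,]$ of Teichmüller-type pieces (using that $R = A_p$ and $A$ is $p$-adically complete, so $W_{p^n}(A)$ is complete by Lemma~\ref{complete Witt vectors}), Lemma~\ref{effect of multiplication by p} computes $|p^m[\underline{z}]|_{W,b} = p^{-m}|\underline{z}_1|_W$ for $b \le 1$, which exactly matches the corresponding computation on the tilt side, where the $p$-adic expansion of $\Warrow(R')\cong W(R')$ is the usual $\sum p^m[\,\cdot\,]$ and the norm $|\bullet|_{W,b}$ there is power-multiplicative (Theorem~\ref{overconvergence in characteristic p}) so behaves the same way under multiplication by $p$. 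Thus on each Teichmüller layer the two norms agree under the bijection, and the full comparison follows by taking suprema over layers and passing to the limit using completeness (Lemma~\ref{complete Witt vectors} on both sides). Finally, since the map is a ring isomorphism at the level of $\Warrow$ and preserves the $b$-norm, it restricts to a ring isomorphism $\Warrow^b(R') \cong \Warrow^b(R)$, and taking the union over $b$ gives the $\dagger$-version if desired.

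I expect the main obstacle to be the bookkeeping in the norm comparison on the $\Warrow$ side when $R$ is not of characteristic $p$: the element $\underline{y} \in \Warrow(R)$ produced by the theta-type map is described through its ghost components, not its Witt components, so one must argue that the growth/decay condition $p^{-bn}|\underline{y}_{p^{-n}}|_W^{p^n} \to 0$ is equivalent to the corresponding condition on $\underline{x} \in \Warrow(R')$. The cleanest route is probably to (i) first prove the Teichmüller case by the direct formula $[r] \mapsto [r^\flat]$ (where $r^\flat$ is the tilt of $r$, with $|r^\flat| = |r|$) and the ghost-component computation from the proof of Theorem~\ref{theta map}, which shows the $p^{-n}$-th ghost component of the image is literally $r$, giving the matching norm; then (ii) handle finite sums $\sum_{m=0}^{M} p^m[\,\cdot\,]$ using Lemma~\ref{effect of multiplication by p} together with Corollary~\ref{Frobenius Verschiebung bound}(a) to see that no cancellation occurs in the supremum defining the norm (the layer $m=0$ dominates when $b \le 1$ and the piece is suitably normalized, or more precisely the norm of the sum is the sup of the norms of the layers); and (iii) pass to the limit $M \to \infty$ invoking Lemma~\ref{complete Witt vectors}. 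The restriction $b \le 1$ enters precisely in step (ii) via the exponent $\min\{b,1\} = b$ in Lemma~\ref{effect of multiplication by p}, which is what makes the multiplication-by-$p$ behavior on the two sides literally identical rather than merely metrically equivalent; for $b > 1$ one only gets the embedding, not an isometry, consistent with the statement.
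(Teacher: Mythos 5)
Your overall structure follows the paper's route (Teichm\"uller expansions, Lemma~\ref{effect of multiplication by p}, completeness), but there are two genuine gaps.

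\textbf{Surjectivity is presupposed rather than proved.} Your step ``Writing a general element of $\Warrow^b(R)$ as a $p$-adically convergent sum $\sum_{m\ge 0} p^m[\,\cdot\,]$'' is not available at the start of the argument. Recall that $p$ is a unit in $R = A_p$, so elements of $\Warrow^b(R)$ have no a priori $p$-adic Teichm\"uller expansion; the existence of such expansions for every element is essentially equivalent to the surjectivity you are trying to prove. The paper's proof handles surjectivity in a fundamentally different way: it first constructs the map $\Warrow^b(R') \to \Warrow^b(R)$ and proves it isometric (hence injective with closed image, by completeness from Lemma~\ref{complete Witt vectors}), then identifies the image of the subring $\Warrow(A')[\underline{y}^{-1}]$ (where $\underline{y}$ is the Teichm\"uller lift of the sequence $x_1,x_2,\ldots$) as exactly the bounded elements $\underline{x} \in \Warrow^b(R)$ with $\sup_n |\underline{x}_{p^{-n}}|_W^{p^n} < \infty$, and finally invokes Lemma~\ref{Witt-perfect to norm-perfect} to show these bounded elements are dense. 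Without some version of that density step your argument does not close.

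\textbf{The ``no cancellation'' claim in the isometry argument is asserted, not proved.} The strong triangle inequality (Corollary~\ref{Frobenius Verschiebung bound}(a)) only gives $\leq$: the norm of a sum is at most the sup of the norms of the layers. To get equality you must show that the supremum is attained, and it is not the $m=0$ layer that dominates in general. The paper picks the smallest index $h$ maximizing $p^{-bh}|x_{p^h}|^{p^{-h}}$, splits off the tail $\underline{z} = \sum_{n\ge h} p^n \underline{y}_{p^n}$, and then carries out a concrete estimate on the first Witt components $z_{p^{-m},1} = \sum_{n\ge h} p^n x_{p^n,p^{-m-n}}$ using power-multiplicativity of the norm on $R'$ to show the $n=h$ summand strictly dominates. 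That computation is the real content of the isometry and is missing from your proposal; your parenthetical hedge ``the norm of the sum is the sup of the norms of the layers'' is exactly the statement requiring proof.

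A smaller point: your parenthetical ``$R' = \varprojlim R/(p)$'' is incorrect since $R/(p) = 0$; the ring $R'$ is $\varprojlim_{x\mapsto x^p} R$ (Definition~\ref{transfer to positive characteristic}), and $A'$ embeds into $R'$ as a proper subring, not as all of it. You need to keep these two rings distinct; the identification that the paper uses is between $A'$ and the subring of $\underline{x}\in R'$ with $x_1\in A$, established by imitating Lemma~\ref{lifting lemma}.
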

\begin{proof}
There is an obvious map to $A'$ from the subring of $\underline{x} \in R'$ with $x_1 \in A$; one checks that this map is an isomorphism by following the proof of Lemma~\ref{lifting lemma}.

We next construct the map $\Warrow^b(R') \to \Warrow(R)$. By Theorem~\ref{overconvergence in characteristic p}, we may identify $\Warrow^b(R')$ with the ring of  $\underline{x} \in W(R')$ for which $p^{-bn} \left| x_{p^n} \right|^{p^{-n}} \to 0$ as $n \to \infty$. Each $x_{p^n}$ corresponds to a coherent sequence $(x_{p^n,p^{-m}})_{m=0}^\infty$ of elements of $R$
in which $x_{p^n,p^{-m-1}}^p = x_{p^n,p^{-m}}$ for all $m$. 
We may then define $\underline{y}_{p^n}  \in \Warrow(R)$ as the coherent sequence $([x_{p^n,p^{-m-n}}])_{m=0}^\infty$ and then define the map in question so as to send $\underline{x}$ to $\underline{y} := \sum_{n=0}^\infty p^n \underline{y}_{p^n}$. Note that \eqref{effect of multiplication by p} and Lemma~\ref{complete Witt vectors} together
guarantee both that the sum converges and that the resulting map is submetric. To check that the map is isometric, pick the smallest index $h \geq 0$ which maximizes $p^{-bh} \left| x_{p^h} \right|^{p^{-h}}$
(assuming that $\underline{x}$ is nonzero, as otherwise there is nothing to check),
and put $\underline{z} := \sum_{n=h}^\infty p^{n} \underline{y}_{p^n}$.
Then on one hand, by the previous discussion we have $\left| \underline{y} - \underline{z}\right|_{W,b} < \left| \underline{x} \right|_{W,b}$. On the other hand, 
for any $m \geq 0$ we have
$z_{p^{-m},1} = \sum_{n=h}^\infty p^{n} x_{p^n,p^{-m-n}}$.
By our choice of $h$, for all $n>h$ and $m>0$ we have
\begin{align*}
p^{-bh} \left| x_{p^h,p^{-m-h}} \right| &= p^{bh(p^{-m}-1)} \left( p^{-bh} \left| x_{p^h} \right|^{p^{-h}} \right)^{p^{-m}} \\
&\geq p^{bh(p^{-m}-1)} \left( p^{-bn} \left| x_{p^n} \right|^{p^{-n}} \right)^{p^{-m}}\\
&\geq p^{b(1-p^{-m})} p^{bn(p^{-m}-1)} \left( p^{-bn} \left| x_{p^n} \right|^{p^{-n}} \right)^{p^{-m}}\\
& = p^{b(1-p^{-m})} p^{-bn} \left| x_{p^n,p^{-m-n}} \right|,
\end{align*}
so $z_{p^{-m},1}$ is dominated by the summand with $n=h$.
Hence $\left| \underline{z} \right|_{W,b} \geq \left| \underline{x}
\right|_{W,b}$, so $\left| \underline{y} \right|_{W,b} \geq \left| \underline{x} \right|_{W,b}$ as well. Consequently, the defined map is an isometry, and in particular is injective.

It is straightforward to check that this map extends the isomorphism $\Warrow(A') \cong \Warrow(A)$ by tracing through the construction of Theorem~\ref{theta map}. In particular, if we choose a sequence $x_1,\dots \in A$ according to Lemma~\ref{sequence of p-power roots}, then these define an element of $A'$. Let $\underline{y} \in W(A') \cong \Warrow(A')$ be the Teichm\"uller lift of this element; it is a unit in $\Warrow^b(R')$. The image of the subring 
$\Warrow(A')[\underline{y}^{-1}]$ of $\Warrow^b(R')$
equals the subring of $\underline{x} \in \Warrow^b(R)$ 
for which $\sup_n \{\left| \underline{x}_{p^{-n}} \right|^{p^n}_W\} < +\infty$. By Lemma~\ref{Witt-perfect to norm-perfect}, this subring is dense in $\Warrow^b(R)$: given $\underline{x} \in \Warrow^b(R)$, we may use the lemma to lift $\underline{x}_{p^{-n}}$ to the subring, and these lifts converge to $\underline{x}$. It follows that the map
$\Warrow^b(R') \to \Warrow^b(R)$ is surjective.
\end{proof}

To study $\Warrow^b(R)$ for $b>1$, we use the inverse Frobenius map
described in Remark~\ref{only inverse Frobenius}.
\begin{Lem} \label{effect of inverse Frobenius}
For $b \geq 1$ and $\underline{x} \in \Warrow^b(R)$,
we have $F^{-1}(\underline{x}) \in \Warrow^{b/p}(R)$ and
\begin{equation} \label{equation for inverse Frobenius norm}
\max\{\left| \underline{x}_1 \right|_W, p^{-b} \left| F^{-1}(\underline{x}) \right|_{W,b/p}^p\} \leq
\left| \underline{x} \right|_{W,b} \leq \max\{\left| \underline{x}_1 \right|_W,\left| F^{-1}(\underline{x}) \right|_{W,b/p}^p\}.
\end{equation}
\end{Lem}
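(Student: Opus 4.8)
The plan is to unwind both sides of \eqref{equation for inverse Frobenius norm} directly from the definitions and relate them term by term. Recall from Remark~\ref{only inverse Frobenius} that $F^{-1}(\underline{x})$ is the element of $\Warrow(R)$ whose $(p^{-n})$-th layer is $\underline{x}_{p^{-n-1}}$ restricted to $W_{p^n}(R)$ for $n \geq 0$; equivalently, if we write $\underline{z} := F^{-1}(\underline{x})$, then $\underline{z}_{p^{-n}}$ is the truncation of $\underline{x}_{p^{-(n+1)}}$ to length $p^n$, so that $\left| \underline{z}_{p^{-n}} \right|_W \leq \left| \underline{x}_{p^{-(n+1)}} \right|_W$ with the two differing only by the contribution of the single Witt component in position $p^{n+1}$. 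The first step is to record this relationship between $\left| \underline{z}_{p^{-n}} \right|_W$ and $\left| \underline{x}_{p^{-(n+1)}} \right|_W$ precisely, and in particular to note $\underline{z}_1 = \underline{x}_{p^{-1}}$ truncated to length $1$, i.e. $\left| \underline{z}_1 \right|_W$ is just the norm of the first component of $\underline{x}_{p^{-1}}$.

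The second step is the convergence claim, that $\underline{x} \in \Warrow^b(R)$ implies $\underline{z} = F^{-1}(\underline{x}) \in \Warrow^{b/p}(R)$. Using $\left| \underline{z}_{p^{-n}} \right|_W \leq \left| \underline{x}_{p^{-(n+1)}} \right|_W$, we get
\[
p^{-(b/p)n} \left| \underline{z}_{p^{-n}} \right|_W^{p^n} \leq p^{-(b/p)n} \left| \underline{x}_{p^{-(n+1)}} \right|_W^{p^n} = p^{b/p} \cdot p^{-(b/p)(n+1)} \left| \underline{x}_{p^{-(n+1)}} \right|_W^{p^n},
\]
and since $p^{-b(n+1)} \left| \underline{x}_{p^{-(n+1)}} \right|_W^{p^{n+1}} \to 0$, taking $p$-th roots shows $p^{-(b/p)(n+1)} \left| \underline{x}_{p^{-(n+1)}} \right|_W^{p^n} \to 0$, hence $\underline{z} \in \Warrow^{b/p}(R)$.

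The third step is the sandwich inequality itself. By definition, $\left| \underline{x} \right|_{W,b} = \sup_{n \geq 0} \{ p^{-bn} \left| \underline{x}_{p^{-n}} \right|_W^{p^n} \}$, and we split off the $n=0$ term $\left| \underline{x}_1 \right|_W$ from the supremum over $n \geq 1$. Reindexing $n = k+1$, the tail is $\sup_{k \geq 0} \{ p^{-b(k+1)} \left| \underline{x}_{p^{-(k+1)}} \right|_W^{p^{k+1}} \}$. The upper bound comes from $\left| \underline{x}_{p^{-(k+1)}} \right|_W \geq \left| \underline{z}_{p^{-k}} \right|_W$, which gives this tail $\geq p^{-b} \sup_k \{ p^{-bk} \left( \left| \underline{z}_{p^{-k}} \right|_W^{p^k} \right)^p \}$; but $\left| \underline{z} \right|_{W,b/p} = \sup_k \{ p^{-(b/p)k} \left| \underline{z}_{p^{-k}} \right|_W^{p^k} \}$, so raising to the $p$-th power and using $b \geq 1 \geq b/p \cdot p$... here one must be careful: $(p^{-(b/p)k})^p = p^{-bk}$ exactly, so the tail of $\left| \underline{x} \right|_{W,b}$ is $\geq p^{-b} \left| \underline{z} \right|_{W,b/p}^p$, hence $\left| \underline{x} \right|_{W,b} \geq \max\{ \left| \underline{x}_1 \right|_W, p^{-b} \left| F^{-1}(\underline{x}) \right|_{W,b/p}^p \}$. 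For the reverse inequality, we need $\left| \underline{x}_{p^{-(k+1)}} \right|_W \leq \max\{ \left| \underline{z}_{p^{-k}} \right|_W, (\text{contribution of the top component}) \}$; the top component in position $p^{k+1}$ of $\underline{x}_{p^{-(k+1)}}$, call it $c$, satisfies $|c|^{1/p^{k+1}} \leq$ the norm of a component of $\underline{x}_{p^{-(k+2)}}$ pushed back under $F$, and chasing this through the Frobenius relation one bounds $p^{-b(k+1)}|c|^{p^{k+1}/p^{k+1}}$ appropriately; together with the $n=0$ term this yields $\left| \underline{x} \right|_{W,b} \leq \max\{ \left| \underline{x}_1 \right|_W, \left| F^{-1}(\underline{x}) \right|_{W,b/p}^p \}$ (note the absence of the $p^{-b}$ factor, which is exactly the slack needed to absorb the top-component contribution, using $b \geq 1$ so that the weight $p^{-b(k+1)}$ decays fast enough relative to $p^{-(b/p)k \cdot p} = p^{-bk}$).

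The main obstacle I anticipate is the \emph{upper} bound in \eqref{equation for inverse Frobenius norm}, i.e. controlling the single Witt component of $\underline{x}_{p^{-n}}$ in position $p^n$ that gets discarded when passing to $\underline{z}_{p^{-(n-1)}}$. This component is not literally a component of any $\underline{z}_{p^{-k}}$; one must recover it from the coherence of $\underline{x}$ under $F$ — it is visible in $\underline{x}_{p^{-(n+1)}}$ through the Frobenius formula of Lemma~\ref{big Frobenius components lemma} — and then verify that its weighted norm is dominated by $\left| F^{-1}(\underline{x}) \right|_{W,b/p}^p$ using the hypothesis $b \geq 1$. I expect this to be a short but slightly delicate estimate; everything else is bookkeeping with the definition \eqref{equation for inverse limit norm} and Corollary~\ref{Frobenius Verschiebung bound}.
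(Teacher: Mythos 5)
Your setup is on the right track: you correctly identify that $\underline{z}_{p^{-n}}$ is obtained from $\underline{x}_{p^{-(n+1)}}$ by dropping the top Witt component, you prove the convergence claim $F^{-1}(\underline{x}) \in \Warrow^{b/p}(R)$ cleanly, and your derivation of the \emph{lower} bound from the exact decomposition
\[
\left| \underline{x} \right|_{W,b} = \max\Bigl\{ \left| \underline{x}_1 \right|_W,\ p^{-b} \left| F^{-1}(\underline{x}) \right|_{W,b/p}^p,\ \sup_{n \geq 1} p^{-bn} \left| x_{p^{-n},p^n} \right| \Bigr\}
\]
is exactly right and is what the paper does. But the upper bound in \eqref{equation for inverse Frobenius norm} is precisely the statement
\[
\sup_{n \geq 0} p^{-bn} \left| x_{p^{-n},p^n} \right| \leq \max\bigl\{ \left| \underline{x}_1 \right|_W,\ \left| F^{-1}(\underline{x}) \right|_{W,b/p}^p \bigr\},
\]
and this is where your proposal stops being a proof: you say you ``anticipate'' the top-component estimate is ``short but slightly delicate,'' but you do not actually carry it out, and the verbal sketch you give is not an argument. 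This is the entire content of the lemma's hard direction.

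Moreover, the direction of your sketch is not the one that works cleanly. You propose to control the discarded component $x_{p^{-n},p^n}$ by looking ``in $\underline{x}_{p^{-(n+1)}}$'' — i.e.\ deeper into the inverse system — but that leads to an infinite regress with no anchor. The paper runs the recursion the other way: it proves $p^{-bn}\left| x_{p^{-n},p^n} \right| \leq \max\{\left| \underline{x}_1 \right|_W, \left| \underline{y} \right|_{W,b/p}^p\}$ by \emph{forward} induction on $n$, with base case $n=0$ trivial. For the step, letting $\underline{z}_{p^{-n}} \in W_{p^{n+1}}(R)$ be the zero-extension of $\underline{y}_{p^{-n}}$ (equivalently, $\underline{x}_{p^{-(n+1)}}$ with its top component replaced by $0$), Lemma~\ref{big Frobenius components lemma} applied to $F(\underline{x}_{p^{-(n+1)}}) = \underline{x}_{p^{-n}}$ and to $F(\underline{z}_{p^{-n}})$ gives the identity
\[
x_{p^{-(n+1)},p^{n+1}} = p^{-1}\bigl( x_{p^{-n},p^n} - F(\underline{z}_{p^{-n}})_{p^n} \bigr),
\]
so the new top component is expressed in terms of the \emph{previous} top component (handled by the induction hypothesis) and a quantity $F(\underline{z}_{p^{-n}})_{p^n}$ depending only on $\underline{y}$, which Corollary~\ref{Frobenius Verschiebung bound} bounds by $\left| \underline{y}_{p^{-n}} \right|_W^{p^{n+1}}$. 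The factor $|p^{-1}| = p$ is absorbed by the weight shift $p^{1-b(n+1)} \leq p^{-bn}$, which is exactly where the hypothesis $b \geq 1$ enters. You correctly guessed that $b \geq 1$ provides the slack, but without the identity above and the correct induction direction the estimate does not close.

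So: your decomposition, convergence argument, and lower bound are correct and match the paper; the upper bound is a genuine gap, and your proposed route for it points backwards. To complete the proof you should set up the forward induction with the zero-extended truncation $\underline{z}_{p^{-n}}$ as above.
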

\begin{proof}
Put $\underline{y} = F^{-1}(\underline{x})$; then 
for $n \geq 0$, $\underline{y}_{p^{-n}}$ is the restriction of
$\underline{x}_{p^{-n-1}}$ from $W_{p^{n+1}}(R)$ to $W_{p^n}(R)$.
From this, it is clear that
\[
\left| \underline{x} \right|_{W,b} = \max\left\{\left| \underline{x}_1 \right|_W, \; p^{-b} \left| \underline{y} \right|_{W,b/p}^p, \;
\sup_{n \geq 0} \{p^{-bn} \left| x_{p^{-n}, p^n}\right| \} \right\}.
\]
It thus remains to prove that for $n \geq 0$,
\[
p^{-bn} \left| x_{p^{-n}, p^n}\right| \leq
\max\{\left| \underline{x}_1 \right|_W,  \left| \underline{y} \right|_{W,b/p}^p\}.
\]
We check this by induction on $n$, the case $n=0$ being obvious.
Given the claim for some $n$, 
let $\underline{z}_{p^{-n}} \in W_{p^{n+1}}(R)$ be the extension of $\underline{y}_{p^{-n}}$ by zero; thus $\underline{z}_{p^{-n}}$ is obtained from $\underline{x}_{p^{-n-1}}$ by replacing its final Witt component by zero. By 
Lemma~\ref{big Frobenius components lemma}, and using its notation, we have both
\begin{align*}
x_{p^{-n},p^n} &= x_{p^{-n-1},p^n}^p + px_{p^{-n-1},p^{n+1}} + f_{p^n}(x_{p^{-n-1},1}, \ldots, x_{p^{-n-1},p^n})\\
\intertext{and }
F(\underline{z}_{p^{-n}})_{p^n} &= x_{p^{-n-1},p^n}^p + p \cdot 0 + f_{p^n}(x_{p^{-n-1},1}, \ldots, x_{p^{-n-1},p^n}),\\
\intertext{so that }
x_{p^{-n-1},p^{n+1}} &= p^{-1}(x_{p^{-n},p^n} - F(\underline{z}_{p^{-n}})_{p^n} ).
\end{align*}
By the induction hypothesis, Proposition~\ref{Frobenius Verschiebung bound infinite case}(c), and the condition $b \geq 1$,
\begin{align*}
p^{-b(n+1)}
\left| 
x_{p^{-n-1},p^{n+1}}
\right|
&\leq p^{1-b(n+1)} \left| x_{p^{-n},p^n} - F(\underline{z}_{p^{-n}})_{p^n} \right| \\
&\leq p^{-bn} \max\{\left| x_{p^{-n},p^n} \right|,
\left| F(\underline{z}_{p^{-n}})_{p^n} \right|\} \\
&\leq \max\{p^{-bn} \left| x_{p^{-n},p^n} \right|,
p^{-bn} \left| \underline{z}_{p^{-n}} \right|^p_W\} \\
&\leq \max\{\left| \underline{x}_1 \right|_W, \left| \underline{y} \right|_{W,b/p}^p\}.
\end{align*}
This completes the induction and thus the proof.
\end{proof}

\begin{Thm} \label{compare to positive characteristic2}
For $0<b \leq 1$, the seminorm $\left| \bullet \right|_{W,b}$ is power-multiplicative;
for $b > 1$, it is equivalent to a power-multiplicative seminorm. Consequently, for all $b>0$, the subset $\Warrow^b(R)$ of $\Warrow(R)$ is in fact a subring.
\end{Thm}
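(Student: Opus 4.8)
The plan is to treat the two ranges of $b$ separately: I reduce the case $0<b\le 1$ to the Witt-perfect, $p$-adically complete situation covered by Theorems~\ref{compare to positive characteristic1} and \ref{overconvergence in characteristic p}, and I bootstrap the case $b>1$ from $b\le 1$ by means of the inverse Frobenius of Lemma~\ref{effect of inverse Frobenius}. In both ranges the assertion that $\Warrow^b(R)$ is a subring will drop out of the identifications used to control the norm.

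\emph{The range $0<b\le 1$.} Since $\Warrow$ is functorial and $|\bullet|_W$ and $|\bullet|_{W,b}$ depend only on the values of the seminorm on the Witt components, these constructions commute with isometric embeddings and factor through the separated quotient by the kernel of the seminorm; moreover, power-multiplicativity of a seminorm passes to subgroups, to pullbacks along ring homomorphisms, and to suprema of families. Applying Theorem~\ref{Gelfand representation} to $R$ with its $p$-adic seminorm therefore reduces us to the case $R=K$, a nonarchimedean analytic field. As $p$ is invertible in $R$, hence nonzero in $K$, the field $K$ has characteristic $0$; and since its norm is dominated by the $p$-adic seminorm, which has $|p|=p^{-1}$, we get $0<|p|_K<1$, so $K$ has residue characteristic $p$. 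Replacing $K$ by its completed algebraic closure we may assume $K$ is a perfectoid field, whence $\mathcal{O}_K$ is Witt-perfect at $p$ and $p$-adically complete with $(\mathcal{O}_K)_p=K$. If $|p|_K=p^{-s}$ with $s\ge 1$, then $\nu:=|\bullet|_K^{1/s}$ satisfies $\nu(p)=p^{-1}$ and is precisely the $p$-adic seminorm of $\mathcal{O}_K$, so Theorems~\ref{compare to positive characteristic1} and \ref{overconvergence in characteristic p} show that the $\nu$-version of $|\bullet|_{W,\beta}$ is power-multiplicative and $\Warrow^\beta(K)$ a subring for every $0<\beta\le 1$. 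One then passes back to $|\bullet|_K=\nu^s$ via the elementary identities: the Witt seminorm $|\bullet|_{W,b}$ built from a base seminorm $\nu^s$ equals $\bigl(|\bullet|_{W,b/s}\text{ built from }\nu\bigr)^{s}$, and likewise $\Warrow^b$ built from $\nu^s$ coincides with $\Warrow^{b/s}$ built from $\nu$. Since $0<b/s\le 1$ and an $s$-th power of a power-multiplicative seminorm is again power-multiplicative, this gives the claim over $K$, and hence over $R$.

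\emph{The range $b>1$.} I induct on the least $k$ with $b/p^{k}\le 1$. By Lemma~\ref{effect of inverse Frobenius}, $F^{-1}$ is a ring homomorphism $\Warrow^b(R)\to\Warrow^{b/p}(R)$; carrying the ``$\to 0$'' condition through the induction in that lemma's proof, and using $b>1$ so that $p^{1-b}<1$, one sees in fact that $\underline x\in\Warrow^b(R)$ if and only if $F^{-1}(\underline x)\in\Warrow^{b/p}(R)$. Thus $\Warrow^b(R)=(F^{-1})^{-1}\bigl(\Warrow^{b/p}(R)\bigr)$ is a subring as soon as $\Warrow^{b/p}(R)$ is, which holds by induction (or by the case $b\le 1$). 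For the seminorm, let $\|\bullet\|$ be a power-multiplicative seminorm metrically equivalent to $|\bullet|_{W,b/p}$ (it is $|\bullet|_{W,b/p}$ itself when $b/p\le 1$, and otherwise exists by induction). Then
\[
\underline x\ \longmapsto\ \max\bigl\{\,|\underline x_1|_W,\ \|F^{-1}(\underline x)\|^{p}\,\bigr\}
\]
is power-multiplicative, being a maximum of the power-multiplicative seminorm pulled back from $R$ along the ring map $\underline x\mapsto\underline x_1$ and the $p$-th power of the power-multiplicative seminorm pulled back along $F^{-1}$; and by the two-sided bound \eqref{equation for inverse Frobenius norm} it is metrically equivalent to $|\bullet|_{W,b}$. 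This closes the induction, and the final assertion of the theorem follows in all cases.

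\emph{Main difficulty.} The crux is the reduction in the case $b\le 1$: one must leave the world of $p$-adically complete, Witt-perfect rings — to which $A$ itself need not belong, its $p$-adic seminorm being possibly neither a norm nor Witt-perfect — and then reconcile the rigid normalization $|p|=p^{-1}$ built into Definition~\ref{p-adic norm on a ring}, and hence into Theorem~\ref{compare to positive characteristic1}, with the arbitrary multiplicative norms produced by the Gelfand representation; this is exactly what forces the bookkeeping with $b\mapsto b/s$ and with $s$-th powers of Witt seminorms. A second point requiring care throughout is that every reduction must respect the \emph{defining} ``$\to 0$'' condition for $\Warrow^b$, not merely finiteness of $|\bullet|_{W,b}$: both the equivalence $\underline x\in\Warrow^b(R)\Leftrightarrow F^{-1}(\underline x)\in\Warrow^{b/p}(R)$ and the closure of $\Warrow^b(R)$ under multiplication genuinely use this sharper statement and not just submultiplicativity of the seminorm.
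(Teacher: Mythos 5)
Your treatment of the range $b>1$ is essentially the paper's argument dressed up more cleanly: the paper also deduces $bp$ from $b$ via Lemma~\ref{effect of inverse Frobenius} and the two-sided bound \eqref{equation for inverse Frobenius norm}, and your reformulation $\Warrow^{bp}(R)=(F^{-1})^{-1}\bigl(\Warrow^{b}(R)\bigr)$ (using $p^{1-b}<1$) is correct and arguably tidier than the paper's explicit-constants bookkeeping, provided you actually carry out the promised recursion showing that the ``extra'' sequence $p^{-bn}|x_{p^{-n},p^n}|$ is forced to $0$ rather than merely bounded.

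The base case $b\le 1$ is where you genuinely diverge from the paper, and it is also where there is a gap. The paper embeds $A$ into a \emph{single} Witt-perfect ring $\widetilde A$, passes to the $p$-adic completion, and uses $\Warrow^b(R)=\Warrow(R)\cap\Warrow^b(\widetilde R)$ together with Theorem~\ref{compare to positive characteristic1}; this reduces \emph{both} the power-multiplicativity claim and the subring claim to the single characteristic-$p$ ring $\widetilde R^\flat$, where the subring property is Theorem~\ref{overconvergence in characteristic p}. You instead apply the Gel'fand representation to embed $R$ isometrically into a product $\prod_i K_i$ of fields. This works fine for power-multiplicativity, because a supremum of power-multiplicative seminorms is power-multiplicative. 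But the subring assertion is not a statement about the seminorm; it is the assertion that the defining condition $p^{-bn}|\underline x_{p^{-n}}|_W^{p^n}\to 0$ is preserved under multiplication, and over $R$ this is the condition $\sup_i p^{-bn}|(\underline x_i)_{p^{-n}}|_W^{p^n}\to 0$ --- a \emph{uniform} convergence over the Gel'fand fibers. Establishing, as you do, that $\Warrow^b(K_i)$ is a subring for each fixed $i$ gives only fiberwise convergence for $\underline x_i\underline y_i$ and does not imply the required uniform decay; the ``hence over $R$'' at the end of this paragraph is exactly the missing step. (Submultiplicativity of $|\cdot|_{W,b}$ only shows the product has finite $|\cdot|_{W,b}$-seminorm, which is strictly weaker than membership in $\Warrow^b(R)$.) To make a Gel'fand-style argument work one would have to extract uniform quantitative bounds from the characteristic-$p$ computation and track them through the rescaling $b\mapsto b/s_i$, where the exponents $s_i$ need not even be bounded over $i$; none of this is addressed. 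The paper's choice of a single enlarged ring is precisely what sidesteps this uniformity problem, and it also makes your rescaling bookkeeping unnecessary (an enlarged Witt-perfect $p$-adically complete ring automatically carries the $p$-adic seminorm with $|p|=p^{-1}$).

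One smaller remark: you should also verify that the $p$-adic seminorm is preserved under whatever enlargement you use --- for the paper's embedding $A\hookrightarrow\widehat{\widetilde A}$ this amounts to compatibility of the ideals in Lemma~\ref{power ideals}, and in your version it is absorbed into the assertion that Gel'fand gives an \emph{isometric} embedding, which is fine; but the subsequent replacement of each $K_i$ by its completed algebraic closure and the comparison of $\nu_i$ with the $p$-adic seminorm of $\mathcal O_{K_i}$ should be stated rather than assumed.
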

\begin{proof}
We proceed by induction on the smallest nonnegative integer $h$ for which $p^h \geq b$. In the base case $h=0$, we have $b \leq 1$; in this case, we may deduce the claim from Theorem~\ref{compare to positive characteristic1} by embedding $A$ into a Witt-perfect ring (e.g., by repeatedly adjoining $p$-th roots of elements)
and then passing to the $p$-adic completion. 

To establish the induction hypothesis, it suffices to deduce the claim for $bp$ from the claim for $b$ whenever $bp \geq 1$.
The claim for $b$ is equivalent to the existence of $c_1, c_2 \geq 1$ such that for all  $\underline{x}, \underline{y} \in \Warrow^{b}(R)$,
\begin{align*}
\left| \underline{x} \underline{y} \right|_{W,b}
&\leq c_1 \left| \underline{x} \right|_{W,b}
\left| \underline{y} \right|_{W,b} \\
\left| \underline{x}^2 \right|_{W,b} 
&\geq c_2 \left| \underline{x} \right|^2_{W,b},
\end{align*}
because then $\left| \bullet \right|_{W,b}$ is equivalent to the power-multiplicative norm defined by $\lim \left| \underline{x}^n \right|_{W,b}^{1/n}$.
For $\underline{x},\underline{y} \in \Warrow^{pb}(R)$, by \eqref{equation for inverse Frobenius norm} we have
\begin{align*}
\left| \underline{x} \underline{y} \right|_{W,pb}
&\leq \max\{\left| \underline{x}_1 \underline{y}_1 \right|_W,
\left| F^{-1}(\underline{x} \underline{y}) \right|^p_{W,b} \}\\
&\leq \max\{\left| \underline{x}_1 \right|_W \left| \underline{y}_1 \right|_W,
c_1^p 
\left| F^{-1}(\underline{x}) \right|^p_{W,b}
\left| F^{-1}(\underline{y}) \right|^p_{W,b} \}\\
&\leq \max\{ \left| \underline{x}_1 \right|_W ,
c_1^p 
\left| F^{-1}(\underline{x}) \right|^p_{W,b} \} 
\max\{ \left| \underline{y}_1 \right|_W ,
c_1^p \left| F^{-1}(\underline{y}) \right|^p_{W,b} \} \\
&\leq 
c_1^{2p} p^{2pb} \max\{ \left| \underline{x}_1 \right|_W ,
p^{-pb}
\left| F^{-1}(\underline{x}) \right|^p_{W,b} \} 
\max\{ \left| \underline{y}_1 \right|_W ,
p^{-pb} \left| F^{-1}(\underline{y}) \right|^p_{W,b} \} \\
&\leq
c_1^{2p} p^{2pb} \left| \underline{x} \right|_{W,pb}
\left| \underline{y} \right|_{W,pb}
\end{align*}
and
\begin{align*}
\left| \underline{x}^2\right|_{W,pb}
&\geq \max\{\left| \underline{x}_1^2 \right|_W,
\left| F^{-1}(\underline{x}^2) \right|^p_{W,b} \}\\
&\geq \max\{\left| \underline{x}_1 \right|^2_W,
c_2^p \left| F^{-1}(\underline{x}) \right|^{2p}_{W,b} \}\\
&\geq c_2^p p^{2pb} \max\{\left| \underline{x}_1 \right|^2_W,
p^{-2pb} \left| F^{-1}(\underline{x}) \right|^{2p}_{W,b} \} \\
&\geq c_2^p p^{2pb} \left|\underline{x} \right|_{W,pb}^2.
\end{align*}
This completes the induction and hence the proof.
\end{proof}

\begin{Rmk}
Consider the specific case $A = \OCp$, $R = \Cp$, and $A' =: \widetilde{\mathbf{E}}^+, R' =: \widetilde{\mathbf{E}}$ the inverse limits under the $p$-power maps.  For $b>1$, although $\Warrow^b(\mathbf{E})$ has been studied previously, the ring $\Warrow^b(\Cp)$ does not seem to have been considered previously.
We cannot have an isometric isomorphism  $\Warrow^b(\mathbf{E}) \cong \Warrow^b(\Cp)$ for $b>1$ because the theta map $\Warrow^1(\mathbf{E}) \to \Cp$
does not extend to $\Warrow^b(\mathbf{E})$.
\end{Rmk}

\section{Finite \'etale extensions and overconvergent Witt vectors}
\label{sec:norms almost purity}

We now arrive at the main theorem of the paper, whose proof will occupy the entirety of this section.
\begin{Thm} \label{almost purity part 2}
Let $p$ be a prime. Let $A$ be a ring which is $p$-normal and Witt-perfect at $p$,
and put $R := A_p$. Let $S$ be a finite \'etale $R$-algebra
and let $B$ be the integral closure of $A$ in $S$. Equip $R$ and $S$ with the $p$-adic seminorms for the subrings $A$
and $B$, respectively.
Then for $b>0$, $\Warrow^b(S)$ is finite \'etale over $\Warrow^b(R)$. In particular, $\Warrow^\dagger(S)$ is finite \'etale over $\Warrow^\dagger(R)$.
\end{Thm}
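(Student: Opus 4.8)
The plan is to reduce the global (non-complete) statement to the $p$-adically complete case treated in the previous paper \cite{DK12} and \cite{KL11, Sch11}, by passing through the positive-characteristic avatars via Theorem~\ref{compare to positive characteristic1} and the decompletion results of \S\ref{sec:overconvergent}. Concretely, the first step is to localize the problem: finite étaleness of $\Warrow^b(S)$ over $\Warrow^b(R)$ can be checked after a faithfully flat base change, and in fact it suffices to produce, for the one fixed $b$, a finite projective $\Warrow^b(R)$-module structure on $\Warrow^b(S)$ together with a trace form that is a perfect pairing. The last sentence of the theorem (the statement about $\Warrow^\dagger$) then follows formally: since $\Warrow^\dagger(R) = \bigcup_{b>0} \Warrow^b(R)$ and likewise for $S$, a compatible family of finite étale structures for all $b$ (or even for a cofinal sequence of $b$'s, using that the transition maps are flat) glues to give that $\Warrow^\dagger(S)$ is finite étale over $\Warrow^\dagger(R)$. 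So the real content is the claim for a single $b>0$.

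For a single $b$, I would first handle $0 < b \leq 1$. Embed $A$ into a Witt-perfect, $p$-normal ring (e.g.\ by repeatedly adjoining $p$-th roots) and pass to $p$-adic completions $\hat{A}, \hat{B}$, with $\hat{R} = \hat{A}[p^{-1}]$, $\hat{S} = \hat{B}[p^{-1}]$; by Theorem~\ref{almost purity part 1} the relevant almost-finite-étale structure descends well, and by Theorem~\ref{compare to positive characteristic1} we have isometric isomorphisms $\Warrow^b(\hat{R}) \cong \Warrow^b(\hat{R}')$ and $\Warrow^b(\hat{S}) \cong \Warrow^b(\hat{S}')$ with the characteristic-$p$ objects. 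In characteristic $p$, $\Warrow^b$ is the usual overconvergent Witt ring (Theorem~\ref{overconvergence in characteristic p}), and the finite étale descent of finite étale extensions of the perfectoid field of norms along the overconvergent subring is exactly the Kedlaya--Liu machinery (the $\Warrow^b$ analogue of \cite[\S5.5, \S5.6]{KL11}): one produces a finite projective module with perfect trace pairing over $\Warrow^b$ of the base, using that almost purity in the complete setting upgrades to honest finite étaleness once one is allowed to divide by the elements $x_n$ (which become units after passing to $\Warrow^b$, by the Teichmüller-unit argument in the proof of Theorem~\ref{compare to positive characteristic1}). This gives the result for $0<b\leq 1$ over the enlarged base; then a faithfully flat descent along $\Warrow^b(A\text{-enlargement}) \to$ (original) recovers it over $\Warrow^b(R)$ itself — here one uses that the $p$-adic seminorms and the completeness statement of Lemma~\ref{complete Witt vectors} let one control the descent, together with the fact that finite étaleness is a local property.

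For $b>1$ I would run the same induction on the least $h$ with $p^h \geq b$ that is used in Theorem~\ref{compare to positive characteristic2}, propagating finite étaleness from $b$ to $pb$ by means of the inverse Frobenius $F^{-1}$ of Remark~\ref{only inverse Frobenius} and the norm estimate \eqref{equation for inverse Frobenius norm}: $F^{-1}$ identifies $\Warrow^{pb}$ with a subring of $\Warrow^{b/1}$-type data, and one checks that the finite projective module and perfect trace pairing for $S/R$ at level $b$ pull back through $F^{-1}$ to give the same at level $pb$, because $F^{-1}$ is compatible with the ring structure on both sides and with formation of integral closures. The step I expect to be the main obstacle is precisely the descent from the enlarged Witt-perfect base back down to $\Warrow^b(R)$ while keeping track of the $p$-adic seminorms — i.e.\ making sure that the finite projective structure one obtains after completing and adjoining $p$-power roots actually descends to the given $A$, not merely to some completion or some extension of it. This is the analogue of Lemma~\ref{descent for finite projective} at the level of overconvergent Witt vectors, and carrying it out requires combining the trace-form control of Theorem~\ref{almost purity part 1}(d) with the approximation afforded by Lemma~\ref{Witt-perfect to norm-perfect} and the completeness of $\Warrow^b(R)$ from Lemma~\ref{complete Witt vectors}.
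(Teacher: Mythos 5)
Your high-level plan is in the right neighborhood — reduce to the $p$-adically complete/characteristic~$p$ case where Kedlaya--Liu gives finite projectivity with a perfect trace pairing, and then decompletify — and you correctly single out the decompletion as the main obstacle. The $\Warrow^\dagger$ statement following formally from the individual $b$'s is also right. However, the proposal has two genuine gaps, and the second is decisive.

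First, for the decompletion you invoke ``faithfully flat descent along $\Warrow^b(\text{enlargement}) \to (\text{original})$.'' But the ring map runs the other way, $\Warrow^b(R) \to \Warrow^b(\hat{R})$, and while it is faithfully flat, fpqc descent for modules requires a descent datum over $\Warrow^b(\hat{R}) \otimes_{\Warrow^b(R)} \Warrow^b(\hat{R})$ — and nothing in the complete-case theory hands you such a datum. The paper does not use descent at all for this step. Instead the argument is organized in the opposite order: Proposition~\ref{construct surjective map} first produces a surjection $\pi \colon G \to \Warrow^b(S)$ with $G$ finite free \emph{directly over the non-complete ring}, via an explicit level-by-level induction on $W_{p^j}$ with careful norm bookkeeping (using Lemma~\ref{constructing generators}, Lemma~\ref{operator norm of projector}, Lemma~\ref{kernel norm}, Lemma~\ref{Witt-perfect to norm-perfect}, and ultimately Theorem~\ref{almost purity part 1}). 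Only afterward does Theorem~\ref{finite implies projective} use the complete case to split $\pi$ — and this splitting is obtained not by descent but by an approximation procedure: one perturbs the splitting over $\Warrow^b(\hat{R})$ to a sequence of splittings converging $p$-adically while simultaneously arranging, level by level, that the truncation to $W_{p^n}$ is defined over the non-complete ring. Your proposal contains none of this explicit surjection construction, which is the real workhorse of the proof; without it there is no finite free presentation to split.

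Second, for $b>1$ the $F^{-1}$ induction à la Theorem~\ref{compare to positive characteristic2} is not obviously sound here: $F^{-1}$ only changes the filtration on an overconvergent Witt ring, it is not a ring homomorphism that would carry a finite étale algebra structure at level $b$ to one at level $pb$. The paper sidesteps this entirely: in Lemma~\ref{finite projective module} it observes that the splitting constructed at $b=1$ is defined level-wise (the level-$n$ component of $\iota(\underline{x})$ depends only on $\underline{x}_{p^{-n}}$) and so automatically restricts to a splitting for every $b>1$, using only that the generators $\underline{b}_i$ lie in $\Warrow(B) \subseteq \Warrow^b(S)$ and that $\Warrow^b(R)$ is multiplicatively closed. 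So your route for $b>1$ would need to be replaced, not merely fleshed out. Finally, you would also need Lemma~\ref{compatibility with tensor product} (or an equivalent) to upgrade finite projective to finite étale; the proposal gestures at a trace pairing but does not say how to transfer its perfection across the decompletion.
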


We proceed using a sequence of lemmas.
\begin{Hyp}
For the remainder of \S\ref{sec:norms almost purity}, assume hypotheses as in Theorem~\ref{almost purity part 2}.
Also fix a sequence $x_1, x_2, \dots \in A$ as in Lemma~\ref{sequence of p-power roots}. 
\end{Hyp}

\begin{Lem} \label{density of subrings}
For $b' > b > 0$, $\Warrow^b(R)$ is dense in $\Warrow^{b'}(R)$
with respect to $\left| \bullet \right|_{W,b'}$.
\end{Lem}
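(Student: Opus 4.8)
The plan is to show that the natural inclusion $\Warrow^b(R) \hookrightarrow \Warrow^{b'}(R)$ has dense image by exhibiting, for an arbitrary $\underline{x} \in \Warrow^{b'}(R)$, an explicit approximating sequence lying in $\Warrow^b(R)$. The obvious candidate is truncation: given $\underline{x} = (\underline{x}_{p^{-n}})_{n \geq 0}$, define for each $N$ an element $\underline{x}^{(N)}$ whose first $N+1$ inverse-ghost-levels agree with those of $\underline{x}$ and whose higher levels are suitably damped. The subtlety is that one cannot simply ``zero out'' the tail, because an element of $\Warrow(R)$ is a coherent system under Frobenius, and $F$ is not surjective in general (Remark~\ref{only inverse Frobenius}); so one must produce a genuine coherent system. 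The clean fix is to use the inverse Frobenius: set $\underline{x}^{(N)} := F^{-N}\bigl(\text{restriction of } \underline{x}_1 \text{-through-}\underline{x}_{p^{-N}} \text{ data}\bigr)$ — more precisely, let $\underline{x}^{(N)}$ be the image under $F^{-N}$ of the element of $\Warrow(R)$ obtained by padding $\underline{x}_{p^{-N}} \in W_{p^N}(R)$ with zeros into $W(R) = \varprojlim_{\text{restriction}}$... but since $\Warrow$ uses Frobenius transition maps, the right construction is: apply the inverse-Frobenius operator $F^{-N}$ to the element $\underline{y}$ of $\Warrow(R)$ whose level-$n$ component is $\underline{x}_{p^{-n-N}}$ restricted from $W_{p^{n+N}}(R)$ down to $W_{p^n}(R)$ by the (ordinary) restriction map, after first replacing $\underline{x}_{p^{-N}}$ by its extension-by-zero beyond the top component at each stage.

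Concretely, I would instead take the simplest honest construction: $\underline{x}^{(N)} := F^{-N}(\underline{z}^{(N)})$ where $\underline{z}^{(N)} \in \Warrow^{b'}(R)$ is obtained from $\underline{x}$ by keeping $\underline{x}_{p^{-n}}$ for $n \leq N$ and, for $n > N$, replacing it with something forced by coherence — which is exactly what $F^{-N}$ of a truncation accomplishes. The key computation is then a norm estimate: using \eqref{equation for inverse Frobenius norm} from Lemma~\ref{effect of inverse Frobenius} repeatedly (which applies once $b' \geq 1$; for $b' < 1$ the estimate is even easier, or one reduces to that case), one shows $\underline{x}^{(N)} \in \Warrow^b(R)$ for every $b>0$ because its inverse-ghost components vanish identically past level $N$ after the appropriate shift, so the defining condition $p^{-bn}\lvert \underline{x}^{(N)}_{p^{-n}}\rvert_W^{p^n} \to 0$ holds trivially — a finitely-supported sequence. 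Simultaneously one checks $\lvert \underline{x} - \underline{x}^{(N)} \rvert_{W,b'} \to 0$ as $N \to \infty$: by construction $\underline{x}$ and $\underline{x}^{(N)}$ agree through level $N$, so $\lvert \underline{x} - \underline{x}^{(N)}\rvert_{W,b'} \leq \sup_{n > N}\{p^{-b'n}(\cdots)\}$, and this supremum tends to $0$ precisely because $\underline{x} \in \Warrow^{b'}(R)$ (so the tail of its defining sequence goes to $0$) and the corresponding tail contributions of $\underline{x}^{(N)}$ are controlled by the same Lemma~\ref{effect of inverse Frobenius} inequalities.

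The main obstacle, and the step requiring genuine care, is bookkeeping the relationship between the ``$\Warrow$-components'' $\underline{x}_{p^{-n}} \in W_{p^n}(R)$ and the individual Witt components $x_{p^{-n},p^j}$, and verifying that the truncated/inverse-Frobenius-shifted object $\underline{x}^{(N)}$ (i) is a legitimate coherent Frobenius sequence, hence an element of $\Warrow(R)$, (ii) actually lies in $\Warrow^b(R)$ for the target $b$, and (iii) genuinely converges to $\underline{x}$ in $\lvert\bullet\rvert_{W,b'}$. All three reduce to the two-sided inequality \eqref{equation for inverse Frobenius norm}; the delicate point is that a single application of $F^{-1}$ only improves the growth parameter from $b$ to $b/p$, so to land inside $\Warrow^b(R)$ for an arbitrary small $b$ one leverages instead the fact that after the shift the sequence is \emph{finitely supported}, making the limit condition automatic regardless of $b$. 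I would also note the reduction $b' \geq 1$ is harmless: for $b' \leq 1$ one can argue directly from the definition since $\lvert\bullet\rvert_{W,b'}$ is already a norm (Theorem~\ref{compare to positive characteristic2}) and truncation-by-zero in the sense of padding works because the relevant tails are controlled without needing Frobenius surjectivity, and in any case $\Warrow^{b}(R) \subseteq \Warrow^{b'}(R)$ for $b < b' \leq 1$ already contains the dense subring identified in the proof of Theorem~\ref{compare to positive characteristic1}.
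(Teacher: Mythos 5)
Your high-level plan is the right one — approximate $\underline{x} \in \Warrow^{b'}(R)$ by elements that agree with $\underline{x}$ through some level $h$ and are better-controlled beyond level $h$ — and you correctly identify the central obstacle: a truncation $\underline{x}_{p^{-h}} \in W_{p^h}(R)$ does not canonically extend to a coherent Frobenius inverse system, since $F$ is not generally surjective. But the fix you propose, applying $F^{-N}$, does not resolve this obstacle; it makes things worse. Recall that $F^{-1}$ sends $(\underline{x}_{p^{-n}})_n$ to the system whose level-$n$ term is the \emph{restriction} of $\underline{x}_{p^{-(n+1)}}$ to $W_{p^n}(R)$. Thus $\bigl(F^{-N}(\underline{x})\bigr)_{p^{-n}}$ has first Witt component $x_{p^{-(n+N)},1}$, whereas $\underline{x}_{p^{-n}} = F^N(\underline{x}_{p^{-(n+N)}})$ has first Witt component $w_{p^N}(\underline{x}_{p^{-(n+N)}})$; these are different in general. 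So $F^{-N}(\underline{x})$ (or $F^{-N}$ of any truncation) does \emph{not} agree with $\underline{x}$ at low levels, and your claim that ``by construction $\underline{x}$ and $\underline{x}^{(N)}$ agree through level $N$'' fails. A concrete check: for $R$ perfect of characteristic $p$ and $\underline{x}=[a]$ a Teichm\"uller element, $F^{-N}(\underline{x})=[a^{p^{-N}}]$, which stays at bounded distance from $[a]$ and does not converge to it.

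The second claim, that $\underline{x}^{(N)}$ has a ``finitely-supported'' sequence of components and therefore lies in $\Warrow^b(R)$ for all $b>0$ trivially, is also incorrect: if $\underline{y}_{p^{-n}} = 0$ for all $n > N$ then by Frobenius-coherence $\underline{y}_{p^{-m}} = F^{n-m}(0) = 0$ for every $m < n$ too, so the only finitely-supported element of $\Warrow(R)$ is $0$. The inverse Frobenius does not truncate; it shifts, and the shifted element still has nonzero components at every level.

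The ingredient your argument is missing — and which the Witt-perfect hypothesis on $A$, in force throughout this section, is there to supply — is Lemma~\ref{Witt-perfect to norm-perfect}. That lemma lets you \emph{extend} $\underline{x}_{p^{-h}} \in W_{p^h}(R)$ forward to a genuine coherent system $\underline{y} \in \Warrow(R)$ with $\underline{y}_{p^{-h}} = \underline{x}_{p^{-h}}$ (hence $\underline{y}_{p^{-n}} = \underline{x}_{p^{-n}}$ for all $n \leq h$ by coherence) and with the crucial norm control $\lvert \underline{y}_{p^{-n}} \rvert_W^{p^n} \leq \lvert \underline{x}_{p^{-h}} \rvert_W^{p^h}$ for all $n \geq h$. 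This uniform bound on $\lvert \underline{y}_{p^{-n}} \rvert_W^{p^n}$ places $\underline{y}$ in $\Warrow^b(R)$ for every $b>0$, and choosing $h$ so that $p^{-b'n}\lvert \underline{x}_{p^{-n}} \rvert_W^{p^n} < \epsilon$ for $n \geq h$ makes $\lvert \underline{x} - \underline{y} \rvert_{W,b'} < \epsilon$, since the two agree for $n \leq h$ and both tails are $< \epsilon$ for $n > h$. Your Lemma~\ref{effect of inverse Frobenius} is about the norm effect of $F^{-1}$ and is not the right tool; Lemma~\ref{Witt-perfect to norm-perfect} is the one that performs the forward extension you need.
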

\begin{proof}
We must check that for any $\underline{x} \in \Warrow^{b'}(R)$
and any $\epsilon > 0$, there exists $\underline{y} \in \Warrow^b(R)$
with $\left| \underline{x} - \underline{y} \right|_{W,b'} < \epsilon$.
To find $\underline{y}$, choose a positive integer $h$ for which
$p^{-b'n} \left| \underline{x}_{p^{-n}} \right|^{p^n}_{W} < \epsilon$
for all $n \geq h$. Then apply Lemma~\ref{Witt-perfect to norm-perfect} 
to lift $\underline{x}_{p^{-h}} \in W_{p^h}(R)$ to an element $\underline{y} \in \Warrow^b(R)$ with
$\left| \underline{y}_{p^{-n}} \right|_W^{p^{n}} \leq
\left| \underline{x}_{p^{-h}} \right|_W^{p^h}$ for all $n \geq h$.
This has the desired effect: for $n \leq h$ we have
$(\underline{x}- \underline{y})_{p^{-n}} = 0$,
while for $n > h$ we have
\[
p^{-b'n} \left| (\underline{x} - \underline{y})_{p^{-n}} \right|^{p^n}_W
\leq \max\{ p^{-b'n} \left| \underline{x}_{p^{-n}} \right|^{p^n}_W,
p^{-b'n} \left| \underline{y}_{p^{-n}} \right|^{p^n}_W\} < \epsilon.
\]
This proves the claim.
\end{proof}

\begin{Lem}\label{kernel norm}
For $j$ a positive integer, for $\underline{x}$ in the kernel of $F: W_{p^j}(S) \to
W_{p^{j-1}}(S)$, we have
\[
\left| w_1(\underline{x}) \right| = p^{-1/p - \cdots - 1/p^j} \left| \underline{x} \right|_{W}.
\]
\end{Lem}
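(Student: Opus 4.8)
The plan is to analyze the structure of the kernel of $F\colon W_{p^j}(S) \to W_{p^{j-1}}(S)$ explicitly, using the description of Frobenius components from Lemma~\ref{big Frobenius components lemma}. Recall $w_1(\underline{x})$ denotes the first ghost component, which equals the first Witt component $x_1$ of $\underline{x}$. So the claim is that for $\underline{x} = (x_1, x_p, \dots, x_{p^{j-1}})$ in the kernel, $|x_1| = p^{-1/p - \cdots - 1/p^j}\,\sup_{0 \le i \le j-1}\{|x_{p^i}|^{1/p^i}\}$.

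First I would set up the recursion. Writing $F(\underline{x}) = \underline{y} = 0$ and using Lemma~\ref{big Frobenius components lemma}, for each $0 \le i \le j-1$ we have $0 = y_{p^i} = x_{p^i}^p + p x_{p^{i+1}} + p f_{p^i}(x_1, \dots, x_{p^i})$, with the convention that $x_{p^j}$ does not appear (the target is $W_{p^{j-1}}(S)$, so the equation for $i = j-1$ reads $0 = x_{p^{j-1}}^p + p f_{p^{j-1}}(x_1,\dots,x_{p^{j-1}})$). This lets me solve, for $i = 0, \dots, j-2$,
\[
x_{p^{i+1}} = -p^{-1}\bigl(x_{p^i}^p + p f_{p^i}(x_1,\dots,x_{p^i})\bigr).
\]
The strategy is to show by induction on $i$ that $|x_{p^i}| = |x_1|^{p^i} \cdot p^{(i - (1 + p + \cdots + p^{i-1}))}$, or equivalently that $|x_{p^i}|^{1/p^i} = |x_1| \cdot p^{(i - (1 + p + \cdots + p^{i-1}))/p^i}$; the key point is that the term $x_{p^i}^p$ dominates $p f_{p^i}(\dots)$ in the expression for $p\,x_{p^{i+1}}$, because each monomial of $f_{p^i}$ is a product of $x$-variables of total weight $p^i$ (by homogeneity, Corollary~\ref{homogeneous corollary}), hence by submultiplicativity has norm at most (something computed from the inductive estimates) which, after accounting for the extra factor of $p$ in front, is strictly smaller than $|x_{p^i}^p|$. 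Then the exponent $\sup_i \{(i - (1+p+\cdots+p^{i-1}))/p^i\}$ over $0 \le i \le j-1$ needs to be identified: I expect the supremum to be attained at $i = j-1$ and to equal exactly $-(1/p + \cdots + 1/p^j)$ after comparing with the final relation $x_{p^{j-1}}^p = -p f_{p^{j-1}}(\dots)$ which supplies one extra factor of $p$ and shifts the bookkeeping by one; this last relation is what pins down $|x_{p^{j-1}}|$ in terms of $|x_1|$ and forces the stated constant.

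The main obstacle I anticipate is making the domination argument (that $x_{p^i}^p$ beats the lower-order terms $p f_{p^i}$) rigorous and uniform: $f_{p^i}$ is a specific integer polynomial whose monomials I do not want to track individually, so I would instead use the homogeneity/weighting from Corollary~\ref{homogeneous corollary} together with the inductive hypothesis to bound $|f_{p^i}(x_1,\dots,x_{p^i})| \le M_i$ for an explicit $M_i$, and then verify the strict inequality $p M_i < |x_{p^i}|^p$ purely arithmetically (this is where the factor $p$ in front of $f_{p^i}$, i.e.\ the $p$-adic nature of the norm, is essential — note the claim is false for a general submultiplicative norm). A subtlety is that if $x_1 = 0$ the whole Witt vector vanishes in the kernel (by descending through the relations), so the identity $|w_1(\underline{x})| = p^{-1/p-\cdots-1/p^j}|\underline{x}|_W$ holds trivially; thus I may assume $x_1 \ne 0$, rescale if convenient, and run the induction. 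Once the chain of norm equalities $|x_{p^i}|^{1/p^i} = p^{-(\text{partial sum})}\,|x_1|$ is established with the supremum located at $i=j-1$, the statement follows by rearranging.
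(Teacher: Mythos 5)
Your approach is genuinely different from the paper's. The paper handles both inequalities by citing \cite[Corollary~2.6]{DK12} (first to pass from $\left|\underline{x}\right|_W < 1$ to $\left|w_1(\underline{x})\right| \leq c$, then in the opposite direction using that $\underline{x}$ is determined by $w_1(\underline{x})$ on the kernel because $S$ is $p$-torsion-free) and then upgrades the threshold version to an equality by rescaling with Teichm\"uller elements $[x_n^a p^b]$. You instead propose a self-contained induction on the Witt components using Lemma~\ref{big Frobenius components lemma} and the homogeneity from Corollary~\ref{homogeneous corollary}. That is a legitimate alternative route and, once carried out carefully, it does yield the exact constant; the paper's route is shorter only because the hard combinatorics has been outsourced to DK12. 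Your observation that the $p$-adic nature of the norm (not just submultiplicativity) is essential is exactly right; you should also note explicitly that power-multiplicativity is used to convert $\left|x_{p^i}^p\right|$ into $\left|x_{p^i}\right|^p$, since the $p$-adic seminorm of Definition~\ref{p-adic norm on a ring} is power-multiplicative but not generally multiplicative.

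However, as written the computation has three concrete errors that together would lead you to the wrong constant.

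First, the component count in $W_{p^j}$ is off by one. In this paper's convention $W_1(R)=R$, so $W_{p^j}(S)$ has the $j+1$ components $x_1, x_p, \dots, x_{p^j}$. Thus $\underline{x}$ \emph{does} have a component $x_{p^j}$, and the kernel conditions $y_{p^i}=0$ run over $i=0,\dots,j-1$ with \emph{every} one of them of the form $0 = x_{p^i}^p + p x_{p^{i+1}} + p f_{p^i}(x_1,\dots,x_{p^i})$. There is no special ``final relation'' $0 = x_{p^{j-1}}^p + p f_{p^{j-1}}(\cdots)$ omitting $x_{p^j}$; rather, the equation $i=j-1$ solves for $x_{p^j}$. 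Consequently the supremum defining $\left|\underline{x}\right|_W$ should be located at $i=j$, not at $i=j-1$; the constant $p^{-1/p-\cdots-1/p^j}$ falls out precisely because you must include $x_{p^j}$.

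Second, the exponent formula $\left|x_{p^i}\right| = \left|x_1\right|^{p^i} p^{\,i - (1+p+\cdots+p^{i-1})}$ is wrong. From $0 = y_1 = x_1^p + p x_p$ one has $x_p = -x_1^p/p$, so $\left|x_p\right| = p\left|x_1\right|^p$: the exponent is $+1$, whereas your formula gives $1-1=0$, and at $i=2$ it gives $1-p < 0$, making $\left|x_{p^i}\right|^{1/p^i}$ \emph{decrease} in $i$ and placing the supremum at $i=0$ — the opposite of what is needed. The correct inductive identity is
\[
\left|x_{p^i}\right| = p^{\,1+p+\cdots+p^{i-1}}\left|x_1\right|^{p^i}, \qquad\text{equivalently}\qquad \left|x_{p^i}\right|^{1/p^i} = p^{\,1/p + 1/p^2 + \cdots + 1/p^i}\left|x_1\right|,
\]
for $i=0,\dots,j$. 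These exponents strictly increase, the supremum is at $i=j$, and the statement follows.

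Third, the weighted degree of $f_{p^i}$ is $p^{i+1}$, not $p^i$: by Corollary~\ref{homogeneous corollary}(c) the $p^i$-th component of $F(\underline{x})$ is homogeneous of weighted degree $p^{i+1}$, and all three terms $x_{p^i}^p$, $p x_{p^{i+1}}$, $p f_{p^i}$ share that weight. This matters for the domination step: with the corrected inductive values and the fact that $1/p+\cdots+1/p^k$ is increasing in $k$, every monomial of $f_{p^i}$ (which involves only $x_1,\dots,x_{p^i}$ and has weight $p^{i+1}$) has norm at most $\left|x_{p^i}\right|^p$, whence $\left|p f_{p^i}\right| \leq p^{-1}\left|x_{p^i}\right|^p < \left|x_{p^i}\right|^p$ as long as $\left|x_1\right|\neq 0$; the ultrametric then forces $\left|p x_{p^{i+1}}\right| = \left|x_{p^i}\right|^p$ and closes the induction. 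The degenerate case $\left|x_1\right|=0$ must be handled separately (as you noted), but it is immediate from the same recursion that then all $\left|x_{p^i}\right|=0$.

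With these corrections your strategy is sound; without them, the bookkeeping does not produce the stated constant.
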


\begin{proof}
Let $c = p^{-1/p - \cdots - 1/p^j}$.
By \cite[Corollary~2.6]{DK12}, if $\left| \underline{x} \right|_W < 1$, then $\left| w_1(\underline{x}) \right| \leq c$.  By multiplying by $[x_n^a p^b]$ for suitable $a,b,n$, we deduce that
for any $r \in \ZZ[p^{-1}]$, if $\left| \underline{x} \right|_{W} < p^r$ then
$\left| w_1(\underline{x}) \right| \leq cp^r$. 

Now assume $\left| w_1(\underline{x}) \right| < c$.   Note that because $S$ is $p$-torsion free, the element $\underline{x}$ is uniquely determined by $w_1(x)$.   By \cite[Corollary~2.6]{DK12}  again, we deduce $|\underline{x}|_W \leq 1$.  Multiplying by Teichm\"uller elements as above, we deduce that if $\left| w_1(\underline{x}) \right| < cp^r$ then $\left| \underline{x} \right|_{W} \leq p^r$.
Combining our two results, we deduce $\left| w_1(\underline{x}) \right| = c\left| \underline{x} \right|_{W}$, as desired.
\end{proof}

\begin{Lem} \label{lifting generators}
Fix positive integers $h$, $m$.
Choose $\underline{b}_1, \dots, \underline{b}_n \in \Warrow(B)$ 
for which
\begin{align*}
(x_{h}, p)B &\subseteq p^m B + w_{1}(\underline{b}_1) A + \cdots + w_{1}(\underline{b}_n) A.  \\
\intertext{Then for any $j \geq 0$, we have}
(x_{j+h}, p)B & \subseteq p^m B + w_{1/p^j}(\underline{b}_1) A + \cdots + w_{1/p^j}(\underline{b}_n) A.
\end{align*}
\end{Lem}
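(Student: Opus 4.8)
The plan is to induct on $j$, the case $j=0$ being the hypothesis. For the inductive step, suppose the inclusion holds for some $j \geq 0$; I want to deduce it for $j+1$. The key is to connect $w_{1/p^{j+1}}$ with $w_{1/p^j}$ via the Frobenius structure of $\Warrow(B)$: by definition $w_{p^{-j}}(\underline{b}_i)$ is obtained from $w_{p^{-(j+1)}}(\underline{b}_i)$ by applying the $p$-th power map modulo corrections coming from lower ghost/Witt components, or more precisely, if $\underline{b}_{i,p^{-(j+1)}} \in W_{p^{j+1}}(B)$ is the relevant truncation, then $w_{1/p^{j+1}}(\underline{b}_i)$ is its first Witt component and $w_{1/p^j}(\underline{b}_i) = w_{p^0}(F(\underline{b}_{i,p^{-(j+1)}}))$ up to the way the inverse-limit structure identifies these. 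I would phrase the step as: the subgroup $w_{1/p^j}(\underline{b}_1)A + \cdots + w_{1/p^j}(\underline{b}_n)A$ is contained (modulo $p^m B$) in the $A$-span of $p$-th powers and lower-order terms of the $w_{1/p^{j+1}}(\underline{b}_i)$, so that combined with the inductive hypothesis one controls $(x_{j+h},p)B$ in terms of the $w_{1/p^{j+1}}(\underline{b}_i)$'s after extracting an extra $p$-th root, which is exactly what multiplying by $x_{j+1+h}$ versus $x_{j+h}$ achieves, since $x_{j+1+h}^p \equiv x_{j+h} \pmod p$.

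More carefully, here is the mechanism I would use. Given $z \in (x_{j+1+h}, p)B$, I want to write $z$ modulo $p^m B$ as an $A$-combination of the $w_{1/p^{j+1}}(\underline{b}_i)$. Since $x_{j+1+h}^{p} \equiv x_{j+h}\pmod{pB}$, one has $z^p \in (x_{j+h}, p)B$ — well, more usefully, $x_{j+1+h}^{?}$-type manipulations let me reduce to controlling elements already known by the inductive hypothesis to lie in $p^m B + \sum w_{1/p^j}(\underline{b}_i)A$. Then I invoke Lemma~\ref{big Frobenius components lemma} (the components-of-Frobenius description) together with Lemma~\ref{kernel norm}: the relation $y_{p^i} = x_{p^i}^p + p x_{p^{i+1}} + p f_{p^i}(\dots)$ applied at $i=0$ to the truncations $\underline{b}_{i,p^{-(j+1)}}$ says precisely that $w_{1/p^j}(\underline{b}_i) \equiv w_{1/p^{j+1}}(\underline{b}_i)^p \pmod{pB}$. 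Hence $\sum w_{1/p^j}(\underline{b}_i)A \equiv \sum w_{1/p^{j+1}}(\underline{b}_i)^p A \pmod{pB}$, and taking the inductive hypothesis modulo $p$, $(x_{j+h},p)B/(p) \subseteq \big(\sum w_{1/p^{j+1}}(\underline{b}_i)^p A + p^m B\big)/(p)$. Now the Witt-perfectness of $B$ (Theorem~\ref{almost purity part 1}(a)) lets me extract $p$-th roots: an element congruent mod $p$ to $x_{j+h}$ has a $p$-th root congruent mod $p$ to $x_{j+1+h}$, and an $A$-linear combination of $p$-th powers $\sum w_{1/p^{j+1}}(\underline{b}_i)^p a_i$ with $a_i = c_i^p$ (using $p$-th power surjectivity on $A/p$, which holds since $A$ is Witt-perfect) is itself a $p$-th power modulo $p$, namely $(\sum w_{1/p^{j+1}}(\underline{b}_i) c_i)^p$. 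Taking $p$-th roots of the mod-$p$ congruence $(x_{j+h},p)B \subseteq \sum w_{1/p^{j+1}}(\underline{b}_i)^p A + pB$ then yields $(x_{j+1+h}, p)B \subseteq \sum w_{1/p^{j+1}}(\underline{b}_i)A + pB$, and finally a standard successive-approximation / Lemma~\ref{power ideals}-style bootstrap upgrades the mod-$p$ statement to the mod-$p^m$ statement, completing the induction.

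The main obstacle I anticipate is the bootstrap from a congruence modulo $p$ to one modulo $p^m$: extracting $p$-th roots is clean modulo $p$ but the error terms $p x_{p^{i+1}}$ and $p f_{p^i}$ in Lemma~\ref{big Frobenius components lemma}, as well as the ambiguity in choosing $p$-th roots, must be shown to be absorbable into $p^m B$ after finitely many correction steps. This is the kind of descending-induction-on-the-exponent argument used in the proof of Lemma~\ref{power ideals}, and I would model it on that: each correction step trades a congruence modulo $p^k$ for one modulo $p^{k+1}$ at the cost of adjusting the $A$-coefficients, using that $B/A$ and the relevant ideals are compatible with $p$-power filtration because $A$ is $p$-normal. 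A secondary subtlety is bookkeeping the identification between the ghost maps $w_{1/p^j}$ on $\Warrow(B)$ and the first Witt component after applying $F$, i.e.\ keeping straight which truncation $W_{p^{j+1}}(B)$ each $\underline{b}_i$ projects to; this is purely definitional but easy to garble.
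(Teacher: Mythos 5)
Your induction scaffolding and the mod-$p$ mechanism are in the right ballpark — the first half of the paper's proof also uses $w_{1/p^j}(\underline{b}_i) \equiv w_{1/p^{j+1}}(\underline{b}_i)^p \pmod{p}$, Witt-perfectness to extract $p$-th roots of the coefficients in $A$, and Lemma~\ref{power ideals} to control the residual. But there are two problems, the second of which is a genuine gap.

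First, a local inaccuracy: the $p$-th root extraction does not give $(x_{j+1+h},p)B \subseteq \sum w_{1/p^{j+1}}(\underline{b}_i)A + pB$. What it gives is that the residual $z := c - \sum c_i w_{1/p^{j+1}}(\underline{b}_i)$ satisfies $z^p \in pB$, which by Lemma~\ref{power ideals} puts $z$ in $(x_1,p)B$, a strictly larger ideal than $pB$. So the intermediate statement you propose to bootstrap from is not what you've actually established.

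Second, and more seriously, the bootstrap from a mod-$p$ (or mod-$(x_1,p)$) statement up to a mod-$p^m$ statement is not given, and I don't believe the mechanism you sketch can produce it. The natural attempts fail: if $c = p c_1 + \sum a_i w_{1/p^{j+1}}(\underline{b}_i)$, you cannot re-apply the mod-$p$ step to $c_1$, since $c_1 \in B$ need not lie in $(x_{j+h+1},p)B$. And if you instead re-invoke the level-$j$ inductive hypothesis modulo $p^m$ and substitute $w_{1/p^j}(\underline{b}_i) = w_{1/p^{j+1}}(\underline{b}_i)^p + p e_i$, the resulting ``coefficient'' $a_i w_{1/p^{j+1}}(\underline{b}_i)^{p-1}$ lives in $B$ rather than $A$, so you fall out of the module $\sum w_{1/p^{j+1}}(\underline{b}_i)A$. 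The appeal to a ``Lemma~\ref{power ideals}-style'' induction does not cure this; that lemma's descending induction has a different shape, and in any case nothing in your step trades $p^k$ for $p^{k+1}$.

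The paper's proof avoids the mod-$p$ reduction entirely and stays modulo $p^m$ throughout. The two ideas it uses, neither of which appear in your sketch, are: (i) it applies the level-$j$ hypothesis not to an arbitrary element of the ideal but to $x_{j+h}b^p$ for $b \in B$, so that after extracting $p$-th roots \emph{only of the coefficients} $a_i \in A$ (never of an element of $B$), one obtains $x_{j+h+1}b = \sum a_i' w_{1/p^{j+1}}(\underline{b}_i) + z$ with $z \in (x_1,p)B$; and (ii) since in $B/p^m B$ both $x_1$ and $p$ are multiples of $x_{j+h+1}^p$, one can rewrite $z \equiv x_{j+h+1}^{p-1}(x_{j+h+1} z_0) \pmod{p^m B}$, then re-run the argument with $x_{j+h+1}z_0$ in place of $x_{j+h+1}b$. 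Each pass multiplies the residual by $x_{j+h+1}^{p-1}$, and since $x_{j+h+1}$ is roughly a $p^{j+h+1}$-th root of $p$, finitely many passes push the residual into $p^m B$. This geometric gain in powers of $x_{j+h+1}$, rather than of $p$, is the engine of the proof, and it is the step your proposal is missing.
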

\begin{proof}
We proceed by induction on $j$, the base case $j=0$ being given.
Given the claim for some $j$, for any $b \in B$ we have 
\[
x_{j+h} b^p = p^m b_0 + a_1 w_{1/p^j}(\underline{b}_{1}) + \cdots + a_n w_{1/p^j}(\underline{b}_{n})
\]
for some $a_1,\dots,a_n \in A$, $b_0 \in B$. 
Since $A$ is Witt-perfect at $p$, we can find $a'_i \in A$
with $(a'_i)^p \equiv a_i \pmod{p}$; we then have
\begin{align*}
x_{j+h+1}^p b^p &\equiv (a'_1)^p \left( w_{1/p^{j+1}}(\underline{b}_{1})\right)^p  + \cdots + (a'_n)^p \left( w_{1/p^{j+1}}(\underline{b}_{n})\right)^p \mod pB \\
\intertext{and so if we write} 
x_{j+h+1} b &= a'_1 w_{1/p^{j+1}}(\underline{b}_{1}) + \cdots + a'_n  w_{1/p^{j+1}}(\underline{b}_{n}) + z,
\end{align*}
then by Lemma \ref{power ideals} we have that $z \in (x_1, p)B$.
In $B/p^m B$, both $x_1$ and $p$ are multiples of $x_{j+h+1}^p$; thus we can write 
\[
z \equiv x_{j+h+1}^{p-1} \left(x_{j+h+1}  z_0\right) \bmod p^mB.
\]
We may then run the argument again with $x_{j+h+1}  z_0$ in place of $x_{j+h+1}b$.  After suitably many repetitions, we find that
\[
x_{j+h+1} b \equiv a''_1 w_{1/p^{j+1}}(\underline{b}_{1}) + \cdots + a''_n  w_{1/p^{j+1}}(\underline{b}_{n}) \bmod p^mB.
\]
Since $p$ is a multiple of $x_{j+h+1}$ in $B/p^m B$, this completes the induction.
\end{proof}

\begin{Lem} \label{constructing generators}
For any $h \geq 1$, there exist elements $\underline{b}_1,\dots,\underline{b}_n \in \Warrow(B)$ for which
\[
(x_{j+h},p)B \subseteq w_{1/p^j}(\underline{b}_1) A + \cdots + w_{1/p^j}(\underline{b}_n) A
\qquad (j=0,1,\dots)
\]
\end{Lem}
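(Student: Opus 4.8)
The plan is to build the elements $\underline{b}_1,\dots,\underline{b}_n \in \Warrow(B)$ by a limiting/completion argument that upgrades the conclusion of Lemma~\ref{lifting generators}, where the error term $p^m B$ is driven to zero as $m \to \infty$. First I would invoke almost purity (Theorem~\ref{almost purity part 1}): since $A$ is Witt-perfect at $p$, the $p$-ideal $(x_h, p)$ admits, for a suitable finite free $A$-module $F$, an $A$-module map $F \to B$ whose cokernel is killed by $(x_h,p)$, together with a splitting up to multiplication by $x_h$ (and separately by $p^m$ for any $m$). Concretely, this produces finitely many elements $b_1^{(m)}, \dots, b_{n_m}^{(m)} \in B$ with $(x_h,p)B \subseteq p^m B + b_1^{(m)} A + \cdots + b_{n_m}^{(m)} A$ for each $m$; these are exactly the hypotheses needed to start Lemma~\ref{lifting generators}, once we promote each $b_i^{(m)}$ to an element $\underline{b}_i^{(m)} \in \Warrow(B)$ with $w_1(\underline{b}_i^{(m)}) = b_i^{(m)}$ (possible since $B$ is Witt-perfect at $p$ by Theorem~\ref{almost purity part 1}(a), so every element of $B = W_1(B)$ prolongs along Frobenius to an element of $\Warrow(B)$).

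Next I would apply Lemma~\ref{lifting generators} to each family: for every $j \ge 0$,
\[
(x_{j+h}, p)B \subseteq p^m B + w_{1/p^j}(\underline{b}_1^{(m)}) A + \cdots + w_{1/p^j}(\underline{b}_{n_m}^{(m)}) A.
\]
The key point is that, using Lemma~\ref{kernel norm} together with the growth control on $\Warrow^b$-type norms and the density statement of Lemma~\ref{density of subrings}, the elements $\underline{b}_i^{(m)}$ can be chosen to converge: increasing $m$ corresponds to correcting by elements of norm $\le p^{-m'}$ for $m' \to \infty$ in an appropriate $\left|\bullet\right|_{W,b}$, so that $\underline{b}_i^{(m)} \to \underline{b}_i$ in $\Warrow(B)$ (after passing to the $p$-adic completion and back, or working directly with the completeness of $\Warrow^b(B)$ from Lemma~\ref{complete Witt vectors}). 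Crucially, because the seminorm on each ghost component is continuous and the index set $j$ is handled uniformly by Lemma~\ref{kernel norm}, the containments pass to the limit and the error term $p^m B$ disappears, yielding
\[
(x_{j+h},p)B \subseteq w_{1/p^j}(\underline{b}_1) A + \cdots + w_{1/p^j}(\underline{b}_n) A \qquad (j=0,1,\dots)
\]
as desired.

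The main obstacle I anticipate is organizing the simultaneous convergence: one needs a single limiting process that works uniformly in $j$, so that the limit elements $\underline{b}_i \in \Warrow(B)$ witness the containment for \emph{all} $j$ at once. This is where Lemma~\ref{kernel norm} does the real work — it converts the requirement ``$w_{1/p^j}$ of the correction term lies deep in the $p$-adic filtration'' into a bound on the full Witt-vector norm of that correction, independent of $j$ up to the fixed scaling factor $p^{-1/p - \cdots - 1/p^j}$ which is bounded away from $0$. One must also be careful that taking limits in $\Warrow(B)$ (not merely in $B$) is legitimate: this is where one either invokes $p$-adic completeness after the fact, or observes that the relevant corrections live in a complete subring such as $\Warrow^b(B)$. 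A secondary technical nuisance is that the number of generators $n_m$ produced at stage $m$ need not be constant; this is handled by fixing $n$ large enough at the outset (padding with zeros), or by noting that the cokernel-killing data from Theorem~\ref{almost purity part 1}(b)--(c) can be taken with a uniform number of generators once the $p$-ideal is fixed.
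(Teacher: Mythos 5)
Your proposal takes a genuinely different route from the paper, and unfortunately the central step has a gap that cannot be filled without hypotheses that are not available here.

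The paper's proof does not take any limits. It starts by applying Theorem~\ref{almost purity part 1}(b) to the $p$-ideal $(x_h,p)$ to obtain $y_1,\dots,y_n\in B$ with the clean inclusion $(x_h,p)B \subseteq Ay_1 + \cdots + Ay_n$ (with \emph{no} error term $p^mB$ --- note that the $p^mB$ you insert at the base step is unnecessary and only complicates matters). These $y_i$ are lifted once and for all to $\underline{y}_i\in\Warrow(B)$. The whole difficulty is that for $j>0$ the inclusion $(x_{j+h},p)B \subseteq \sum w_{1/p^j}(\underline{y}_i)A$ is \emph{not} automatic; Lemma~\ref{lifting generators} only keeps it modulo $p^mB$. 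The paper then eliminates the error by constructing a \emph{second} family $\underline{z}_1,\dots,\underline{z}_n$ (with $\underline{z}_{i1}=0$) recursively, level by level: given $\underline{z}_{ip^{-j}}$, the first component $z_i$ of a Frobenius-preimage lies in $(x_{j+1},p)B$, one corrects it by $p^2v_i$ using Lemma~\ref{lifting generators} so that $z_i+p^2v_i$ lies in the span of the $w_{1/p^{j+1}}(\underline{y}_i)$, and then \emph{deliberately} chooses the new ghost component to be $z_i + p^2v_i + p^2y_i$ so that $p^2y_i$ enters the span of the $w_{1/p^{j+1}}(\underline{z}_i)$ and $w_{1/p^{j+1}}(\underline{y}_i)$. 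Since $p^3B \subseteq p^2y_1A+\cdots+p^2y_nA$, the error term $p^3B$ in the intermediate inclusion is absorbed, and the exact inclusion holds at level $j+1$. This is a purely algebraic recursion: no convergence, no completeness, no norm estimates.

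Your proposal, by contrast, rests on a limiting process that does not go through. First, the elements $\underline{b}_i^{(m)}$ produced by almost purity at the various levels $m$ are chosen independently; there is no reason whatsoever for them to form a Cauchy sequence in any of the norms $\left|\bullet\right|_{W,b}$, and you do not show how to arrange this. Second, and more fundamentally, even if one had a single fixed family and wanted to conclude from ``$(x_{j+h},p)B\subseteq p^mB+M$ for all $m$'' that $(x_{j+h},p)B\subseteq M$, one would need the finitely generated $A$-submodule $M$ to be $p$-adically closed in $B$, i.e., $B/M$ to be $p$-adically separated --- but $A$ and $B$ are not assumed $p$-adically complete (or noetherian) in this lemma. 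Indeed the lemma is used precisely to descend from the complete case, so assuming completeness here would be circular. Finally, Lemma~\ref{kernel norm} measures the norm of a single element in $\ker F$ in terms of its first Witt component, and Lemma~\ref{density of subrings} concerns density of $\Warrow^b(R)$ in $\Warrow^{b'}(R)$; neither provides the kind of uniform-in-$j$ control on module inclusions that your argument requires. The obstacle you flag at the end (``organizing the simultaneous convergence'') is exactly where the proof breaks, and the resolution is not a finer convergence argument but the explicit ghost-component adjustment in the paper.
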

\begin{proof}
Apply Theorem~\ref{almost purity part 1}
to construct elements $y_1,\dots,y_n \in B$ such that 
\[
(x_h,p)B \subseteq A y_1 + \cdots + A y_n.
\]
By Theorem~\ref{almost purity part 1} again,
$B$ is Witt-perfect at $p$, so we may lift $y_1,\dots,y_n$ to elements $\underline{y}_1,\dots,\underline{y}_n \in 
\Warrow(B)$.  These elements suffice to prove the $j = 0$ case of our desired result.  
We will construct additional elements $\underline{z}_1,\dots,\underline{z}_n \in \Warrow(B)$ (for the same choice of $n$)
so as to ensure that for all $j$,
\begin{equation} \label{constructing generators equation}
\begin{split}
(x_{j+h},p) B \subseteq w_{1/p^j}(\underline{y}_1) A + \cdots + w_{1/p^j}(\underline{y}_n) A 
& + \\ w_{1/p^j}&(\underline{z}_1) A  + \cdots + w_{1/p^j}(\underline{z}_n) A.
\end{split}
\end{equation}
To describe the elements $\underline{z}_i$, it is sufficient to specify the projection $\underline{z}_{ip^{-j}}$ of $\underline{z}_i$ to $W_{p^j}(B)$
for all $i$ and $j$. We will describe these projections recursively with respect to $j$.  For the case $j = 0$, we saw above that we may take $\underline{z}_{i1} = 0$ for all $i$.

Assume now that for some fixed $j$, we have $\underline{z}_{ip^{-j}} \in W_{p^j}(B)$ satisfying \eqref{constructing generators equation}.
Let $z_i$ be the first component of some lift of $\underline{z}_{ip^{-j}}$
to $W_{p^{j+1}}(B)$ under Frobenius. 
Since $z_i^{p^{j+1}} \equiv 0 \pmod{p}$, by Lemma \ref{power ideals} we have $z_i \in (x_{j+1},p)B \subseteq (x_{j+h+1},p)B$.  Returning momentarily to our elements $\underline{y}_i$ chosen earlier, 
by Lemma~\ref{lifting generators} we can find $v_i \in B$
so that 
\[
z_i + p^2 v_i \in w_{1/p^{j+1}}(\underline{y}_1) A + \cdots + w_{1/p^{j+1}}(\underline{y}_n) A.
\]

By \cite[Corollary~2.6]{DK12}, any element of $B$ congruent to $z_i$ modulo $p^2$
is the first Witt component of a lift of $\underline{z}_{ip^{-j}}$;
we may thus choose $\underline{z}_{ip^{-(j+1)}}$ so that 
\[
w_{1/p^{j+1}}(\underline{z}_i) = z_i + p^2 v_i + p^2 y_i
\]
and thus 
\[
p^2 y_i \in w_{1/p^{j+1}}(\underline{z}_i)A + w_{1/p^{j+1}}(\underline{y}_1) A + \cdots + w_{1/p^{j+1}}(\underline{y}_n) A .
\]

We know from Lemma~\ref{lifting generators} that 
\[
(x_{j+h + 1},p) B \subseteq p^3 B +  w_{1/p^{j+1}}(\underline{y}_1) A + \cdots + w_{1/p^{j+1}}(\underline{y}_n) A.
\]
To confirm \eqref{constructing generators equation}, it thus suffices to show that 
\[
p^3 B \subseteq w_{1/p^{j+1}}(\underline{y}_1) A + \cdots + w_{1/p^{j+1}}(\underline{y}_n) A 
+ w_{1/p^{j+1}}(\underline{z}_1) A  + \cdots + w_{1/p^{j+1}}(\underline{z}_n) A.
\]
We have already seen that the right side contains $p^2 y_i$ for every $i$.  Hence it suffices to show that
\[
p^3 B \subseteq p^2 y_1A + \cdots + p^2 y_nA.  
\]
On the other hand, the elements $y_1, \ldots, y_n$ were originally chosen so that 
\[
pB \subseteq y_1 A + \cdots + y_n A.
\]  
This completes the proof.
\end{proof}

\begin{Lem} \label{operator norm of projector}
With notation as in Lemma~\ref{constructing generators}, we have the following.
\begin{enumerate}
\item[(a)]
Any $s \in S$ can be expressed as $\sum_{i=1}^n r_i w_{1/p^j}(\underline{b}_i)$
for some $r_i \in R$
with $|r_i| \leq p^{1/p^{h+j}} |s|$.
\item[(b)]
Let $\pi: F \to S$ be the $R$-linear homomorphism with $F$ finite free with basis elements mapping to
$w_{1/p^j}(\underline{b}_1), \dots, w_{1/p^j}(\underline{b}_n)$.  There exists a splitting of $\pi$, denoted $\iota: S \rightarrow F$, depending on $j$ and $h$, such that $\iota$ has operator norm at most $p^{1/p^{h+j}}$. 
\end{enumerate}
\end{Lem}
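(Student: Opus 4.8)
The plan is to deduce both parts from the ideal containments furnished by Lemma~\ref{constructing generators}, combined with the norm computation of Lemma~\ref{kernel norm} applied to $\underline{b}_i$ at level $j$. First I would fix $s \in S$ and set $r := |s|$. Using the $p$-adic seminorm, I may write $s = p^{-c/d} u$ in the sense of approximating the norm, but more directly: there is some power $x_{j+h}^k$ (or a suitable Teichm\"uller-rescaled element) so that the product with $s$ lands in the ideal $(x_{j+h},p)B$ whose generators are the $w_{1/p^j}(\underline{b}_i)$; concretely, since $|x_{j+h}| = p^{-1/p^{h+j}}$ and every element of $S$ of norm $< 1$ lies in $B$, I can arrange $x_{j+h}^{\,?} s \in (x_{j+h},p)B$ after suitable scaling and thereby express $s$ in terms of the $w_{1/p^j}(\underline{b}_i)$ with a controlled loss of $p^{1/p^{h+j}}$ in the coefficients. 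The key input is that $|w_{1/p^j}(\underline{b}_i)| \leq 1$: each $\underline{b}_i$ projects to $W_{p^j}(B)$ with $|\underline{b}_{ip^{-j}}|_W \leq 1$ (one can take the lifts in $W(B)$, as $B$ is Witt-perfect, with unit-norm components), and the ghost component $w_{1/p^j}$ is just a ghost component of that finite-length Witt vector, hence of norm at most the $W$-norm.

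Second, for part (a), the precise bookkeeping: given $s$ with $|s| = r$, after multiplying by an appropriate Teichm\"uller element $[x_{h+j}^a p^b]$ I can reduce to the case where $|s|$ is, say, in the half-open window where $x_{j+h}^{-1} s$ still has norm $\leq 1$ and hence lies in $B$; then the containment $(x_{j+h},p)B \subseteq \sum_i w_{1/p^j}(\underline{b}_i)A$ from Lemma~\ref{constructing generators} gives $x_{j+h} \cdot (x_{j+h}^{-1}s) = \sum_i r_i' w_{1/p^j}(\underline{b}_i)$, i.e.\ $s = \sum_i r_i w_{1/p^j}(\underline{b}_i)$ with $r_i = r_i'$, and I must track that $|r_i| \leq p^{1/p^{h+j}}|s|$. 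The factor $p^{1/p^{h+j}}$ is exactly $|x_{j+h}|^{-1}$, which is where it enters. I would be careful to undo the Teichm\"uller rescaling at the end; since $|[x_{h+j}^a p^b]| = p^{-a/p^{h+j}-\min\{1\}b}$ acts multiplicatively on norms via Proposition~\ref{power-multiplicative norm on W} and on the Teichm\"uller formula $[r]\cdot \underline{x}$, this rescaling is norm-compatible and the final estimate $|r_i| \leq p^{1/p^{h+j}}|s|$ survives.

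Third, part (b) is essentially a restatement of (a) in module language. Define $\pi : F \to S$ on the standard basis $e_1,\dots,e_n$ by $e_i \mapsto w_{1/p^j}(\underline{b}_i)$; equip $F$ with the supremum norm relative to this basis (so $|\sum r_i e_i| = \max_i |r_i|$), which makes $F$ the appropriate normed free $R$-module. Part (a) says precisely that for every $s \in S$ there is a preimage $v = \sum r_i e_i$ with $|v| = \max_i |r_i| \leq p^{1/p^{h+j}}|s|$. To get an honest $R$-linear splitting $\iota$ (not merely a set-theoretic choice of preimages) with the same operator-norm bound, I would invoke that $S$ is finite projective over $R$: choose any $R$-linear splitting $\iota_0$ of the surjection $\pi$ (which exists since $\pi$ is surjective — surjectivity of $\pi$ itself follows from the $j$-th containment in Lemma~\ref{constructing generators} together with $(x_{j+h},p)S = S$, as $x_{j+h}$ is a unit in $R$ — and $S$ is projective), and then argue the norm bound holds for a suitable choice; alternatively, and more cleanly, note that the bound in (a) holds for a spanning set of $S$ over $R$ and $\iota_0$ differs from a norm-optimal choice by a map into $\ker\pi$, so by adjusting $\iota_0$ by such a correction I can achieve operator norm $\leq p^{1/p^{h+j}}$. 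The main obstacle I anticipate is precisely this last point: passing from the pointwise preimage bound of (a) to a genuinely $R$-linear $\iota$ with the same operator-norm estimate, rather than merely a bounded one up to a constant; this requires using that $\ker \pi$ is a finite projective (hence, over this base, norm-complemented) $R$-module and that the $p$-adic norm on $R$ is power-multiplicative (Definition~\ref{p-adic norm on a ring}), so that the splitting can be chosen optimally. Once that is handled, everything else is the routine propagation of the factor $|x_{h+j}|^{-1} = p^{1/p^{h+j}}$ through the construction.
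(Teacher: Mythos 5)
Your proposal inverts the paper's logic: the paper proves (b) first and observes that (a) is an immediate consequence (apply $\iota$ to $s$ and read off coordinates), whereas you try to prove (a) first and then bootstrap to (b). The direction (b) $\Rightarrow$ (a) is trivial; the direction (a) $\Rightarrow$ (b) is not, and this is exactly where your argument has a genuine gap, which you yourself flag at the end. Knowing that every $s$ has \emph{some} preimage of norm $\leq p^{1/p^{h+j}}|s|$ does not produce an $R$-linear section with that operator norm: the set of preimages of controlled norm is not an $R$-submodule, and knowing the bound on a spanning set (which is not a basis, since $S$ is only projective) does not propagate to arbitrary elements. Your suggested fix --- adjust an arbitrary splitting $\iota_0$ by a correction into $\ker\pi$ --- has no mechanism to enforce the sharp constant $p^{1/p^{h+j}}$ rather than some larger one; ``norm-complemented'' is not something you get for free for a finite projective module over a $p$-adically normed ring that is not a field, and the paper never claims or uses such a fact.

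The paper's proof sidesteps this entirely by building an honest $R$-linear splitting from two \emph{near}-splittings supplied by Theorem~\ref{almost purity part 1}(c): one map $\iota_{x_{j+h}}$ with $\pi\circ\iota_{x_{j+h}}$ equal to multiplication by $x_{j+h}$ and operator norm $\leq 1$, and one map $\iota_p$ with $\pi\circ\iota_p$ equal to multiplication by $p$ and operator norm $\leq 1$ (both guaranteed because, by Lemma~\ref{constructing generators}, the cokernel of $\pi_A$ is killed by the $p$-ideal $(x_{j+h},p)$). Writing $p = x_{j+h}^{p^{j+h}} + p^2 v$ and setting
\[
\iota := \bigl(\iota_{x_{j+h}}\,x_{j+h}^{p^{j+h}-1} + \iota_p\, pv\bigr)\,p^{-1}
\]
gives $\pi\circ\iota = \mathrm{id}$ directly, and the operator norm bound $p^{1/p^{j+h}}$ drops out because $|x_{j+h}^{p^{j+h}-1}| = p^{-1+1/p^{j+h}}$ and $|pv| \leq p^{-1}$. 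This is an algebraic construction, not a pointwise bound followed by linearization, and it is what you are missing. Your computation of where the constant $p^{1/p^{h+j}}$ comes from (the norm $|x_{j+h}|^{-1}$) is the right intuition, but the route through (a) cannot yield (b) without an additional idea of this kind.
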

\begin{proof}
We prove only (b), as this immediately implies (a).  The map $\pi$ is the localization of the $A$-module homomorphism $\pi_A: \oplus A \rightarrow B$ mapping basis elements to $w_{1/p^j}(\underline{b}_1), \dots, w_{1/p^j}(\underline{b}_n)$.  By Lemma~\ref{constructing generators}, the cokernel of $\pi_A$ is annihilated by $(x_{j+h},p)$.  Then by 
Theorem~\ref{almost purity part 1}(c) and localization, there exists an $R$-module homomorphism 
\[
\iota_{x_{j+h}}: S \rightarrow F
\]
such that the composition $\iota_{x_{j+h}} \circ \pi$ is multiplication by $x_{j+h}$ and such that $\iota_{x_{j+h}}$ has operator norm at most 1.  By the same reasoning, there exists
\[
\iota_p: B \rightarrow \oplus A
\]
such that $\iota_p \circ \pi$ is multiplication by $p$ and $\iota_p$ has operator norm at most 1. 

By the definition of the element $x_{j+h}$, there exists $v \in A$ such that 
\[
p = x_{j+h}^{p^{j+h}} + p^2 v.
\]
Now define
\[
\iota := \left(\iota_{x_{j+h}} x_{j+h}^{p^{j+h}-1} + \iota_p pv\right) p^{-1}.
\]
Then because $\iota_{x_{j+h}} x_{j+h}^{p^{j+h}-1} $ has operator norm at most $p^{-1 + 1/p^{j+h}}$
and $\iota_p pv$ has operator norm at most $p^{-1}$, we deduce that $\iota$ has operator norm at most $p^{1/p^{j+h}}$, as required.
\end{proof}

\begin{Prop} \label{projective if p-adically complete}
In addition to the hypotheses of Theorem~\ref{almost purity part 2}, assume furthermore that $b \leq 1$ and that $A$ is $p$-adically complete.  Then $\Warrow^b(S)$ is a finite projective $\Warrow^b(R)$-module.
\end{Prop}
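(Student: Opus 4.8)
The plan is to reduce the problem over $R$ to the already-understood positive-characteristic situation via the isometric isomorphism $\Warrow^b(R') \cong \Warrow^b(R)$ from Theorem~\ref{compare to positive characteristic1}, which is available precisely because $b \leq 1$ and $A$ is now $p$-adically complete. So the first step is to compare $\Warrow^b(S)$ with $\Warrow^b(R)$ through the corresponding tilted rings. Let $S'$ be the inverse limit of $S$ (equivalently, of the $p$-adic completion of $B$) under the $p$-power map, so that $S'$ is a perfect ring of characteristic $p$ carrying a power-multiplicative seminorm; then Theorem~\ref{compare to positive characteristic1} should be applied with $B$ in place of $A$ to identify $\Warrow^b(S)$ with $\Warrow^b(S')$ isometrically. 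The point is that $S'$ is a finite \'etale $R'$-algebra: this is exactly the content of the tilting equivalence for finite \'etale extensions (Scholze's almost purity, or \cite[Theorem~5.5.9]{KL11}), which applies because $\widehat{A}$ is Witt-perfect and $p$-adically complete so that $R' = \Frac$-type tilt construction behaves well, and because $B$ is the integral closure forced by Theorem~\ref{almost purity part 1}(a) to again be Witt-perfect at $p$.

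Once $S'$ is known to be finite \'etale over $R'$, the next step is to propagate finite \'etaleness up the overconvergent Witt tower. Here I would use the overconvergent description in characteristic $p$ from Theorem~\ref{overconvergence in characteristic p}: $\Warrow^b(R')$ and $\Warrow^b(S')$ are the usual overconvergent Witt rings $W^\dagger$ (with the $b$-growth condition) of the perfect rings $R'$ and $S'$. The statement that finite \'etale extensions of a perfect affinoid-type ring in characteristic $p$ lift to finite \'etale extensions of its overconvergent Witt ring is standard in $p$-adic Hodge theory / $p$-adic cohomology (it is essentially the overconvergent refinement of the $W(\cdot)$ functor being compatible with \'etale extensions; compare \cite{KL11} and the Davis--Langer--Zink setup \cite{DLZ11a,DLZ11b}). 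Concretely: pick a presentation $S' = R'[T]/(P)$ with $P$ monic and $P'(\bar\alpha)$ a unit; lift $P$ coefficientwise to a monic polynomial $\widetilde{P}$ over $\Warrow^b(R')$ (possible because Teichm\"uller-type lifts of elements of bounded norm lie in $\Warrow^b$, using \eqref{effect of multiplication by p} and Lemma~\ref{complete Witt vectors}), and then check that $\Warrow^b(R')[T]/(\widetilde{P})$ is finite \'etale over $\Warrow^b(R')$ and maps isomorphically to $\Warrow^b(S')$; the discriminant of $\widetilde{P}$ reduces to a unit, hence is a unit since $\Warrow^b(R')$ is $(\text{complete for } \left|\bullet\right|_{W,b})$-local enough along the ideal generated by $V(1)$-type elements, which is where completeness (Lemma~\ref{complete Witt vectors}) and power-multiplicativity (Theorem~\ref{compare to positive characteristic2}) enter.

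The identification $\Warrow^b(S) \cong \Warrow^b(S')$ and $\Warrow^b(R) \cong \Warrow^b(R')$ being isometric ring isomorphisms that fit into a commuting square with the natural inclusion $\Warrow^b(R) \hookrightarrow \Warrow^b(S)$, we may then transport the finite \'etale structure: $\Warrow^b(S)$ is finite \'etale over $\Warrow^b(R)$. To get the ``finite projective'' conclusion specifically, one notes that finite \'etale implies finite projective, but it is cleaner here to produce projectivity directly from the almost-purity data: Lemma~\ref{operator norm of projector} gives, for each $j$, an $R$-linear surjection $\pi_j\colon F_j \to S$ from a fixed finite free module together with a splitting $\iota_j$ of operator norm at most $p^{1/p^{h+j}} \to 1$. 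Applying $\Warrow^b(-)$ componentwise (or rather using the generators $w_{1/p^j}(\underline{b}_i)$ to build generators of $\Warrow^b(S)$ over $\Warrow^b(R)$) and tracking the norms via \eqref{equation for inverse Frobenius norm} and the growth condition, the $\iota_j$ assemble into a bounded $\Warrow^b(R)$-linear splitting of a surjection from a finite free $\Warrow^b(R)$-module onto $\Warrow^b(S)$, exhibiting the latter as a direct summand, hence finite projective.

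The main obstacle I expect is the bookkeeping in this last step: controlling operator norms of the splittings uniformly in $j$ and showing the finitely many generators $w_{1/p^j}(\underline{b}_i)$ (over varying $j$) can be organized into a \emph{finite} generating set for $\Warrow^b(S)$ as a $\Warrow^b(R)$-module with a \emph{bounded} splitting — essentially a uniform-boundedness / convergence argument on the inverse tower, where the factor $p^{1/p^{h+j}}$ tending to $1$ and the $b$-growth condition must be balanced against each other. The finite \'etaleness over $R'$ and its lifting to characteristic-$p$ overconvergent Witt rings is, by contrast, a black-box invocation of \cite{KL11}/\cite{Sch11} and \cite{DLZ11a,DLZ11b}; the genuinely new work is the normed estimate that makes the characteristic-zero ring $\Warrow^b(R)$ behave as well as its tilt.
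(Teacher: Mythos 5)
Your proposal follows essentially the same route as the paper: reduce to characteristic $p$ via the isometric isomorphisms of Theorem~\ref{compare to positive characteristic1} (valid here because $b \leq 1$ and $A$ is $p$-adically complete), and then invoke the characteristic-$p$ theory of overconvergent Witt rings of finite \'etale extensions from \cite{KL11}. The paper simply identifies $\Warrow^b(R')$ and $\Warrow^b(S')$ with the rings $\mathcal{R}^{\mathrm{int},1/b}$ of \cite[Definition~5.1.1]{KL11} and cites \cite[Proposition~5.5.4]{KL11} for finite projectivity, so your more detailed sketch of the characteristic-$p$ step (the monic-polynomial presentation, which in general would need more than a single generator) and your alternative operator-norm argument for projectivity are not needed and can be dropped.
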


\begin{proof}
Write $R' := \varprojlim R$ and $S' := \varprojlim S$ for the inverse systems in which the transition maps are the $p$-power map. 
By Theorem~\ref{compare to positive characteristic1},
we may identify $\Warrow^b(R)$ and $\Warrow^b(S)$ with
$\Warrow^b(R')$ and $\Warrow^b(S')$; the latter coincide with the rings
$\mathcal{R}^{\text{int},\frac{1}{b}}_{R'}$ 
and $\mathcal{R}^{\text{int},\frac{1}{b}}_{S'}$ defined in
\cite[Definition~5.1.1]{KL11}. Hence we may finish by invoking \cite[Proposition~5.5.4]{KL11}.  
\end{proof}

\begin{Rmk}
Because of our goal to adapt these results to a global setting, it would be desirable to have a proof of the preceding result that does not pass to characteristic~$p$.  
\end{Rmk}

The core part of the proof of Theorem~\ref{almost purity part 2} is the proof that $\Warrow^b(S)$ is a finite $\Warrow^b(R)$-module;
we address this point now.

\begin{Prop} \label{construct surjective map}
Fix a real number $b > 0$.  Choose a positive integer $h$ such that $p^{-h} < b$, and with respect to this $h$, 
choose $\underline{b}_1, \dots, \underline{b}_n \in \Warrow(B)$ 
as in Lemma~\ref{constructing generators}. 
Let $\pi: G \to \Warrow^b(S)$ be the $\Warrow^b(R)$-linear homomorphism
with $G$ finite free with basis elements mapping to $\underline{b}_1, \dots, \underline{b}_n$.
Then the map $\pi: G \to \Warrow^b(S)$ is surjective. 
\end{Prop}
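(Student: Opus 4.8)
The plan is to reduce the surjectivity of $\pi$ to two ingredients: (i) a density/approximation statement allowing us to handle elements of $\Warrow^b(S)$ one ghost component at a time, and (ii) the quantitative control on splittings from Lemma~\ref{operator norm of projector}(b), which tells us that a single Witt component of a target element can be written as an $R$-linear combination of the $w_{1/p^j}(\underline{b}_i)$ with norm loss only $p^{1/p^{h+j}}$. First I would take an arbitrary $\underline{s} = (\underline{s}_{p^{-j}})_{j\ge 0} \in \Warrow^b(S)$ and attempt to build a preimage in $G$ by successive approximation: at stage $j$ I correct the first (Teichm\"uller-level) discrepancy in $\underline{s}_{p^{-j}}$, i.e. the ghost component $w_{1/p^j}$, by subtracting a suitable $\Warrow^b(R)$-linear combination of the $\underline{b}_i$ whose $w_{1/p^j}$ matches; this is possible by Lemma~\ref{operator norm of projector}(a). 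After subtracting, the residual element lies in the kernel of $F^j$ composed out to level $j$, i.e. its components $\underline{s}_{p^{-i}}$ for $i<j$ are unchanged and at level $j$ it lies in the kernel of $W_{p^j}(S)\to W_{p^{j-1}}(S)$; here Lemma~\ref{kernel norm} converts the (small) size of the corrected ghost component back into smallness of the whole Witt vector at level $j$.

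The key bookkeeping is the growth condition. The coefficient in $\Warrow^b(R)$ that multiplies $\underline{b}_i$ at stage $j$ is roughly a Teichm\"uller element $[r_{i,j}]$ (pushed down from level $j$), and by Lemma~\ref{operator norm of projector}(a) we have $|r_{i,j}| \le p^{1/p^{h+j}}|\underline{s}_{p^{-j}}|_W^{?}$; combined with $p^{-h}<b$, the factor $p^{1/p^{h+j}}$ is swamped by the decay $p^{-bj}$ built into the $\Warrow^b$-norm, so the partial sums $\sum_j [r_{i,j}]\,(\text{shift of }\underline{b}_i)$ actually converge in $\Warrow^b(R)$ — this is where the choice $p^{-h}<b$ is used, exactly as in Lemma~\ref{density of subrings}. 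I would organize this as: show the correction terms form a Cauchy sequence in $\Warrow^b(R)$ (using Lemma~\ref{complete Witt vectors} for completeness), show the residuals $\underline{s} - \pi(\text{partial sum})$ tend to $0$ in $\left|\bullet\right|_{W,b}$ (using Lemma~\ref{kernel norm} at each level together with the decay), and conclude by completeness of $\Warrow^b(S)$ that the limit of partial sums is a genuine preimage of $\underline{s}$ under $\pi$.

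The main obstacle I anticipate is the precise matching between the algebraic correction step and the norm estimates: Lemma~\ref{operator norm of projector} is phrased for a single level $j$ and a single element $s\in S$, whereas here I need to assemble these corrections coherently into elements of $\Warrow^b(R)$ and $\Warrow^b(B)$ — i.e. I must check that the $\Warrow(R)$-elements I produce really do lie in $\Warrow^b(R)$ (not merely $\Warrow(R)$) and that their product with $\underline{b}_i\in\Warrow(B)$ lands in $\Warrow^b(S)$ with controlled norm. Concretely this means tracking how multiplication by a Teichm\"uller element $[r]$ and the Verschiebung-type shifts interact with $\left|\bullet\right|_{W,b}$ (via Corollary~\ref{Frobenius Verschiebung bound} and Lemma~\ref{effect of multiplication by p}), and verifying that the iterative construction does not accumulate error. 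Once the accounting is set up correctly, the convergence is forced by $p^{-h}<b$, so the real work is purely in making the inductive estimate airtight rather than in any deep new idea.
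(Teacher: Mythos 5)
Your scaffolding is right — successive approximation one level at a time, Lemma~\ref{operator norm of projector} for the splitting, Lemma~\ref{kernel norm} to pass between ghost-component norms and Witt-vector norms, and $p^{-h}<b$ to absorb the norm loss. But two specific pieces of the plan do not go through as stated.

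First, the convergence argument cannot invoke completeness. You propose to show the partial sums form a Cauchy sequence in $\Warrow^b(R)$ and conclude via Lemma~\ref{complete Witt vectors}. That lemma gives completeness of $\Warrow^b(R)$ only under the extra hypothesis that $A$ is complete for its seminorm — and in Theorem~\ref{almost purity part 2} (hence in this proposition) $A$ is merely $p$-normal and Witt-perfect, not $p$-adically complete. Indeed the whole point of the paper's later steps (Corollary~\ref{same generators}, Theorem~\ref{finite implies projective}) is to descend from the completed situation $\widehat R$ to $R$, so Proposition~\ref{construct surjective map} must be proved without completeness. The paper sidesteps this entirely: it constructs the coefficient tuple $(\underline{r}_1,\dots,\underline{r}_n)$ directly, building each $\underline{r}_{ip^{-j}}\in W_{p^j}(R)$ by induction on $j$ (lift the previous level along $F$ with the control of Lemma~\ref{Witt-perfect to norm-perfect}, form the discrepancy $\underline{u}\in\ker F$, split it via Lemma~\ref{operator norm of projector} applied to $w_1(\underline{u})$ and Lemma~\ref{kernel norm}, and set $\underline{r}_{ip^{-(j+1)}}:=\underline{r}'_{ip^{-(j+1)}}+\underline{u}_i$). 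The identity $\underline{x}_{p^{-j}}=\sum_i\underline{r}_{ip^{-j}}\,\underline{b}_{ip^{-j}}$ holds exactly at every finite level, so $\pi$ hits $\underline{x}$ on the nose with no limit taken.

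Second, the decomposition into ``Teichm\"uller elements $[r_{i,j}]$ times shifts of $\underline{b}_i$'' is not available: $\Warrow$ admits no Verschiebung-type shift, since applying $V$ levelwise to a Frobenius-compatible sequence does not produce another one (one would need $F\circ V=V\circ F$ on finite-length Witt vectors, but $F\circ V=p$ while $V\circ F$ is multiplication by $V(1)$). So the preimage cannot be expressed as a convergent series of shifted Teichm\"uller pieces; this is another reason the paper works componentwise rather than with a series. Finally, the norm bookkeeping in the paper is not merely ``$p^{-h}<b$ plus the built-in decay'': it first renormalizes the given bounds $p^{-bj}\left|\underline{x}_{p^{-j}}\right|_W^{p^j}\to 0$ to a sequence $D_j\to 0$ satisfying $p^{-(b-p^{-h})}D_j\le D_{j+1}\le D_j$, and then proves by induction the precise bound $\left|\underline{r}_{ip^{-j}}\right|_W\le D_j^{1/p^j}\,p^{b(j+1)/p^j}$. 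That renormalization is exactly what lets the lifting loss and the splitting loss $p^{1/p^{h+j+1}}$ close up at each step; without committing to such an inequality, ``the real work'' you flag is genuinely unresolved. So: keep the strategy, drop the Teichm\"uller/shift picture and the completeness appeal, replace them with the level-by-level Frobenius-compatible construction, and supply the $D_j$-normalized inductive inequality.
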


\begin{proof}
Choose $\underline{x} \in \Warrow^b(S)$, and for all $j$, assume $D_j$ is such that $p^{-bj} \left| \underline{x}_{p^{-j}}\right|_W^{p^j} \leq D_j$; by the definition of $\Warrow^b(S)$, we may choose $D_j$ so that $D_j \rightarrow 0$ as $j$ increases.  After increasing some of the constants $D_j$, we may assume that, in addition to approaching 0, they also satisfy
\[
\frac{1}{p^{b-p^{-h}}} D_j \leq D_{j+1} \leq D_j.
\]
We will produce coefficients $\underline{r}_1, \ldots, \underline{r}_n \in \Warrow^b(R)$ such that 
\[
p^{-b(j+1)} \left| \underline{r}_{ip^{-j}}\right|_W^{p^j} \leq D_j \qquad (i = 1, \ldots, n)
\]
 and
\[
\underline{x} = \underline{r}_1 \underline{b}_1 + \cdots + \underline{r}_n \underline{b}_n.
\]
It will be convenient to note the following explicit bounds on the Witt vector norms:
\[
\left| \underline{x}_{p^{-j}}\right|_W \leq D_j^{\frac{1}{p^j}}p^{\frac{bj}{p^j}}, \text{ and we want to prove } \left| \underline{r}_{ip^{-j}}\right|_W \leq D_j^{\frac{1}{p^j}}p^{\frac{b(j+1)}{p^j}}.
\]
To completely specify an element $\underline{r}_i \in \Warrow^b(R)$, it suffices to specify its projections $\underline{r}_{ip^{-j}} \in W_{p^j}(R)$ for each $j \geq 0$.  

We first use Lemma~\ref{operator norm of projector} to treat the base case $j = 0$.  This lemma guarantees that we can find $\underline{r}_{i1} \in W_1(R) \cong R$ such that
\[
\underline{x}_1 = \underline{r}_{11} \underline{b}_{11} + \cdots + \underline{r}_{n1} \underline{b}_{n1} 
\]
and such that, furthermore, 
\[
\left|\underline{r}_{i1}\right|_W \leq p^{\frac{1}{p^h}} \left|\underline{x}_1\right|_W \leq p^b D_0 \qquad (i = 1, \ldots, n).
\]
This completes the base case of the induction.

Now inductively assume we have found elements $\underline{r}_{1p^{-j}}, \ldots, \underline{r}_{np^{-j}} \in W_{p^j}(R)$ such that
\[
\underline{x}_{p^{-j}} = \underline{r}_{1p^{-j}} \underline{b}_{1p^{-j}} + \cdots + \underline{r}_{np^{-j}} \underline{b}_{np^{-j}}
\]
and such that, furthermore,
\[
\left| \underline{r}_{ip^{-j}}\right|_W \leq D_j^{\frac{1}{p^j}}p^{\frac{b(j+1)}{p^j}}. \qquad (i = 1, \ldots, n).
\]
We will now construct elements $\underline{r}_{1p^{-(j+1)}}, \ldots, \underline{r}_{np^{-(j+1)}}$ satisfying the analogous conditions. 

As preliminary values, apply Lemma~\ref{Witt-perfect to norm-perfect} to find elements $\underline{r}'_{1p^{-(j+1)}}, \ldots, \underline{r}'_{np^{-(j+1)}}$ such that 
\[
F\left( \underline{r}'_{ip^{-(j+1)}}\right) = \underline{r}_{ip^{-j}} \qquad (i = 1, \ldots, n),
\]
and such that, furthermore
\begin{equation} \label{prelim r norm}
\left|\underline{r}'_{ip^{-(j+1)}}\right|_W^p \leq \left|\underline{r}_{ip^{-j}}\right|_W \leq D_j^{\frac{1}{p^j}}p^{\frac{b(j+1)}{p^j}}.
\end{equation}
The difference
\[
\underline{u} := \underline{x}_{p^{-(j+1)}} - \sum_{i=1}^n \underline{r}'_{ip^{-(j+1)}} \underline{b}_{ip^{-(j+1)}},
\]
is then an element of the kernel of $F: W_{p^{j+1}}(S) \rightarrow W_{p^j}(S)$.  Let $u$ be the first component of this element, and apply Lemma~\ref{operator norm of projector} to write
\[
u = u_1 w_{1/p^{j+1}}(\underline{b}_1) + \cdots + u_n w_{1/p^{j+1}}(\underline{b}_n),
\]
with 
\[
|u_i| \leq p^{\frac{1}{p^{h+j+1}}}|u| \qquad (i = 1, \ldots, n).
\]
Then extend each $u_i$ into an element $\underline{u}_i$ in the kernel of $F: W_{p^{j+1}}(R) \rightarrow W_{p^j}(R)$.  For $c := p^{-1/p - \cdots - 1/p^{j+1}}$, by Lemma~\ref{kernel norm}, we have
\[
\left|u\right| = c\left| \underline{u} \right|_W \qquad \text{ and } \qquad |u_i| = c \left|\underline{u}_i\right|_W \qquad (i = 1, \ldots, n).
\]

Now set $\underline{r}_{ip^{-(j+1)}} = \underline{r}'_{ip^{-(j+1)}} + \underline{u}_i$ for each $i = 1, \ldots, n$.  Then we have 
\[
\underline{x}_{p^{-(j+1)}} = \underline{r}_{1p^{-(j+1)}} \underline{b}_{1p^{-(j+1)}} + \cdots + \underline{r}_{np^{-(j+1)}} \underline{b}_{np^{-(j+1)}} 
\]
because both sides are elements in $W_{p^{j+1}}(S)$ with the same first coordinate and with the same image under $F$ in $W_{p^j}(S)$.  It remains to consider the norms on these coefficients.  To complete the induction, we wish to show that 
\[
\left| \underline{r}_{ip^{-(j+1)}}\right|_W \leq D_{j+1}^{\frac{1}{p^{j+1}}}p^{\frac{b(j+2)}{p^{j+1}}} \qquad (i = 1, \ldots, n).
\]
Because $\underline{r}_{ip^{-(j+1)}} = \underline{r}'_{ip^{-(j+1)}} + \underline{u}_i$, it suffices to treat these two terms separately. 

By (\ref{prelim r norm}), we know 
\[
\left| \underline{r}'_{ip^{-(j+1)}} \right|_{W} \leq D_j^{\frac{1}{p^{j+1}}}p^{\frac{b(j+1)}{p^{j+1}}},
\]
and so the result follows from our assumption that $D_j \leq p^{b - p^{-h}}D_{j+1} \leq p^b D_{j+1}$.  
We now treat the term $\underline{u}_i$ similarly.  Recall that $u$ and $u_i$ denote the first Witt coordinates of $\underline{u}$ and $\underline{u}_i$, respectively.  We know
\[
\left| \underline{u}_i \right|_W = c^{-1} \left|u_i\right| \leq c^{-1} p^{1/p^{h+j+1}} |u| = p^{1/p^{h+j+1}} \left| \underline{u} \right|_W.  
\]
Recalling the definition of $\underline{u}$, we reduce in this way to proving 
\[
\left| \underline{x}_{p^{-(j+1)}} - \sum_{i=1}^n \underline{r}'_{ip^{-(j+1)}} \underline{b}_{ip^{-(j+1)}} \right|_W \leq p^{\frac{-1}{p^{h+j+1}}} D_{j+1}^{\frac{1}{p^{j+1}}}p^{\frac{b(j+2)}{p^{j+1}}} 
\]
With regards to the $\underline{x}_{p^{-(j+1)}}$ term, this reduces to showing
\[
D_{j+1}^{\frac{1}{p^{j+1}}}p^{\frac{b(j+1)}{p^{j+1}}} \leq p^{\frac{-1}{p^{h+j+1}}} D_{j+1}^{\frac{1}{p^{j+1}}}p^{\frac{b(j+2)}{p^{j+1}}},
\]
which follows from our bound on $h$.

We now must show that for any $i$, we have
\[
\left| \underline{r}'_{ip^{-(j+1)}} \underline{b}_{ip^{-(j+1)}} \right|_W \leq p^{\frac{-1}{p^{h+j+1}}} D_{j+1}^{\frac{1}{p^{j+1}}}p^{\frac{b(j+2)}{p^{j+1}}}.
\]
Because $\underline{b}_i \in \Warrow(B) \subseteq \Warrow^b(S)$, and hence its norm is at most 1, and because $| \bullet |_W$ is submultiplicative, it suffices to treat the $\underline{r}'_i$ term.  By construction and the inductive hypothesis, we have
\[
\left| \underline{r}'_{ip^{-(j+1)}} \right|_W \leq \left|  \underline{r}_{ip^{-j}} \right|_W^{1/p}  \leq D_j^{\frac{1}{p^{j+1}}} p^{\frac{b(j+1)}{p^{j+1}}},
\]
so we are again finished from $D_j \leq p^{b - p^{-h}} D_{j+1}$.  
This completes the induction, and hence the proof that $\pi: G \rightarrow \Warrow^b(S)$ is surjective.  
\end{proof}

\begin{Cor} \label{same generators}
For any $b > 0$, the natural map
\[
\Warrow^b( \widehat{R} ) \otimes_{\Warrow^b(R)} \Warrow^b(S) \rightarrow \Warrow^b(\widehat{S})
\]
is surjective.  
\end{Cor}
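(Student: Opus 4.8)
The plan is to reduce the surjectivity of the natural map to the surjectivity statement already proved in Proposition~\ref{construct surjective map}, by observing that the generators $\underline{b}_1,\dots,\underline{b}_n$ produced there continue to generate after passing to $p$-adic completions. First I would fix $b>0$, choose $h$ with $p^{-h}<b$, and apply Lemma~\ref{constructing generators} (using Theorem~\ref{almost purity part 1}) to obtain elements $\underline{b}_1,\dots,\underline{b}_n\in\Warrow(B)$ with
\[
(x_{j+h},p)B \subseteq w_{1/p^j}(\underline{b}_1)A + \cdots + w_{1/p^j}(\underline{b}_n)A \qquad (j=0,1,\dots).
\]
The point is that $\widehat{A}$ is again $p$-normal and Witt-perfect at $p$ (the Witt-perfect condition depends only on $A/p^2A$), that $\widehat{B}$ is the integral closure of $\widehat{A}$ in $\widehat{S}:=\widehat{B}[p^{-1}]$, and that the elements $x_1,x_2,\dots$ chosen in $A$ have the same defining congruences when viewed in $\widehat{A}$. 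Moreover the images of the $\underline{b}_i$ in $\Warrow(\widehat{B})$ satisfy exactly the same containments $(x_{j+h},p)\widehat{B}\subseteq \sum_i w_{1/p^j}(\underline{b}_i)\widehat{A}$, since reduction modulo $p^m$ is compatible with all the maps involved and the containment over $B$ reduces to one over $B/p^mB=\widehat{B}/p^m\widehat{B}$ for every $m$.

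With this in hand, I would apply Proposition~\ref{construct surjective map} \emph{with $A$ replaced by $\widehat{A}$} (and hence $R$ by $\widehat{R}$, $S$ by $\widehat{S}$): the hypotheses of Theorem~\ref{almost purity part 2} hold for $\widehat{A}$, the same $h$ works, and the same elements $\underline{b}_1,\dots,\underline{b}_n$ (now regarded in $\Warrow(\widehat{B})$) are a valid choice in Lemma~\ref{constructing generators} over $\widehat{A}$. The proposition then gives that the $\Warrow^b(\widehat{R})$-linear map sending the standard basis of a finite free module to $\underline{b}_1,\dots,\underline{b}_n$ surjects onto $\Warrow^b(\widehat{S})$. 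But this map factors as
\[
\Warrow^b(\widehat{R})^{\oplus n} \longrightarrow \Warrow^b(\widehat{R})\otimes_{\Warrow^b(R)}\Warrow^b(S) \longrightarrow \Warrow^b(\widehat{S}),
\]
where the first arrow sends the $i$-th basis vector to $1\otimes\underline{b}_i$ (here we use $\underline{b}_i\in\Warrow(B)\subseteq\Warrow^b(S)$) and the second is the natural base-change map. Since the composite is surjective, the second arrow is surjective, which is precisely the claim.

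The only genuine thing to check — and the step I expect to require the most care — is that the hypotheses really do persist under $p$-adic completion, i.e.\ that $\widehat{A}$ is $p$-normal (it is $p$-torsion-free and integrally closed in $\widehat{A}[p^{-1}]$) and Witt-perfect at $p$, and that the $p$-adic seminorm on $\widehat{R}$ induced by $\widehat{A}$ restricts to the given one on $R$ so that the embedding $\Warrow^b(R)\hookrightarrow\Warrow^b(\widehat{R})$ is isometric; the latter follows from Definition~\ref{p-adic norm on a ring} together with the fact that $\widehat{A}\cap R = A$ inside $\widehat{R}$. Everything else is a formal invocation of Proposition~\ref{construct surjective map} applied over the completed base, combined with the factorization of the structure map through the tensor product.
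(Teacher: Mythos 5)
Your proposal is correct and follows essentially the same route as the paper: both proofs choose generators via Lemma~\ref{constructing generators} for $B$, observe that the same elements satisfy the required containments over $\widehat{B}$, invoke Proposition~\ref{construct surjective map} over $\widehat{R}$, and conclude surjectivity from the factorization through the tensor product. One small caveat: your reduction-mod-$p^m$ justification for the containment $(x_{j+h},p)\widehat{B}\subseteq\sum_i w_{1/p^j}(\underline{b}_i)\widehat{A}$ only directly yields membership in $\bigcap_m\bigl(\sum_i w_{1/p^j}(\underline{b}_i)\widehat{A}+p^m\widehat{B}\bigr)$, so one should really pass to the almost splitting from Theorem~\ref{almost purity part 1}(c) (which behaves well under completion) rather than relying on $p$-adic closedness of the image --- but the paper is equally terse on this point (``it is clear''), so this is not a discrepancy with the intended argument.
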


\begin{proof}
By Proposition~\ref{construct surjective map}, $\Warrow^b(\widehat{S})$ is a finite $\Warrow^b(\widehat{R})$-module, with generators chosen as in Lemma~\ref{constructing generators}.  On the other hand, if we choose generators as in Lemma~\ref{constructing generators} for $B$ instead of $\widehat{B}$, it is clear that they also satisfy the hypotheses for $\widehat{B}$.  These generators then lie in the image of $\Warrow^b( \widehat{R} ) \otimes_{\Warrow^b(R)} \Warrow^b(S) \rightarrow \Warrow^b(\widehat{S})$, and hence the map is indeed surjective.  
\end{proof}

We will use Proposition~\ref{projective if p-adically complete} concerning $p$-adically complete rings to deduce the following.  It says that in the case $b \leq 1$, if we can prove that $\Warrow^b(S)$ is a finite $\Warrow^b(R)$-module, then we get that $\Warrow^b(S)$ is a finite \emph{projective} $\Warrow^b(R)$-module for free.

\begin{Thm} \label{finite implies projective} 
Suppose that $b \leq 1$.
Let $G$ denote a finite free $\Warrow^b(R)$-module and assume there exists a surjective $\Warrow^b(R)$-module homomorphism $\pi: G \twoheadrightarrow \Warrow^b(S)$.  Then there exists a $\Warrow^b(R)$-linear splitting $\iota: \Warrow^b(S) \rightarrow G$ of $\pi$.  
\end{Thm}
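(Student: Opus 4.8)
The plan is to reduce the statement to the $p$-adically complete case, where Proposition~\ref{projective if p-adically complete} applies directly, by exploiting the fact that $\Warrow^b$ is a decompletion whose $p$-adic completion recovers the completed version. The key obstacle is that splitting a surjection onto a finite \emph{free} module is trivial, but here the target $\Warrow^b(S)$ is only finite projective over $\Warrow^b(\widehat{R})$ after completion; one must descend a splitting from the completed setting back to the overconvergent one, and this descent requires a convergence/approximation argument in the spirit of Lemma~\ref{descent for finite projective}.

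First I would form the $p$-adic completions $\widehat{A}, \widehat{B}$ and note that $\widehat{A}$ is again Witt-perfect at $p$ and $p$-adically complete. By Proposition~\ref{projective if p-adically complete}, $\Warrow^b(\widehat{S})$ is finite projective over $\Warrow^b(\widehat{R})$. Next, base-changing $\pi$ along $\Warrow^b(R) \to \Warrow^b(\widehat{R})$ gives a surjection $\widehat{G} := G \otimes_{\Warrow^b(R)} \Warrow^b(\widehat{R}) \twoheadrightarrow \Warrow^b(\widehat{R}) \otimes_{\Warrow^b(R)} \Warrow^b(S)$, and composing with the surjection of Corollary~\ref{same generators} yields a surjection $\widehat{G} \twoheadrightarrow \Warrow^b(\widehat{S})$ onto a finite projective module. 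A finite free module surjecting onto a finite projective module always splits, so we obtain a $\Warrow^b(\widehat{R})$-linear splitting $\widehat{\iota}: \Warrow^b(\widehat{S}) \to \widehat{G}$. The remaining task is to produce a splitting $\iota: \Warrow^b(S) \to G$ over the uncompleted rings.

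To descend, I would use the operator-norm bounds recorded in Lemma~\ref{operator norm of projector} together with the density statement of Lemma~\ref{density of subrings} (and the completeness of $\Warrow^b(R)$ from Lemma~\ref{complete Witt vectors} once one passes to a normed setting, or a successive-approximation argument otherwise). Concretely: the set of $\Warrow^b(\widehat{R})$-linear splittings of $\widehat{\pi}$ is a torsor under $\Hom(\Warrow^b(\widehat{S}), \ker(\widehat{\pi}))$; since $\Warrow^b(R)$-module splittings of $\pi$ exist rationally (i.e., after inverting $p$, by the étale hypothesis and the fact that $S$ is finite projective over $R$, hence $\Warrow^b(S)[p^{-1}]$ is finite projective over $\Warrow^b(R)[p^{-1}]$), one can start from a rational splitting $\iota_0$ and correct it $p$-adically. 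The correction term lies in a $p$-adically small neighborhood, and using the explicit splittings $\iota_{x_{j+h}}$ and $\iota_p$ from Lemma~\ref{operator norm of projector}(b) — which combine via $p = x_{j+h}^{p^{j+h}} + p^2 v$ to a bounded splitting at each level $j$ — one assembles a genuine $\Warrow^b(R)$-linear splitting by taking a compatible system of these level-$j$ splittings whose norms are controlled uniformly in $j$ (the bound $p^{1/p^{h+j}} \to 1$ is exactly what makes this work).

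**The main obstacle** I anticipate is the bookkeeping in the last step: ensuring that the level-by-level splittings $\iota^{(j)}: W_{p^j}(S) \to (\text{level } j \text{ of } G)$ are mutually compatible under Frobenius so that they glue to a map on $\Warrow^b$, \emph{and} that the operator-norm bounds are uniform enough that the glued map respects the growth condition defining $\Warrow^b$. This is the same tension managed in the proof of Proposition~\ref{construct surjective map}, where one lifts coefficients along $F$ using Lemma~\ref{Witt-perfect to norm-perfect} while tracking the constants $D_j$; here one does the analogous thing for the splitting rather than for a single element, so I would expect to invoke Lemma~\ref{Witt-perfect to norm-perfect} and Lemma~\ref{kernel norm} in much the same pattern, with the role of $\underline{x}$ played by the discrepancy $\pi \circ \iota_0 - \mathrm{id}$ that must be killed.
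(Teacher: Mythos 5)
Your overall structure --- pass to $p$-adic completions, obtain a splitting $\hat{\iota}$ of $\hat{\pi}$ via Corollary~\ref{same generators} and Proposition~\ref{projective if p-adically complete}, then descend --- matches the paper, but the descent step contains two genuine gaps. The first is the claim that $\Warrow^b(S)[p^{-1}]$ is finite projective over $\Warrow^b(R)[p^{-1}]$, and hence that a ``rational splitting'' $\iota_0$ exists to start your correction process from. This is unjustified and nearly circular: the theorem you are proving is exactly what upgrades the finiteness of $\Warrow^b(S)$ over $\Warrow^b(R)$ (Proposition~\ref{construct surjective map}) to finite projectivity, and nothing established so far shows that $\Warrow^b$ carries finite projective $R$-modules to finite projective $\Warrow^b(R)$-modules, with or without $p$ inverted. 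Inverting $p$ does not obviously simplify the module theory here --- the interaction of $p$ with $\Warrow^b$ is already subtle (cf.\ Lemma~\ref{effect of multiplication by p}), and no lemma in the paper gives projectivity after localizing at $p$. So your $\iota_0$ does not exist by any available result, and the paper's argument in fact never inverts $p$.

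The second gap is the one you flag yourself: producing level-$j$ splittings that are mutually Frobenius-compatible and uniformly bounded. Imitating Proposition~\ref{construct surjective map} via Lemmas~\ref{Witt-perfect to norm-perfect}, \ref{kernel norm}, and \ref{operator norm of projector} does not achieve this, since those lemmas lift single Witt vectors along $F$, not module homomorphisms. The paper instead regards the set $\hat{T}$ of splittings of $\hat{\pi}$ and the sets $T_n$, $\hat{T}_n$ of level-$n$ splittings (over $W_{p^n}(R)$, $W_{p^n}(\hat{R})$) as torsors under Hom-modules $\hat{H}$, $H_n$, $\hat{H}_n$, and uses that the norm on $\hat{H}_n$ induced by $\hat{H}$ is equivalent to one induced from $H_n$ (by \cite[Lemma~2.2.12]{KL11}), so that the level-$n$ lifts of a fixed $t_{n-1} \in T_{n-1}$ coming from $T_n$ are \emph{dense} among the completed ones. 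Perturbing $\hat{\iota}$ inductively by corrections small in $\hat{H}$-norm that fix the level-$(n-1)$ image and make the level-$n$ image descend to some $t_n \in T_n$ yields a Frobenius-compatible family $(t_n)$ gluing to a map $\Warrow(S) \to \bigoplus \Warrow(R)$; convergence of the $\hat{t}^{(n)}$ in $\hat{T}$ then forces this map to land in $G$, the intersection of $\bigoplus \Warrow(R)$ and $\hat{G}$. This density-in-torsor mechanism, which separates the Frobenius-compatibility from the norm control, is the key idea missing from your sketch.
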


\begin{proof}
Write $\hat{R}$ for the $p$-adic completion of $R$, and similarly for other rings; for example, $\hat{G}$ denotes a finite free $\Warrow^b(\hat{R})$-module.  By Corollary~\ref{same generators}, base extension of $\pi$ produces a surjection 
\[
\hat{\pi}: \hat{G} \twoheadrightarrow \Warrow^b(\hat{S}),
\]
and by Proposition~\ref{projective if p-adically complete} there is a splitting $\hat{\iota}$ of $\hat{\pi}$.  Our strategy is to perturb $\hat{\iota}$ slightly so that it is the base extension of our desired splitting $\iota$.

By base extension of $\pi$, we obtain surjections
\[
\hat{G} \to \Warrow^b(\hat{S}), \qquad
G_n \to W_{p^n}(S), \qquad
\hat{G}_n \to W_{p^n}(\hat{S}).
\]
Let $\hat{K}, K_n, \hat{K}_n$ be the kernels of these maps. Let $\hat{T}, T_n, \hat{T}_n$ be the spaces of splitting of these maps; these are torsors for $\hat{H}, H_n, \hat{H}_n$, where
\[
\hat{H} := \Hom_{\Warrow^b(\hat{R})}(\Warrow^b(\hat{S}), \hat{K})
\]
and so on. Note that $\hat{K},K_n,\hat{K}_n$ and $\hat{H}, H_n, \hat{H}_n$ are finite projective modules over their respective rings.
(For $\hat{K}$ and $\hat{H}$, we are using Proposition~\ref{projective if p-adically complete}
in order to know that $\Warrow^b(\hat{S})$ is a finite projective module over $\Warrow^b(\hat{R})$; this is why we must assume that $b \leq 1$.)

For everything in sight, 
Frobenius gives us maps from $*_{n+1}$ to $*_{n}$.
More precisely, we have isomorphisms
\[
*_{n+1} \otimes_{W_{p^{n+1}},F} W_{p^n} \cong *_n.
\]
This even makes sense for the $T$'s: if we take a splitting of $T_{n+1}$, use it to identify $T_{n+1}$ with $H_{n+1}$, then map the same splitting to $T_n$ and use it to identify $T_n$ with $H_n$, we can transfer the identification $H_{n+1} \otimes_{W_{p^{n+1}}(R),F} W_{p^n}(R) \cong H_n$ to get an identification of ``$T_{n+1} \otimes_{W_{p^{n+1}}(R),F} W_{p^n}(R)$'' with $T_n$. 

Fix a norm on $\hat{H}$ by viewing $\hat{H}$ as a finite projective module over $\Warrow^b(\hat{R})$ and applying the norm on $\Warrow^b(\hat{R})$; the equivalence class of this norm does not depend on any choices (see \cite[Lemma~2.2.12]{KL11}).
We now define a sequence of splittings 
\[
\hat{t}^{(-1)},\, \hat{t}^{(0)},\, \dots \in \hat{T}
\] 
which will converge to an element $\hat{t} \in \hat{T}$ which is the base extension of a splitting 
\[
\iota: \Warrow^b(S) \rightarrow G.
\] 

We define the splittings $\hat{t}^{(i)}$ as follows. To begin with, take $\hat{t}^{(-1)} = \hat{\iota} \in \hat{T}.$  Now suppose we have been given $\hat{t}^{(n-1)} \in \hat{T}$ for some $n \geq 0$.  If $n \geq 1$, assume further that the image $\hat{t}^{(n-1)}_{n-1} \in \hat{T}_{n-1}$ of $\hat{t}^{(n-1)}$
is the base extension of some $t_{n-1} \in T_{n-1}$.  We would like to choose $\hat{t}^{(n)} \in \hat{T}$ so that:
\begin{enumerate}
\item[(a)]
$\hat{t}^{(n)}$ and $\hat{t}^{(n-1)}$ have the same image in $\hat{T}_{n-1}$
(that is, $\hat{t}^{(n)} - \hat{t}^{(n-1)}$ maps to zero in $\hat{H}_{n-1}$);
\item[(b)]
the norm of $\hat{t}^{(n)} - \hat{t}^{(n-1)}$ in $\hat{H}$ is at most $p^{-n}$;
\item[(c)]
the image of $\hat{t}^{(n)}$ in $\hat{T}_n$ is the base extension of some $t_n \in T_n$.
\end{enumerate}
We momentarily postpone verifying that we can produce such a $\hat{t}^{(n)} \in \hat{T}$ and instead explain why this will complete the proof. If these conditions are all satisfied, then the elements $t_n$ from condition (c) can be taken to be Frobenius compatible by condition (a), in which case they produce a splitting 
\[
\iota: \Warrow(S) \rightarrow \bigoplus \Warrow(R)
\]
without any overconvergence condition. 
On the other hand, by (b) the $\hat{t}_n$ converge in $\hat{T}$ to  a splitting $\hat{\iota}$
of $\hat{\pi}$, so $\iota$ maps $\Warrow^b(S)$ into
the intersection of $\bigoplus \Warrow(R)$ and $\hat{G}$
within $\bigoplus \Warrow(\hat{R})$. This intersection is precisely $G$,
so we obtain a splitting of $\pi$ as desired.

 Thus it remains to show how to produce the desired element $\hat{t}^{(n)} \in \hat{T}$ from the given elements $\hat{t}^{(n-1)} \in \hat{T}$ and $t_{n-1} \in T_{n-1}$.
Let $U_n$ be the set of lifts of $t_{n-1}$ to $T_n$.
Fixing a norm on $H_n$, the induced norm on $\hat{H}_n$ will be equivalent to the one induced on $\hat{H}_n$ by $\hat{H}$
(again see \cite[Lemma~2.2.12]{KL11}). In this sense, $U_n$ is dense in the set of lifts of $\hat{t}^{(n-1)}_{n-1}$ from $\hat{T}_{n-1}$ to $\hat{T}_n$, so we can pick $t_n \in T_n$ which maps to $t_{n-1}$ and whose base extension to $\hat{T}_n$ lifts to $\hat{t}^{(n)} \in \hat{T}$ of the right form. 
\end{proof}

\begin{Lem} \label{finite projective module}
For all $b$, $\Warrow^b(S)$ is a finite projective $\Warrow^b(R)$-module. 
\end{Lem}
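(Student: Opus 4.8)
The plan is to reduce the general statement to two ingredients already available---the $p$-adically complete case (Proposition~\ref{projective if p-adically complete}) and the descent argument of Theorem~\ref{finite implies projective}---and to use the Witt-vector Frobenius to remove the hypothesis $b\le 1$ from both. For $0<b\le 1$ there is nothing to do beyond combining what is in hand: Proposition~\ref{construct surjective map} produces a finite free $\Warrow^b(R)$-module $G$ together with a surjection $\pi\colon G\twoheadrightarrow\Warrow^b(S)$, and Theorem~\ref{finite implies projective} provides a $\Warrow^b(R)$-linear splitting of $\pi$, so $\Warrow^b(S)$ is a direct summand of $G$ and hence finite projective.

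Next I would treat all $b>0$ under the extra hypothesis that $A$ is $p$-adically complete. The key observation is that then $F$ is an automorphism of $\Warrow(\widehat R)$: by Corollary~\ref{inverse limit for complete} we have $\Warrow(\widehat R)\cong\Warrow(\widehat R/(p))$, and on a characteristic-$p$ ring $F$ becomes bijective after passing to the perfection, which does not change $\Warrow$ (Lemma~\ref{inverse limit for perfect}). Combining this with the quantitative bounds of Lemma~\ref{effect of inverse Frobenius}, one extracts for each $c\ge 1$ a ring isomorphism $F\colon\Warrow^{c/p}(\widehat R)\xrightarrow{\ \sim\ }\Warrow^{c}(\widehat R)$ (with inverse $F^{-1}$), and likewise $F\colon\Warrow^{c/p}(\widehat S)\xrightarrow{\ \sim\ }\Warrow^{c}(\widehat S)$, compatibly. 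Thus $\Warrow^{c}(\widehat S)$ is finite projective over $\Warrow^{c}(\widehat R)$ whenever $\Warrow^{c/p}(\widehat S)$ is finite projective over $\Warrow^{c/p}(\widehat R)$, since the former is the base change of the latter along the isomorphism $F$. Starting from the case $c\le 1$, which is Proposition~\ref{projective if p-adically complete}, and applying $F$ finitely many times to climb from a parameter $\le 1$ up to any given $b$, one concludes that $\Warrow^{b}(\widehat S)$ is finite projective over $\Warrow^{b}(\widehat R)$ for every $b>0$.

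Finally, for general $A$ and arbitrary $b$: the case $b\le 1$ is the first paragraph, and for $b>1$ I would repeat the proof of Theorem~\ref{finite implies projective} without change. That proof begins with a finite free surjection $\pi\colon G\twoheadrightarrow\Warrow^b(S)$ (now supplied by Proposition~\ref{construct surjective map} for any $b$), passes to the $p$-adic completion via Corollary~\ref{same generators}, splits the completed surjection using the finite projectivity of $\Warrow^b(\widehat S)$ over $\Warrow^b(\widehat R)$, and then corrects the splitting by a Frobenius-compatible successive-approximation argument so that it descends to a splitting of $\pi$. The only appeal to $b\le 1$ there is the citation of Proposition~\ref{projective if p-adically complete} for the completed rings; the previous paragraph removes that restriction, so the argument applies verbatim and exhibits $\Warrow^b(S)$ as a direct summand of $G$.

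The step I expect to be the main obstacle is the Frobenius isomorphism in the complete case. Lemma~\ref{effect of inverse Frobenius} already gives $F^{-1}\bigl(\Warrow^{c}(\widehat R)\bigr)\subseteq\Warrow^{c/p}(\widehat R)$, so what remains is to verify that $F$ is defined on all of $\Warrow(\widehat R)$---here $p$-adic completeness is essential, as Remark~\ref{only inverse Frobenius} shows it can fail otherwise---and that $F$ carries $\Warrow^{c/p}(\widehat R)$ back into $\Warrow^{c}(\widehat R)$. The latter amounts to running the induction in the proof of Lemma~\ref{effect of inverse Frobenius} in the opposite direction, keeping track not merely of a norm bound but of the decay of the growth condition defining $\Warrow^{c}$.
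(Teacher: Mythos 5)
Your handling of $b\le 1$ is exactly the paper's, and the strategy of first removing the restriction $b\le 1$ from the complete-ring input (Proposition~\ref{projective if p-adically complete}) and then rerunning the decompletion of Theorem~\ref{finite implies projective} for $b>1$ is not unreasonable on its face. But the Frobenius step on which it rests fails, and the obstacle you flag at the end turns out to be fatal rather than merely a loose end.

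The slip is in the appeal to Corollary~\ref{inverse limit for complete}: that corollary needs the ring itself to be $p$-adically complete, and $\widehat{R}=\widehat{A}_p$ is never $p$-adically complete, because $p$ is a unit in it (so $\widehat{R}/(p)=0$ and $\Warrow(\widehat{R}/(p))=0$, which is certainly not $\Warrow(\widehat{R})$). What Corollary~\ref{inverse limit for complete} together with Lemma~\ref{inverse limit for perfect} supplies is an automorphism $F$ of $\Warrow(\widehat{A})$, not of $\Warrow(\widehat{R})$. On $\Warrow(\widehat{R})$ one has only the map $F^{-1}$ of Remark~\ref{only inverse Frobenius}, and it acquires a nontrivial kernel once the growth parameter exceeds~$1$: for any $d\in\widehat{R}$ the coherent sequence $\underline{d}$ with $\underline{d}_{p^{-n}}=(0,\dots,0,d/p^n)\in W_{p^n}(\widehat{R})$ satisfies $F^{-1}(\underline{d})=0$, and since $|\underline{d}_{p^{-n}}|_W^{p^n}=|d|\,p^n$ one has $p^{-cn}|\underline{d}_{p^{-n}}|_W^{p^n}=p^{(1-c)n}|d|$, which tends to $0$ exactly when $c>1$. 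So $\underline{d}\in\Warrow^c(\widehat{R})$ for every $c>1$, and $F^{-1}\colon\Warrow^c(\widehat{R})\to\Warrow^{c/p}(\widehat{R})$ fails to be injective in precisely the range you need. The claimed ring isomorphism $F\colon\Warrow^{c/p}(\widehat{R})\xrightarrow{\ \sim\ }\Warrow^c(\widehat{R})$ with inverse $F^{-1}$ therefore does not exist; this is closely related to the phenomenon recorded in the remark following Theorem~\ref{compare to positive characteristic2}, that for $b>1$ the overconvergent ring over $\widehat{R}$ is no longer identified with its characteristic-$p$ model.

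The paper's argument for $b>1$ sidesteps the complete case entirely. One observes that the splitting $\iota$ built for $b=1$ in Theorem~\ref{finite implies projective} is constructed level-wise, hence extends to a $\Warrow(R)$-linear map $\Warrow(S)\to\bigoplus\Warrow(R)$ with no overconvergence condition at all. Proposition~\ref{construct surjective map} (valid for every $b>0$) gives $\pi\bigl(\bigoplus\Warrow^b(R)\bigr)=\Warrow^b(S)$, and the containments $\iota(\underline{b}_i)\in\bigoplus\Warrow^1(R)\subseteq\bigoplus\Warrow^b(R)$ together with $\Warrow(R)$-linearity of $\iota\circ\pi$ then force $\iota\bigl(\Warrow^b(S)\bigr)\subseteq\bigoplus\Warrow^b(R)$. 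No Frobenius isomorphism is used, and Theorem~\ref{finite implies projective} is never rerun for $b>1$.
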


\begin{proof}
For $b \leq 1$ this follows from Theorem~\ref{finite implies projective} and Proposition~\ref{construct surjective map}. Now fix $b > 1$. From the $b = 1$ case, we may produce a surjection $\pi:  \oplus \Warrow^1(R) \rightarrow \Warrow^1(S)$ and
a splitting $\iota: \Warrow^1(S) \to \oplus \Warrow^1(R)$ of $\Warrow^1(R)$-modules.  (Notice that the elements $\underline{b}_1, \ldots, \underline{b}_n$ from the statement of Proposition~\ref{construct surjective map} also satisfy the hypotheses of that proposition for $b > 1$.  Thus the base change of $\pi$ to $\oplus \Warrow^b(R) \rightarrow \Warrow^b(S)$ is also surjective.  We will use this below.)  The construction of $\iota$ in the proof of Theorem~\ref{finite implies projective} is constructed level-wise, i.e., the projection of $\iota(\underline{x})$ to $\oplus W_{p^n}(R)$ depends only on the projection of $\underline{x}$ to $\underline{x}_{p^{-n}} \in W_{p^n}(S)$.  Thus we may view $\iota$ as the restriction to $\Warrow^1(S)$ of another map $\Warrow(S) \rightarrow \oplus \Warrow(R)$ with no overconvergence conditions.  We also denote this latter map by $\iota$.  We claim that the restriction of $\iota$ to $\Warrow^b(S)$ is a splitting of $\oplus \Warrow^b(R) \rightarrow \Warrow^b(S)$; to prove this, we only need to show that $\iota( \Warrow^b(S) ) \subseteq \oplus \Warrow^b(R)$.  

Consider the map 
\[
\pi: \oplus \Warrow(R) \rightarrow \Warrow(S)
\]
defined by sending $e_i \mapsto \underline{b}_i$, with $\underline{b}_i$ as above.  Because the elements $\underline{b}_i$ all lie in $\Warrow(B)$, they in particular lie in $\Warrow^b(S)$, and so $\iota(\underline{b}_i) = \iota(\pi(e_i)) \in \oplus \Warrow^b(R)$ for all $i$.  Because $\iota$ and $\pi$ are both $\Warrow(R)$-module homomorphisms, and because $\Warrow^b(R)$ is closed under multiplication, we deduce that $\iota(\pi( \oplus \Warrow^b(R))) \subseteq \oplus \Warrow^b(R)$. On the other hand, as noted parenthetically above, $\pi(\oplus \Warrow^b(R)) = \Warrow^b(S)$.  Combining these last two observations, we have just checked $\iota(\Warrow^b(S)) \subseteq \oplus \Warrow^b(R)$, as required.  
\end{proof}


\begin{Lem} \label{compatibility with tensor product}
For $S'$ another finite \'etale algebra, the natural map
\[
\Warrow^b(S) \otimes_{\Warrow^b(R)} \Warrow^b(S') \to \Warrow^b(S \otimes_R S')
\]
is an isomorphism.
\end{Lem}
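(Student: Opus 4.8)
The plan is to combine the finite projectivity already in hand (Lemma~\ref{finite projective module}) with the characteristic-$p$ comparison (Theorem~\ref{compare to positive characteristic1}), reducing the statement to a fact about Kedlaya--Liu rings that belongs to the perfectoid formalism. Write $T := \Warrow^b(R)$, let $B'$ be the integral closure of $A$ in $S'$, and let $B''$ be the integral closure of $A$ in $S'' := S \otimes_R S'$. Since $S''$ is again finite \'etale over $R$, Theorem~\ref{almost purity part 1}(a) shows $B''$ is again $p$-normal and Witt-perfect at $p$, and $S'' = B''_p$ carries the corresponding $p$-adic seminorm, so $\Warrow^b(S'')$ is defined in our framework. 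The natural map $\phi$ exists because $S \to S''$ and $S' \to S''$ are norm-nonincreasing (they carry $B,B'$ into $B''$), hence induce $T$-algebra homomorphisms $\Warrow^b(S) \to \Warrow^b(S'')$ and $\Warrow^b(S') \to \Warrow^b(S'')$ landing in a common commutative ring. Both the source and the target of $\phi$ are finite projective $T$-modules: the source as a tensor product of finite projective $T$-modules, the target by Lemma~\ref{finite projective module} applied to $S''$. Thus it suffices to prove $\phi$ is an isomorphism, which we may approach by the same case analysis used throughout this section.

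I would first treat the case $b \le 1$ with $A$ moreover $p$-adically complete. Here, exactly as in Proposition~\ref{projective if p-adically complete}, Theorem~\ref{compare to positive characteristic1} identifies $\Warrow^b(R),\Warrow^b(S),\Warrow^b(S'),\Warrow^b(S'')$ with the rings $\mathcal{R}^{\mathrm{int},1/b}$ over the perfect characteristic-$p$ rings obtained as inverse limits under the $p$-power map. Since tilting is an equivalence of finite \'etale sites (\cite{Sch11}, \cite{KL11}), it carries $S \otimes_R S'$ to the tensor product of the tilts of $S$ and $S'$ over the tilt of $R$ (up to the relevant completion), and under these identifications $\phi$ becomes the natural comparison map $\mathcal{R}^{\mathrm{int}}_{U} \otimes_{\mathcal{R}^{\mathrm{int}}_{R'}} \mathcal{R}^{\mathrm{int}}_{U'} \to \mathcal{R}^{\mathrm{int}}_{U \otimes_{R'} U'}$ for $U,U'$ finite \'etale over the tilt $R'$. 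That this map is an isomorphism follows from the finite \'etale comparison of \cite[\S5.5]{KL11}: the functor $\mathcal{R}^{\mathrm{int}}$ carries finite \'etale algebras to finite \'etale algebras and is compatible with fibre products. This settles the complete case.

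Next I would remove completeness, still for $b \le 1$, by the perturbation argument of Theorem~\ref{finite implies projective}. Surjectivity of $\phi$ comes from Proposition~\ref{construct surjective map} applied to $B''$ together with Corollary~\ref{same generators}: the generators of $\Warrow^b(S'')$ furnished by Lemma~\ref{constructing generators} may be chosen in $\Warrow(B \otimes_A B')$ after multiplication by a suitable power of $x_h$ (using, as in the proof of Theorem~\ref{almost purity part 1}(d), that $B''/(B \otimes_A B')$ is killed by every $p$-ideal), hence lie in the image of $\phi$; then one upgrades $\phi$ to an isomorphism by perturbing, level by level, the inverse produced over the $p$-adic completion, just as $\hat\iota$ was perturbed to $\iota$ in Theorem~\ref{finite implies projective}. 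Finally, for $b > 1$ I would descend from $b \le 1$ as in Lemma~\ref{finite projective module}: the generators of Proposition~\ref{construct surjective map} work for every $b$ above a fixed threshold, so surjectivity for $b \le 1$ propagates to all $b$, and the level-wise inverse built for $b = 1$ restricts to an inverse on $\Warrow^b(S'')$ because it is a $\Warrow(R)$-module map carrying the $\underline{b}_i \in \Warrow(B'')$ into the image of $\Warrow^b(S) \otimes_T \Warrow^b(S')$ while $\Warrow^b$ is closed under multiplication. The main obstacle is precisely this descent: because $R \to \widehat{R}$ need not even be injective, one cannot invoke faithfully flat descent from the $p$-adically complete model, and must instead reuse the explicit level-by-level approximation machinery of Proposition~\ref{construct surjective map} and Theorem~\ref{finite implies projective}.
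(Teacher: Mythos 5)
The paper's proof is far shorter and more elementary than what you propose. It applies Proposition~\ref{construct surjective map} to $S$, $S'$, and $S\otimes_R S'$ to obtain a surjection $G\otimes_{\Warrow^b(R)} G'\to \Warrow^b(S\otimes_R S')$ with $G,G'$ finite free, observes that this factors through the natural map (so the natural map is surjective), notes that both sides are finite projective $\Warrow^b(R)$-modules of everywhere equal rank by Lemma~\ref{finite projective module}, and concludes by Nakayama's lemma. No case analysis on $b$, no passage to the $p$-adic completion, and no appeal to the perfectoid comparison is needed at this point. You arrive at surjectivity along essentially the same lines (products of the generators of Lemma~\ref{constructing generators}, via Corollary~\ref{same generators}), but then you take a much heavier path to injectivity: reduce by Theorem~\ref{compare to positive characteristic1} to a compatibility statement in \cite{KL11}, strip away the completeness assumption by a perturbation modelled on Theorem~\ref{finite implies projective}, and finally propagate from $b\le 1$ to all $b$.

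That injectivity path also has a genuine gap. The level-by-level perturbation of Theorem~\ref{finite implies projective} produces a \emph{section} of a surjection of $\Warrow^b(R)$-modules; it does not upgrade an isomorphism $\hat\phi$ over the completion to an isomorphism $\phi$. Knowing that $\phi$ is a split surjection only tells you $\ker\phi$ is a direct summand, and, as you yourself note, you cannot invoke faithfully flat descent along $R\to\widehat R$ to force $\ker\phi=0$. To close this hole you would in any case have to compare ranks of the two finite projective modules and observe that a split surjection of equal-rank projectives is an isomorphism---but once you grant that, the entire perturbation and tilting apparatus becomes superfluous and you recover precisely the paper's Nakayama argument. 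The simple fact you are missing is that surjectivity plus projectivity on both sides plus equal rank already finishes the proof.
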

\begin{proof}
Apply Proposition~\ref{construct surjective map} to construct surjections
$\pi: G \to \Warrow^b(S)$ and $\pi': G' \to \Warrow^b(S')$.
By Proposition~\ref{construct surjective map} again, the induced map
\[
\pi \otimes \pi': G \otimes_{\Warrow^b(R)} G' \to \Warrow^b(S \otimes_R S')
\]
is also surjective. Consequently, the natural map
\[
\Warrow^b(S) \otimes_{\Warrow^b(R)} \Warrow^b(S') \to \Warrow^b(S \otimes_R S')
\]
is a surjective $\Warrow^b(R)$-linear homomorphism between finite projective modules of everywhere equal rank. This map is thus forced to be an isomorphism by Nakayama's lemma.
\end{proof}

\begin{proof}[Proof of Theorem~\ref{almost purity part 2}]
Since $S$ is finite \'etale as an $R$-algebra, it is finite projective as a module over both $R$ and $S \otimes_R S$ (via the multiplication map $S \otimes_R S \to S$). 
By Lemma~\ref{finite projective module}, 
$\Warrow^b(S)$ is finite projective as a module over both $\Warrow^b(R)$
and $\Warrow^b(S \otimes_R S)$. However, by Lemma~\ref{compatibility with tensor product} the latter ring is isomorphic to $\Warrow^b(S) \otimes_{\Warrow^b(R)} \Warrow^b(S)$. Therefore, $\Warrow^b(S)$ is a finite \'etale $\Warrow^b(R)$-algebra, as desired.
\end{proof}

\section{Associating $(\varphi^{-1}, \Gamma)$-modules to Artin motives} \label{sec:phi,Gamma}

We conclude with a baby step in the direction of global $(\varphi, \Gamma)$-modules in order to illustrate the motivation for the preceding constructions.

Let $L$ denote a finite extension of $\QQ_p$.  The comparison isomorphism in $p$-adic Hodge theory associates to each smooth and proper variety over $L$ a certain algebraic object called a \emph{$(\varphi, \Gamma)$-module}. Such an object is a module over a complicated base ring equipped with some additional structures. 
More precisely, $(\varphi, \Gamma)$-modules can be associated to arbitrary $p$-adic representations of the absolute Galois group $G_L$, and the one associated to an algebraic variety is obtained from the $p$-adic \'etale cohomology.

A hypothetical theory of \emph{global $(\varphi, \Gamma)$-modules} would similarly associate an algebraic object to a smooth proper variety over $\QQ$ and would be related to \'etale cohomology as a representation of $G_{\QQ}$ rather than $G_L$. While we are quite far at present from any such theory, we can at least treat the case of zero-dimensional varieties, which correspond to discrete $G_{\QQ}$-representations. This case is essentially a formal consequence of Theorem~\ref{almost purity part 2}.

\begin{Hyp}
Throughout the remainder of this paper, we consider $\Qab$ equipped with the supremum norm taken over all places above $p$.  The results here would hold similarly for $\QQ(\mu_{p^{\infty}})$, or for an intermediate field between $\QQ(\mu_{p^{\infty}})$ and $\Qab$, but we work with $\Qab$ to emphasize our eventual goal of having a theory in which there is no distinguished prime $p$.  In that theory, our supremum norm would be replaced by the supremum over all places (including perhaps the infinite places), and our $p$-typical Witt vectors would be replaced by big Witt vectors.
\end{Hyp}

\begin{Def}
An \emph{Artin motive} over $\QQ$ with coefficients in a field $K$ is a finite dimensional $K$-vector space equipped with a discrete action of $G_{\QQ}$.
\end{Def}

In our analogue of the theory of $(\varphi, \Gamma)$-modules, Artin motives over $\QQ$ with coefficients in $K$ will play the role of the $p$-adic \'etale cohomology groups associated to varieties over a $p$-adic field.  The role of $(\varphi, \Gamma)$-modules will be played by the following.

\begin{Def}
A \emph{$(\varphi^{-1}, \Gamma)$-module} over the ring $\Warrow^{\dag}(\Qab) \otimes_{\ZZ} K$ is a finite projective module over $\Warrow^{\dag}(\Qab) \otimes_{\ZZ} K$ equipped with semilinear actions of the inverse Frobenius $\varphi^{-1}$ and the group $\Gamma := \Gal(\Qab/\QQ) \cong \widehat{\ZZ}^{\times}$.
\end{Def}

\begin{Rmk}
The actions of $\varphi^{-1}$ and $\Gamma$ on the base ring $\Warrow^{\dag}(\Qab) \otimes_{\ZZ} K$ are induced by the actions on the left-hand factor.  Note that we must use $\varphi^{-1}$ instead of $\varphi$ because there is no action of $\varphi$ on $\Warrow^\dagger(\Qab)$.
\end{Rmk}

We now describe how to associate a $(\varphi^{-1}, \Gamma)$-module over $\Warrow^{\dag}(\Qab) \otimes_{\ZZ} K$ to an Artin motive.  Let $V$ denote an Artin motive over $\QQ$ with coefficients in $K$.  Because the action of $G_{\QQ}$ is by definition \emph{discrete}, we can find a finite Galois extension $F/\QQ$ such that the Galois action on $V$ factors through $\Gal(F/\QQ)$.  To $V$, we then associate the $\Warrow^{\dag}(\Qab) \otimes_{\ZZ} K$-module
\[
\left(\Warrow^{\dag}(F \otimes_{\QQ} \Qab) \otimes_{\ZZ} V\right)^{\Gal(F/\QQ)}.
\]

\begin{Thm} \label{Artin motive theorem}
With notation as above,
\[
\left(\Warrow^{\dag}(F \otimes_{\QQ} \Qab) \otimes_{\ZZ} V\right)^{\Gal(F/\QQ)}
\]
is a finite projective $\Warrow^{\dag}(\Qab) \otimes_{\ZZ} K$-module.
\end{Thm}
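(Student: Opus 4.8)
The plan is to realize $\Warrow^{\dagger}(F\otimes_{\QQ}\Qab)$ as one of the finite \'etale extensions produced by Theorem~\ref{almost purity part 2} and then to pass to $\Gal(F/\QQ)$-invariants by Galois descent.

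First I would check that $\Qab$ with the norm fixed in the Hypothesis fits the setup of Theorem~\ref{almost purity part 2}. Take $A$ to be the integral closure of $\ZZ_{(p)}$ in $\Qab$; it is a domain integrally closed in $A_p=\Qab$, hence $p$-normal, and unwinding Definition~\ref{p-adic norm on a ring} shows that its $p$-adic seminorm is the supremum over the places above $p$. Moreover $A$ is Witt-perfect at $p$: this condition depends only on $A/p^2A$, and one reduces it to the (known) Witt-perfectness of the valuation rings of the completions of $\Qab$ at places above $p$, each of which contains $\mu_{p^{\infty}}$ together with all prime-to-$p$ roots of unity; compare \cite[Example~4.9]{DK12}. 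Since $F/\QQ$ is finite separable, $S:=F\otimes_{\QQ}\Qab$ is a finite \'etale $R:=\Qab$-algebra; let $B$ be the integral closure of $A$ in $S$, equipped with its $p$-adic seminorm. By Theorem~\ref{almost purity part 2}, $\Warrow^{\dagger}(S)$ is finite \'etale over $\Warrow^{\dagger}(R)$; in particular it is finite projective over $\Warrow^{\dagger}(R)$, and $\Warrow^{\dagger}(R)\to\Warrow^{\dagger}(S)$ is faithfully flat.

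Next I would install the Galois structure. Put $G:=\Gal(F/\QQ)$, acting on $S$ through the first tensor factor by isometric $R$-algebra automorphisms; functoriality of $\Warrow^{\dagger}$ then gives an action of $G$ on $\Warrow^{\dagger}(S)$ by $\Warrow^{\dagger}(R)$-algebra automorphisms. From the classical isomorphism $F\otimes_{\QQ}F\cong\prod_{g\in G}F$, $x\otimes y\mapsto(x\,g(y))_{g}$, base change along $\QQ\to\Qab$ gives a $G$-equivariant isomorphism $S\otimes_{R}S\cong\prod_{g\in G}S$ and the identity $S^{G}=R$. Applying $\Warrow^{\dagger}$, using Lemma~\ref{compatibility with tensor product} (passed to the union over $b$) and the fact that $\Warrow^{\dagger}$ commutes with finite products, I obtain
\[
\Warrow^{\dagger}(S)\otimes_{\Warrow^{\dagger}(R)}\Warrow^{\dagger}(S)\;\cong\;\Warrow^{\dagger}(S\otimes_{R}S)\;\cong\;\prod_{g\in G}\Warrow^{\dagger}(S),
\]
which functoriality identifies with the Galois comparison map $u\otimes v\mapsto(u\,g(v))_{g}$; together with faithful flatness this forces $\Warrow^{\dagger}(S)^{G}=\Warrow^{\dagger}(R)$, so $\Warrow^{\dagger}(S)/\Warrow^{\dagger}(R)$ is a $G$-Galois extension. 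Tensoring over $\ZZ$ with the flat ring $K$ (on which $G$ acts trivially), and using again that finite products commute with $-\otimes_{\ZZ}K$, the extension $(\Warrow^{\dagger}(S)\otimes_{\ZZ}K)/(\Warrow^{\dagger}(R)\otimes_{\ZZ}K)$ is $G$-Galois.

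Finally I would run Galois descent. As $V$ is a finite-dimensional $K$-vector space, $M:=\Warrow^{\dagger}(S)\otimes_{\ZZ}V$ is finite free of rank $\dim_{K}V$ over $\Warrow^{\dagger}(S)\otimes_{\ZZ}K$, carrying the diagonal $G$-action (functorial on $\Warrow^{\dagger}(S)$, the given one on $V$), which is semilinear over the $G$-Galois extension above. Galois descent for modules over a $G$-Galois extension of commutative rings then shows that $M^{G}$ is finite projective over $\Warrow^{\dagger}(R)\otimes_{\ZZ}K=\Warrow^{\dagger}(\Qab)\otimes_{\ZZ}K$ (indeed $M\cong(\Warrow^{\dagger}(S)\otimes_{\ZZ}K)\otimes_{\Warrow^{\dagger}(R)\otimes_{\ZZ}K}M^{G}$). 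Since $M^{G}=(\Warrow^{\dagger}(F\otimes_{\QQ}\Qab)\otimes_{\ZZ}V)^{\Gal(F/\QQ)}$, this is exactly the claim. I expect the only genuine obstacle to be the middle step, namely verifying that $\Warrow^{\dagger}(S)/\Warrow^{\dagger}(R)$ really is $G$-Galois: this needs the tensor-product compatibility of Lemma~\ref{compatibility with tensor product} upgraded from $\Warrow^{b}$ to $\Warrow^{\dagger}$, the easy fact that $\Warrow^{\dagger}$ commutes with finite products, and the bookkeeping that all comparison isomorphisms respect the $G$-action; once that is settled, the remainder is purely formal.
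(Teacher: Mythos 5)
Your proposal is correct and follows essentially the same route as the paper: both apply Theorem~\ref{almost purity part 2} with $S = F\otimes_{\QQ}\Qab$, then descend along the faithfully flat map $\Warrow^\dagger(\Qab)\otimes_\ZZ K \to \Warrow^\dagger(F\otimes_\QQ\Qab)\otimes_\ZZ K$ using Lemma~\ref{compatibility with tensor product} and the decomposition $F\otimes_\QQ F\cong\prod_{g\in G}F$. The only difference is cosmetic --- you invoke Galois descent for $G$-Galois ring extensions as a package, while the paper unwinds the effectivity step as an explicit chain of isomorphisms and then applies faithfully flat descent of finite projectivity.
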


\begin{proof}

We will describe how this follows from Theorem~\ref{almost purity part 2}. We will use without further comment the compatibility of the functor $F \mapsto \Warrow^\dagger(F \otimes_{\QQ} \Qab)$ on finite \'etale $\QQ$-algebras with the formation of tensor products; see
Lemma~\ref{compatibility with tensor product}.

 We first observe that $\Warrow^{\dag}(F \otimes_{\QQ} \Qab)$ is faithfully flat over $\Warrow^{\dag}(\Qab)$.  Then by faithfully flat descent \cite[Tag~058S]{stacks-project}, to deduce the result of the lemma, it suffices to show that
\begin{equation} \label{22113 eqn}
\left( \Warrow^{\dag}(F \otimes_{\QQ} \Qab)\otimes_{\ZZ} K \right) \otimes_{\Warrow^{\dag}(\Qab) \otimes_{\ZZ} K}\left(\Warrow^{\dag}(F \otimes_{\QQ} \Qab) \otimes_{\ZZ} V\right)^{\Gal(F/\QQ)}
\end{equation}
is finite projective over $\Warrow^{\dag}(F \otimes_{\QQ} \Qab)\otimes_{\ZZ} K$.  To prove this, we will show that the module in (\ref{22113 eqn}) is isomorphic to $\Warrow^{\dag}(F \otimes_{\QQ} \Qab)\otimes_{\ZZ} V$.  To prove they are isomorphic, consider the following isomorphisms:
\begin{align*}
\left( \Warrow^{\dag}(F \otimes_{\QQ} \Qab)\otimes_{\ZZ} K \right) &\otimes_{\Warrow^{\dag}(\Qab) \otimes_{\ZZ} K}\left(\Warrow^{\dag}(F \otimes_{\QQ} \Qab) \otimes_{\ZZ} V\right)^{\Gal(F/\QQ)} \\
&\cong \left[ \left( \Warrow^{\dag}(F \otimes_{\QQ} \Qab)\otimes_{\ZZ} K \right) \otimes_{\Warrow^{\dag}(\Qab) \otimes_{\ZZ} K}\left(\Warrow^{\dag}(F \otimes_{\QQ} \Qab) \otimes_{\ZZ} V \right) \right]^{\Gal(F/\QQ)} \\
\intertext{Here we declare that $\Gal(F/\QQ)$ acts trivially on the left factor.  The isomorphism follows from projectivity of $\Warrow^{\dag}(F \otimes_{\QQ} \Qab)$ over $\Warrow^{\dag}(\Qab)$.}
&\cong \left[ \Warrow^{\dag}(F \otimes_{\QQ} \Qab) \otimes_{\Warrow^{\dag}(\Qab)} \Warrow^{\dag}(F \otimes_{\QQ} \Qab) \otimes_{\ZZ} V \right]^{\Gal(F/\QQ)} \\
&\cong \left[ \Warrow^{\dag}\left((F \otimes_{\QQ} \Qab) \otimes_{\Qab} (F \otimes_{\QQ} \Qab) \right) \otimes_{\ZZ} V \right]^{\Gal(F/\QQ)} \\
\intertext{To see this last isomorphism, note that $\Warrow^{\dag}(F \otimes_{\QQ} \Qab) \otimes_{\Warrow^{\dag}(\Qab)} \Warrow^{\dag}(F \otimes_{\QQ} \Qab)$ and $\Warrow^{\dag}\left((F \otimes_{\QQ} \Qab) \otimes_{\Qab} (F \otimes_{\QQ} \Qab) \right) $ are both finite \'etale algebras over $\Warrow^{\dag}(F \otimes_{\QQ} \Qab)$ lifting $(F \otimes_{\QQ} \Qab) \otimes_{\Qab} (F \otimes_{\QQ} \Qab)$.}
&\cong \left[ \Warrow^{\dag}\left( (F \otimes_{\QQ} F) \otimes_{\QQ} \Qab \right) \otimes_{\ZZ} V \right]^{\Gal(F/\QQ)}
\end{align*}
We will show that this last module is isomorphic to $\Warrow^{\dag}(F \otimes_{\QQ} \Qab)\otimes_{\ZZ} V$, which will complete our proof.  Recall that $\Gal(F/\QQ)$ acts only on the right-hand factor of $F \otimes_{\QQ} F$.  In the direct sum decomposition $F \otimes_{\QQ} F \cong \bigoplus F$, we can realize the Galois action as a transitive permutation of the factors $F$.   Pick one of the factors and call it $F_e$.  Then the projection $\bigoplus F \rightarrow F_e$ induces a homomorphism 
\[
 \left[ \Warrow^{\dag}\left( (F \otimes_{\QQ} F) \otimes_{\QQ} \Qab \right) \otimes_{\ZZ} V \right]^{\Gal(F/\QQ)} \rightarrow \Warrow^{\dag}\left( F_e \otimes_{\QQ} \Qab \right) \otimes_{\ZZ} V.
\]
Identifying $F \otimes_{\QQ} F$ with the induction of $F_e$ from the trivial group to $\Gal(F/\QQ)$ then gives a map in the reverse direction 
\[
\Warrow^{\dag}\left( F_e \otimes_{\QQ} \Qab \right) \otimes_{\ZZ} V \rightarrow  \left[ \Warrow^{\dag}\left( (F \otimes_{\QQ} F) \otimes_{\QQ} \Qab \right) \otimes_{\ZZ} V \right]^{\Gal(F/\QQ)}.
\]
These two maps are inverses to each other, which completes the proof.
\end{proof}

\begin{Rmk}
The definition of $(\varphi^{-1}, \Gamma)$-module given above should be regarded as preliminary for at least two reasons.  The first reason is that the base ring $\Warrow^\dagger(\Qab) \otimes_{\ZZ} K$ is most likely too big to be the correct choice for motives other than Artin motives. An alternative construction, which accounts more carefully for the map to Fontaine's de\thinspace Rham period ring, is suggested in \cite[Definition~4.3]{Ked13b}.  The second reason, as we will see below, is that the functor from Artin motives to $(\varphi^{-1}, \Gamma)$-modules is not fully faithful.

In future work, we hope to produce a theory of \emph{global} $(\varphi^{-1}, \Gamma)$-modules which will combine the approach here with the approach in \cite{Ked13b}, and which will further replace the $p$-typical Witt vectors here with big Witt vectors, so as to treat all nonarchimedean norms simultaneously.  
Proposition~\ref{phi,Gamma invariants} below suggests that this global analogue will yield a fully faithful functor to $(\varphi^{-1},\Gamma)$-modules.  
\end{Rmk}

\begin{Thm} \label{phi,Gamma faithful}
The functor 
\[
\underline{D}: V \leadsto \left(\Warrow^{\dag}(F \otimes_{\QQ} \Qab) \otimes_{\ZZ} V\right)^{\Gal(F/\QQ)}
\]
is a faithful functor from the category of Artin motives over $\QQ$ with coefficients in $K$ to the category of $(\varphi^{-1}, \Gamma)$-modules.  
\end{Thm}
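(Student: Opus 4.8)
The plan is to deduce faithfulness from the base-change computation already carried out in the proof of Theorem~\ref{Artin motive theorem}. Put $\Lambda := \Warrow^\dagger(\Qab)\otimes_\ZZ K$ and $\Lambda_F := \Warrow^\dagger(F\otimes_\QQ\Qab)\otimes_\ZZ K$; recall that $\underline D(V)$ is a finite projective $\Lambda$-module (Theorem~\ref{Artin motive theorem}) and that morphisms of $(\varphi^{-1},\Gamma)$-modules are in particular $\Lambda$-linear maps. A functor is faithful as soon as its composite with some other functor is; so it suffices to show that $V\mapsto \Lambda_F\otimes_\Lambda\underline D(V)$ is a faithful functor from Artin motives to $\Lambda_F$-modules.

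The crucial point is that the chain of isomorphisms in the proof of Theorem~\ref{Artin motive theorem} is natural in $V$, and hence yields a natural isomorphism of functors
\[
\Lambda_F\otimes_\Lambda\underline D(V)\;\cong\;\Warrow^\dagger(F\otimes_\QQ\Qab)\otimes_\ZZ V\;=\;\Lambda_F\otimes_K V .
\]
To justify this I would revisit each arrow of that chain --- the step using projectivity of $\Warrow^\dagger(F\otimes_\QQ\Qab)$ over $\Warrow^\dagger(\Qab)$, the identification of the two finite \'etale lifts of $(F\otimes_\QQ\Qab)\otimes_{\Qab}(F\otimes_\QQ\Qab)$ furnished by Theorem~\ref{almost purity part 2} and Lemma~\ref{compatibility with tensor product}, and the final isomorphism assembled from the projection onto a distinguished factor $F_e$ and from induction --- and check that none of these depends on a choice involving $V$, the $\ZZ$-- versus $K$--coefficient bookkeeping being likewise compatible. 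This is routine but is the only genuine work in the argument.

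Granting the displayed isomorphism, it remains to observe that $V\mapsto\Lambda_F\otimes_K V$ is faithful on Artin motives. If $f\colon V\to V'$ is a $K$-linear $G_\QQ$-equivariant map with $\mathrm{id}_{\Lambda_F}\otimes_K f=0$, then choosing $K$-bases (the motives are finite dimensional over $K$) exhibits the matrix entries of $f$ as elements of $K$ annihilated by $1\in\Lambda_F$; since $\Lambda_F$ is a nonzero ring and $K$ is a field, those entries vanish and $f=0$. The only input here is that $\Lambda_F=\Warrow^\dagger(F\otimes_\QQ\Qab)\otimes_\ZZ K\neq 0$: when $K$ has characteristic $0$ this holds because $1$ has infinite additive order in $\Warrow^\dagger(F\otimes_\QQ\Qab)$, and when $K$ has characteristic $p$ it holds because $p$ is not a unit in $\Warrow^\dagger(F\otimes_\QQ\Qab)$ (as one sees after $p$-adic completion via Theorem~\ref{compare to positive characteristic1}). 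Combining the last two paragraphs, $V\mapsto\Lambda_F\otimes_\Lambda\underline D(V)$ is faithful, hence so is $\underline D$.

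The main obstacle is therefore entirely bookkeeping: isolating the isomorphism of Theorem~\ref{Artin motive theorem} in a form that is manifestly functorial in $V$, and tracking the interaction of the various base changes (over $\Lambda$, over $\Warrow^\dagger(\Qab)$, and between $\ZZ$-- and $K$--coefficients). No new analytic content beyond Theorem~\ref{almost purity part 2} and Lemma~\ref{compatibility with tensor product} is required.
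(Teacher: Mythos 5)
Your proof is correct and follows essentially the same route as the paper: reduce to faithfulness of the base-extended functor $V\mapsto\Lambda_F\otimes_\Lambda\underline D(V)\cong\Warrow^\dagger(F\otimes_\QQ\Qab)\otimes_\ZZ V$ via the (naturally functorial) chain of isomorphisms established in the proof of Theorem~\ref{Artin motive theorem}, then verify faithfulness of this simplified functor. Your final step of choosing $K$-bases and invoking $\Lambda_F\neq 0$ is a slightly more uniform phrasing of the paper's argument, which instead uses $\ZZ$-flatness of $V$ in characteristic $0$ and $\FF_p$-flatness after reduction mod $p$ in characteristic $p$; both ultimately rest on the nonvanishing of $\Warrow^\dagger(F\otimes_\QQ\Qab)\otimes_\ZZ K$, which you helpfully make explicit.
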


\begin{Rmk}
Example \ref{phi,Gamma not full} below shows that in general, this faithful functor is not fully faithful.
\end{Rmk}

\begin{proof}
We saw in Theorem~\ref{Artin motive theorem} that this functor $\underline{D}$ does take values in the intended category; here we check that this functor is faithful.  Let $\underline{T}$ denote the functor 
\[
\left(\Warrow^{\dag}(F \otimes_{\QQ} \Qab) \otimes_{\ZZ} K\right) \otimes_{\Warrow^{\dag}(\Qab) \otimes_{\ZZ} K} -
\]
from $\Warrow^{\dag}(\Qab) \otimes_{\ZZ} K$-modules to $\Warrow^{\dag}(F \otimes_{\QQ} \Qab) \otimes_{\ZZ} K$-modules.  To show that $\underline{D}$ is faithful, it suffices to show that the composite $\underline{T} \circ \underline{D}$ is faithful.  We saw in the proof of Theorem~\ref{Artin motive theorem} that this composition of functors was equivalent to
\[
V \leadsto \Warrow^{\dag}(F \otimes_{\QQ} \Qab) \otimes_{\ZZ} V.
\]
If $V$ is a vector space over a field of characteristic~0, then $V/\ZZ$ is flat and so the map
\[
V \rightarrow \Warrow^{\dag}(F \otimes_{\QQ} \Qab) \otimes_{\ZZ} V, \text{ given by } v \mapsto 1 \otimes v, 
\]
is injective.  Thus if $\phi_1, \phi_2: V \rightarrow V'$ are two different morphisms of Artin motives, and say $\phi_1(v) \neq \phi_2(v)$, then 
\[
\underline{T} \circ \underline{D}(\phi_1)(1 \otimes v) = 1 \otimes \phi_1(v) \neq 1 \otimes \phi_2(v) = \underline{T} \circ \underline{D}(\phi_2)(1 \otimes v).  
\]
This shows that $\underline{D}$ is faithful in the specific case that the coefficients are characteristic~0.  

Essentially the same proof works in characteristic~$p$.  If $\phi_1(v) \neq \phi_2(v)$, then we can check that $1 \otimes \phi_1(v) \neq 1 \otimes \phi_2(v)$ by checking that they have different images in  
\[
\left(\Warrow^{\dag}(F \otimes_{\QQ} \Qab)/p \Warrow^{\dag}(F \otimes_{\QQ} \Qab)\right) \otimes_{\ZZ/p\ZZ} V',
\]
for which we use that $V'$ injects into this ring, again by flatness.
\end{proof}

\begin{Eg} \label{phi,Gamma not full}
We work out a specific example which shows that in general, the functor from Theorem~\ref{phi,Gamma faithful} is not full.  Consider the case $p = 5$ and $V$ is the $\QQ$-vector space $\QQ(i)$ equipped with the obvious $\Gal(\overline{\QQ}/\QQ)$-action.  In the notation of Theorem~\ref{phi,Gamma faithful}, we may take $F = \QQ(i)$.  We claim that multiplication by $[i \otimes 1] \otimes i$ is a $(\varphi^{-1}, \Gamma)$-module endomorphism of 
\[
\left(\Warrow^{\dag}(\QQ(i) \otimes_{\QQ} \Qab) \otimes_{\ZZ} \QQ(i)\right)^{\Gal(\QQ(i)/\QQ)}
\]
which is not induced by an Artin motive endomorphism of $V = \QQ(i)$.  This will show that the functor in question is not full.  

We first show that multiplication by $[i \otimes 1] \otimes i$ does map $\left(\Warrow^{\dag}(\QQ(i) \otimes_{\QQ} \Qab) \otimes_{\ZZ} \QQ(i)\right)^{\Gal(\QQ(i)/\QQ)}$ into itself.  For varying $j$, let $r_j$ denote elements of $\Warrow^{\dag}(\QQ(i) \otimes_{\QQ} \Qab)$ and let $v_j$ denote elements of our Artin motive $V$.  If $\sum r_j \otimes v_j$ is fixed by the non-identity automorphism $\tau \in \Gal(\QQ(i)/\QQ)$, then so is $[i \otimes 1] \otimes i$, as the following computation shows:
\begin{align*}
\tau \left( [i \otimes 1] \otimes i \sum r_j \otimes v_j \right)  & = -[i \otimes 1] \otimes -i \sum \tau(r_j) \otimes \tau(v_j)\\
&= [i \otimes 1] \otimes i \sum \tau(r_j) \otimes \tau(v_j) \\
&= [i \otimes 1] \otimes i \sum r_j \otimes v_j.
\end{align*}

We next show that multiplication by $[i \otimes 1] \otimes i$ commutes with $\varphi$.  It suffices to show that the ghost components of $\varphi^{-1} \left( [i \otimes 1] w \right)$ and $[i \otimes 1] \varphi^{-1}(w)$ are equal, and this follows immediately from the fact that the ghost components of $[i \otimes 1]$ are 
\[
\left( i \otimes 1, (i \otimes 1)^5, (i \otimes 1)^{25}, \ldots\right) = (i \otimes 1, i \otimes 1, i \otimes 1, \ldots).
\]

To complete the proof that $[i \otimes 1] \otimes i$ is $(\varphi^{-1}, \Gamma)$-module endomorphism of 
\[
\left(\Warrow^{\dag}(\QQ(i) \otimes_{\QQ} \Qab) \otimes_{\ZZ} \QQ(i)\right)^{\Gal(\QQ(i)/\QQ)},
\]
we should check $\Gamma$-equivariance, but this is obvious because $[i \otimes 1] \otimes i$ is $1$ in the only coordinate on which $\Gamma$ acts.

We now have a $(\varphi^{-1}, \Gamma)$-module endomorphism, and we will show that it is not induced by an Artin motive endomorphism of $V$.  Consider its action 
\[
[1 \otimes 1] \otimes 1 \mapsto [i \otimes 1] \otimes i.
\]
We claim that no Artin motive endomorphism of $V$ can have this effect.  It is easy to see that an Artin motive endomorphism of $V = \QQ(i)$ must have the form $r + si \mapsto ar + bsi$ for some scalars $a,b \in \QQ$.  Thus we are reduced to showing that
\[
[i \otimes 1] \otimes i \neq [1 \otimes 1] \otimes a
\]
for any choice of scalar $a \in \QQ$.  By projecting to the first Witt component, we would like to show that 
\[
i \otimes 1 \otimes i \neq 1 \otimes 1 \otimes a \in \QQ(i) \otimes_{\QQ} \Qab \otimes_{\ZZ} \QQ(i).
\]
For example, these elements have different images in $\QQ(i) \otimes_{\QQ} \Qab \otimes_{\QQ} \QQ(i)$.  
\end{Eg}

\begin{Rmk}
The problem that was exploited in the preceding example is that, in general,
\[
\left(\Warrow^{\dag}(F \otimes_{\QQ} \Qab)\right)^{\varphi^{-1}, \Gamma} \neq \ZZ.
\]
In the above example, $[i \otimes 1]$ was a $(\varphi^{-1}, \Gamma)$-invariant.  The following proposition identifies these invariants explicitly.
\end{Rmk}

\begin{Prop} \label{phi,Gamma invariants}
Let $F_0$ denote the maximal Galois subextension of $F/\QQ$ in which $p$ splits completely.  Let $\mathcal{O}_{F_0}$ denote its ring of integers, and let $\mathcal{O}_{F_0, (p)}$ denote its localization at $(p)$.  Then 
\[
\left(\Warrow^{\dag}(F \otimes_{\QQ} \Qab)\right)^{\varphi^{-1}, \Gamma} \cong \mathcal{O}_{F_0, (p)}.
\]
\end{Prop}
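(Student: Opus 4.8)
The plan is to strip off the two invariances one at a time and then localize at the primes of $F$ above $p$.

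\medskip\noindent\textbf{Step 1: the $\varphi^{-1}$-invariants.}
Put $C:=F\otimes_{\QQ}\Qab$. Since $C$ is a $\QQ$-algebra, for each $k$ the ghost map is a bijection $W_{p^k}(C)\cong C^{k+1}$, so the ghost components give a ring isomorphism $\Warrow(C)\cong\prod_{n\ge0}C$, $\underline x\mapsto(w_{p^{-n}}(\underline x))_{n\ge0}$. Under this identification the inverse Frobenius of Remark~\ref{only inverse Frobenius} acts by $w_{p^{-n}}\circ\varphi^{-1}=w_{p^{-(n+1)}}$, i.e.\ by a one-sided shift; hence $\Warrow(C)^{\varphi^{-1}}$ consists exactly of the elements whose ghost components are all equal. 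Write $\gamma(c)\in\Warrow(C)$ for the unique such element with $w_1(\gamma(c))=c$; then $\gamma$ is a functorial ring homomorphism splitting $w_1$, and $w_1$ restricts to its inverse on $\Warrow(C)^{\varphi^{-1}}$. As $\Warrow^\dagger(C)$ is stable under $\varphi^{-1}$, the map $w_1$ identifies $\Warrow^\dagger(C)^{\varphi^{-1}}$ with $\{c\in C:\gamma(c)\in\Warrow^\dagger(C)\}$.

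\medskip\noindent\textbf{Step 2: the $\Gamma$-invariants and reduction to $F$.}
Since $\gamma$ and $w_1$ are $\Gamma$-equivariant and $C^{\Gamma}=(F\otimes_{\QQ}\Qab)^{\Gal(\Qab/\QQ)}=F$, imposing $\Gamma$-invariance yields a ring isomorphism $\bigl(\Warrow^\dagger(C)\bigr)^{\varphi^{-1},\Gamma}\cong\{c\in F:\gamma(c)\in\Warrow^\dagger(C)\}$. For $c\in F$, the Witt components $\xi_0(c),\xi_1(c),\dots$ of $\gamma(c)$ lie in $F$, are universal polynomials in $c$ over $\ZZ[p^{-1}]$ independent of the truncation level, and have the same $p$-adic absolute value whether computed in $C$ or in $F$ (each equals the supremum over the places of $F$ above $p$); so $\gamma(c)\in\Warrow^\dagger(C)$ if and only if the corresponding element $\gamma_F(c)$ lies in $\Warrow^\dagger(F)$, formed using the $p$-normal ring $\mathcal{O}_{F,(p)}$. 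Because $|\gamma_F(c)_{p^{-n}}|_W=\sup_{0\le j\le n}|\xi_j(c)|^{1/p^j}$, overconvergence of $\gamma_F(c)$ is equivalent to $|\xi_j(c)|\le1$ for every $j$: a single $j$ with $|\xi_j(c)|>1$ forces $p^{-bn}|\gamma_F(c)_{p^{-n}}|_W^{p^n}\ge p^{-bn}|\xi_j(c)|^{p^{n-j}}\to\infty$ for all $b>0$, whereas if all $|\xi_j(c)|\le1$ the quantity is $\le p^{-bn}\to0$. Thus the invariant ring is $\{c\in F:\xi_j(c)\in\mathcal{O}_{F,(p)}\text{ for all }j\}$, equivalently $\{c:\gamma_F(c)\in\Warrow(\mathcal{O}_{F,(p)})\}$.

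\medskip\noindent\textbf{Step 3: local computation.}
Since $x\in F$ lies in $\mathcal{O}_{F,(p)}$ exactly when its image in $\mathcal{O}_{F_{\frakp}}$ is integral for every $\frakp\mid p$, and since the $j$-th Witt component of $\gamma(c_{\frakp})$ over $F_{\frakp}$ is the image of $\xi_j(c)$, the condition becomes: $\gamma(c_{\frakp})\in\Warrow(\mathcal{O}_{F_{\frakp}})$ for every $\frakp\mid p$. The decisive input is that the inverse limit of $\mathcal{O}_{F_{\frakp}}/p$ under Frobenius is the residue field $k_{\frakp}$ (a uniformizer of $\mathcal{O}_{F_{\frakp}}$ is nilpotent modulo $p$, hence maps to $0$ in the limit); combined with Corollary~\ref{inverse limit for complete} and Lemma~\ref{inverse limit for perfect} this yields a canonical isomorphism $\Warrow(\mathcal{O}_{F_{\frakp}})\cong W(k_{\frakp})$ under which $w_1$ is the theta map of Theorem~\ref{theta map}, namely the inclusion of $W(k_{\frakp})$ as the maximal unramified subring of $\mathcal{O}_{F_{\frakp}}$, and $\varphi^{-1}$ acts as $W$ of the $p$-power Frobenius of $k_{\frakp}$. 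Hence $\Warrow(\mathcal{O}_{F_{\frakp}})^{\varphi^{-1}}=W(\FF_p)=\ZZ_p$, carried by $w_1$ isomorphically onto $\ZZ_p\subseteq\mathcal{O}_{F_{\frakp}}$. Since $\gamma(c_{\frakp})$ is automatically $\varphi^{-1}$-fixed (the $\varphi^{-1}$-invariants of $\Warrow(F_{\frakp})$ being all of $F_{\frakp}$), it lies in $\Warrow(\mathcal{O}_{F_{\frakp}})$ if and only if $c_{\frakp}\in\ZZ_p$. Therefore the invariant ring is $\{c\in\mathcal{O}_{F,(p)}:c_{\frakp}\in\ZZ_p\text{ for every }\frakp\mid p\}$.

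\medskip\noindent\textbf{Step 4: Galois descent, and the main difficulty.}
It remains to identify this set with $\mathcal{O}_{F_0, (p)}$. If $p$ splits completely in $F_0$, then $F_0$ completes to $\Qp$ at the place below each $\frakp\mid p$, so $\mathcal{O}_{F_0, (p)}$ lands inside $\ZZ_p\subseteq\mathcal{O}_{F_{\frakp}}$ for all $\frakp$. Conversely, if $c_{\frakp}\in\ZZ_p$, then for every $\sigma$ in the decomposition group $D_{\frakp}=\Gal(F_{\frakp}/\Qp)$ the elements $\sigma c$ and $c$ have the same image in $F_{\frakp}$, whence $\sigma c=c$; so $c$ is fixed by each $D_{\frakp}$, hence by the normal subgroup they generate, whose fixed field is $F_0$, and integrality gives $c\in\mathcal{O}_{F_0, (p)}$. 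I expect Step~3 to be the main obstacle: one must show that $\Warrow$ with its Frobenius transition maps is blind to ramification and to residue-field extensions at $p$, so that the local contribution collapses from $\mathcal{O}_{F_{\frakp}}$ to $\ZZ_p$ --- the delicate points being the collapse of the Frobenius-perfection of the non-reduced ring $\mathcal{O}_{F_{\frakp}}/p$ to $k_{\frakp}$, and the identification of $w_1$ on $\Warrow(\mathcal{O}_{F_{\frakp}})$ with the unramified embedding through the theta map.
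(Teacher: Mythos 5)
Your proof is correct, but it takes a genuinely different route from the paper's, and arguably a more self-contained one. The paper embeds $\Warrow^1(F)\hookrightarrow\Warrow^1(\widehat F)\hookrightarrow\Warrow^1(\Cp)$ and then invokes the tilting isometry of Theorem~\ref{compare to positive characteristic1} (which needs Witt-perfectness and $p$-adic completeness) to pass to $W^1(\tilde E)$ and compute the $\varphi^{-1}$-invariants there; this is why the paper needs the preliminary step, via Lemma~\ref{effect of inverse Frobenius}, of descending from $\Warrow^\dagger$ to $\Warrow^1$, since Theorem~\ref{compare to positive characteristic1} is only stated for $b\leq 1$. You instead prove the sharper observation that a $\varphi^{-1}$-fixed element is overconvergent if and only if all of its Witt components are $p$-integral, and then work directly with $\Warrow(\mathcal O_{F_\frakp})$, using only Corollary~\ref{inverse limit for complete}, Lemma~\ref{inverse limit for perfect}, and the elementary collapse of $\varprojlim_\varphi\mathcal O_{F_\frakp}/p$ to the residue field $k_\frakp$. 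This sidesteps the passage to $\Cp$ and its tilt entirely, at the cost of having to carry out the local identification $\Warrow(\mathcal O_{F_\frakp})\cong W(k_\frakp)$ and match $w_1$ with the unramified embedding --- which you correctly flag as the crux, and which you handle properly via Theorem~\ref{theta map}. The Galois-descent step at the end is the same in both proofs. One minor gain of your route is that it makes transparent \emph{why} ramification at $p$ is invisible: the non-reducedness of $\mathcal O_{F_\frakp}/p$ is killed by the Frobenius inverse limit, rather than being absorbed silently into the perfectoid machinery.
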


\begin{proof}
The ghost components of an invariant must all be equal to some single element in $F \otimes_{\QQ} \Qab$ (by $\varphi^{-1}$-invariance).  The $\Gamma$-invariants of $F \otimes_{\QQ} \Qab$ are precisely $F \otimes_{\QQ} \QQ \cong F$ (to see this, choose a $\QQ$-basis of $F$ and use the corresponding isomorphism $F \otimes_{\QQ} \Qab \cong \oplus \Qab$).  Thus we would like to determine, for which elements $f \in F$, is $(f \otimes 1,f \otimes 1,\ldots)$ in the image of $\Warrow^{\dag}(F \otimes_{\QQ} \Qab)$ under the ghost map. 

Assume $f \in F$ is such that $(f, f, \ldots)$ is the image of some element $\underline{x}$ in $\Warrow^{\dag}(F)$ under the ghost map. 
Choose $b \geq 1$ for which $\underline{x} \in \Warrow^b(F)$;
by Lemma~\ref{effect of inverse Frobenius}, we also have $\underline{x} \in \Warrow^{b'}(F)$ for $b' = \max\{b/p, 1\}$. It follows that
$\underline{x} \in \Warrow^1(F)$.
Let $\widehat{F}$ denote the completion of $F$ at some fixed prime above $p$.  We have
\[
\Warrow^{1}(F) \subseteq \Warrow^{1}(\widehat{F}) \subseteq \Warrow^{1}(\Cp) \cong \Warrow^{1}(\varprojlim_{x \mapsto x^p} \Cp),
\]
where the last isomorphism is a consequence of Theorem~\ref{compare to positive characteristic1}.  Abbreviate $\varprojlim \Cp$ by $\tilde{E}$.  Because $\tilde{E}$ is a perfect field of characteristic $p$, by Theorem~\ref{overconvergence in characteristic p}, we have a further isomorphism $\Warrow^{1}(\tilde{E}) \cong W^{1}(\tilde{E})$.  The above maps are all $\varphi^{-1}$-compatible, and so we deduce that our element $\underline{x} \in \left( \Warrow^{\dag}(F) \right)^{\varphi^{-1}}$ corresponds to an element in $W(\tilde{E})^{\varphi^{-1}} = W(\tilde{E}^{\varphi^{-1}}) = W(\FF_p).$  

Assume $F$ is a number field.  The field $F_0$ (as in the statement of the proposition) is the largest subfield of $F$ with the property that, for every prime $\mathfrak{p}$ above $p$, the image of $F_0$ in the completion $F_{\mathfrak{p}}$ is equal to $\QQ_p$.  The previous paragraph shows that our invariant element $f$ is an element of $F_0$.  On the other hand, it further has the property that its image is in $\ZZ_p$ for every prime above $p$, so in fact the element $f$ is in $\mathcal{O}_{F_0, (p)}$.  

Conversely, we would like to show that for any $f \in \mathcal{O}_{F_0, (p)}$, the element $(f \otimes 1, f \otimes 1, \ldots)$ is in the image of $\left(\Warrow^{\dag}(F \otimes_{\QQ} \Qab)\right)^{\varphi^{-1}, \Gamma}$ under the ghost map.  Clearly an element of $\Warrow^{\dag}(F \otimes_{\QQ} \Qab)$ with the above ghost components is invariant under $\varphi^{-1}$ and $\Gamma$, so it suffices to check that the element $(f \otimes 1, f \otimes 1, \ldots)$ is in the image of $\Warrow^{\dag}(F \otimes_{\QQ} \Qab)$ under the ghost map.  

For $f \in \mathcal{O}_{F_0, (p)}$, we can find a unique element $\underline{x} \in \Warrow(F)$ with ghost components $(f,f,\ldots)$; we will show that $\underline{x}$ is in fact an element of $\Warrow^{\dag}(F)$.  We know that for every completion $\widehat{F}$ of $F$ at a place above $p$, the image of $\underline{x}$ in $\Warrow(\widehat{F})$ lies in $\Warrow(\mathcal{O}_{\widehat{F}}) \subseteq \Warrow^{\dag}(\widehat{F})$.  Because our norm on $F$ corresponds to the supremum of the $p$-adic norms on the various completions $\widehat{F}$, we deduce that $\underline{x} \in \Warrow^{\dag}(F)$, as desired.
\end{proof}

\begin{Rmk}
In future work, we will construct an analogue $\WWarrow^{\dag}(F \otimes_{\QQ} \Qab)$ of $\Warrow^{\dag}(F \otimes_{\QQ} \Qab)$ which uses big Witt vectors instead of $p$-typical Witt vectors, and which uses as norm the supremum over (at least) all finite places, instead of the supremum over all places above $p$.   (The map $\varphi^{-1}$ in this case should also be replaced by a different Frobenius map for each prime.)  For this more global base ring, the analogue of Proposition~\ref{phi,Gamma invariants} should be 
\[
\left(\WWarrow^{\dag}(F \otimes_{\QQ} \Qab)\right)^{\varphi^{-1}, \Gamma} \cong \mathcal{O}_{L},
\] 
where $L$ is the maximal subextension of $F$ in which \emph{every} prime $p$ splits completely.  In other words, the $(\varphi^{-1}, \Gamma)$-invariants should be $\ZZ$.  It should then follow that the functor 
\[
V \leadsto \left(\WWarrow^{\dag}(F \otimes_{\QQ} \Qab) \otimes_{\ZZ} V\right)^{\Gal(F/\QQ)}
\]
is fully faithful.
\end{Rmk}

\bibliography{padicHodge3}
\bibliographystyle{plain} 

\end{document}